\documentclass[oneside, reqno]{amsart}

\usepackage{graphicx}
\graphicspath{{images/}}
\usepackage[dvipsnames]{xcolor}

\usepackage{a4wide}
\usepackage[hidelinks]{hyperref}
\usepackage[alphabetic]{amsrefs}
\usepackage[shortlabels]{enumitem}
\usepackage{csquotes}

\theoremstyle{plain}
\newtheorem{thm}{Theorem}[section]
\newtheorem{prop}[thm]{Proposition}
\newtheorem{lemma}[thm]{Lemma}
\newtheorem{cor}[thm]{Corollary}
\theoremstyle{definition}
\newtheorem{definition}[thm]{Definition}
\newtheorem{notation}[thm]{Notation}

\newtheorem{remark}[thm]{Remark}
\newtheorem{example}[thm]{Example}

\newtheorem{assumption}[thm]{Assumption}

\usepackage{amssymb,amsmath,amsthm,mathrsfs}
\usepackage{graphicx}

\renewcommand{\max}{{\mathrm{max}}}
\renewcommand{\min}{{\mathrm{min}}}

\renewcommand{\u}{{\mathfrak u}}

\newcommand{\R}[0]{\ensuremath{\mathbb{R}}}
\newcommand{\N}{\ensuremath{\mathbb{N}}}
\newcommand{\C}{\ensuremath{\mathbb{C}}}
\newcommand{\g}{{\mathfrak g}}
\newcommand{\de}{\delta}
\renewcommand{\d}{\partial}

\newcommand{\cR}{{\mathcal{R}}}
\newcommand{\grad}{{\mathrm{grad}}}
\newcommand{\md}{{\mathrm{d}}}
\newcommand{\tr}{\mathrm{tr}}
\renewcommand{\div}{\mathrm{div}}
\newcommand{\ldr}[1]{\left\langle #1 \right\rangle}

\renewcommand{\a}{\alpha}
\renewcommand{\b}{\beta}
\newcommand{\p}{{\mathrm p}}
\renewcommand{\q}{q}
\renewcommand{\o}{\omega}
\renewcommand{\O}{{\mathcal O}}
\newcommand{\A}{{\mathcal A}}
\newcommand{\B}{{\mathcal B}}
\newcommand{\Cm}{{\mathcal C}}

\newcommand{\D}{\mathfrak D}
\newcommand{\De}{\mathrm D}

\newcommand{\abs}[1]{|#1|}
\newcommand{\norm}[1]{\left\|#1\right\|}

\newcommand{\res}{{\mathrm{res}}}

\newcommand{\X}{{\mathcal X}}

\newcommand{\T}{{\mathrm T}}
\newcommand{\Ell}{{\mathrm {Ell}}}
\newcommand{\WF}{{\mathrm{WF}}}

\renewcommand{\S}{\Sigma}
\newcommand{\s}{\sigma}
\newcommand{\n}{\nabla}

\newcommand{\im}{\mathrm{im}}
\newcommand{\supp}{\mathrm{supp}}
\renewcommand{\P}{{\mathrm{P}}}
\newcommand{\mP}{{\mathcal{P}}}

\newcommand{\Cr}{{\mathfrak{C}}}
\newcommand{\Vol}{{\mathrm{Vol}}}
\newcommand{\Hess}{{\mathrm{Hess}}}
\renewcommand{\L}{{\mathcal{L}}}
\renewcommand{\cR}{{\mathcal{R}}}
\renewcommand{\H}{{\mathrm{H}}}
\newcommand{\Ric}{{\mathrm{Ric}}}
\newcommand{\mX}{{\mathrm X}}
\newcommand{\mY}{{\mathrm Y}}
\newcommand{\Char}{{\mathrm{Char}}}
\renewcommand{\Re}{\mathrm{Re}}
\renewcommand{\Im}{\mathrm{Im}}
\newcommand{\id}{{\mathrm{id}}}
\newcommand{\oT}{{\overline {T^*}}}

\allowdisplaybreaks
\author{Oliver Petersen}
\address{Department of Mathematics, Stockholm University, Albanovägen 28, 10691 Stockholm, Sweden}
\email{oliver.petersen@math.su.se}

\title{The linear instability of Kasner spacetimes}

\begin{document}

\begin{abstract}
We prove linear stability of Kasner spacetimes in the direction of the Big Bang, up to an explicit finite dimensional space of non-decaying self-similar solutions.
The fastest growing solution is the linearization of Taub's explicit Bianchi II solution, describing a Kasner transition.
All other non-decaying solutions are inhomogeneous analogues of this, and the linearized Kasner metric.
The key novelty is a notion of quasinormal modes on Kasner spacetimes, similar to quasinormal modes on stationary black holes spacetimes.

All quiescent (as opposed to oscillatory) vacuum Big Bang spacetimes of dimension $4$ are asymptotic to a Kasner spacetime from the perspective of a single observer at the Big Bang.
They are expected to be highly unstable due to the oscillations conjectured by Belinski\v{\i}, Khalatnikov and Lifschitz.
This paper provides a complete description of this instability on a linear level without symmetry assumptions.
\end{abstract}

\maketitle

\parskip0ex
\tableofcontents

\section{Introduction} 
\parskip1ex

In a recent breakthrough result, Franco-Grisales and Ringstr\"om \cite{FR2026} prove stable formation of quiescent Big Bang singularities with complete asymptotics for the Einstein-scalar field equations in the non-degenerate subcritical regime, i.e.~for distinct Kasner exponents satisfying
\begin{equation} \label{eq: subextremality}
	p_j + p_k - p_l < 1,
\end{equation}
for all $j, k, l$ with $j \neq k$.
This extends the seminal result by Fournodavlos, Rodnianski and Speck \cite{FRS2023} by obtaining complete asymptotics at the Big Bang.
Franco-Grisales and Ringstr\"om prove that if one perturbs the initial data at a Cauchy hypersurface in a Kasner-scalar field spacetime, with distinct Kasner exponents satisfying \eqref{eq: subextremality}, then the resulting maximal globally hyperbolic Einstein-scalar field solution has a Big Bang as $t \to 0$ with convergent geometry.

Unfortunately, the arguments by Fournodavlos--Rodnianski--Speck and Franco-Grisales--Ringstr\"om do not carry over to the $4$-dimensional vacuum setting, since \eqref{eq: subextremality} is not compatible with the vacuum Kasner relations 
\begin{equation} \label{eq: Kasner relations}
	\sum_{j = 1}^n p_j^2 
		= \sum_{j = 1}^n p_j 
		= 1
\end{equation}
if the spacetime dimension is $n + 1 = 4$.
In fact, it is not expected that a similar result should be true in the vacuum case, due to the conjectures by Belinski\v{\i}, Khalatnikov and Lifschitz (BKL) in \cites{BKL1970, BKL1982}. 
Instead, the Kasner spacetimes are expected to be highly unstable, and each observer towards the Big Bang should experience their own instability.
We verify this expectation for the linearized Einstein equation by explicitly describing the instability from the point of view of a single observer, c.f.~Figure~\ref{fig: Causal}:

\begin{figure*} \label{fig: Causal}
  \begin{center}
    \includegraphics[scale = 0.8]{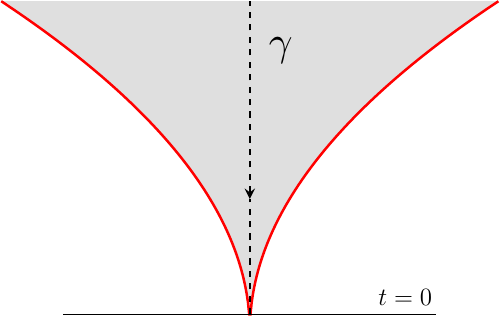}
  \end{center}
  \caption{
  	The shaded region illustrates the causal future $J^+(\gamma)$ of the observer $\gamma$ in a Kasner spacetime. The red curves illustrate the particle horizon of $\gamma$.
  	}
\end{figure*}

\begin{itemize}
	\item The first novelty is that any solution to the linearized Einstein equation in a non-flat Kasner spacetime can be written as an asymptotic sum of self-similar solutions of various growth rates, where all but finitely many decay towards the Big Bang, see Theorem~\ref{thm: main linearized Einstein, general dim}.
	\item The second novelty is that the space of \emph{non-decaying} self-similar solutions to the linearized Einstein equation in a non-flat Kasner spacetime of dimension $4$ can be computed explicitly, see Theorem~\ref{thm: self-similar solutions}.
	\item Theorem~\ref{thm: main linearized Einstein, general dim} and Theorem~\ref{thm: self-similar solutions} combine to give a complete description of the linear instability of $4$-dimensional non-flat Kasner spacetimes, see Theorem~\ref{thm: main linearized Einstein}.
	\item Finally, we note that any $4$-dimensional non-degenerate quiescent vacuum Big Bang spacetime (in the sense of Ringstr\"om~\cite{R2026_2}) is asymptotic to a non-flat Kasner spacetime from the perspective of a single observer, see Section~\ref{subsubsec: asymptotically Kasner}. 
	Our result therefore verifies the expected instability of any quiescent vacuum Big Bang spacetime, as predicted by the BKL conjecture, on a linear level. 
\end{itemize}

The key observation in this paper is the following:
The causal future of the world-line of a single observer travelling to the Big Bang in a non-flat Kasner spacetime shares many structural properties with the exterior of a Kerr-de Sitter black hole or the static patch in the de Sitter spacetime. 
We show that many concepts and ideas from the (by now very well developed) mathematical analysis of wave equations on black holes carry over to the Big Bang setting.
Our approach follows Vasy's microlocal analysis recipe for linear wave equations on black hole spacetimes introduced in \cite{V2013}.
The significant new challenges compared to black holes come from the \emph{anisotropy} of the Kasner spacetimes.

We prove the asymptotic expansion of solutions to the linearized Einstein equations in self-similar solutions by first developing the corresponding results for general systems linear of wave equations, see Theorem~\ref{thm: main QNM} and Theorem~\ref{thm: main as exp}.
As it turns out, only considering self-similar solutions are not quite sufficient for general wave equations.
Indeed, it is easy to construct examples of linear wave equations with real coefficients, that do not admit any self-similar solution (see Example~\ref{ex: scalar wave}).
We therefore have to pass to a generalization of self-similarity, which in black hole analysis is known as \emph{quasinormal modes}, where we allow the growth rate, or \emph{frequency}, to be a \emph{complex} number. 
To the best of our knowledge, quasinormal modes have never before been considered on Big Bang spacetimes.

There are two important differences between quasinormal modes on Big Bang spacetimes compared to quasinormal modes on black hole spacetimes.
Firstly, on black hole spacetimes, one typically expects the quasinormal modes to decay or come from perturbations of the black hole parameters, see for example Whiting's foundational mode stability result in \cite{W1989}.
As we will show, this is not the case in Kasner spacetimes.
Instead, there are unstable non-gauge quasinormal modes and these are precisely describing the linear instability which is the main result of this paper.
Secondly, quasinormal modes and their frequencies on black hole spacetimes are in general very hard to compute explicitly. 
In Kasner spacetimes however, we develop techniques that in principle allow for explicit computations of all quasinormal modes and frequencies of geometric wave operators, like the scalar wave operator or the Lichnerowicz-d'Alembert operator.
The latter is the relevant wave operator for the linearized Einstein equation. 
We use this to compute all unstable non-gauge (generalized) quasinormal modes for the linearized Einstein equations, thereby obtaining a complete characterization of the linear instability in Kasner spacetimes from the point of view of a single observer at the Big Bang, see Theorem~\ref{thm: main linearized Einstein}.

\subsection{Kasner spacetimes} \label{subsec: Kasner spacetimes}

Let us recall the definition of a Kasner spacetime.
\begin{definition} \label{def: Kasner}
Let $n \in \N$. 
A Kasner spacetime $(M, g)$ is given by
\begin{equation} \label{eq: Kasner}
	M
		= (0, \infty)_t \times \R^n_x, 
	\qquad
	g
		= - \md t^2 + \sum_{j = 1}^n t^{2p_j} \md x_j^2,
\end{equation}
such that $p_1, \hdots, p_n \in \R$ satisfy the Kasner relations \eqref{eq: Kasner relations}.
\end{definition}

Kasner spacetimes are solutions to the Einstein vacuum equation $\Ric_g = 0$.
They model a Big Bang at $t = 0$, unless one $p_j = 1$, in which case the Kasner relations force all other $p_j$ to vanish.
The latter special case is called the \emph{flat} Kasner spacetime and will be excluded in this paper. 
Indeed, the Kretschmann scalar is given by
\begin{equation} \label{eq: Krethschmann}
	\mathrm R_{\alpha \beta \gamma \delta} \mathrm R^{\alpha \beta \gamma \delta}
		= \frac4{t^4} \left( \sum_{j = 1}^n p_j^2 (1 - p_j)^2 + \sum_{j < k} p_j^2 p_k^2 \right),
\end{equation}
which is unbounded as $t \to 0$ in non-flat Kasner spacetimes.
The symmetry group of the Kasner spacetime is $n+1$-dimensional and generated by the Killing vector fields $\d_{x_1}, \hdots, \d_{x_n}$ and the homothetic Killing vector field
\begin{equation} \label{eq: T}
	\T
		:= - t \d_t - \sum_{j = 1}^n (1-p_j) x_j \d_{x_j},
\end{equation}
satisfying $\L_\T g = - 2 g$.
In order to measure decay of tensors on the spacetime, we fix a \emph{Riemannian} metric which satisfies the same symmetries as the Kasner spacetime.
Though all choices of such are equivalent, a natural choice is $\tilde g := \md t^2 + \sum_{j = 1}^n t^{2p_j} \md x_j^2$.
This indeed satisfies $\L_\T \tilde g = - 2 \tilde g$ and $\L_{\d_{x_j}}\tilde g = 0$ for $j = 1, \hdots, n$.
For a tensor $u$, we will measure decay of $u$ by the norm $\abs{u} := \sqrt{\tilde g(u, u)}$.
The Kasner metric $g$ does not decay as $t \to 0$, since
\begin{equation} \label{eq: g decay}
	\abs{g}
		= n + 1.
\end{equation}

Let 
\begin{equation} \label{eq: gamma}
	\gamma(\tau) 
		:= (\tau, 0, \hdots, 0) \in M,
\end{equation}
for $\tau \in (0, \infty)$, be the observer that follows the $t$-axis to the Big Bang as $\tau \to 0$.
This observer is invariant under the flow of $\T$, which means that the \emph{causal future} of $\gamma$,
\begin{equation} \label{eq: causal future of obs}
	J^+(\gamma)
		:= \overline{\bigcup_{\tau \in (0, \infty)} J^+(\gamma(\tau))},
\end{equation}
where $J^+(p)$ is the causal future of $p \in M$, is also invariant under the flow of $\T$.
See an illustration of $J^+(\gamma)$ in Figure~\ref{fig: Causal}.
We are interested in the asymptotics of solutions to the linearized Einstein vacuum equation in a region of the form
\[
	J^+_\de(\gamma)
		:= J^+(\gamma) \cap \{t \leq \de\},
\]
for some $\de > 0$.

\begin{remark} \label{rmk: Kasner-scalar field}
The Kasner-scalar field spacetimes $(M, g, \phi)$ are a generalization of the Kasner spacetimes with a metric $g$ on the same form and a scalar field given by $\phi = a \log(t)$, satisfying the Kasner-scalar field relations $\sum_{j = 1}^n p_j^2 + a^2 = \sum_{j = 1}^n p_j = 1$. 
With the extra flexibility of choosing $a \in \R$, there is a large class of Kasner-scalar field exponents that satisfy \eqref{eq: subextremality}.
This is the setting of the results of Franco-Grisales--Ringstr\"om \cite{FR2026} and Fournodavlos--Rodnianski--Speck \cite{FRS2023}.
\end{remark}

\subsection{Main results}

As opposed to \cites{FR2026, FRS2023}, we are here interested in analyzing solutions to the linearized Einstein equation in a $4$-dimensional Kasner spacetime, which is a \emph{vacuum} spacetime.
We therefore localize the analysis to the causal future of an observer, defined in \eqref{eq: causal future of obs}, see also Figure~\ref{fig: Causal}.
Our main result is a complete description of the \emph{linear instability of Kasner spacetime}, Theorem~\ref{thm: main linearized Einstein}.

\subsubsection{Expansion in self-similar solutions}

We consider non-flat Kasner spacetimes of any spacetime dimension $n + 1$.
(The Kasner relations \eqref{eq: Kasner relations} actually imply that $n \geq 3$.)
Our first result says that any solution to the linearized Einstein equation has an asymptotic expansion in terms of self-similar solutions as $t \to 0$, i.e.~solutions $\u \in C^\infty(J^+(\gamma))$ satisfying
\[
	\left( \L_\T - \lambda \right)^k \u
		= 0
\]
for some $\lambda \in \R$ and some $k \in \N_0$, where $\L$ denotes the Lie derivative.
We use the notation
\[
	\L_{t^{1-p}\d_x}^\a
		:= \left( \L_{t^{1-p_1}\d_{x_1}} \right)^{\a_1} \hdots \left( \L_{t^{1-p_n}\d_{x_n}}\right) ^{\a_n}
\]
for the spatial derivatives, where $\a = (\a_1, \hdots, \a_n) \in \N_0^n$.

\begin{thm}[The asymptotic expansion] \label{thm: main linearized Einstein, general dim} 
Let $n \in \N$ and let $(M, g)$ be a non-flat Kasner spacetime of dimension $n+1$, see Definition~\ref{def: Kasner}, and let $\gamma$ be the observer introduced in \eqref{eq: gamma}.
There is a discrete set $\u_1, \u_2, \hdots \in C^\infty(M)$, satisfying
\begin{equation} \label{eq: u_j}
	\De \Ric_g(\u_j) 
		= 0,
	\qquad \left( \L_\T - \lambda_j \right)^{k_j} \u_j 
		= 0,
\end{equation}
for some $k_j \in \N$ and $\lambda_j \in \R$, and the gauge condition 
\begin{equation} \label{eq: gauge condition}
	\div_g \left( \u_j - \frac12 \tr_g(\u_j) g \right) 
		= 0,
\end{equation}
for all $j \in \N$, such that the following holds:
If $l \in \R$, $\de > 0$ and $u \in C^\infty(J_\de^+(\gamma))$ satisfies the linearized Einstein vacuum equation
\[
	\De \Ric_g(u) = 0,
\]
then there is a vector field $X \in C^\infty(M)$ and constants $c_1, \hdots, c_N \in \R$, such that
\begin{equation} \label{eq: asymptotic expansion higher}
	\left| \L_{t\d_t}^k \L_{t^{1-p}\d_x}^\a \left( u - \L_X g - \sum_{j = 1}^N c_j \u_j \right) (t, x) \right| \leq C_{k, \a, l} t^l
\end{equation}
for any $k \in \N_0$ and $\a \in \N_0^n$, for some $C_{k, \a, l} > 0$ and all $(t, x) \in J_\de^+(\gamma)$.
\end{thm}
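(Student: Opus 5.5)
The plan is to reduce to a linear system of wave equations by imposing a gauge, and then feed that system into the general quasinormal mode machinery that the introduction announces (Theorem~\ref{thm: main QNM} and Theorem~\ref{thm: main as exp}).

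\textbf{Gauge fixing.} Given $u$ with $\De \Ric_g(u) = 0$ on $J^+_\de(\gamma)$, I first look for a vector field $X$ such that $u' := u - \L_X g$ satisfies the gauge condition \eqref{eq: gauge condition}. Set $G_g(h) := h - \frac12 \tr_g(h) g$. The requirement $\div_g G_g(\L_X g) = \div_g G_g(u)$ is a linear wave equation for $X$, namely $\Box_g X^\flat + \Ric_g(X) = \Box_g X^\flat$ in vacuum. I would solve it on the domain of dependence $J^+_\de(\gamma)$ from data on a spacelike slice $\{t=\de\}\cap J^+(\gamma)$, taking trivial data there. Then I extend $X$ smoothly to $M$.

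Once the gauge holds, $\De\Ric_g(u') = 0$ reduces to
\[
	\Box_L u' = 0,
\]
the Lichnerowicz--d'Alembert equation. This is a tensorial linear wave equation whose coefficients commute with $\L_\T$ and with $\L_{\d_{x_j}}$.

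\textbf{Expansion of the gauged solution.} I apply Theorem~\ref{thm: main as exp} to $u'$ viewed as a solution of $\Box_L u' = 0$. This gives, for each $l$, an expansion of $u'$ modulo $O(t^l)$ with all $\L_{t\d_t}$ and $\L_{t^{1-p}\d_x}$ derivatives controlled. The terms of the expansion are generalized quasinormal modes $\u$ with $(\L_\T - \lambda)^k \u = 0$ and $\Re\lambda$ in a region determined by $l$. Theorem~\ref{thm: main QNM} guarantees that only finitely many frequencies lie in each half-plane, so the sum is finite.

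\textbf{Reducing to real, gauge-compatible modes.} Two issues remain before this matches the statement.

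First, the modes may have complex $\lambda$, while the theorem asks for real $\lambda_j$. Using the self-similarity, I write the modes via $t^{\lambda}$ times profiles on a compact cross-section of $J^+(\gamma)$ transverse to $\T$. In Kasner I expect the geometry to allow separating the modes explicitly, which should show that the frequencies relevant to $\De\Ric$ are real. If that fails, I would absorb the conjugate pairs into real solutions, at the cost of relaxing $\lambda_j\in\R$.

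Second, the modes of $\Box_L$ need not satisfy the gauge condition or $\De\Ric = 0$. To handle this I use the constraint propagation identity: $\div_g G_g$ of a $\Box_L$-solution solves a wave equation for a one-form, $\Box_g + \Ric$. Projecting the expansion mode by mode, which is legitimate because $\L_\T$ commutes with all the operators involved, each generalized eigenspace splits into two parts:
\begin{itemize}
	\item modes satisfying the gauge condition, which then satisfy $\De\Ric = 0$ and are the $\u_j$;
	\item modes violating it, whose gauge violation is a mode of the one-form equation.
\end{itemize}
Since the total $u'$ satisfies the gauge exactly, the gauge-violating components must cancel up to $O(t^l)$. They can therefore be removed by an additional pure-gauge correction $\L_{Y}g$, with $Y$ built from one-form modes, and this correction is absorbed into $X$.

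\textbf{Main obstacle.} I expect the hardest part to be this last step. Concretely, it requires showing that each generalized eigenspace of $\Box_L$ decomposes compatibly into gauge-satisfying modes and pure-gauge modes plus constraint-violating modes, so that the residual lies in $\{\L_X g\}$ modulo $O(t^l)$. I would first try to prove it using the exact commutation $\div_g G_g \circ \Box_L = (\Box_g + \Ric)\circ \div_g G_g$, together with solvability of the gauge equation within modes. The latter follows from the Fredholm theory underlying Theorem~\ref{thm: main QNM}, applied to the one-form operator at the same frequency.
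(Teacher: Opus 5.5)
Your overall architecture matches the paper's: fix the gauge by solving a wave equation for $X$, reduce to $\Box_L$, apply Theorem~\ref{thm: main as exp}, then identify the gauge-compatible modes. There is, however, a genuine gap at $\lambda_j\in\R$, which you only hope for.

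Your fallback proves a different statement. A real combination of $t^{\lambda}$ and $t^{\bar\lambda}$ is annihilated by $(\L_\T-\lambda)(\L_\T-\bar\lambda)$, not by a power of $\L_\T-\lambda_j$ with $\lambda_j$ real. Writing modes as $t^\lambda$ times a profile on a cross-section gives no information either, since $\hat\mP(\s)$ is not self-adjoint and complex frequencies genuinely occur for $\T$-invariant wave operators (Example~\ref{ex: scalar wave} with $\b<0$).

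The missing idea is one you mention but never use: $\Box_L$ commutes with $\L_{\d_{x_j}}$, and $\L_{\d_{x_j}}$ maps a generalized QNM of frequency $\s$ to one of frequency $\s+(1-p_j)i$. No frequency has $\Im\s>C$ (Corollary~\ref{cor: resolvent large im sigma}), so finitely many such derivatives annihilate any mode. Hence every generalized QNM is a polynomial in $x$ whose top coefficient is an $x$-independent solution (Corollary~\ref{cor: structure of QNMs}). Every frequency is then $-i(\lambda+\abs{\a}_p)$ with $\lambda$ an indicial root of the $x$-independent system (Corollary~\ref{cor: QNM frequencies}). By Lemma~\ref{le: L-d'Alembert}, that system is a set of Euler equations in the frame $e^\a$. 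With $S:=u_{00}+\sum_k p_k u_{kk}$ one finds $\lambda^2 S=2(1+\sum_k p_k^2)S=4S$, giving roots $0,\pm2$, while $u_{0j}$ and $u_{jk}$ give $\pm(1+p_j)$ and $\pm(p_j-p_k)$. All are real, and this is exactly how the paper obtains $\lambda_j\in\R$ (Theorem~\ref{thm: AVTD system} with Corollary~\ref{cor: QNM frequencies}).

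Two smaller points. First, the step you call the main obstacle is easier than you fear, and no pure-gauge correction is needed. A gauge-violating $\Box_L$-mode is in general not of the form $\L_Y g$; for example, the $t^2$-mode of the $00$/$jj$ block has $\De\Ric\neq0$. Instead, apply $D:=\div_g(\cdot-\frac12\tr_g(\cdot)g)$ to an expansion taken slightly beyond order $l$. Here $tD$ preserves the $\O_*$-classes, $D$ intertwines $\L_\T$ up to a constant shift, and $D\Box_L=\Box D$ in vacuum. Linear independence of generalized modes with distinct frequencies then forces $D$ of each frequency block of the expansion to vanish on $J^+(\gamma)$, hence on $M$ by the polynomial structure in $x$. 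Each block thus already satisfies the gauge, and the $\u_j$ should be chosen as a basis of the gauge-satisfying generalized QNMs of $\Box_L$.

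Second, Theorem~\ref{thm: main as exp} applies only to backward solutions of $\P v=f$ with $f\in C_c^\infty(M)$, so you need a cutoff. Take $\chi(t)=1$ for $t\le\de/3$ and $\chi(t)=0$ for $t\ge2\de/3$. Then $\Box_L(\chi u')=[\Box_L,\chi]u'$ is supported in the compact set $J^+(\gamma)\cap\{\de/3\le t\le2\de/3\}$, and finite speed of propagation identifies $\chi u'$ with the backward solution on $J^+_\de(\gamma)$.
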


\begin{remark}[Self-similarity and decay]  \label{rmk: self-similar intro}
If a non-trivial $\u \in C^\infty(M)$ satisfies $(\L_\T - \lambda) \u = 0$, one can check using $\L_\T \tilde g = - 2 \tilde g$ that $\T \left( t^{2 + \lambda} \abs{\u} \right) = 0$.
It follows that
\[
	\abs{\u(t, x)} 
		\leq C t^{- (2 + \lambda)},
\]
for some $C > 0$ and all $(t, x) \in J^+(\gamma)$, and that this cannot be improved to $\abs{\u(t, x)} \leq C t^{- (2 + \lambda) + \epsilon}$ for any $\epsilon > 0$.
If, more generally, $\u$ satisfies $(\L_\T - \lambda)^k \u = 0$, then the optimal decay rate is 
\[
	\abs{\u(t, x)} 
		\leq C t^{- (2 + \lambda)}\abs{\log(t)}^{k'},
\]
for some $k' \leq k - 1$ and $C > 0$, and all $(t, x) \in J^+(\gamma)$.
Since the vector fields $t\d_t$ and $t^{1-p_j}\d_{x_j}$ commute with $\T$, for every $j$, the same is true after applying such spatial derivatives. 
Consequently, the terms $\u_j$ can only be removed from the asymptotic expansion \eqref{eq: asymptotic expansion higher} if $-(2+\lambda_j) > l$ or if $\u_j$ is a gauge solution.
\end{remark}

\subsubsection{The unstable self-similar solutions} \label{subsubsec: USS}

Theorem~\ref{thm: main linearized Einstein, general dim} and Remark~\ref{rmk: self-similar intro} imply that the obstacles to decay for solutions to the linearized Einstein equation are the non-gauge self-similar solutions $\u_j$ with growth rate $\lambda \geq - 2$, satisfying the gauge condition \eqref{eq: gauge condition}.
Our next result is that all these can be computed explicitly if the spacetime dimension is $4$.
We will find them among the following set of solutions:

\textbf{The linearized Kasner metric:}
By differentiating a one-parameter family of Kasner solutions
	\begin{equation} \label{eq: g' def}
		g'
			: = \frac{\md}{\md r}\Big|_{r = 0} \left( -\md t^2 + \sum_{j = 1}^3 t^{2p_j(r)} \left( \md x_j \right)^2 \right)
			= 2 \log(t) \sum_{j = 1}^3 p_j' t^{2p_j} \left( \md x_j \right)^2,
	\end{equation}
	we obtain the \emph{linearized Kasner metric} $g'$, which solves
	\begin{equation} \label{eq: g' QNM}
		\De \Ric_g(g') 
			= 0, 
		\qquad 
		\left( \L_\T + 2 \right)^2 g'
			= 0, 
	\end{equation}
	and satisfies the gauge condition \eqref{eq: gauge condition}, see Theorem~\ref{thm: AVTD system}.
	Moreover, it is not a gauge solution, see Lemma~\ref{le: linearized Kasner non gauge}.
	Hence $g'$ is one of the self-similar solutions in Theorem~\ref{thm: main linearized Einstein, general dim} with $k = 2$ and $\lambda = -2$.
	By Remark~\ref{rmk: self-similar intro}, we have thus found our first unstable non-gauge self-similar solution.
	One can also note that $g'$ is not decaying without using Remark~\ref{rmk: self-similar intro}, by simply computing its norm to be
	\begin{equation} \label{eq: g' decay}
		\abs{g'}
			= 2 \abs{\log(t) \hspace{0.4mm} } \sqrt{\sum_{j=1}^3 \left( p_j'\right)^2},
	\end{equation}
	which grows logarithmically as $t \to 0$.
	In other words, $g'$ is growing at the same polynomial rate in $t$ as $g$ as $t \to 0$, see \eqref{eq: g decay}, but slightly faster due to the logarithmic factor.
	The linearization of the Kasner relations \eqref{eq: Kasner relations} are $\sum_{j = 1}^3 p_j'p_j = \sum_{j = 1}^3 p_j' = 0$, which gives the explicit formula $(p_1', p_2', p_3') = c (p_3 - p_2, p_2 - p_1, p_1 - p_3)$, for some $c \in \R$.

\textbf{An explicit class of self-similar solution:}
Up to permutation, the Kasner relations \eqref{eq: Kasner relations} imply for a non-flat Kasner spacetime of dimension $4$ that $p_1, p_2 \in (0, 1)$ and $p_3 \in [-1/3, 0)$.
Define
\begin{equation} \label{eq: v 1 k}
	v_k(t, x)
		:= \sum_{l = 0}^k a_{lk} t^{l(1-p_1)} \left( x_1 \right)^{k-l} t^{2p_3} \md x^2 \otimes_s \md x^3
\end{equation}
for each integer $1 \leq k < 2 \frac{p_2 - p_3}{1-p_1}$, where $a_{0 k} = \frac1{k!}$ and
\begin{equation} \label{eq: a m K}
	a_{lk}
		= \frac1{(k-l)!} \prod_{q = 1}^{\frac l2} \frac1{\left( 2q (1 - p_2) + p_3 - p_1 \right)^2 - \left( p_3 - p_1 \right)^2},
\end{equation}
when $l$ is even and positive, and $a_{lk} = 0$ if $l$ is odd.
We similarly define $w_k$ for each integer $1 \leq k \leq 2 \frac{p_1 - p_3}{1-p_2}$ by swapping the indices $1$ and $2$ in equations~\eqref{eq: v 1 k} and \eqref{eq: a m K}.
Then 
\begin{align*}
	\De \Ric_g(v_k) 
		= & \ 0, \\
	\De \Ric_g(w_k)
		= & \ 0,
\end{align*}
for all $k$ satisfying the above bounds, see Theorem~\ref{thm: explicit solutions} below.
All $v_k$ and $w_k$ are \emph{self-similar} with respect to $\T$, with
\begin{equation} \label{eq: explicit QNM decay}
\begin{split}
	\left( \L_\T + 2 + p_3 - p_2 + k (1-p_1) \right) v_k
		= & \ 0, \\
	\left( \L_\T + 2 + p_3 - p_1 + k (1-p_2) \right) w_k
		= & \ 0.
\end{split}
\end{equation}
Moreover, all these solutions satisfy the gauge condition \eqref{eq: gauge condition}, see Theorem~\ref{thm: explicit solutions}. 
By Remark~\ref{rmk: self-similar intro}, these will be \emph{unstable} (i.e.~non-decaying) as $t \to 0$ precisely when $1 \leq k \leq \frac{p_2 - p_3}{1-p_1}$ and $1 \leq k \leq \frac{p_1 - p_3}{1-p_2}$, respectively.
Note that $v_1$ and $w_1$ are gauge related, since
\begin{equation} \label{eq: v1 w1}
	\frac12 \L_{x_1 x_2 \d_{x_3}}g
		= v_1 + w_1.
\end{equation}
Apart from this, all $v_k$ and $w_k$ are linearly independent, even up to adding gauge solutions, see Proposition~\ref{prop: linear independence}.

\textbf{An extra self-similar solution when two Kasner exponents coincide:}
In the special case that two Kasner exponents coincide, there is one more solution that will be needed. 
Up to permutation, we are left with the case
\[
	p_1 = p_2 = \frac23, \quad p_3 = -\frac13.
\]
In this particular case, we have to include one more to the list of unstable solutions:
\[
	z(t, x)
		:= x_1 \left( \frac16x_2^2t^{- \frac23} - 1 \right) \md x_1 \otimes_s \md x_3 + x_2\md x_2 \otimes_s \md x_3.
\]
By Theorem~\ref{thm: LRS case}, this satisfies $\De \Ric_g(z) = 0$ and is self-similar with $\L_\T z = - 2 z$, so Remark~\ref{rmk: self-similar intro} implies that $z$ is not decaying.

\begin{thm}[The unstable self-similar solutions] \label{thm: self-similar solutions}
Let $(M, g)$ be a non-flat Kasner spacetime of dimension $4$, see Definition~\ref{def: Kasner}, and let $\gamma$ be the observer introduced in \eqref{eq: gamma}.
Up to permutation of the indices, we may assume that $p_1, p_2 \in (0, 1)$ with $p_1 > p_2$ and $p_3 \in [-1/3, 0)$.
Let $K \in \N_0$ be the largest integer such that $K \leq \frac{p_2 - p_3}{1-p_1}$.
Assume that $\u \in C^\infty(M)$ satisfies
\[
	\De \Ric_g(\u) 
		= 0,
	\qquad 
	\left( \L_\T - \lambda \right)^k \u
		= 0,
\]
for some $k \in \N$ and $\lambda \geq -2$.
\begin{itemize}
	\item If $p_3 \in \left( -\frac27, 0 \right)$, then there are unique constants $a, b_1, \hdots, b_K \in \R$ and a vector field $X \in C^\infty(M)$, such that
	\[
		\u
			= \L_X g + a g' + \sum_{k = 1}^K b_k v_k.
	\]
	\item If $p_3 \in \left( -\frac13, -\frac27 \right]$, then there are unique constants $a, b_1, \hdots, b_K, c \in \R$ and a vector field $X \in C^\infty(M)$, such that
	\[
		\u
			= \L_X g + a g' + \sum_{k = 1}^K b_k v_k + c w_2.
	\]
	\item If $p_3 = - \frac13$, then there are unique constants constants $a, b_1, b_2, b_3, c_2, c_3, d \in \R$ and a vector field $X \in C^\infty(M)$, such that
	\[
		\u
			= \L_X g + a g' + \sum_{k = 1}^3 b_k v_k + \sum_{k = 2}^3 c_k w_k + d z.
	\]
\end{itemize}
\end{thm}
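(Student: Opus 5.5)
Our plan is to reduce to a classification of self-similar solutions of a gauge-fixed wave system, and then solve that system mode by mode. We first use gauge freedom to place $\u$ in the gauge \eqref{eq: gauge condition}. Given $\u$ with $(\L_\T-\lambda)^k\u=0$, we look for a vector field $X$ with $(\L_\T-\lambda-2)^{k'}X=0$ (up to the appropriate shift in weight) solving the wave equation $\Box_g X + \Ric(X)=\,$(gauge defect of $\u$). This equation is again $\T$-equivariant. We solve it within self-similar vector fields using the quasinormal-mode Fredholm theory of Theorem~\ref{thm: main QNM}, after possibly enlarging $k$ to accommodate resonances. Once $\u$ satisfies the gauge condition, it solves the Lichnerowicz wave equation $\Box_L \u = 0$, a linear wave system commuting with $\L_\T$ and $\L_{\d_{x_j}}$.

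The key structural step exploits the anisotropic scaling of $\T$. Writing $\u = \sum u_{\mu\nu}\,\md x^\mu\otimes\md x^\nu$, self-similarity forces each coefficient to be a function of $t$ times a polynomial in $x$ of bounded degree. Indeed, $\L_{\d_{x_j}}$ commutes with $\L_\T - \lambda$ only up to shifting $\lambda$ by $1-p_j>0$: we have $[\L_\T,\L_{\d_{x_j}}] = (1-p_j)\L_{\d_{x_j}}$. Hence $\L_{\d_x}^\a\u$ is self-similar with rate $\lambda - \sum\a_j(1-p_j)$. By the decay/smoothness constraint at the particle horizon, which comes from Theorem~\ref{thm: main QNM} (quasinormal frequencies are bounded above, or equivalently smooth self-similar solutions cannot have arbitrarily negative rate), these derivatives must vanish for $|\a|$ large. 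So $\u$ is polynomial in $x$. Each coefficient then reduces, via self-similarity, to $t^{\text{power}}(\log t)^m$ times monomials. The equation $\De\Ric_g(\u)=0$ becomes a finite triangular system of algebraic/ODE relations indexed by the top-degree monomial. At the top degree the $x$-independent part satisfies the AVTD-type system of Theorem~\ref{thm: AVTD system}, whose non-gauge solutions with $\lambda\ge -2$ are only $g'$ (and $g$ itself, which is gauge: $\L_{t\d_t}g$-type). Descending in degree, the inhomogeneous recursion for the coefficients reproduces the $a_{lk}$ of \eqref{eq: a m K}. The growth condition $\lambda\ge-2$ becomes the bound $k\le (p_2-p_3)/(1-p_1)$ via \eqref{eq: explicit QNM decay}, and similarly for $w_k$. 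Only the $\md x^2\otimes_s\md x^3$ and $\md x^1\otimes_s\md x^3$ components admit non-gauge solutions, because of the sign of $p_3<0$.

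Finally, we carry out the case analysis. The inequality $k(1-p_2)\le p_1-p_3$ with $k\ge2$ (since $w_1$ is gauge-equivalent to $-v_1$ by \eqref{eq: v1 w1}) holds exactly when $p_3\le -2/7$, using the Kasner relations. For $k=3$ it forces $p_3=-1/3$, i.e.\ $p_1=p_2$. In that coincident case a resonance produces the extra Jordan-type solution $z$ (Theorem~\ref{thm: LRS case}). Uniqueness of the constants follows from Proposition~\ref{prop: linear independence} and Lemma~\ref{le: linearized Kasner non gauge}.

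The main obstacle is the middle step: proving rigorously that every smooth self-similar gauged solution is polynomial in $x$, and then excluding all other component/degree combinations. This requires checking that the remaining recursions either produce only gauge solutions or have no solutions. We would try first to use the quasinormal-frequency upper bound to truncate degrees, and then an explicit computation of $\Box_L$ on each tensor component in the Kasner frame.
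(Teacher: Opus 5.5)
Your gauge-fixing step matches the paper: you solve $\Box_g X=\div_g\bigl(\u-\frac12\tr_g(\u)g\bigr)$ within generalized QNMs, possibly raising the order, as in Proposition~\ref{prop: QNM wave equations}. Your polynomiality step also matches (Corollary~\ref{cor: structure of QNMs}), except for a sign. From $[\L_\T,\L_{\d_{x_j}}]=(1-p_j)\L_{\d_{x_j}}$ one gets $\L_\T\L_{\d_{x_j}}\u=(\lambda+1-p_j)\L_{\d_{x_j}}\u$. So $x$-derivatives \emph{increase} the growth rate, and it is the \emph{upper} bound on $\Im(\s)$ from Corollary~\ref{cor: resolvent large im sigma} that forces them to vanish. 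Self-similar solutions with arbitrarily negative rate do exist.

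The genuine gap is the classification step, which you flag yourself, and your sketch of it points to the wrong mechanism. It is true that every $x$-independent gauged solution in Theorem~\ref{thm: AVTD system} other than $g'$ has the form $\L_W g$. But this says nothing about whether a top-degree term $x^\a\L_W g$ of $\u$ completes to a gauge solution. For instance, $v_1=\frac12 x_1\L_{x_2\d_{x_3}}g$ exactly, yet $v_1$ is not gauge (Lemma~\ref{le: Bianchi II non-gauge}). So $v_k$, $w_k$, $z$ do not arise by ``descending'' below $g'$. Your claim that only the $\md x^1\otimes_s\md x^3$ and $\md x^2\otimes_s\md x^3$ components carry non-gauge solutions is exactly what has to be proved.

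The missing idea is a top-down induction on the weighted $x$-degree. For each admissible top term $x^\a\L_W g$ (i.e.\ with $\Im(\s)\ge-2$), one exhibits a \emph{global} gauged solution with the same top term and subtracts it.

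For $W=\frac1t\d_t$ and $W=t^{-2p_j}\d_{x_j}$ (resp.\ $\log(t)\d_{x_j}$), this solution is $\L_X g$. Here $X$ is obtained by applying to $W$ the creation operators of Theorem~\ref{thm: creation and annihilation} for the vector wave operator. Then $\Box X=0$, so the gauge condition is preserved, and $X=x^\a W/\a!+{}$lower order (Lemma~\ref{le: gauge solution reduction}). For $g'$ and the three fields of frequency $-2i$, the condition $\Im(\s)\ge-2$ forces $\a=0$. If you instead solved your lower-order recursions by hand, you would run into resonances. That is where $\log t$ terms and higher Jordan order enter; compare $\tilde z$ in Corollary~\ref{cor: LRS}.

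For $W=x_j\d_{x_k}$ the condition reads $-\abs{\a}_p+p_j-p_k\ge0$, and Lemma~\ref{le: mixed terms reduction} goes through the cases:
\begin{enumerate}
\item $\a_k>0$ is impossible.
\item A pure power of $x_j$ is exactly gauge, since $x_j^{\a_j}\L_{x_j\d_{x_k}}g=\L_{x_j^{\a_j+1}\d_{x_k}/(\a_j+1)}g$.
\item $\a_l\ge1$ for the third index $l$ forces $p_k\le0$, i.e.\ $k=3$. Then a pure power $x_l^{\a_l}$ yields $v_{\a_l}$ or $w_{\a_l}$.
\item The case $\a_l=1\le\a_j$ reduces to the two previous ones.
\item The case $\a_l\ge2$, $\a_j\ge1$ survives only for $p_1=p_2=\frac23$ with $\a_j=1$, $\a_l=2$, and yields $\tilde z$.
\end{enumerate}
Only this case analysis, together with the induction, shows that the list in the theorem is complete. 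As it stands, your argument establishes only that $\u$ is polynomial in $x$ with top coefficient an AVTD solution.
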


Note that $w_1$ is not appearing here, since \eqref{eq: v1 w1} implies that it relates to $v_1$ by a gauge solution.

\begin{remark}
With the convention $p_1, p_2 \in (0, 1)$ with $p_1 > p_2$ and $p_3 \in [-1/3, 0)$, as in Theorem~\ref{thm: self-similar solutions}, the Kasner relations \eqref{eq: Kasner relations} imply that
\[
	\frac{p_2 - p_3}{1-p_1}
		= 1 - \frac{4 p_3}{1 + p_3 + \sqrt{(1+p_3)^2 - 4p_3^2}}.
\]
In concrete situations, the number $K$ is therefore easily computed. 
\end{remark}

\begin{remark}
One can in practice conclude more information about the constants appearing in Theorem~\ref{thm: self-similar solutions}.
For example, since the growth rate of all $v_k$ are different, at most one of the constants $b_k$ can be non-zero for a given $\u$.
However, none of the terms appearing in the decomposition of $\u$ can be removed in general, since the constants are unique.
\end{remark}

\subsubsection{The linear instability of Kasner spacetimes}

Combining Theorem~\ref{thm: main linearized Einstein, general dim} with Theorem~\ref{thm: self-similar solutions}, we may formulate the main result of this paper:

\begin{thm}[The linear instability of Kasner spacetimes] \label{thm: main linearized Einstein} 
Let $(M, g)$ be a non-flat Kasner spacetime of dimension $4$, see Definition~\ref{def: Kasner}, and let $\gamma$ be the observer introduced in \eqref{eq: gamma}.
Up to permutation of the indices, we may assume that $p_1, p_2 \in (0, 1)$ with $p_1 > p_2$ and $p_3 \in [-1/3, 0)$.
Let $K \in \N_0$ be the largest integer such that $K \leq \frac{p_2 - p_3}{1-p_1}$.
There is an $\epsilon > 0$, such that the following holds: 
If $\de > 0$ and $u \in C^\infty(J_\de^+(\gamma))$ solves the linearized Einstein vacuum equation
\[
	\De \Ric_g(u) = 0,
\]
then the following holds:
\begin{itemize}
	\item If $p_3 \in \left( -\frac27, 0 \right)$, then there are unique constants $a, b_1, \hdots, b_K \in \R$ and a vector field $X \in C^\infty(M)$, such that
	\begin{equation} \label{eq: asymptotic expansion 1}
	\left| \L_{t\d_t}^k \L_{t^{1-p}\d_x}^\a \left( u - \L_X g - a g' - \sum_{k = 1}^K b_k v_k \right) (t, x) \right| \leq C_{k, \a} t^{\epsilon}
\end{equation}
	for all $k \in \N_0$ and $\a \in \N_0^3$, for some $C_{k, \a} > 0$ and all $(t, x) \in J_\de^+(\gamma)$, 
	\item If $p_3 \in \left( -\frac13, -\frac27 \right]$, there are unique constants $a, b_1, \hdots, b_K, c \in \R$ and a vector field $X \in C^\infty(M)$, such that
	\begin{equation} \label{eq: asymptotic expansion 2}
	\left| \L_\T^k \L_{t^{1-p}\d_x}^\a \left( u - \L_X g - a g' - \sum_{k = 1}^K b_k v_k - c w_2 \right) (t, x) \right| \leq C_{k, \a} t^{\epsilon}
\end{equation}
for all $k \in \N_0$ and $\a \in \N_0^3$, for some $C_{k, \a} > 0$ and all $(t, x) \in J_\de^+(\gamma)$, 
	\item If $p_3 = - \frac13$, there are unique constants constants $a, b_1, b_2, b_3, c_2, c_3, d \in \R$ and a vector field $X \in C^\infty(M)$, such that
\begin{equation} \label{eq: asymptotic expansion 3}
	\left| \L_\T^k \L_{t^{1-p}\d_x}^\a \left( u - \L_X g - a g' - \sum_{k = 1}^3 b_k v_k - \sum_{k = 2}^3 c_k w_k - d z \right) (t, x) \right|
		\leq C_{k,\a} t^\epsilon,
\end{equation}
for all $k \in \N_0$ and $\a \in \N_0^3$, for some $C_{k, \a} > 0$ and all $(t, x) \in J_\de^+(\gamma)$.
\end{itemize}
Conversely, the estimates \eqref{eq: asymptotic expansion 1}, \eqref{eq: asymptotic expansion 2}, and \eqref{eq: asymptotic expansion 3} are false if one removes any term.
\end{thm}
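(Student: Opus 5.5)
The plan is to combine Theorem~\ref{thm: main linearized Einstein, general dim} with Theorem~\ref{thm: self-similar solutions}. First fix $\epsilon>0$ from the discreteness of the growth rates. The set $\{\lambda_j\}$ of rates appearing in Theorem~\ref{thm: main linearized Einstein, general dim} is discrete, so there is $\epsilon>0$ with no $\lambda_j$ in $(-2-\epsilon,-2)$. We may also take $\epsilon$ smaller than $-(2+\lambda)$ for each explicit decaying rate $\lambda$ among $v_k,w_k$ for $k$ in the admissible range. Apply Theorem~\ref{thm: main linearized Einstein, general dim} with $l=\epsilon$. This gives
\[
u-\L_{X_0}g-\sum_{j=1}^N c_j\u_j=O(t^{\epsilon}),
\]
with all $\L_{t\d_t}$ and $\L_{t^{1-p}\d_x}$ derivatives satisfying the same bound. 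Since $\L_\T$ is $-\L_{t\d_t}$ plus a combination of $x_j\L_{\d_{x_j}}$, and $|x_j|\lesssim t^{1-p_j}$ on $J^+(\gamma)$ (after checking the shape of the particle horizon), estimates with $\L_\T^k$ are equivalent to those with $\L_{t\d_t}^k$. This reconciles the two forms of the estimates.

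Next, split the sum $\sum c_j\u_j$ into terms with $\lambda_j<-2-\epsilon$ and terms with $\lambda_j\ge -2$. By Remark~\ref{rmk: self-similar intro}, the first group is $O(t^{\epsilon}|\log t|^{k'})$ together with all its commuted derivatives. After shrinking $\epsilon$ slightly, these terms can be absorbed into the error. For each $\u_j$ in the second group, Theorem~\ref{thm: self-similar solutions} applies, since $\De\Ric_g(\u_j)=0$ and $(\L_\T-\lambda_j)^{k_j}\u_j=0$ with $\lambda_j\ge-2$. In the relevant case for $p_3$, this writes $\u_j$ as a gauge term $\L_{X_j}g$ plus a combination of $g'$, $v_k$, $w_2$ (or $w_k$ and $z$). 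Summing gives the existence part with $X=X_0+\sum c_jX_j$.

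Uniqueness and the converse statement ("no term can be removed") are the main obstacle. Suppose a combination $\L_Yg+ag'+\sum b_kv_k+\dots$ is $O(t^\epsilon)$ with derivatives. Split it into its $\L_\T$ generalized eigencomponents. Each listed solution has rate $\lambda\ge-2$ and is non-decaying by Remark~\ref{rmk: self-similar intro}. The difficulty is the gauge term $\L_Yg$, where $Y$ is arbitrary.

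To handle it, first use the gauge condition \eqref{eq: gauge condition}, which all listed solutions satisfy. A small combination then forces $\div_g(\L_Yg-\tfrac12\tr_g(\L_Yg)g)$, a wave equation $\Box Y+\Ric(Y)=\Box Y$, to be small. I would then like to argue that $\L_Yg$ may be replaced by its projection onto the finitely many $\L_\T$-generalized eigenspaces with rates $\ge-2$. This would reduce the question to the finite-dimensional statement that the listed solutions are linearly independent modulo gauge. That statement is exactly the uniqueness in Theorem~\ref{thm: self-similar solutions} together with Proposition~\ref{prop: linear independence}.

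Concretely, the uniqueness argument goes as follows. Given two such expansions, their difference $D$ is a self-similar non-gauge combination plus $\L_Zg$, and $D=O(t^\epsilon)$. Apply $\prod(\L_\T-\lambda)^{k}$ over the finitely many rates present. Since the Lie derivative commutes with $\L_\T$ up to the $\L_{[\T,Z]}$ structure, this annihilates the self-similar part and leaves a gauge term that is still $O(t^\epsilon)$. Reversing the order, we isolate each eigencomponent via the projection
\[
\lim_{s\to\infty}e^{-s\lambda}\Phi_s^*
\]
along the flow of $\T$. On the small term this projection vanishes, because $\Phi_s^*$ of an $O(t^\epsilon)$ tensor scales like $e^{-s(2+\epsilon)}$. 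Hence the self-similar part itself equals a limit of gauge solutions $\Phi_s^*\L_Zg=\L_{\Phi_s^*Z}g$. The space of gauge solutions within a finite-dimensional self-similar space is closed, so the self-similar part is pure gauge. Proposition~\ref{prop: linear independence} then forces all constants to vanish.
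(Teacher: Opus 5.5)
Your existence argument is the paper's: apply Theorem~\ref{thm: main linearized Einstein, general dim} with $l=\epsilon$, where $\epsilon$ is chosen using discreteness of the rates. Then apply Theorem~\ref{thm: self-similar solutions} to the finitely many $\u_j$ with $\lambda_j\ge -2$, and absorb the rest into the error. Your remark that the $\L_\T$- and $\L_{t\d_t}$-versions of the estimates are interchangeable is correct, and it covers a point the paper passes over. The reason is that $x_j/t^{1-p_j}$ is bounded on $J^+(\gamma)$ and $\T$ commutes with $t^{1-p_j}\d_{x_j}$.

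The uniqueness/converse part, however, has an unjustified step. After applying $e^{-s\lambda}\Phi_s^*$ you obtain the self-similar part as a limit of gauge tensors $\L_{Z_s}g$. These $\L_{Z_s}g$ do not lie in any fixed finite-dimensional space. So closedness of gauge solutions \emph{within} the finite-dimensional self-similar space gives you nothing.

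What you would actually need is that the whole range $\{\L_Z g\}$ of the Killing operator is closed in $C^\infty_{\mathrm{loc}}$. That is true, via prolongation of the Killing equation, but it must be proved. Even then, the resulting $Z$ lives only on $J^+_\de(\gamma)$.

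The ``projection'' also only makes sense when done from the top rate downward, after stripping Jordan blocks. The operator $e^{-s\lambda}\Phi_s^*$ blows up on components of larger rate. It also grows linearly in $s$ on $g'$; this is harmless only because $(\L_\T+2)g'$ is itself a gauge solution, a multiple of $\L_{\sum_j p_j' x_j\d_{x_j}}g$.

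None of this machinery is needed. Proposition~\ref{prop: linear independence} is not just linear independence modulo \emph{exact} gauge solutions. It is stated for an arbitrary smooth $X$ with an $\O_*(t^\epsilon)$ remainder, which is exactly the situation here. Applying it to the difference of two expansions gives uniqueness. Applying it with $u$ equal to the omitted term gives the converse, since each listed term solves $\De\Ric_g(u)=0$. This is precisely how the paper concludes.
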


Strictly speaking, Theorem~\ref{thm: main linearized Einstein} is not a direct consequence of combining Theorem~\ref{thm: main linearized Einstein, general dim} and Theorem~\ref{thm: self-similar solutions}, since it needs a slightly stronger uniqueness statement provided by Proposition~\ref{prop: linear independence}.

The question that arises is whether there are one-parameter families of metrics $s \mapsto g_s$ solving the Einstein vacuum equation, with linearization given by our unstable solutions $v_k$, $w_k$ and $z$ in Theorem~\ref{thm: self-similar solutions} and Theorem~\ref{thm: main linearized Einstein}. 
As shown by Moncrief in his linearization stability results in \cites{M1975, M1976}, this is a subtle question when the background spacetime has Killing vector fields like the Kasner spacetime.
The next two remarks address this question.

\begin{remark}[Non-linearized version of $v_1$ and $w_1$] \label{rmk: Bianchi II}
Consider the Taub's explicit family of Bianchi II vacuum metrics
\[
	g_s
		= - A(t)^2 \md t^2 + t^{2p_1}A(t)^2\md x_1^2 + t^{2p_2}A(t)^2 \md x_2^2 + t^{2 p_3}A(t)^{-2}\left( \md x_3 + 4p_3 s x_1 \md x_2 \right)^2,
\]
where $A(t)^2 := 1 + s^2 t^{4 p_3}$ and $\sum_{j = 1}^3 p_j = \sum_{j = 1}^3 p_j^2 = 1$, where $s \in \R$.
They satisfy the vacuum equation $\Ric(g_s) = 0$, and we compute that
\[
	\frac{\md}{\md s}\Big|_{s = 0} g_s
		= 8p_3 x_1 t^{2 p_3} \md x_2 \otimes_s \md x_3
		= 8p_3 v_1.
\]
In other words, $v_1$ is, up to a constant factor, the linearization of the family $g_s$ of solutions to Einstein's vacuum equation.
This particular family $g_s$ describes a \emph{Kasner transition}, where the expansion-normalized Weingarten map (the natural generalization of Kasner exponents) converge to
\begin{align*}
	(p_1, p_2, p_3)
		\text{ as } t \to \infty, \qquad
	\left( \frac{p_1 + 2p_3}{1 + 2p_3}, \frac{p_2 + 2 p_3}{1 + 2p_3}, \frac{- p_3}{1 + 2p_3} \right)
		\text{ as } t \to 0.
\end{align*}
Consequently, $v_1$ is the linearization of a Kasner transition.
The solution $w_1$ is similarly the linearization of an isometric copy of the same Kasner transition.
Theorem~\ref{thm: main linearized Einstein} imply on a linear level that these transition maps give the leading order instability.
\end{remark}

\begin{remark}[Non-linearized version of $v_k$ and $w_k$ for $k \geq 2$]
Assume that $g_s$ is a curve of metrics such that $g_0$ is a non-flat Kasner metric and that $\frac{\md}{\md s} g_s |_{s = 0} = v_k$.
We claim that $g_s$ is an \emph{inhomogeneous} metric if $k \geq 2$.
For this, assume that $X_s$ is a family of Killing vector fields depending smoothly on $s$. 
It follows that
\[
	0
		= \frac{\md}{\md s} \Big |_{s = 0} \L_{X_s}g_s
		= \L_{X'}g + \L_{X_0} v_k
\]
where $X' : = \frac{\md}{\md s}|_{s = 0} X_s$.
Since $X_0$ is a Killing vector field with respect to the Kasner metric, it follows that $X_0 = \sum_{j = 1}^3 b_j \d_{x_j}$ for some $b_1, b_2, b_3 \in \R$.
Hence
\[
	0
		= \L_{X'}g + b_1 \L_{\d_{x_1}} v_k
		= \L_{X'}g + b_1 k v_{k-1}.
\]
By Proposition~\ref{prop: linear independence}, $v_{1k-1}$ is not a gauge solution if $k - 1 \geq 1$, so it follows that $b_1 = 0$.
But this means that $X_0$ is spanned by $\d_{x_2}$ and $\d_{x_3}$. 
Therefore there cannot be three linearly independent Killing vector fields depending smoothly on $s$, and the conclusion is that $g_s$ cannot be a homogeneous family of metrics for any $s \neq 0$.
\end{remark}

\subsubsection{General quiescent vacuum spacetimes} \label{subsubsec: asymptotically Kasner}
Let us now show why Kasner spacetimes indeed are the natural limits of \emph{any} $4$-dimensional non-degenerate quiescent vacuum spacetime after `zooming in' with the vector field $\T$.
Let $(M, g_*)$ be a non-degenerate quiescent vacuum spacetime (with a Gaussian foliation) in the sense of Ringstr\"om's \cite{R2026_2}*{Def.1.7}.
By \cite{R2026_2}*{Lem.~3.3}, we can write the metric as
\[
	g_*
		:= - \md t^2 + \sum_{j = 1}^3 a_{jk}t^{2\max(p_j, p_k)} \md x_j \otimes_s \md x_k,
\]
where $\norm{a_{jk}(\cdot, t) - c_{jk}}_{C^l} \leq C t^{2\eta}$ for some $\eta > 0$.
The proof of \cite{R2026_2}*{Lem.~3.3} also shows that $c_{ij}|_{x = 0} = \de_{ij}$ for $i, j = 1, 2, 3$, see also the proof of \cite{R2026_2}*{Lem.~3.1}.
Therefore, we conclude that
\begin{equation} \label{eq: asymptotically Kasner}
	g_*
		= g + \sum_{j = 1}^3 (a_{jj} - 1) t^{2p_j} \md x_j^2 + \sum_{j \neq k} a_{jk}t^{2\max(p_j-p_k, p_k-p_j)}  t^{p_j}\md x_j \otimes t^{p_j}\md x_j,
\end{equation}
where $g$ is the Kasner metric with Kasner exponents $p_1, p_2, p_3$.
Since both the second and the third term in \eqref{eq: asymptotically Kasner} decay faster than the Kasner metric along $\T$ (as both $t$ \emph{and} all $\abs{x_j}$ decrease in the direction of $\T$), this shows that the Kasner metric $g$ is the limit after zooming in with $\T$.
In this sense, the results of this paper is the natural linear version of the instability of non-degenerate quiescent vacuum Big Bang spacetimes conjectured by Belinski\v{\i}, Khalatnikov and Lifschitz.

\subsubsection{Quasinormal modes at the Big Bang}
As stated in our main results for the linearized Einstein equation, Theorem~\ref{thm: main linearized Einstein, general dim}, Theorem~\ref{thm: self-similar solutions} and Theorem~\ref{thm: main linearized Einstein}, we find in this paper asymptotic expansions of solutions to the linearized Einstein equation in self-similar solutions.
We will prove this by providing a general theory for such asymptotic expansions for solutions to systems of linear wave equations on Kasner-type spacetimes.

It turns out that for many linear wave equations, the solutions cannot be written as a sum of self-similar solutions $v$ satisfying $(\T - \lambda)^k v = 0$, where $k \in \N$ and $\lambda$ is a real number.
See Example~\ref{ex: scalar wave} for a simple instance of this. 
Instead, we need to also allow for complex scale factors.
This leads us to the notion of \emph{quasinormal modes} (QNMs), where we allow for a complex frequencies $\lambda$.

We will show for any linear system of wave equations that scales naturally with $\T$, all smooth solutions have asymptotic expansions in terms of QNMs. 
The result will hold for more general spacetimes than Kasner spacetimes, we call these the \emph{Kasner-type spacetimes}:

\begin{definition} \label{def: Kasner-type spacetime}
Let $n \geq 1$. 
A Kasner-type spacetime $(M, g)$ is given by
\[
	M
		= (0, \infty)_t \times \R^n_x, 
	\qquad
	g
		= - \md t^2 + \sum_{j = 1}^n t^{2p_j} \md x_j^2,
\]
such that $p_1, \hdots, p_n \in \R$ with $p_j < 1$ for $j = 1, \hdots, n$.
\end{definition}

In other words, we do not ask the Kasner relations to hold, but we still exclude the flat Kasner spacetime by demanding that $p_j < 1$ for all $j$.
Similar to Kasner spacetimes, the symmetry group of a Kasner-type spacetime is $n+1$-dimensional, generated by the same Killing vector fields $\d_{x_1}, \hdots, \d_{x_n}$ and the homothetic Killing vector field $\T$ given by the formula \eqref{eq: T}, satisfying $\L_{\T}g = - 2 g$.
We also define the causal future $J^+(\gamma)$  of the world-line of an observer $\gamma$ as in \eqref{eq: gamma} analogously to \eqref{eq: causal future of obs}.

In our analysis, we will need the linear wave operators $\P$ we consider to satisfy
\begin{equation} \label{eq: T commutator first}
	[\T, t^2 \P]
		= 0.
\end{equation}
Since $\L_\T g = - 2 g$, linear wave operators associated with the Kasner-type geometry (for example $\P = \n^*\n$ on tensor fields) will typically satisfy \eqref{eq: T commutator first}.
Our first result for linear wave equations is the construction of (generalized) QNMs:

\begin{thm}[Generalized quasinormal modes] \label{thm: main QNM}
Let $(M, g)$ be a Kasner-type spacetime, see Definition \ref{def: Kasner-type spacetime}.
Assume that $\P$ is a linear wave operator on $\C^m$-valued functions, for some $m \in \N$, such that $[\T, t^2 \P] = 0$.
Then there exists a discrete set $\s_1, \s_2, \hdots \in \C{}$, such that the following holds:
\begin{itemize}
\item For every $\a > 0$, the set of all $\s_j$ satisfying $\Im(\s_j) \geq - \a$ is finite.
\item If $u \in C^\infty(M; \C^m)$ is a non-trivial solution to
\begin{equation} \label{eq: gen mode equation}
	\P u
		= 0, 
	\qquad 
	\left( \T + i \s \right)^k u
		= 0,
\end{equation}
for a $\s \in \C$ and $k \in \N$, then $\s = \s_j$ for some $j \in \N$.
\item For any fixed $\s \in \C{}$, the vector space of $u \in C^\infty(M; \C^m)$ solving \eqref{eq: gen mode equation}, for some $k \in \N$, is finite dimensional.
\end{itemize}
\end{thm}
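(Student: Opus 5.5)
We follow Vasy's method: reduce the mode equation \eqref{eq: gen mode equation} to a family of operators on a fixed compact manifold with boundary and apply analytic Fredholm theory.

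\textbf{Step 1: self-similar coordinates.} Choose coordinates adapted to $\T$. Let $\tau := -\log t$ and $y_j := x_j t^{p_j - 1}$. Then $\T = \d_\tau$ in the coordinates $(\tau, y)$, since $\T(t^{p_j-1}x_j) = -(p_j-1)t^{p_j-1}x_j - (1-p_j)t^{p_j-1}x_j = 0$ and $\T \tau = 1$. The condition $[\T, t^2 \P] = 0$ says that $t^2\P$ has $\tau$-independent coefficients in $(\tau,y)$. We can therefore Mellin/Fourier transform in $\tau$: a mode $u = e^{-i\s\tau} v(y)$ solves $\P u = 0$ iff $\widehat{\P}(\s) v = 0$. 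Here $\widehat{\P}(\s)$ is a differential operator in $y \in \R^n$ depending polynomially on $\s$. Generalized modes $(\T+i\s)^k u = 0$ correspond to polynomials in $\tau$ times $e^{-i\s\tau}$, i.e.\ Jordan chains of $\widehat{\P}$ at $\s$.

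Next we identify the region on which to pose the problem. The observer $\gamma$ is $y = 0$, and $J^+(\gamma)$ is $\T$-invariant, so it corresponds to $\R_\tau \times \Omega$ for a region $\Omega \subset \R^n_y$ bounded by the particle horizon. Since $p_j < 1$, the $\T$-orbits through spatial infinity approach this horizon. The operator $\widehat{\P}(\s)$ should be elliptic inside $\Omega$, where $\d_\tau$ is timelike, and degenerate at $\d\Omega$, where $\T$ becomes null; this is the analogue of the event horizon or cosmological horizon. To obtain a problem with no boundary conditions, extend across the horizon to a slightly larger $\Omega_+$, where $\widehat{\P}(\s)$ becomes hyperbolic with spacelike boundary. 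Since the theorem is stated for $u \in C^\infty(M)$, smoothness across the horizon is built in.

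\textbf{Step 2: Fredholm estimates (the main obstacle).} We need to show that $\widehat{\P}(\s) : \mathcal X^s \to H^{s-1}(\Omega_+)$, with $\mathcal X^s = \{v \in \bar H^s : \widehat{\P}(\s)v \in \bar H^{s-1}\}$, is Fredholm for $\Im \s > \frac12 - s$ times a constant. The ingredients are:
\begin{itemize}
\item elliptic regularity in $\Omega$;
\item propagation of singularities;
\item radial point estimates at the conormal bundle of the horizon $\d\Omega$, where the Hamilton flow has sources and sinks, with threshold regularity depending on $\Im\s$;
\item hyperbolic energy estimates near the artificial spacelike boundary $\d\Omega_+$.
\end{itemize}
The anisotropy makes $\d\Omega$ not a sphere, and the surface gravity varies along it. We therefore have to verify nontrapping of null bicharacteristics and compute the radial-set thresholds explicitly from the weights $1-p_j > 0$. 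This is where $p_j < 1$ enters, and it is the step we expect to be hardest.

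\textbf{Step 3: invertibility and meromorphy.} We need invertibility for some $\s$. For $\Im\s \gg 1$, a semiclassical (large parameter) estimate, or an energy estimate for the original evolution with weight $t^{-C}$, gives injectivity of $\widehat{\P}(\s)$ and of its adjoint. Analytic Fredholm theory then shows that $\widehat{\P}(\s)^{-1}$ is meromorphic on $\{\Im\s > -\a\}$ for each $\a$, with finite-rank poles. Taking $s$ large, these regions exhaust $\C$, and the pole set is independent of $s$ by elliptic/radial regularity, since smooth kernels lie in all $\mathcal X^s$. Only finitely many poles lie in $\Im\s \geq -\a$ because, on the half-plane $\Im\s > C$, there are no poles, and below that in the strip we have a compact region plus high-frequency estimates in $|\Re\s|$. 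The high-frequency estimates either come from semiclassical nontrapping, or can be replaced by the discreteness of poles plus the direct bound $\Im\s \le C$.

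\textbf{Step 4: conclusion.} The poles are the $\s_j$. A smooth generalized mode restricts to $\Omega_+$ and gives a Jordan chain in $\ker$, which forces $\s$ to be a pole. The dimension of the generalized mode space equals the rank of the Laurent coefficients, which is finite.
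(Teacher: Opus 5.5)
Your architecture matches the paper's: self-similar coordinates, the mode operator on a region slightly beyond $J^+(\gamma)$ closed off by a spacelike hypersurface, a weighted energy estimate for $\mathrm{Im}\,\sigma \gg 1$, analytic Fredholm theory, and high-energy bounds. However, Step 2 rests on the isotropic picture. That picture holds only when $p_1 = \dots = p_n$, and it fails for every anisotropic metric, in particular every vacuum Kasner metric. With $q_j = 1-p_j$, the principal symbol of $\widehat{\mathrm P}(\sigma)$ is $|\xi|^2 - (\sum_j q_j y_j \xi_j)^2$. This is elliptic exactly on the ellipsoid $\{\sum_j q_j^2 y_j^2 < 1\}$, where $\partial_\tau$ is timelike, and that ellipsoid is not the slice $\Omega$ of $J^+(\gamma)$. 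The particle horizon is a null hypersurface to which $\mathrm T$ is tangent, so $\mathrm T$ is spacelike on it except where it is parallel to a null generator. A $\mathrm T$-orbit is a null geodesic only when $y$ is supported on indices with equal $q_j$ (e.g.\ $y=\pm e_j/q_j$ if the $q_j$ are distinct). Hence $\widehat{\mathrm P}(\sigma)$ is not elliptic on all of $\Omega$: there is an ergoregion-like zone inside $J^+(\gamma)$, and $\mathrm T$ is not null along $\partial\Omega$. Likewise, the radial set is not a conormal bundle. It is the union of the sets $\mathcal R_{l,\pm}$ at fiber infinity, one for each distinct value $\check q_l$ among the $q_j$, on which $y,\hat\xi$ are supported on $\{j: q_j=\check q_l\}$ with $\hat\xi_j=\mp q_j y_j$. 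Along the rescaled Hamilton field, $\mathsf H\hat\xi_k=(\check q_l-q_k+A_l)\hat\xi_k$ with $A_l=0$ on $\mathcal R_{l,+}$. So directions with $q_k>\check q_l$ are attracted, while those with $q_k<\check q_l$, and the combinations $\hat\xi_k+(2q_k-\check q_l)y_k$, are repelled. Only $\mathcal R_{m,+}$ and $\mathcal R_{m,-}$ (largest $q$) are a genuine source and sink. Every other $\mathcal R_{l,\pm}$ is a saddle, where Vasy's radial source/sink estimates give nothing.

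What is missing is twofold. First, you need a saddle-point propagation estimate: above threshold, regularity at $\mathcal R_{l,+}$ follows from a priori control on the incoming region $\{V_l\le\delta,\ \delta/2\le W_l\le\delta\}$. Here $V_l+W_l$ is a quadratic defining function of $\mathcal R_{l,+}$ in the characteristic set, $V_l$ grows along the flow, and $W_l=\sum_{q_j>\check q_l}\hat\xi_j^2$ decays. Second, you need the global dynamics. Every bicharacteristic in the $+$ component tends backward to $\mathcal R_{l_{\max},+}$, and forward either to $\mathcal R_{l_{\min},+}$ or out through the spacelike level sets of $\sum_j q_j y_j^2$. Moreover, any bicharacteristic with $W_l>0$ comes from some $\mathcal R_{\tilde l,+}$ with $\tilde l>l$. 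With these two ingredients, regularity propagates from the source $\mathcal R_{m,+}$ through $\mathcal R_{m-1,+},\dots,\mathcal R_{1,+}$ by downward induction, and dually for the $-$ component. The global dynamics is not a routine nontrapping check. The paper derives it from the asymptotically monotone quantities $A_l=\sum_{q_j\ne\check q_l}(q_j-\check q_l)\hat\xi_j^2$, which decay exponentially along bicharacteristics in the appropriate time direction. The same saddle structure appears in the semiclassical problem, so the high-energy estimate also needs a semiclassical saddle-point estimate. Finally, your fallback in Step 3 is invalid. Discreteness of the poles plus $\mathrm{Im}\,\sigma\le C$ does not give finitely many poles in the strip $\{-\alpha\le\mathrm{Im}\,\sigma\le C\}$, which is unbounded in $\mathrm{Re}\,\sigma$. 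The high-energy bound $|\mathrm{Re}\,\sigma|\le C'$ on that strip is exactly what proves the first bullet.
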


In other words, there is a certain invariantly defined discrete set of frequencies $\s_1, \s_2, \hdots \in \C$ associated with the operator $\P$, such that \eqref{eq: gen mode equation} admits a non-trivial solution.
For a concrete example, we refer to Example~\ref{ex: scalar wave}.

\begin{definition} \label{def: QNM}
The numbers $\s_1, \s_2, \hdots \in \C{}$ are called the \emph{quasinormal mode frequencies} of $\P$.
For a $\s \in \C{}$, a solution $u \in C^\infty(M; \C^m)$ to \eqref{eq: gen mode equation} is called a \emph{quasinormal mode} (QNM) of $\P$ if $k = 1$, and a \emph{generalized quasinormal mode} of $\P$ if $k \geq 1$. 
\end{definition}

A QNM is in particular a generalized QNM.
Another natural name for the numbers $\s_1, \s_2, \hdots \in \C{}$ is \emph{resonance frequencies} and \emph{resonant states} for the corresponding solutions to \eqref{eq: gen mode equation}.
Definition~\ref{def: QNM} is completely parallel to the definition of quasinormal modes on black hole spacetimes, c.f.~for example \cite{W1989} or \cite{V2013}.
For black holes, the stationary Killing vector field is used in place of the homothetic Killing vector field $\T$ used here.

\begin{remark}
A quasinormal mode satisfies by definition the condition $\T u = - i \s$, for some $\s \in \C$.
The reason for writing `$- i\s$' instead of simply `$\s$' or `$\lambda$' is that we follow the standard conventions for quasinormal modes on black holes, see e.g.~\cite{V2013} and references therein.
A QNM frequency $\s$ with $\Im(\s) > 0$ implies that the corresponding (generalized) QNM is growing as $t \to 0$, while $\Im(\s) < 0$ implies that it decays.
\end{remark}

\begin{remark}
Note that any generalized QNM can be written $u = t^{i \s} \sum_{j = 0}^{k-1} \log(t)^j a_j$, where $a_j \in C^{\infty}(M; \C^m)$ with $\T a_j = 0$.
Comparing with Remark~\ref{rmk: self-similar intro}, where we computed the decay rate of self-similar solutions, we similarly conclude that 
\[
	\abs{u(t, x)} 
		\leq C t^{\Im(\s) + \epsilon},
\]
for all $(t, x) \in J^+(\gamma)$, for any $\epsilon > 0$, and we may set $\epsilon = 0$ if and only if $k = 1$.
\end{remark}

\begin{remark} \label{rmk: geometric wave}
Linear wave operators associated with the geometry of Kasner-type spacetimes will typically satisfy $[\d_{x_j}, \P] = 0$ in addition to $[\T, t^2\P] = 0$.
Examples include the d'Alembert operator and the Lichnerowicz-d'Alembert operator, studied in Section~\ref{sec: QNM lin Ein}.
This class of \emph{geometric wave operators} is studied in Section~\ref{sec: QNM geometric waves}, and one of the main features, see Corollary~\ref{cor: structure of QNMs}, is that the generalized QNMs for such operators are polynomials in $x$, which makes them particularly easy to compute. 
\end{remark}

\begin{remark} \label{rmk: gen QNM are QNM}
If $u \in C^\infty(M; \C^m)$ is a generalized QNM with frequency $\s \in \C$ and order $k \in \N_0$, then $v := \left( \T + i \s \right)^{k-1} u$ is a QNM, since
\[
	t^2\P v
		= \left( \T + i \s \right)^{k-1} t^2 \P u
		= 0, 
	\qquad 
	\left( \T + i \s \right) v
		= \left( \T + i \s \right)^k u
		= 0.
\]
In other words, any generalized QNM gives rise to a QNM of the same frequency.
Consequently, if we want to show that $\s \in \C{}$ is \emph{not} a QNM frequency, it is enough to show that there is no QNM with that frequency.
\end{remark}

Our second result for linear wave equations is that solutions to linear wave equations satisfying \eqref{eq: T commutator first} have an asymptotic expansion in the generalized QNM given by Theorem~\ref{thm: main QNM}. 
We use the notation
\[
	\left( t^{1-p}\d_x \right)^\a 
		:= \left( t^{1-p_1} \d_{x_1}\right)^{\a_1} \hdots \left( t^{1-p_n} \d_{x_n}\right)^{\a_n}
\]
for any $\a \in \N_0^n$.

\begin{thm}[Asymptotic expansion] \label{thm: main as exp}
Let $(M, g)$ be a Kasner-type spacetime, see Definition \ref{def: Kasner-type spacetime}.
Assume that $\P$ is a linear wave operator on $\C^m$-valued functions, for some $m \in \N$, such that $[\T, t^2 \P] = 0$.
Let $v_1, v_2, \hdots \in C^\infty(M)$ be the generalized QNMs associated with $\P$ provided by Theorem~\ref{thm: main QNM} (see Definition~\ref{def: QNM}), ordered so that $\Im\left(\s_{v_j}\right) \geq \Im\left(\s_{v_k}\right)$ for all $j \leq k$, where $\s_{v_j}$ is the quasinormal mode frequency of $v_j$.
Given any $l \in \R$, let $N \in \N$ be the smallest number such that $\Im\left(\s_{v_{N+1}}\right) < l$.
Then, for any $f \in C_c^\infty(M)$, there are unique constants $c_1, \hdots, c_N \in \C$, such that the unique backward propagated solution $u \in C^\infty(M)$ to $\P u = f$ satisfies
\begin{equation} \label{eq: asym exp estimate}
	\left| \left( t \d_t \right) ^k \left( t^{1-p}\d_x \right)^\a \left( u
		- \sum_{j = 1}^N c_j v_j \right) (t, x) \right|
			\leq C_{k,\a, l} t^l,
\end{equation}
for all $k \in \N_0$ and $\a \in \N_0^n$ and some $C_{k, \a, l} > 0$, and all $(t, x) \in J^+(\gamma)$.
\end{thm}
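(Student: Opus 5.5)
We follow Vasy's scheme and convert the problem into a stationary resonance problem for an operator family on a compact manifold with boundary. First we pick coordinates adapted to $\T$. The functions $t$ and $y_j := x_j t^{-(1-p_j)}$ are well suited: $y_j$ is $\T$-invariant, since
\[
	\T(x_j t^{p_j - 1}) = -(1-p_j)x_j t^{p_j-1} + (1-p_j) x_j t^{p_j-1} = 0,
\]
while $\T t = -t$. In the coordinates $(\tau, y) = (-\log t, y)$ we have $\T = \d_\tau$. The region $J^+(\gamma)$ becomes $\{\tau \in \R\} \times \overline{\Omega}$, where $\Omega \subset \R^n_y$ is a bounded region; it is bounded because $p_j < 1$, so the particle horizon has finite $y$-extent. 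The boundary $\d\Omega$ is the image of the particle horizon $\d J^+(\gamma)$. The hypothesis $[\T, t^2\P] = 0$ says that $t^2\P$ has $\tau$-independent coefficients in $(\tau, y)$. The vector fields $t\d_t$ and $t^{1-p_j}\d_{x_j}$ commute with $\T$, so they become $\tau$-translation-invariant vector fields built from $\d_\tau$ and $\d_{y}$. The Fourier--Laplace transform in $\tau$ then produces a holomorphic family $\widehat{P}(\s)$ of second order operators on $\overline\Omega$, obtained from $t^2\P$ by replacing $\d_\tau$ with $-i\s$.

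The second step is to show that $\widehat P(\s)$ is Fredholm on suitable variable-order or high-regularity spaces, and invertible for $\Im \s \ll 0$. To handle the boundary, we extend $\Omega$ slightly past $\d\Omega$ to $\Omega_1 \Supset \Omega$, as in \cite{V2013}. Since $\d\Omega$ is a characteristic surface (the $\T$-invariant light cone of $\gamma$), $\widehat P(\s)$ is elliptic in $\Omega$ and hyperbolic beyond $\d \Omega$, and we add a complex absorbing potential or an artificial spacelike boundary there. The characteristic set over $\d\Omega$ then contains radial sets at the conormal bundle of $\d\Omega$. We would prove radial point estimates there, with a threshold regularity depending linearly on $\Im\s$, together with propagation of singularities and hyperbolic estimates in the extension. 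This gives Fredholm estimates on $H^s$ with $s$ above threshold. For $\Im \s \to -\infty$ (recall that growth as $t \to 0$ corresponds to $\Im\s > 0$), we would prove invertibility via energy estimates for the backward problem with a $\T$-weighted energy, equivalently large-parameter estimates.

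Analytic Fredholm theory then makes $\widehat P(\s)^{-1}$ meromorphic. Its poles, which are independent of the extension by the usual domain-of-dependence argument, are the $\s_j$ of Theorem~\ref{thm: main QNM}. The finiteness of the poles in each half plane $\Im\s \geq -\a$ needs a high-energy estimate for $|\Re \s| \to \infty$ in strips. Here we would use that the trapping is trivial: every null geodesic in $J^+(\gamma)$ leaves through the horizon or escapes to $t \to 0$. In the $\tau$-invariant picture, this means semiclassical nontrapping, which gives invertibility of $\widehat P(\s)$ for $|\Re\s|$ large in any strip.

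To get the expansion in \eqref{eq: asym exp estimate}, we take $f \in C_c^\infty$ and let $u$ be the backward solution, which vanishes for $\tau \ll 0$. Its Laplace transform is $\widehat u(\s) = \widehat P(\s)^{-1}\widehat f(\s)$, defined in a half plane. We then shift the inversion contour to $\Im\s = l$. The residues at the poles with $\Im\s_j \geq l$ produce the generalized QNMs $c_j v_j$, and the remaining integral is $O(e^{-l\tau}) = O(t^l)$ in $H^s(\Omega)$ uniformly in $\tau$. The high-energy bounds make the contour shift legitimate, including polynomial growth in $\s$ that is compensated by the rapid decay of $\widehat f$. Commuting with the translation-invariant fields $t\d_t$ and $t^{1-p}\d_x$ and using Sobolev embedding gives the pointwise bounds for all $k, \a$. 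Uniqueness of the $c_j$ follows from the sharp growth rates of generalized QNMs and their linear independence.

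The main obstacle is the microlocal analysis at $\d\Omega$ coupled with the anisotropy. The null geometry of $t^2 g$ in $(\tau,y)$-coordinates is not conformally symmetric, so one must verify by hand that $\d\Omega$ is a non-degenerate (smooth, or at least Lipschitz with controlled corners) characteristic hypersurface with source/sink radial dynamics. We also have to compute the threshold exponents. We would first try to establish smoothness of $\d\Omega$ away from the axis directions, where $y$ stays bounded.
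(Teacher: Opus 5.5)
Your overall architecture matches the paper's: coordinates $(\tau,y)$ with $\T=\partial_\tau$, the spectral family $\hat\mP(\sigma)$, Fredholm and high-energy estimates, meromorphic continuation and contour deformation. The gap is in the microlocal input. You plan to verify the isotropic, de Sitter--type picture, and that picture is false as soon as the $p_j$ are not all equal.

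First, ellipticity fails. The principal symbol of $\hat\mP(\sigma)$ is $|\xi|^2-(\sum_j q_jy_j\xi_j)^2$, which is elliptic exactly on $\{\sum_j q_j^2y_j^2<1\}$. In the anisotropic case $\Omega$ contains points with $\sum_j q_j^2y_j^2>1$, where $\partial_\tau$ is spacelike; this is the ergoregion-like region of the introduction. For example, for $n=2$, $p=(-1,0)$, the null geodesic leaving $x=0$ at $t=0$ with equal conserved momenta has $\sum_j q_j^2y_j^2\approx 1.06$ at $t=1$. So $\hat\mP(\sigma)$ is not elliptic in $\Omega$.

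More importantly, the radial set is not $N^*\partial\Omega$. On $\Char(\hat\mP(\sigma))$ the rescaled Hamilton field vanishes only on the sets $\cR_{l,\pm}$, one for each distinct value $\check q_l$ of the $q_j$. These lie over $\{y_j=0 \text{ for } q_j\neq\check q_l,\ \sum_{q_j=\check q_l}q_j^2y_j^2=1\}$; for distinct exponents, this is the two points $\pm q_j^{-1}e_j$ on each axis. These base points lie on $\partial\Omega$, so they are present whatever domain you use. Only $\cR_{\pm,m}$, for the largest $q$, is a source/sink. Every other $\cR_{\pm,l}$ is a saddle with both unstable and stable directions (Theorem~\ref{thm: saddle point flow}). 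So the verification you plan at $\partial\Omega$ would fail, and no source/sink radial estimate is available there.

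Three ingredients have to replace it:
\begin{enumerate}
\item A saddle-point estimate (Appendix~\ref{sec: saddle point estimates}). It gives regularity at $\cR_{+,l}$ only if you already control the incoming region $\{W_l\sim\delta\}$.
\item The global dynamics (Theorem~\ref{thm: bicharacteristic flow}). Each bicharacteristic tends backward to $\cR_{l_{\max},+}$. Forward, it tends to $\cR_{l_{\min},+}$ or escapes across the spacelike level sets of $\sum_jq_jy_j^2$. Hence the incoming region at $\cR_{+,l}$ is fed from some $\cR_{+,\tilde l}$ with $\tilde l>l$ (Corollary~\ref{cor: propagation into larger source}).
\item An induction over $l=m,m-1,\dots,1$, forced by the previous point. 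It starts from the genuine source $\cR_{+,m}$, followed by the symmetric argument for the $\cR_{-,l}$.
\end{enumerate}
The same structure sits in the semiclassical characteristic set at fiber infinity. So the high-energy bound does not follow from nontrapping alone; it needs the semiclassical saddle estimate and the same induction.

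The particle horizon plays no role in the actual argument. The paper works on $\mX_a=\{\sum_jq_jy_j^2\le 1/\min_jq_j+a\}$, which contains $\Omega$. Its boundary is spacelike because $d(\sum_jq_jy_j^2)$ is timelike where $\sum_jq_j^2y_j^2>1$. The estimate is closed there by hyperbolic energy estimates. The smoothness of $\partial\Omega$, which you single out as the main obstacle, is therefore beside the point.

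There is also a minor sign slip. With modes $e^{-i\sigma\tau}$ and growth for $\Im\sigma>0$, you need invertibility as $\Im\sigma\to+\infty$, not $-\infty$. An $O(e^{-l\tau})$ remainder comes from shifting the contour to $\Im\sigma=-l$, not $\Im\sigma=l$.
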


In particular, we may always choose $l = \Im \left( \s_{v_1} \right) + \epsilon$, and conclude that
\begin{equation} \label{eq: estimate without as exp}
	\left| \left( t \d_t \right) ^k \left( t^{1-p}\d_x \right)^\a u (t, x) \right|
			\leq C_{k,\a, \epsilon} t^{\Im \left( \s_{v_1} \right) + \epsilon}
\end{equation}
for all $k \in \N_0$, $\a \in \N_0^n$ and $\epsilon > 0$, and some $C_{k, \a, \epsilon} > 0$, and all $(t, x) \in J^+(\gamma)$. 
For \emph{geometric operators}, see Remark~\ref{rmk: geometric wave} and Definition~\ref{eq: x commutators} below, we may use this to get improved derivative estimates. 
Indeed, since $\P \d_{x_j} u = \d_{x_j} \P u = \d_{x_j} f \in C_c^\infty(M)$, the estimate \eqref{eq: estimate without as exp} also holds for $\d_{x_j} u$ in place of $u$ with some new constant $\tilde C_{k, \a, \epsilon} > 0$. 
The following corollary is therefore an immediate consequence of Theorem~\ref{thm: main as exp}.
We use the notation $\left( \d_x \right)^\a := \left( \d_{x_1}\right)^{\a_1} \hdots \left( \d_{x_n}\right)^{\a_n}$ for any $\a \in \N_0^n$.

\begin{cor}[Derivative estimates for geometric wave equations] \label{cor: derivative estimates}
Let $(M, g)$, $\P$ and $v_1$ be as in Theorem~\ref{thm: main as exp}, with the additional assumption that $[\d_{x_j}, \P] = 0$ for $j = 1, \hdots, n$.
Then, for any $f \in C_c^\infty(M)$, the unique backward propagated solution $u \in C^\infty(M)$ to $\P u = f$ satisfies
\[
	\left| \left( t \d_t \right) ^k \left( \d_x \right)^\a u (t, x) \right|
			\leq C_{k,\a, \epsilon} t^{\Im \left( v_1 \right) + \epsilon},
\]
for all $k \in \N_0$ and $\a \in \N_0^n$, any $\epsilon > 0$, some $C_{k, \a, \epsilon} > 0$, and all $(t, x) \in J^+(\gamma)$.
\end{cor}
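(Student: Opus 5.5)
The plan is to apply the bound \eqref{eq: estimate without as exp}, i.e.\ Theorem~\ref{thm: main as exp} with $l = \Im(\s_{v_1}) + \epsilon$, not to $u$ itself but to the spatial derivatives $w_\a := (\d_x)^\a u$. The assumption $[\d_{x_j}, \P] = 0$ for all $j$ is what makes this possible. The key point is that the right-hand side of \eqref{eq: estimate without as exp} depends on $\P$ only through the leading frequency $\s_{v_1}$, while the source $f$ enters only through the constant.

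\textbf{Step 1.} Fix $\a \in \N_0^n$. Iterating the commutation relation gives
\[
	\P w_\a = (\d_x)^\a \P u = (\d_x)^\a f =: f_\a,
\]
and $f_\a \in C_c^\infty(M)$ with $\supp f_\a \subset \supp f$.

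\textbf{Step 2.} Next I check that $w_\a$ is the unique backward propagated solution of $\P w = f_\a$. The backward propagated solution $u$ is characterized as the smooth solution vanishing outside the causal past of $\supp f$, in particular for all large $t$. Differentiating in $x$ preserves smoothness and preserves vanishing on the open set where $u$ vanishes. So $w_\a$ is smooth, solves $\P w_\a = f_\a$, and vanishes outside the causal past of $\supp f \supset \supp f_\a$. Uniqueness for the backward problem then identifies $w_\a$ with the backward propagated solution for $f_\a$.

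\textbf{Step 3.} Now I apply \eqref{eq: estimate without as exp} to $w_\a$ with source $f_\a$, using the spatial multi-index $0$ and an arbitrary $k \in \N_0$. The operator $\P$ is unchanged, so its generalized QNMs are unchanged, and in particular the leading frequency $\s_{v_1}$ is the same. This gives
\[
	\left| (t\d_t)^k (\d_x)^\a u (t,x) \right|
		= \left| (t\d_t)^k w_\a (t,x) \right|
		\leq C_{k,\a,\epsilon}\, t^{\Im(\s_{v_1}) + \epsilon}
\]
on $J^+(\gamma)$. Here the constant is the one from \eqref{eq: estimate without as exp} for the data $f_\a$, and this is exactly the claimed estimate.

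The only step needing real care is Step 2, since the corollary depends on $w_\a$ being the solution to which Theorem~\ref{thm: main as exp} applies. This reduces to the support property of backward solutions, namely finite speed of propagation for $\P$. I expect no further obstacle: the estimate is uniform in $\a$ only through the constants, and the corollary does not require uniformity in $\a$.
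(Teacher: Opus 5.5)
Your proposal is correct and is essentially the paper's argument: the paper likewise uses $\P \d_{x_j} u = \d_{x_j} f \in C_c^\infty(M)$ to apply \eqref{eq: estimate without as exp} to the differentiated solution, with a new constant depending on the differentiated source. Your Step 2 checks that $(\d_x)^\a u$ is the backward propagated solution for $(\d_x)^\a f$ via its support, which makes explicit a point the paper leaves implicit.
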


It is desirable to show for example that $\Im(\s) \leq 0$ for all QNM frequencies $\s$, in which case Corollary~\ref{cor: derivative estimates} also provides derivative estimates.
Such statements are known in the black hole literature as \emph{mode stability}, see for example \cites{W1989, S2015, AMPW2017, CTC2021, H2025_2, PV2026}.

\begin{remark}[Asymptotic expansions and derivative estimates]
We claim that one cannot in general (even for geometric wave equations) remove the $t^{1-p_j}$-factors from the $\d_{x_j}$-derivatives in \eqref{eq: asym exp estimate}.
Indeed, consider the following example.
Let $p_1 = \hdots = p_n = 0$, i.e.~$(M,g)$ is the future half of the Minkowski spacetime.
Then $\Box = \d_t^2 - \sum_{j = 1}^n \d_{x_j}^2$ and $\T = - t\d_t - \sum_{j = 1}^n x_j \d_{x_j}$.
It follows that $v_1 := 1$ and $v_2 := x_1$ are QNMs with QNM frequency $0$ and $-i$, respectively.
All other QNM frequencies have smaller or equal imaginary parts. 
Defining $u := \chi \left( 1 + x_1 \right)$, for some $\chi \in C^\infty(M)$ with $\chi(t, x) = 0$ for $t \geq 2$ and $\chi(t, x) = 1$ for $t \leq 1$, we note that $\Box u = [\Box, \chi] (1 + x_1)$, which has support where $1 \leq t \leq 2$.
By finite speed of propagation, $u$ coincides in $J_2^+(\gamma) = \{ \abs x \leq t \leq 2\}$ with the unique backward propagated solution $\tilde u \in C^\infty(M)$ to $\Box \tilde u = f$, for any $f \in C_c^\infty(M)$ that coincides with $[\Box, \chi](1+x_1)$ in $J^+(\gamma)$.
We are thus precisely in the setting of Theorem~\ref{thm: main as exp}.
Moreover, in $J_1^+(\gamma) = \{ \abs x \leq t \leq 1\}$, $u - v_1 = x_1$, which implies that 
\[
	\abs{(u - v_1)(t, x)}
		\leq C t
\]
for all $(t, x) \in J^+(\gamma)$.
However, in $J_1^+(\gamma)$, $\d_{x_1} \left( u - v_1 \right) = \d_{x_1}x_1 = 1$, so 
\[
	\abs{\d_{x_1}(u - v_1)(t, x)} 
		\nleq C t^\de
\]
for all $(t, x) \in J^+(\gamma)$, for any $C > 0$ and $\de > 0$.
This shows the claim.
Note also that this does not contradict Corollary~\ref{cor: derivative estimates}, which is even true with $\epsilon = 0$ in this example.
\end{remark}

\subsubsection{Relation to previous literature}

The most general setting for linear wave equations on Big Bang spacetimes is considered by Ringstr\"om in \cites{R2020, R2025} (see also related results in \cites{P2016, R2019, AFF2019,FG2025, R2026, S2025}, and the sharp scattering result for the scalar wave equation by Warren Li in \cite{L2024}).
In \cite{R2020}, Ringstr\"om obtains optimal energy estimates without loss of derivatives for systems of wave equations on (in particular) Kasner spacetimes.
He also obtains the leading-order asymptotics, which for e.g.~for the scalar wave equation are
\begin{equation} \label{eq: R asymptotics}
	u(t, x) 
		\sim a(x) \ln(t) + b(x),
\end{equation}
for some functions $a$ and $b$, as $t \to 0$ in a non-flat Kasner spacetime. 
In \cite{R2025}, Ringstr\"om localizes these asymptotics to only require information in a region of the form $J^+(\gamma)$ (and thereby allow for a general class of spacetimes).
Asymptotics analogous to \eqref{eq: R asymptotics} for the linearized Einstein-scalar field equations in the subcritical range, which is known to be stable by \cites{FRS2023, OGPR2023, FR2026}, were obtained by Warren Li in \cite{L2024}.
Ringstr\"om's method was later iterated by Franco-Grisales in \cite{FG2025} in order to higher-order improvements of e.g.~\eqref{eq: R asymptotics} of the form
\begin{equation} \label{eq: FG asymptotics}
	u(t, x) 
		\sim a(x) \ln(t) + b(x) + \sum_{j = 1}^n t^{2(1-p_j)} \left( \a_j(x) + \b_j(x) \right) + \hdots,
\end{equation}
where the $\a_j$ and $\b_j$ (and the corresponding higher-order coefficients) in principal can be computed from $a$ and $b$.
The main difference between the asymptotics obtained in \eqref{eq: R asymptotics} and \eqref{eq: FG asymptotics} and our Theorem~\ref{thm: main as exp} is that $a$ and $b$ lie in an \emph{infinite-dimensional} space, whereas our asymptotic terms (for a given decay order in $J^+(\gamma)$) come from a \emph{finite-dimensional} space of generalized QNMs. 
The asymptotic information we obtain in Theorem~\ref{thm: main as exp} therefore gives more precise information.
This is the key that makes it possible to explicitly characterize all unstable perturbations of the linearized Einstein equations in non-flat Kasner spacetimes, Theorem~\ref{thm: main linearized Einstein}.

In recent years, there has been a fascinating development of the natural scattering problem for Einstein-scalar field solutions with a Big Bang in the \emph{subcritical regime}.
This means that the Kasner exponents, or more generally the eigenvalues of the expansion-normalized Weingarten map, satisfy the subcriticality condition \eqref{eq: subextremality} for all $j, k, l = 1, \hdots, n$ with $j \neq k$.
(Recall that the subextremality condition is not compatible with vacuum spacetimes of dimension $n + 1 = 4$.)
In modern terms, the scattering question in the subcritical regime for the Einstein-scalar field equations is the following:
\begin{enumerate}
\item Given any initial data on the Big Bang singularity in the sense of Ringstr\"om \cite{R2026_2}, does there exist a unique (up to isometry) maximal globally hyperbolic development?
\item What is the class of initial data at a Cauchy hypersurface (as opposed to the singularity) whose maximal globally hyperbolic development contains a Big Bang singularity in the sense of Ringstr\"om \cite{R2026_2}?
\end{enumerate}
Through the efforts of many contributors, both these questions are to a large extent resolved in the subcritical regime.
The pioneering work on question (1) is by Andersson and Rendall in \cite{AR2001} under the assumption of real analyticity.
Analyticity was removed by Fournodavlos and Luk in the vacuum setting \cite{FL2023}.
This was extended in \cite{NRT2020} to the Einstein-scalar field equations in the subcritical setting.
All these three results were significantly generalized in \cite{FG2026}, which in particular allows for either situation at different parts of the initial hypersurface.
The result in \cite{FL2023} was localized in \cite{AF2025}.
See also \cites{ABIL2013, ABIL2017} for related analysis.
The pioneering work for question (2) without symmetry or analyticity in the subcritical regime is a sequence of results by Rodnianski and Speck in \cites{RS2018, RS2018_2, R2022} and Speck in \cite{S2018}, which deals with near isotropic Einstein-scalar field solutions. 
The first breakthrough for the full subcritical regime is the work of Fournodavlos, Rodnianski and Speck in \cite{FRS2023}, already mentioned above.
Fajman and Urban extended this to negative spatial curvature in \cites{FU2025, FU2026}.
See also \cites{BO2024, AHS2025, BIGO2026} for further extensions.
In \cite{OGPR2023}, Oude Groeniger, Ringstr\"om and the author identified general conditions on initial data that produce a quiescent Big Bang singularity, without any reference to a background solution (e.g.~Kasner-scalar field spacetime).
Under these conditions, Franco-Grisales and Ringstr\"om improved on \cite{OGPR2023} by obtaining complete data on the Big Bang, nicely giving a general answer to (2).
An interesting further development are the localized results by Beyer and Oliynyk in \cite{BO2024_2} and Beyer, Oliynyk and Zheng in \cite{BOZ} and the recent improvement in the framework of \cite{OGPR2023} by Franco-Grisales in \cite{FG2026_2}.

By the result mentioned above, problem (1) is actually well understood even for vacuum spacetimes.
However, as opposed to the subcritical regime, general vacuum Big Bang spacetimes are not expected to be stably \emph{quiescent}, which makes it remarkably difficult to answer question (2) in the vacuum setting.
The expectation by the celebrated heuristic analysis by Belinski\v{\i}, Khalatnikov and Lifschitz in \cites{BKL1970, BKL1982} and the significant advances in the homogeneous setting in e.g.~\cites{R2000, R2001, B2010, LHWG2011, BD2023}, is that small perturbations of quiescent vacuum spacetimes typically will cause an oscillating behaviour for the metric, where each observer towards the Big Bang would experience their own chaotic gravitational dynamics.
There is also important results supporting this under Gowdy symmetry, see~\cites{R2009, L2024_2}, and polarized $U(1)$ symmetry, see \cites{FRS2023, D2026}. 
To the best of our knowledge, the present paper is the first in which the asymptotics for general linear gravitational perturbation of a vacuum Big Bang spacetime are derived without symmetry or analyticity, indicating on a linear level the instabilities predicted by Belinski\v{\i}, Khalatnikov and Lifschitz.

The methods of this paper rely on the microlocal Fredholm analysis framework for non-elliptic operators put forth by Vasy in \cite{V2013}.
See the outline of the proof in the next section for an explanation in some detail.
Linear wave equations in the \emph{isotropic} case, i.e.~when all Kasner exponents in a Kasner-type spacetime coincide, can be treated directly with the methods of Vasy in \cite{V2013}, even though this has not been observed before.
There is, however, no isotropic Kasner spacetime, so novel analysis in the anisotropic setting is necessary for the Einstein vacuum equations.
One of the new challenges is that the bicharacteristic flow does not have the source/sink structure (see the next section for more details), but instead a more involved saddle point structure (see Section~\ref{subsec: geometry}).
Since the work of Vasy in \cite{V2013}, various type of saddle point estimates have been developed, see e.g.~the work by Hintz and Vasy in \cite{HV2015}*{Sec.~2}, in the b-calculus setting, and the recent work by Hintz, Vasy and the author in \cite{HPV2025}*{Sec.~4}, in the semiclassical b-calculus setting.
The saddle point estimates we need are proven in Appendix~\ref{sec: saddle point estimates}.

To the best of our knowledge, quasinormal modes have not been defined in this way for Kasner-type spacetimes before.
However, QNMs are the fundamental objects in the analysis of black holes spacetimes and a vital part of astrophysics and scattering theory.
The study of black hole stability has made quasinormal modes important also in mathematical general relativity.
In addition to the references already mentioned above, see for example \cites{W1989, SZ1997, BH2008, V2010, V2013, S2015, AMPW2017, HV2018_1, HV2018_2, HHV2019, HHV2019_2, DHRT2021, CTC2021, PV2023, PV2024, PV2025, H2025_2, H2026, PV2026} and references therein.

\subsection{Outline of the proof}

The proof of our main results, Theorem~\ref{thm: main linearized Einstein, general dim} and Theorem~\ref{thm: main linearized Einstein}, consists of two main steps:
\begin{enumerate}
	\item[Step 1] Prove the asymptotic expansion of solutions to linear wave equations in QNMs.
	This is Theorem~\ref{thm: main as exp} and is proven Section~\ref{sec: asymptotic expansion}.
	\item[Step 2] Compute all unstable generalized QNMs for the linearized Einstein vacuum equation in a non-flat Kasner spacetime.
	This is Theorem~\ref{thm: self-similar solutions} and is proven in Section~\ref{sec: QNM lin Ein}, based on the theory obtained in Section~\ref{sec: QNM geometric waves}.
\end{enumerate}

\textbf{Strategy for Step 1:}
The main novelty in Step 1 is the introduction of QNMs near Big Bang and the microlocal analysis of the corresponding spectral problem in an anisotropic spacetime.
The outline for Step 1 is the following:
\begin{enumerate}
	\item We analyze the light propagation (the bicharacteristic flow) in a neighbourhood of $J^+(\gamma)$.
	This is needed for propagation of singularities techniques in microlocal analysis. 
	See Theorem~\ref{thm: saddle point flow} for the saddle point structure of the bicharacteristic flow near the equilibrium points and Theorem~\ref{thm: bicharacteristic flow} for the global structure of the bicharactieristic flow.
	\item Fourier transforming the wave equation along the integral curves of $\T$, which corresponds to formally replacing $\T$ by the frequency $-i \s$ in the linear wave operators, gives a spectral family of operators on the spacelike hypersurface given by e.g.~$t = 1$.
	We show in Theorem~\ref{thm: Fredholm property} that this gives an analytic family of Fredholm operators.
	This leads to the meromorphic extension of the resolvent, see Corollary~\ref{cor: meromorphic extension}, which in turn is used to construct the QNMs in the subsequent Corollary~\ref{cor: QNM freq}.
	This step follows the recipe from Vasy's analysis on Kerr-de Sitter spacetimes in \cite{V2013}, with the addition of microlocal saddle point estimates.
	\item In order to prove Theorem~\ref{thm: main QNM} and Theorem~\ref{thm: main as exp}, we need high-energy estimates in the sense of \cite{V2013} for the resolvent, i.e.~uniform estimates in regions where $\abs{\Re(\s)} \to \infty$, using semiclassical saddle point estimates.
	This is achieved in Theorem~\ref{thm: high-energy estimate}, using the full knowledge about the bicharacteristic flow.
\end{enumerate}
Let us look closer how we deal with the geometry in Kasner-type spacetimes.
It turns out that there is another set of coordinates which are much better adapted to the homothetic Killing vector field $\T$.
We introduce the new coordinates
\begin{equation} \label{eq: new coordinates}
	s
		:= t, \qquad 
	y_j
		:= \frac{x_j}{t^{1-p_j}},
\end{equation}
for $j = 1, \hdots, n$. 
The main point of these coordinates is that $\T = s\d_s$.
A Kasner-type metric takes the form
\[
	g
		= - \md t^2 + \sum_{j = 1}^n t^{2p_j}\md x_j^2
		= - \md s^2 + \sum_{j = 1}^n \left( \q_j y_j \md s + s \md y_j \right)^2,
\]
on the same manifold $M$, with new coordinates $(s, y_1, \hdots, y_n)$, and where $\q_j := 1 - p_j > 0$ for $j = 1, \hdots, n$.
See Figure~\ref{fig: Straight} for an illustration of how the $J^+(\gamma)$ has now been `straightened out' in the new coordinates.
\begin{figure*} \label{fig: Straight}
  \begin{center}
    \includegraphics[scale = 0.8]{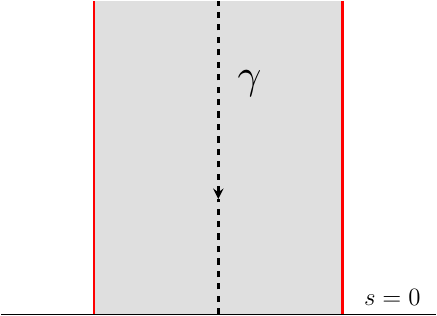}
  \end{center}
  \caption{
  	The gray region denotes the causal future $J^+(\gamma)$ of the observer $\gamma$ with respect to the new coordinates $(s, y_1, \hdots, y_n)$. The red curves illustrating the \emph{particle horizon} of the observer have been straightened out.
  	}
\end{figure*}
We will work with the rescaled metric,
\begin{equation} \label{eq: g rescaled}
	\g
		:= s^{-2} g
		= \left( 1- \sum_{j = 1}^n \q_j^2 y_j^2 \right)^2 \left( \frac{\md s}s \right)^2 + 2 \sum_{j = 1}^n \q_j y_j \md y_j \otimes_s \frac{\md s}s + \sum_{j = 1}^n \md y_j^2
\end{equation}
This is a b-metric, generalizing the set-up of \cite{V2013}, see specifically \cite{HV2015}*{p.~1811}.
Note that $s \d_s$ is a Killing vector field with respect to $\g$ and it is timelike if $\sum_{j = 1}^n \q_j^2 y_j^2 < 1$, lightlike if $\sum_{j = 1}^n \q_j^2 y_j^2 = 1$, and spacelike otherwise.

\textbf{Problem 1:}
\emph{There is an ergoregion-like region near the Big Bang singularity in anisotropic Kasner-type spacetimes.} 

Recall that in the Kerr(-anti de Sitter/de Sitter) spacetime, there is a small region beyond the event horizon where the stationary Killing vector field is \emph{spacelike}.
This makes the analysis of linear wave equations significantly more involved than for Schwarzschild(-anti de Sitter/de Sitter) spacetime, where the stationary Killing vector field is lightlike precisely at the horizon, timelike outside and spacelike inside.
For Kasner-type spacetimes, the situation is similar.
For \emph{isotropic} Kasner-type spacetime,  rescaled as in \eqref{eq: g rescaled}, the Killing vector field $s\d_s$ is timelike everywhere in $J^+(\gamma)$ and lightlike precisely on the boundary of $J^+(\gamma)$.
However, if not all $p_1, \hdots, p_n$ coincide, then there is a region inside $J^+(\gamma)$ where $s \d_s$ is \emph{spacelike}.
Like for rotating black holes, this significantly complicates the analysis, and requires a method that can handle an ergoregion-like region.
Microlocal analysis is particularly suitable in such a situation, since it only requires asymptotic knowledge about the light propagation (the bicharacteristic flow). 

\textbf{Problem 2:}
\emph{The bicharacteristic flow does not have a source/sink structure.}

It is only in isotropic Kasner-type spacetimes that the bicharacteristic flow has a source/sink structure, like in the Kerr-de Sitter spacetime (which instead is complicated by trapping). 
This implies that the source/sink propagation estimates that are used for Kerr-de Sitter spacetimes, and similar situations, cannot be applied here.
Instead, we have to carefully analyze the saddle point structure and apply the saddle point estimates proven in Appendix~\ref{sec: saddle point estimates}.
A difficulty related to this is that there seems to be fewer monotone quantities for the bicharacteristic flow on Kasner-type spacetimes than on Kerr-de Sitter spacetimes, c.f.~\cites{V2013, PV2025}.
We instead need to find appropriate asymptotically monotone quantities for the bicharacteristic flow, see Section~\ref{subsec: geometry} in general and Lemma~\ref{le: A min max} in particular.

\textbf{Strategy for Step 2:}
We define \emph{geometric wave operators} in Section~\ref{sec: QNM geometric waves} to be the linear wave operators $\P$ that share all symmetries of the spacetime, i.e.
\begin{equation} \label{eq: commutation rules}
	[\T, t^2 \P] = 0, \qquad [\d_{x_j}, \P] = 0,
\end{equation}
for $j = 1, \hdots, n$.
These assumptions are natural, as they will be satisfied for any linear wave equation associated with the geometry of a Kasner-type spacetime.
For this paper, the most important such operator is the Lichnerowicz-d'Alembert operator $\Box_L$, which is the main object of study in Section~\ref{sec: QNM lin Ein}.
In Section~\ref{sec: QNM geometric waves}, we develop an algorithm on how to compute the generalized QNMs for geometric wave operators, that we outline here:
Let $\P$ be a system of linear wave equations, which satisfies \eqref{eq: commutation rules}.
Assume now that $u \in C^\infty(M)$ is a quasinormal mode for $\P$, i.e.~that $\P u = 0$ and $\T u = - i \s u$, for some $\s \in \C$.
Then the key is that $\P \d_{x_j} u = \d_{x_j} \P u + [\P, \d_{x_j}] u = 0$ and $\T \d_{x_j} u = [\T, \d_{x_j}] u + \d_{x_j} \T u = (1-p_j) \d_{x_j} u - i \s u = -i \left( \s + (1-p_j)i \right) u$.
Consequently, $\d_{x_j} u$ is a new QNM for $\P$ with the frequency $\s + (1-p_j)i$.
Iterating this several times, we may produce a QNM of \emph{arbitrary} growth as $t \to 0$.
But a (very rough) energy estimate forbids arbitrary growth, and we conclude that $u$ must vanish after taking a finite amount of $x$-derivatives.
Consequently, any generalized QNM must be a polynomial in $x$.
Moreover, the condition $[\T, t^2\P] = 0$ and the defining property of generalized QNMs, $\left(\T + i\s \right)^k u = 0$, characterizes also the possible time-dependence of the coefficients, see Corollary~\ref{cor: structure of QNMs}.
In this context, we therefore call $\d_{x_j}$, mapping (generalized) QNMs to (generalized) QNMs, the \emph{annihilation} operator. 
Choosing a consistent way of finding a primitive function in $x_j$, we also obtain the \emph{creation} operator, c.f.~Theorem~\ref{thm: creation and annihilation}.
There are two immediate conclusions from this observation:
\begin{enumerate}
	\item The fastest growing QNMs to a linear geometric wave equation will be independent of $x$. 
	Indeed, the fastest growing QNMs necessarily lie in the kernel of annihilation operator $\d_{x_j}$, for otherwise there would be a faster growing QNM, see Corollary~\ref{cor: structure of QNMs}.
	This can be seen as the manifestation of the \emph{asymptotically velocity term dominated (AVTD)} behaviour of solutions in this context.
	\item Every generalized quasinormal mode is a linear combination of terms produced by the creation operator of the $x$-independent solutions.
	Using this, we characterize all possible QNM frequencies in Corollary~\ref{cor: QNM frequencies}.
\end{enumerate}
The scalar linear wave equation in Kasner spacetimes is given by 
\[
	t^2 \Box 
		= \left( t\d_t \right)^2 - \sum_{j = 1}^n t^{2(1-p_j)} \d_{x_j}^2.
\]
If we restrict to $x$-independent solutions, we end up with the ODE $(t\d_t)^2 u(t) = 0$, which has the general solution $u(t) = c_1 + c_2 \log(t)$.
This solutions dictates the decay for general solutions, as all other quasinormal modes will decay faster, see Corollary~\ref{cor: structure of QNMs} and Example~\ref{ex: scalar wave}.
Analyzing the linearized Einstein equation is significantly more involved, but the basic idea is the same.
We compute the $x$-independent QNMs in Theorem~\ref{thm: AVTD system}.
These need to be carefully analyzed to distinguish what solutions are guage solutions (or gauge related), even after applying the creation operator.
We refer to Section~\ref{sec: QNM lin Ein} for the details.

\subsubsection*{Acknowledgements}

The author would like to thank Andres Franco-Grisales, Hans Oude Groeniger, Alberto Richtsfeld, Hans Ringstr\"om and Andr\'as Vasy for helpful discussions, and gratefully acknowledges support from the Swedish Research Council under grant number 2021-04269 and from Knut och Alice Wallenbergs Stiftelse under grant number KAW 2021.0239.

\section{Quasinormal modes at the Big Bang} \label{sec: asymptotic expansion}

The goal of this subsection is to prove Theorem~\ref{thm: main QNM} and Theorem~\ref{thm: main as exp}.
Let $(M, g)$ be a Kasner-type spacetime, see Definition~\ref{def: Kasner-type spacetime}, and let $(s, y_1, \hdots y_n)$ be the coordinates introduced in \eqref{eq: new coordinates}.
We will often work with the conformally rescaled metric $\g$, introduced in \eqref{eq: g rescaled}.
Though we use Vasy's framework introduced in \cite{V2013}, we will mostly refer to the detailed exposition by Hintz in \cite{H2025} for the background on microlocal analysis that we will need.
To simplify the comparison with \cite{H2025}*{Sec.~11.1}, we introduce the coordinate $\tau := - \log(s)$, and obtain
\begin{align*}
	\g
		&= - \md \tau^2 + \sum_{j = 1}^n \left( - \q_j y_j \md \tau + \md y_j \right)^2
\end{align*}
on the manifold $\R^{n+1}$ with coordinates $\tau, y_1, \hdots, y_n$.
The Big Bang is now located at $\tau = \infty$.
We recall that $q_j := 1 - p_j$ for $j = 1, \hdots, n$.
The metric $\g$ thus generalizes \cite{H2025}*{Eq.~(11.1)} in which $\q_j = 1$, or equivalently $p_j = 0$, for all $j = 1, \hdots, n$.
Moreover, this generalizes the set-up in \cite{HV2015}*{p.~1811}, which would require all $\q_j$ to coincide - that is the \emph{isotropic} case. 
The dual metric is given by
\begin{equation} \label{eq: dual metric normalized}
	\g^{-1}
		= - \left( \d_\tau + \sum_{j = 1}^n \q_j y_j \d_{y_j} \right)^2 + \sum_{j = 1}^n \d_{y_j}^2.
\end{equation}

\subsection{Geometry of Kasner-type spacetimes}
\label{subsec: geometry}

The goal of this subsection is to compute the asymptotic behaviour of all lightlike geodesics of $\g$, which are just reparametrizations of lightlike geodesics of $g$, from the point of view of one observer, using the expression for the dual metric $\g^{-1}$ in \eqref{eq: dual metric normalized}.

\subsubsection{The Hamiltonian vector field}

Recall from e.g.~\cite{H2025}*{Prop.~9.9} that geodesics are given by the projections of the integral curves of the Hamiltonian vector field $H_G$, where $G$ is the Hamiltonian defined as $G(\zeta) := \g^{-1}(\zeta, \zeta)$, for any $\zeta \in T^*M$.
More specifically, since we are interested in the null geodesics, we will restrict the $H_G$ to the set of non-zero lightlike co-vectors
\[
	\S^\circ
		:= \S^\circ_+ \sqcup \S^\circ_-, \qquad \S^\circ_\pm := \left\{ \zeta \in T^*\R^{n+1} \mid G(\zeta) = 0, \ \pm \g^{-1}(\zeta, \md \tau) > 0 \right\}.
\]
Note here that $\md \tau$ is timelike, ensuring that $\S_\pm^\circ$ are connected.
Following the sign convention in \cite{H2025}*{Sec.~11.1}, let us define $\s$ and $\xi_j$ by $\zeta = - \s \md \tau + \sum_{j = 1}^n \xi_j \md y_j$, so that
\begin{equation}\label{eq: G definition}
	G(\zeta)
		= - \left( \s - \sum_{j = 1}^n \q_j y_j \xi_j \right)^2 + \sum_{j = 1}^n \xi_j^2.
\end{equation}
The Hamiltonian vector field is given by
\begin{align}
	\frac12 H_G
		= & \ \frac12 \left( - \left( \d_\s G \right) \d_\tau + \left(\d_\xi G \right) \cdot \d_y + \left( \d_\tau G \right) \d_\s - \left( \d_y G \right) \cdot \d_\xi \right) \nonumber \\
		= & \ \left( \s - \sum_{j = 1}^n \q_j y_j \xi_j \right) \d_\tau + \sum_{k = 1}^n \left( \left( \s - \sum_{j = 1}^n \q_j y_j \xi_j \right)\q_k y_k + \xi_k \right) \d_{y_k} \nonumber \\*
		& \ - \left( \s - \sum_{j = 1}^n \q_j y_j \xi_j \right) \sum_{k = 1}^n \q_k \xi_k \d_{\xi_k}. \label{eq: Hamiltonian vector field}
\end{align}
By definition, $0 < \pm \g^{-1}(\zeta, \md \tau) = \pm \left( \s - \sum_{j = 1}^n \q_j y_j \xi_j \right)$ in $\S_\pm^\circ$, which implies using $G(\zeta) = 0$ that
\begin{equation} \label{eq: lightlike comp}
	\s - \sum_{j = 1}^n \q_j y_j \xi_j 
		= \pm \abs{\xi}
\end{equation}
in $\S_\pm^\circ$.
Since there is no $\d_\s$-component in the Hamiltonian vector field (since $\d_\tau G = 0$), it follows that $\s$ is constant along the Hamiltonian flow.
Moreover, the $\tau$-values of each integral curve can be recovered by integrating $\s - \sum_{j = 1}^n \q_j y_j \xi_j$. 
It therefore suffices to study the flow of the vector field 
\begin{equation} \label{eq: Ham sigma}
	\frac12 H_\s
		:= \sum_{k = 1}^n \left( \left( \s - \sum_{j = 1}^n \q_j y_j \xi_j \right)\q_k y_k + \xi_k \right) \d_{y_k} - \left( \s - \sum_{j = 1}^n \q_j y_j \xi_j \right) \sum_{k = 1}^n \q_k \xi_k \d_{\xi_k}
\end{equation}
on the set
\[
	\S^\circ_\s
		:= \S^\circ_{\s,+} \sqcup \S^\circ_{\s, -}, \qquad \S^\circ_{\s, \pm} := \left\{ (y, \xi) \in T^*\R^n \mid (0, y, \s, \xi) \in \S^\circ_\pm \right\},
\]
where we have identified $\{0\} \times \R^n \cong \R^n$.
Note that $\Sigma_\s^\circ$ is homogeneous in the fibers of $T^* \R^{n+1}$ if and only if $\s = 0$.
We refer to \cite{H2025}*{p.~332--333} for a geometric interpretation of these sets when $\q_1 = \hdots = \q_n = 1$.

In order to formulate the next lemma, we group all $\q_1, \hdots, \q_n$ that coincide.
Let
\[
	0 < \check \q_1 < \hdots < \check \q_m
\]
be such that each $\q_j$ is equal to precisely one $\check \q_l$.
Consequently, $m \leq n$.
In the maximally anisotropic case, where all $\q_j$ are distinct, then $m = n$.
In the isotropic case, where all $\q_j$ coincide, then $m = 1$.
For each $l = 1, \hdots, m$, we define
\begin{align*}
	\L_{l, \pm}
		:= \left\{ \frac{\xi_j}{\abs \xi} = \mp \q_j y_j \mid j: \q_j = \check \q_l \right\} \cap \{ y_j = \xi_j = 0 \mid j: \q_j \neq \check \q_l\} \subset \S_{0, \pm}^\circ.
\end{align*}

\begin{lemma}[Radial points for $H_0$.] \label{le: radial points}
The set of points in $\S_{0, \pm}^\circ$ at which the vector field $H_0$ is radial is given by $\L_\pm := \cup_{l = 1}^m \L_{l, \pm}$, and
\[
	\frac1{2\abs \xi}H_0|_{\L_{l, \pm}}
		= \mp \check \q_l \sum_{k: \q_k = \check \q_l}^n \xi_k \d_{\xi_k}
\]
for $l = 1, \hdots, m$.
\end{lemma}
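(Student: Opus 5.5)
With $\s=0$, write $S:=-\sum_j \q_j y_j\xi_j$, so that $\S^\circ_{0,\pm}=\{S=\pm\abs\xi,\ \xi\neq0\}$ by \eqref{eq: lightlike comp}. From \eqref{eq: Ham sigma},
\[
	\tfrac12 H_0=\sum_k\left(S\q_k y_k+\xi_k\right)\d_{y_k}-S\sum_k\q_k\xi_k\d_{\xi_k}.
\]
In $T^*\R^n$, radial means that $H_0$ is a multiple of the fiber radial vector field $\xi\cdot\d_\xi$. The plan is to write down exactly this condition and solve it on $\S^\circ_{0,\pm}$.

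\textbf{Step 1: the $\d_y$-components.} These must vanish, so $\xi_k=-S\q_k y_k=\mp\abs\xi\,\q_k y_k$ for every $k$. This is already the first defining relation of $\L_{l,\pm}$, but without any restriction on which $y_k$ are nonzero.

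\textbf{Step 2: the $\d_\xi$-components.} The vector $(-S\q_k\xi_k)_k$ must be parallel to $(\xi_k)_k$. Since $S=\pm\abs\xi\neq0$, this means $\q_k\xi_k=c\,\xi_k$ for all $k$ and some $c$. Hence every $k$ with $\xi_k\neq0$ has $\q_k=c$. As $\xi\neq0$, the constant $c$ equals some $\check\q_l$, and $\xi_j=0$ whenever $\q_j\neq\check\q_l$. Step 1 then gives $y_j=0$ for those $j$, because $\abs\xi\q_j\neq0$. This places the point in $\L_{l,\pm}$, and on that set the field reduces to $\tfrac12H_0=\mp\abs\xi\check\q_l\sum_{k:\q_k=\check\q_l}\xi_k\d_{\xi_k}$, which is the claimed formula after dividing by $\abs\xi$.

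\textbf{Step 3: the converse.} I need that $\L_{l,\pm}$ actually lies in $\S^\circ_{0,\pm}$, i.e.\ that the relations there are consistent with $S=\pm\abs\xi$. Using the relations,
\[
	S=-\sum_{j:\q_j=\check\q_l}\q_jy_j\xi_j=\pm\sum_j\xi_j^2/\abs\xi=\pm\abs\xi .
\]
With this, Step 1 gives vanishing $y$-components and Step 2 gives radiality. This self-consistency is the step I would check most carefully: it also shows that the constraint $\sum_{j:\q_j=\check\q_l}\check\q_l^2 y_j^2=1$ is automatically encoded in $\L_{l,\pm}$.

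I expect no real obstacle. The only subtlety is the grouping by equal $\q_j$, which is handled in Step 2.
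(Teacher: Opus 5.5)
Your proof is correct and follows essentially the same argument as the paper. Radiality forces the $\d_y$-components of $H_0$ to vanish, giving $\xi_k=\mp\abs\xi\,\q_k y_k$. It also forces $\q_k\xi_k=c\,\xi_k$, so $c=\check\q_l$ and hence $\xi_j=y_j=0$ whenever $\q_j\neq\check\q_l$. Your explicit check that the defining relations give $S=\pm\abs\xi$, i.e.\ $\L_{l,\pm}\subset\S^\circ_{0,\pm}$, fills in the converse that the paper simply calls clear.
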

\begin{remark}
Note that $\sum_{j = 1}^n \q_j^2 y_j^2|_{\L_\pm} = 1$, which implies that $\d_\tau$ is lightlike at $\L_\pm$.
In the isotropic case, when all $\q_j =: \q$ coincide, then $\L_+ \cup \L_-$ is the \emph{conormal bundle} of the particle horizon (or cosmological horizon in the case of the de Sitter spacetime, when $\q = 1$) given by $\sum_{j = 1}^n y_j^2 = \q^{-2}$.
\end{remark}
\begin{proof}
If $H_0$ is radial at a point $(y, \xi)$, then there is a $c \in \R$, such that $\sum_{k = 1}^n \q_k \xi_k \d_{\xi_k} = c \sum_{k = 1}^n \xi_k \d_{\xi_k}$, which is equivalent to $\sum_{k = 1}^n \left( \q_k - c \right) \xi_k \d_{\xi_k} = 0$.
It follows that $c = \q_l$ for some $l$ and therefore $\xi_j = 0$ for all $j$ such that $\q_j \neq \q_l$.
Moreover, if the vector field $H_0$ is radial at a point $(y, \xi) \in \S_{0, \pm}^\circ$, then the vanishing of the $\d_{y_k}$-components in \eqref{eq: Ham sigma} and \eqref{eq: lightlike comp} imply that
\[
	\xi_k
		= \q_k y_k \sum_{j = 1}^n \q_j y_j \xi_j
		= \mp \q_k y_k \abs{\xi},
\]
for any $k = 1, \hdots, n$.
Since $\abs \xi \neq 0$ in $\S_0^\circ$, it follows that $y_k = 0$ for all $k$ such that $\q_k \neq \q_l$.
Conversely, it is clear that $H_0$ is radial at all such points, and given by the asserted expression.
\end{proof}

The next step is to extend the set of lightlike vectors, as well as the Hamiltonian vector field, to the \emph{radial compactification} of the cotangent bundle $\oT \R^n$, see for example \cite{H2025}*{Sec.~6.5}, and its boundary $S^*\R^n := \d \oT \R^n$, which we call \emph{fiber infinity}.
We will use the following notation for the closures inside the radial compactification of the cotangent bundle:
\begin{align*}
	\S_0
		:= & \ \overline {\S^\circ_0} \cap \left( \oT \R^n \backslash o \right), && 
	\S_{0, \pm}
		:= \ \overline{\S_{0,\pm}^\circ} \cap \left( \oT \R^n \backslash o \right), \\
	\S_\s
		:= & \ \overline {\S_\s^\circ} \subseteq \oT \R^n, && 
	\S_{\s, \pm}
		:= \ \overline{\S_{\s,\pm}^\circ} \subseteq \oT \R^n, && \s \neq 0.
\end{align*}
One readily checks that $\d \S_{0, \pm} = \S_{0, \pm} \cap S^*\R^n = \S_{\s, \pm} \cap S^*\R^n = \d \S_{\s, \pm}$ for all $\s \in \R$.
We define the \emph{radial sets} by
\[
	\cR_{l, \pm}
		:= \overline{\L_{l, \pm}} \cap S^*\R^n
		\subset \d \S_{\s, \pm},
\]
for any $l = 1, \hdots, m$, and any $\s \in \R$.
Let us now extend the Hamiltonian vector field $H_\s$ smoothly to fiber infinity $S^*\R^n$.
Since we only care about the lightlike geodesics, it suffices to extend $H_\s$, defined in $\S_\s^\circ \subset T^*\R^n$, to $\S_\s \subset \oT \R^n$.
We therefore define
\begin{equation}\label{eq: Ham vf}
	\pm \mathsf H_\s|_{\S_{\s,\pm}^\circ}
		:= \pm \frac1{2\abs\xi} H_\s|_{\S_{\s,\pm}^\circ} 
		= \sum_{k = 1}^n \left( \q_k y_k \pm \frac{\xi_k}{\abs \xi} \right) \d_{y_k} - \sum_{k = 1}^n \q_k \xi_k \d_{\xi_k}
\end{equation}
which is the restriction of a vector field which is invariant under dilations in $\xi$ (even though $\S_\s$ is not) and therefore extends smoothly to all of $\S_\s$.
Since $\d \S_{\s, \pm} = \d \S_{0, \pm} \subset S^*\R^n$ for all $\s \in \R$ and the expression for $\mathsf H_\s|_{\S_{\s,\pm}^\circ}$ is independent of $\s$, we note that 
\[
	\mathsf H_\s|_{\d \S_{\s,\pm}}
		= \mathsf H_0|_{\d \S_{0,\pm}}.
\]
\begin{prop}[Stationary points for the Hamiltonian flow] \label{prop: Stationary points}
Let $\s \in \R$.
The (smoothly extended to fiber infinity) Hamiltonian vector field $\mathsf H_\s$, defined on $\S_{\s,\pm}$, vanishes precisely in the set 
\[
	\cR_\pm = \cup_{l = 1}^m \cR_{l, \pm}.
\]
\end{prop}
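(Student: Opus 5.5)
We split into the interior $\S_{\s,\pm}^\circ$ (finite frequencies) and fiber infinity $\d\S_{\s,\pm}$, and work with the explicit formula \eqref{eq: Ham vf}, which on $\S^\circ_{\s,\pm}$ reads
\[
	\pm \mathsf H_\s = \sum_{k=1}^n \Big( \q_k y_k \pm \frac{\xi_k}{\abs\xi}\Big)\d_{y_k} - \sum_{k=1}^n \q_k \xi_k \d_{\xi_k}.
\]

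\textbf{Interior.} At a point of $\S^\circ_{\s,\pm}$ we have $\xi \neq 0$. This holds for $\s=0$ by definition of $\S_0^\circ$, and for $\s \neq 0$ because \eqref{eq: lightlike comp} with $\xi=0$ would force $\s=0$. Since all $\q_k>0$, the $\d_{\xi}$-component $-\sum_k \q_k\xi_k\d_{\xi_k}$ is then nonzero. Hence $\mathsf H_\s$ has no zeros in the interior, and it suffices to study fiber infinity.

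\textbf{Fiber infinity.} Since $\mathsf H_\s|_{\d\S_{\s,\pm}} = \mathsf H_0|_{\d\S_{0,\pm}}$, we may take $\s=0$. We use homogeneous coordinates near $S^*\R^n$: $\rho = \abs\xi^{-1}$ and $\hat\xi = \xi/\abs\xi \in S^{n-1}$, with $\d\S_{0,\pm}$ given by $\rho=0$ together with the rescaled constraint $\sum_j \q_j y_j\hat\xi_j = \mp 1$ from \eqref{eq: lightlike comp}. The radial vector field $\sum_k\xi_k\d_{\xi_k}$ is tangent to fibers and maps to $-\rho\d_\rho$, so it vanishes at $\rho=0$. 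A general fiber vector field $\sum a_k\d_{\xi_k}$ pushes forward to its tangential part on the sphere plus a multiple of $\rho\d_\rho$. Therefore, on $S^*\R^n$, $\pm\mathsf H_0$ is the vector field
\[
	\sum_k(\q_k y_k \pm \hat\xi_k)\d_{y_k} - \Big(\sum_k \q_k\hat\xi_k\d_{\hat\xi_k} - \Big(\sum_j \q_j\hat\xi_j^2\Big)\sum_k\hat\xi_k\d_{\hat\xi_k}\Big),
\]
where the second term is the projection of $\sum_k\q_k\hat\xi_k\d_{\hat\xi_k}$ onto $T S^{n-1}$. It vanishes exactly when both of the following hold.

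First, $\sum_k \q_k\hat\xi_k\d_{\hat\xi_k}$ is radial, i.e.\ $\q_k\hat\xi_k = c\,\hat\xi_k$ for all $k$. As in the proof of Lemma~\ref{le: radial points}, this forces $c=\check\q_l$ for some $l$ and $\hat\xi_k=0$ whenever $\q_k\ne\check\q_l$.

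Second, $\q_ky_k = \mp\hat\xi_k$ for all $k$. This gives $y_k=0$ whenever $\q_k \ne \check\q_l$, and $\hat\xi_k = \mp \q_k y_k$ for $\q_k = \check\q_l$. These conditions are compatible with the constraint: $\sum_j\q_j y_j\hat\xi_j = \mp\sum_j \hat\xi_j^2 = \mp1$.

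These are exactly the defining conditions of $\L_{l,\pm}$, homogenized, so the zero set at fiber infinity is $\overline{\L_{l,\pm}}\cap S^*\R^n = \cR_{l,\pm}$. Conversely, every point of $\cR_{l,\pm}$ satisfies both conditions, so $\mathsf H_0$ vanishes there. This is consistent with Lemma~\ref{le: radial points}, where $\mathsf H_0$ is a multiple of the radial field on $\L_{l,\pm}$, and that multiple vanishes at $\rho=0$.

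\textbf{Main obstacle.} The main point requiring care is the change to homogeneous coordinates. One must verify that the pushforward of the fiber components to $S^*\R^n$ is precisely the tangential projection written above, with the radial part becoming a multiple of $\rho\d_\rho$ that vanishes at $\rho = 0$. One must also confirm that $\overline{\L_{l,\pm}}\cap S^*\R^n$ coincides with the homogenized conditions, which holds because $\L_{l,\pm}$ is conic in $\xi$.
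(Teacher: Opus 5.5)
Your proof is correct and follows the paper's argument. There are no zeros in the interior because $\xi\neq 0$ on $\S^\circ_{\s,\pm}$, and at fiber infinity the zeros are exactly the radial points. The one difference is that you redo the computation of Lemma~\ref{le: radial points} explicitly in the coordinates $(\rho,\hat\xi)$, including the check that the radial part becomes a multiple of $\rho\,\partial_\rho$. The paper instead simply cites that lemma after noting that zeros at fiber infinity correspond to radial points.
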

\begin{proof}
In the interior, $\S^\circ_{\s, +}$, the Hamiltonian vector field does not vanish, since the zero section is not in $\S^\circ_{\s, +}$.
At fiber infinity, $\d S_{\s, +}$, the Hamiltonian vector field vanishes precisely at the radial points.
The statement therefore follows by Lemma~\ref{le: radial points}.
\end{proof}

\subsubsection{The saddle points}

We now turn to analyzing the structure of the flow near the stationary points in $\cR_\pm$. 
For this, let us first note that
\begin{equation} \label{eq: negative of Hamiltonian VF}
	\mathsf H_\s|_{(y, \xi)}
		= - \mathsf H_{-\s}|_{(y, -\xi)}, \qquad 
	(y, \xi) \in \S_{\s, \pm} \Leftrightarrow (y, - \xi) \in \S_{- \s, \mp}.
\end{equation}
For simplicity of presentation, we therefore restrict to $(y, \xi) \in \S_{\s, +}$, in particular to $\cR_+$, when describing the saddle point structure.
The saddle point structure at $\cR_-$ is obtained by applying the above identification. 
We will use the notation
\[
	\rho 
		:= \frac1{\abs \xi}, \qquad 
	\hat \xi_j 
		:= \frac{\xi_j}{\abs \xi},
\]
for $j = 1, \hdots, n$. 
Note that these functions extend smoothly up to to fiber infinity $S^*\R^n$.
For a fixed $l \in \{1, \hdots, m\}$, define the functions
\begin{equation} \label{eq: Vl Wl}
\begin{split}
	V_l
		&:= \sum_{j : \q_j \leq \check \q_	{l-1}} \hat \xi^2_j + \sum_{j = 1}^n \left( \hat \xi_j + \left( 2\q_j - \check \q_l \right) y_j \right)^2 + \sum_{j : 2 \q_j = \check \q_l} \q_j^2 y_j^2, \\
	W_l
		&:= \sum_{j : \q_j \geq \check \q_{l+1}} \hat \xi_j^2
\end{split}
\end{equation}
defined on $S^*\R^n$.
Note that there might well be no $j$ such that $2 \q_j = \check \q_l$, in which case the last sum in $V_l$ is void; it is just there to cover for the potential degeneracy in the second last term in $V_l$.
\begin{remark} \label{rmk: quadratic def}
Both $V_l$ and $W_l$ vanish quadratically at $\cR_{l,+}$, but $\Hess \left(V_l + W_l\right)$ is positive semi-definite on $\d \S_{\s, +}|_{\cR_{l,+}}$, with kernel given by $T\cR_{l,+} \subset T\d \S_{\s, +}$.
In other words, $V_l + W_l$ is a \emph{quadratic defining function} for $\cR_{l,+}$ in $\d \S_{\s, +}$ (see \cite{H2025}*{Def.~10.3}).
\end{remark}

\begin{thm}[The saddle point structure] \label{thm: saddle point flow}
Let $l \in \{1, \hdots, m\}$ be fixed. 
Then there is a $c > 0$, such that
\[
	\mathsf H_\s V_l
		\geq c V_l + \Phi_l, \qquad
	\mathsf H_\s W_l
		\leq - c W_l + \Psi_l,
\]
in $\d \S_{\s,+}$, where $\Phi_l$ and $\Psi_l$ are smooth functions on $\d \S_{\s,+}$, which are cubically vanishing at $\cR_{l,+}$.
\end{thm}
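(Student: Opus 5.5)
The plan is to compute $\mathsf H_\s$ directly on the coordinate functions $y_j$ and $\hat\xi_j$ at fiber infinity, and then read off the sign of $\mathsf H_\s V_l$ and $\mathsf H_\s W_l$ term by term. By \eqref{eq: Ham vf} and the remark following it, on $\d\S_{\s,+}$ the vector field is independent of $\s$ and equals
\[
	\mathsf H = \sum_k \left( \q_k y_k + \hat\xi_k \right)\d_{y_k} - \sum_k \q_k\xi_k\d_{\xi_k}.
\]
Since $\mathsf H |\xi| = -A|\xi|$ with $A := \sum_k \q_k\hat\xi_k^2$, we get
\[
	\mathsf H y_j = \q_j y_j + \hat\xi_j, \qquad \mathsf H\hat\xi_j = (A - \q_j)\hat\xi_j .
\]
The key structural observation is that
\[
	A - \check\q_l = \sum_{k:\q_k\neq\check\q_l} (\q_k - \check\q_l)\hat\xi_k^2 ,
\]
using $|\hat\xi| = 1$. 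Since $\hat\xi_k = 0$ on $\cR_{l,+}$ for $\q_k \neq \check\q_l$, the quantity $A - \check\q_l$ vanishes \emph{quadratically} at $\cR_{l,+}$.

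\textbf{The function $W_l$.} For $\q_j \geq \check\q_{l+1}$ we have
\[
	\mathsf H\hat\xi_j^2 = 2(\check\q_l - \q_j)\hat\xi_j^2 + 2(A - \check\q_l)\hat\xi_j^2
	\leq -2(\check\q_{l+1} - \check\q_l)\hat\xi_j^2 + \text{(quartic)} .
\]
This gives the bound for $W_l$ with $c \leq 2(\check\q_{l+1} - \check\q_l)$.

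\textbf{The function $V_l$.} The first sum is handled in the same way: for $\q_j \leq \check\q_{l-1}$,
\[
	\mathsf H\hat\xi_j^2 \geq 2(\check\q_l - \check\q_{l-1})\hat\xi_j^2 - \text{(quartic)} .
\]
For the middle sum, set $z_j := \hat\xi_j + (2\q_j - \check\q_l)y_j$. A short computation gives
\[
	\mathsf H z_j = \q_j z_j + (A - \check\q_l)\hat\xi_j ,
\]
so that $\mathsf H z_j^2 = 2\q_j z_j^2 + 2(A - \check\q_l)\hat\xi_j z_j$. The error term is cubic, since $A - \check\q_l$ is quadratic and $z_j$ vanishes on $\cR_{l,+}$. (One checks that $z_j = 0$ there: for $j$ in group $l$ this follows from $\hat\xi_j = -\check\q_l y_j$, and otherwise from $y_j = \hat\xi_j = 0$.)

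The only genuinely non-cubic cross term comes from the degenerate indices with $2\q_j = \check\q_l$. For these, $z_j = \hat\xi_j$ and
\[
	\mathsf H(\q_j^2y_j^2) = 2\q_j^3y_j^2 + 2\q_j^2y_jz_j .
\]
Together with $2\q_j z_j^2$, this forms the quadratic form with matrix $\begin{pmatrix} 2\q_j & \q_j^2 \\ \q_j^2 & 2\q_j^3 \end{pmatrix}$ in $(z_j, y_j)$. Its determinant is $3\q_j^4 > 0$, so the form is bounded below by a positive multiple of $z_j^2 + \q_j^2y_j^2$.

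Summing these contributions gives $\mathsf H V_l \geq cV_l + \Phi_l$ with $\Phi_l$ cubically vanishing. Here $c$ is the minimum of $2(\check\q_l - \check\q_{l-1})$, $2\min_j\q_j$ and the smallest eigenvalue ratio of the $2\times2$ forms. The bound holds on all of $\d\S_{\s,+}$, not only near $\cR_{l,+}$, because the error terms are globally smooth. I would also note in passing that Remark~\ref{rmk: quadratic def} is consistent with these choices but is not actually needed for the inequalities themselves.

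The main obstacle is noticing that $A - \check\q_l$ vanishes to second order, and not merely to first order. Without this, the term $(A - \check\q_l)\hat\xi_j z_j$ for $j$ in group $l$ would be only quadratic and could destroy the lower bound. Apart from that, the remaining work is the routine identification of the cross term at the resonant indices $2\q_j = \check\q_l$, which is exactly why the extra summand was built into $V_l$.
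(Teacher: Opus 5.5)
Your proposal is correct and follows essentially the paper's proof. The paper uses the same identities from Lemma~\ref{le: diagonal linearization}, $\mathsf H_\s \hat\xi_k = (\check q_l - q_k + A_l)\hat\xi_k$ and $\mathsf H_\s z_k = q_k z_k + A_l \hat\xi_k$ with $A_l = A - \check q_l$. It relies on the same fact that $A_l$ vanishes quadratically at $\cR_{l,+}$, so the remainders $2A_l\hat\xi_j^2$ and $2A_l\hat\xi_j z_j$ are cubic.

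The only cosmetic difference is at the resonant indices $2q_j = \check q_l$. You absorb the cross term $2q_j^2 y_j \hat\xi_j$ using the middle-sum term $z_j^2 = \hat\xi_j^2$ and a $2\times 2$ positivity check. The paper instead pairs $q_j^2 y_j^2$ with the first-sum term $\hat\xi_j^2$ via Lemma~\ref{le: degeneration linearization}, which works equally well since both $\hat\xi_j^2$ terms grow at rate $2q_j$.
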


The proof will be based on the following two lemmas:

\begin{lemma} \label{le: diagonal linearization}
For $k, l = 1, \hdots, n$, we have
\begin{align}
	\mathsf H_\s \rho
		&= \left(\check \q_l + A_l \right) \rho, \label{eq: Hp rho} \\
	\mathsf H_\s \hat \xi_k
		&= \left( \check \q_l - \q_k + A_l \right) \hat \xi_k, \label{eq: bich diagonal xi} \\
	\mathsf H_\s \left( \hat \xi_k + \left( 2\q_k - \check \q_l \right) y_k \right)
		&= \q_k \left( \hat \xi_k + \left( 2 \q_k - \check \q_l \right) y_k \right) + A_l \hat \xi_k, \label{eq: bich diagonal xi y}
\end{align}
in $\S_{\s, +}$, where 
\begin{equation}\label{eq: def A l}
	A_l
		:= \sum_{j : \q_j \neq \check \q_l} (\q_j - \check \q_l) \hat \xi_j^2.
\end{equation}
\end{lemma}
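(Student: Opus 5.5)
The plan is a direct computation from the explicit formula \eqref{eq: Ham vf} for the rescaled Hamiltonian vector field. On $\S^\circ_{\s,+}$ we have
\[
	\mathsf H_\s
		= \sum_{k = 1}^n \left( \q_k y_k + \hat \xi_k \right) \d_{y_k} - \sum_{k = 1}^n \q_k \xi_k \d_{\xi_k}.
\]
This expression is independent of $\s$ and invariant under dilations in $\xi$. Since $\rho$, $\hat\xi_k$ and $y_k$ extend smoothly to fiber infinity, it suffices to verify the three identities on $\S^\circ_{\s,+}$. They then extend to all of $\S_{\s,+}$ by continuity, because both sides are smooth on the radial compactification. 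The identities should in fact hold on the whole of $T^*\R^n\setminus o$ wherever this formula for $\mathsf H_\s$ is used; the lightlike condition \eqref{eq: lightlike comp} has already been absorbed into the form of $\mathsf H_\s$.

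First I will compute $\mathsf H_\s \abs\xi$. Only the $\d_\xi$-part contributes, and it gives
\[
	\mathsf H_\s \abs\xi = -\sum_k \q_k \xi_k^2/\abs\xi = -\Big(\sum_k \q_k \hat\xi_k^2\Big)\abs\xi .
\]
Hence $\mathsf H_\s \rho = \big(\sum_k \q_k\hat\xi_k^2\big)\rho$. The key algebraic step is to rewrite this coefficient using $\sum_k\hat\xi_k^2=1$:
\[
	\sum_k \q_k \hat\xi_k^2 = \check\q_l + \sum_k(\q_k-\check\q_l)\hat\xi_k^2 = \check\q_l + A_l .
\]
The terms with $\q_k=\check\q_l$ drop out, which is exactly the definition \eqref{eq: def A l}, and this proves \eqref{eq: Hp rho}. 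Next, by the Leibniz rule on $\hat\xi_k=\xi_k\rho$, we get $\mathsf H_\s\hat\xi_k = (\mathsf H_\s\xi_k)\rho + \xi_k\mathsf H_\s\rho$. This equals $-\q_k\hat\xi_k + (\check\q_l + A_l)\hat\xi_k$, which is \eqref{eq: bich diagonal xi}.

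For \eqref{eq: bich diagonal xi y}, I use $\mathsf H_\s y_k = \q_k y_k + \hat\xi_k$ together with the previous identity:
\[
	(\check\q_l-\q_k+A_l)\hat\xi_k + (2\q_k-\check\q_l)(\q_k y_k+\hat\xi_k)
		= \q_k\hat\xi_k + \q_k(2\q_k-\check\q_l)y_k + A_l\hat\xi_k .
\]
This is $\q_k\big(\hat\xi_k+(2\q_k-\check\q_l)y_k\big)+A_l\hat\xi_k$, as claimed.

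There is no real obstacle here. The only points requiring care are the sign conventions in \eqref{eq: Ham vf}, specifically the choice of the $+$ branch with $\s-\sum_j\q_jy_j\xi_j=\abs\xi$, and the observation that $\mathsf H_\s$, being homogeneous of degree zero in $\xi$, acts on the degree-zero functions $\hat\xi_k$ and on the degree $-1$ function $\rho$ in a way that extends smoothly to $S^*\R^n$.
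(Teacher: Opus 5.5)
Your proposal is correct and follows the same route as the paper, which simply reads the identities off the explicit form \eqref{eq: Ham vf} of $\mathsf H_\s$ on $\S^\circ_{\s,+}$ and extends them smoothly to $\S_{\s,+}$, calling the computations straightforward. You have supplied exactly those computations, including the key rewriting $\sum_k \q_k\hat\xi_k^2 = \check\q_l + A_l$ via $\sum_k \hat\xi_k^2 = 1$.
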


\begin{proof}
We first recall from \eqref{eq: Ham vf} that $\mathsf H_\s|_{\S_{\s,+}^\circ} = \sum_{j = 1}^n \left( \q_j y_j + \hat \xi_j \right) \d_{y_j} - \sum_{j = 1}^n \q_j \xi_j \d_{\xi_j}$, which smoothly extends to $\S_{\s,+}$.
The computations are straightforward.
\end{proof}

Since $A_l$ vanishes at $\cR_{+, l}$, Lemma~\ref{le: diagonal linearization} provides the \emph{linearization} of $\mathsf H_\s$ at $\cR_{+, l}$, unless there is a $k$, such that $\q_k = \frac12 \check \q_l$, in which case \eqref{eq: bich diagonal xi} and \eqref{eq: bich diagonal xi y} coincide. 
We therefore need a separate equation in this case:

\begin{lemma} \label{le: degeneration linearization}
If $\q_k = \frac12 \check \q_l$, for some $k,l \in \{1, \hdots, n\}$, then
\[
		\mathsf H_\s \left( \hat \xi_k^2 + \q_k^2 y_k^2 \right) 
		= \q_k \left( \hat \xi_k^2 + \q_k^2 y_k^2 \right) + \q_k \left( \hat \xi_k + \q_k y_k \right)^2 + 2 A_l \hat \xi_k^2
\]
in $\S_{\s, +}$, where $A_l$ is defined in \eqref{eq: def A l}.
\end{lemma}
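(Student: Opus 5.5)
The identity is a direct computation from Lemma~\ref{le: diagonal linearization} and the explicit form \eqref{eq: Ham vf} of $\mathsf H_\s$ on $\S_{\s,+}$. I will verify it on the interior $\S^\circ_{\s,+}$ and then pass to all of $\S_{\s,+}$ by continuity.

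The two inputs are the derivatives of $\hat\xi_k$ and $y_k$ along $\mathsf H_\s$. By \eqref{eq: bich diagonal xi}, and using the hypothesis $\check\q_l = 2\q_k$,
\[
	\mathsf H_\s \hat\xi_k = (\check\q_l - \q_k + A_l)\hat\xi_k = (\q_k + A_l)\hat\xi_k .
\]
Reading off the $\d_{y_k}$-component of \eqref{eq: Ham vf} in the $+$ case gives $\mathsf H_\s y_k = \q_k y_k + \hat\xi_k$.

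The Leibniz rule then gives
\[
	\mathsf H_\s\left(\hat\xi_k^2 + \q_k^2 y_k^2\right)
	= 2(\q_k + A_l)\hat\xi_k^2 + 2\q_k^2 y_k(\q_k y_k + \hat\xi_k)
	= 2\q_k\hat\xi_k^2 + 2\q_k^3 y_k^2 + 2\q_k^2 y_k\hat\xi_k + 2A_l\hat\xi_k^2 .
\]
Expanding the claimed right-hand side,
\[
	\q_k\left(\hat\xi_k^2 + \q_k^2 y_k^2\right) + \q_k\left(\hat\xi_k + \q_k y_k\right)^2
	= 2\q_k\hat\xi_k^2 + 2\q_k^3 y_k^2 + 2\q_k^2 y_k\hat\xi_k .
\]
Adding $2A_l\hat\xi_k^2$ to this reproduces the previous display term by term, so the identity holds on $\S^\circ_{\s,+}$.

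To extend to $\S_{\s,+}$, note that $\mathsf H_\s$ extends smoothly to $\S_{\s,+}$, and $\hat\xi_k$, $y_k$ and $A_l$ extend smoothly to fiber infinity. Both sides of the identity are therefore continuous on $\S_{\s,+}$, and $\S^\circ_{\s,+}$ is dense in it, so the identity holds on all of $\S_{\s,+}$.

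There is no real obstacle. The only point needing care is using the $+$ sign convention in \eqref{eq: Ham vf}, so that $\mathsf H_\s y_k = \q_k y_k + \hat\xi_k$ rather than $\q_k y_k - \hat\xi_k$; with the wrong sign the cross terms would not match.
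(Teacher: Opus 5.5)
Your proposal is correct and follows the paper's proof: both use \eqref{eq: bich diagonal xi} with $\check \q_l = 2\q_k$, together with $\mathsf H_\s y_k = \q_k y_k + \hat \xi_k$, then apply the Leibniz rule and expand the right-hand side to match term by term. Your density argument extending the identity from $\S^\circ_{\s,+}$ to its closure is a harmless extra detail that the paper leaves implicit.
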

\begin{proof}
Since $\mathsf H_\s|_{\S_{\s,\pm}^\circ} y_k = \q_k y_k + \hat \xi_k$ and $\check \q_l = 2 \q_k$, and by Equation \eqref{eq: bich diagonal xi}, 
\begin{align*}
	\mathsf H_\s \left( \hat \xi_k^2 + \q_k^2 y_k^2 \right)
		& = 2 \left( \q_k + A_l \right) \hat \xi_k^2 + 2 \q_k^3 y_k^2 + 2 \q_k^2 \hat \xi_k y_k \\
		& = \q_k \left( \hat \xi_k^2 + \q_k^2 y_k^2 \right) + \q_k \left( \hat \xi_k + \q_k y_k \right)^2 + 2 A_l \hat \xi_k^2,
\end{align*}
in $\S_{\s, +}$, proving the assertion.
\end{proof}

\begin{proof}[Proof of Theorem~\ref{thm: saddle point flow}]
We choose $c := \min \left(\q_1, \check \q_l - \check \q_{l-1}, \check \q_{l+1} - \check \q_l \right) > 0$.
We start with the computation for $W_l$.
By Lemma~\ref{le: diagonal linearization}, $\mathsf H_\s W_l = 2 \sum_{j : \q_j \geq \check \q_{l+1}} \left( \check \q_l - \q_j + A_l \right) \hat \xi_j^2 \leq - c W_l + \Psi_l$, where $\Psi_l = -2A_l W_l$.
Next, we similarly compute, using Lemma~\ref{le: diagonal linearization} and Lemma~\ref{le: degeneration linearization},
\begin{align*}
	\mathsf H_\s V_l
		= & \ 2 \sum_{j : \q_j \leq \check \q_{l-1}, 2\q_j \neq \check \q_l} \left( \check \q_l - \q_j \right) \hat \xi^2_j 
		+ 2 \sum_{j = 1}^n \q_j \left( \hat \xi_j + \left( 2 \q_j - \check \q_l\right) y_j \right)^2 \\*
		& \ + \sum_{j : 2 \q_j = \check \q_l} \q_j \left( \q_j^2 y_j^2 + \hat \xi_j^2 \right) + \q_j \left( \hat \xi_j + \q_j y_j \right)^2 + \Phi_l \\
		\geq & \ c V_l + \Phi_l,
\end{align*}
where $\Phi_l = 2 A_l \left( \sum_{j = 1}^n \left( \hat \xi_j + \left( 2 \q_j - \check \q_l\right) y_j \right) \hat \xi_j + \sum_{j : \q_j \leq \check \q_{l-1}} \hat \xi^2_j \right)$.
\end{proof}

\subsubsection{The global behaviour of lightlike geodesics}

Proposition~\ref{prop: Stationary points} described the stationary points of the flow of lightlike geodesics and Theorem~\ref{thm: saddle point flow} described the flow near the stationary points.
The following theorem describes the global structure of the flow.

\begin{thm} \label{thm: bicharacteristic flow}
Let $\mathring \gamma \in \S_{\s, +}$ and consider the integral curve of $\mathsf H_\s$ with initial condition $\gamma(0) = \mathring \gamma$.
If $\mathring \gamma \in \cR_+$, then $\gamma(r) = \mathring \gamma$ for all $r \in \R$.
If instead $\mathring \gamma \in \S_{\s, +} \backslash \cR_+$, then the following holds:
\begin{enumerate}[(a)]
	\item Let $l_\max \in \{1, \hdots, m\}$ be the largest integer such that $\hat \xi_j (\mathring \gamma) \neq 0$ for some $j$ with $\q_j = \check \q_{l_\max}$.
	Then 
	\[
		\lim_{r \to - \infty} \gamma(r) \in \cR_{l_\max, +}.
	\]
	\item Let $l_\min \in \{1, \hdots, m\}$ be the smallest integer such that $\hat \xi_j(\mathring \gamma) \neq 0$ for some $j$ with $\q_j = \check \q_{l_\min}$. 
	Then \textbf{either}
	\[
		\lim_{r \to \infty} \gamma(r) \in \cR_{l_\min, +},
	\]
	\textbf{or} $\gamma$ intersects all spacelike hypersurfaces given by
	\[
		\mathcal C_a := 
			\left\{ \sum_{j = 1}^n \q_j y_j^2 = \frac1{\min_j \q_j} + a \right\} 
			\subset \R^n,
	\]
	for any $a > 0$, transversally and outward pointing, as $r \to \infty$.
	\label{item: future dichotomy}
\end{enumerate}
\end{thm}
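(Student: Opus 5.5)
The plan is to integrate the flow of $\mathsf H_\s$ almost explicitly, using the linear structure visible in \eqref{eq: Ham vf}. On $\S_{\s,+}^\circ$ the fiber variables decouple: $\mathsf H_\s \xi_k = -\q_k \xi_k$, hence $\xi_k(r) = e^{-\q_k r}\xi_k(0)$. The $y$-variables then solve the linear inhomogeneous system $\dot y_k = \q_k y_k + \hat \xi_k(r)$, with $\hat \xi_k(r)$ a known function. Since $\mathsf H_\s$ is dilation invariant in $\xi$, the same formulas hold on fiber infinity $\d\S_{\s,+}$ in the variables $(y,\hat\xi)$. So the argument is identical on $\d\S_{\s,+}$ and on the interior, except that in the interior we also track $\rho$. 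The statement for $\mathring\gamma\in\cR_+$ is immediate from Proposition~\ref{prop: Stationary points}.

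\textbf{Step 1: the fiber direction.} Writing $\hat \xi_k(r) = e^{-\q_k r}\xi_k(0)/|\xi(r)|$, the fastest-decaying components dominate as $r \to +\infty$ and the slowest-decaying ones dominate as $r\to-\infty$.
\begin{itemize}
\item As $r\to-\infty$, $|\xi(r)| \sim e^{-\check\q_{l_\max} r}$, so $\hat\xi_j\to 0$ exponentially for $\q_j\ne \check\q_{l_\max}$. The components with $\q_j=\check\q_{l_\max}$ become constant after normalization.
\item As $r\to+\infty$, the analogous statement holds with $l_\min$ in place of $l_\max$.
\end{itemize}
Equivalently, one may use \eqref{eq: bich diagonal xi} and the sign of $\check\q_l-\q_k+A_l$.

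\textbf{Step 2: backward limit, part (a).} Variation of constants gives
\[
y_k(r)=e^{\q_k r}\Big(y_k(0)-\int_r^0 e^{-\q_k s}\hat\xi_k(s)\,\md s\Big).
\]
As $r\to-\infty$ the term $e^{\q_k r}y_k(0)$ decays. Inserting the asymptotics of $\hat\xi_k$ from Step 1 into the integral, I expect to obtain $y_k(r)\to -\lim\hat\xi_k/\q_k$; this is zero unless $\q_k=\check\q_{l_\max}$. The limit point therefore satisfies $\hat\xi_j=-\q_j y_j$ on the $\check\q_{l_\max}$-block and $y_j=\hat\xi_j=0$ elsewhere, i.e.\ it lies in $\cR_{l_\max,+}$. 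In the interior we also need $\rho\to0$ backwards. This follows from \eqref{eq: Hp rho}: $A_l\to 0$, so $\mathsf H_\s\rho \ge \tfrac12 \check\q_1\rho$ eventually, and the backward limit lies at fiber infinity as required.

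\textbf{Step 3: forward dichotomy, part (b).} Forward in time, the homogeneous factor $e^{\q_k r}$ is expanding. We get $y_k(r)= e^{\q_k r}(\kappa_k + o(1)) + (\text{bounded})$ with the constant
\[
\kappa_k := y_k(0)+\int_0^\infty e^{-\q_k s}\hat\xi_k(s)\,\md s,
\]
which is finite because $\hat \xi_k$ is bounded and $\q_k>0$.
\begin{itemize}
\item If all $\kappa_k=0$, the same computation as in Step 2, now with $l_\min$, gives convergence to $\cR_{l_\min,+}$. This alternative can only occur at fiber infinity. Indeed, in the interior $\rho$ grows forward, so $|\xi|\to0$. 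Then \eqref{eq: lightlike comp} forces $\sum_j\q_j y_j\xi_j\to\s$, so for $\s\ne0$ the variable $y$ must blow up; for $\s=0$ the point leaves $\S_0$ through the zero section anyway.
\item Otherwise some $|y_k|\to\infty$ exponentially. For the hypersurfaces $\mathcal C_a$ I would compute
\[
\mathsf H_\s\Big(\sum_j \q_j y_j^2\Big) = 2\sum_j \q_j^2y_j^2 + 2\sum_j \q_j y_j\hat\xi_j \ge 2\Big(\sum_j\q_j^2y_j^2\Big)^{1/2}\Big(\Big(\sum_j\q_j^2y_j^2\Big)^{1/2}-1\Big).
\]
On $\mathcal C_a$ we have $\sum_j\q_j^2y_j^2\ge \min_j\q_j\sum_j\q_jy_j^2>1$, so this derivative is strictly positive. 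Hence every crossing of $\mathcal C_a$ is transversal and outward, the superlevel sets are forward invariant, and the exponential growth guarantees that every $\mathcal C_a$ is reached.
\end{itemize}

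\textbf{Main obstacle.} The delicate step is the asymptotic evaluation of the integrals in Steps 2 and 3 when several blocks $\check\q_l$ interact. In particular, the case $\q_k=\tfrac12\check\q_l$ (cf.\ Lemma~\ref{le: degeneration linearization}) may produce resonant, $r e^{\q_k r}$-type, terms. One must check that these still vanish in the limit on the off-diagonal blocks. If the explicit computation becomes messy, my fallback is to use Theorem~\ref{thm: saddle point flow}: the function $V_{l_\max}$ is repelling near $\cR_{l_\max,+}$ and $W_{l_\min}$ is attracting near $\cR_{l_\min,+}$. These give local convergence once the trajectory is close, and Step 1 provides that closeness in the $\hat\xi$-variables.
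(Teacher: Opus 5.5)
Your proof is correct, and it takes a genuinely different, more elementary route than the paper.

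\textbf{Your route versus the paper's.} You use that the flow on $\S_{\s,+}$ is exactly solvable. We have $\xi_k(r)=e^{-\q_kr}\xi_k(0)$, so $\hat\xi$ is the projectivized linear flow, also on $\d\S_{\s,+}$. Each $y_k$ then solves a scalar linear ODE forced by $\hat\xi_k$. By variation of constants only $\lim_{r\to\mp\infty}\hat\xi_k$ enters, and the forward dichotomy becomes the explicit linear condition $\kappa_k=0$ for all $k$. The paper instead works with the identities of Lemma~\ref{le: diagonal linearization}, in particular the combinations $\hat\xi_k+(2\q_k-\check\q_l)y_k$. It combines these with the exponential bounds on $A_{l_\max}$, $A_{l_\min}$ from Lemma~\ref{le: A min max} and the trichotomy of Remark~\ref{rmk: ODE cases}. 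Since those combinations degenerate when $2\q_k=\check\q_l$, the paper needs Lemma~\ref{le: degeneration linearization} as a separate case.

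The obstacle you anticipate does not arise in your own approach. The kernel $\q_ke^{\q_k(r-s)}$ in your variation-of-constants formula has total mass tending to $1$ and concentrates near $s=r$. Hence $\hat\xi_k(s)\to L_k$ as $s\to-\infty$ already gives $y_k\to-L_k/\q_k$, regardless of rates; resonant contributions are $O(|r|e^{\q_kr})\to0$. Forward, the tail integral over $[r,\infty)$ behaves in the same way. So no fallback to Theorem~\ref{thm: saddle point flow} is needed.

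What the paper's route buys is reuse: $A_l$ and the monotonicity of $|A_{l_\min}|$ are needed again in the proof of Corollary~\ref{cor: backward propagation smaller larger source}. What yours buys is explicit limit points and rates, with no degenerate cases.

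\textbf{Sign of $\rho$ and the case $\s=0$.} Your direction for the evolution of $\rho$ is the correct one: $\mathsf H_\s\rho=\sum_j\q_j\hat\xi_j^2\rho>0$, consistent with \eqref{eq: Hp rho}. The paper's proof writes the opposite sign and concludes $\rho\to0$ as $r\to\infty$.

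This has a consequence you should state explicitly rather than dismiss with ``anyway''. Take an interior point with $\s=0$ and all $\kappa_k=0$, for example any point of $\L_{l,+}$, where the flow is radial. The curve then converges as $r\to\infty$ to a point of the zero section, with $\sum_j\q_jy_j^2\to1/\check\q_{l_\min}\le 1/\min_j\q_j$. So neither alternative in (b) holds.

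Part (b) is therefore valid for $\mathring\gamma\in\d\S_{\s,+}$ and, in the interior, for $\s\neq0$. This is all that is used later: fiber infinity in Theorem~\ref{thm: Fredholm property}, and $\s=\pm1$ in Theorem~\ref{thm: high-energy estimate}.
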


The idea for the proof is to combine the identities in Lemma~\ref{le: diagonal linearization} and Lemma~\ref{le: degeneration linearization}, with the following exponential bounds.

\begin{lemma} \label{le: A min max}
Let $l_\min, l_\max, \gamma$ be as in Theorem~\ref{thm: bicharacteristic flow}.
There is a $C > 0$, such that
\[
	\left|A_{l_\max} \left(\gamma(r) \right)\right|
		\leq C e^{2 r}, \text{ for all } r \leq 0, 
	\qquad 
	\left|A_{l_\min} \left(\gamma(r) \right)\right|
		\leq C e^{-2 r}, \text{ for all } r \geq 0. 
\]
If $l_\min = l_\max$, then in fact $A_{l_\min} = A_{l_\max} = 0$.
Moreover, if $A_{l_\min}\left(\mathring \gamma \right) < 1$, or $A_{l_\min}\left(\mathring \gamma \right) < 1$, then $\abs{A_{l_\max}(\gamma(r))}$ is strictly growing for all $r \in (-\infty, 0)$ and $\abs{A_{l_\min}(\gamma(r))}$ is strictly decreasing for all $r \in (0, \infty)$. 
\end{lemma}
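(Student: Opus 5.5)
The plan is to solve the $\hat\xi$-part of the flow explicitly and read off $A_{l_\min}$ and $A_{l_\max}$ as weighted averages. Put $B := \sum_{j=1}^n \q_j \hat\xi_j^2$. Then $A_l = B - \check\q_l$ for every $l$, because $\sum_j \hat\xi_j^2 = 1$. With this notation, \eqref{eq: bich diagonal xi} reads $\mathsf H_\s \hat\xi_k = (B - \q_k)\hat\xi_k$ for every $k$, independently of $l$.

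\textbf{Step 1: closed form for $\hat\xi$.} The $\hat\xi$-system decouples from $y$ and is projectively linear. Hence along $\gamma$
\[
	\hat\xi_k(\gamma(r)) = \frac{\hat\xi_k(\mathring\gamma)\, e^{-\q_k r}}{\bigl(\sum_j \hat\xi_j(\mathring\gamma)^2 e^{-2\q_j r}\bigr)^{1/2}},
\]
as can be checked by differentiating and using uniqueness of integral curves. In particular $\hat\xi_k(\gamma(r)) = 0$ for all $r$ whenever $\hat\xi_k(\mathring\gamma) = 0$.

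\textbf{Step 2: the bounds.} Write $w_j := \hat\xi_j(\mathring\gamma)^2$. Then
\[
	A_{l_\min}(\gamma(r)) = \frac{\sum_j (\q_j - \check\q_{l_\min})\, w_j\, e^{-2\q_j r}}{\sum_j w_j\, e^{-2\q_j r}}.
\]
\begin{itemize}
	\item Only indices with $w_j \neq 0$ contribute, and for these $\q_j \geq \check\q_{l_\min}$. So $A_{l_\min} \geq 0$.
	\item The terms with $\q_j = \check\q_{l_\min}$ vanish in the numerator. The denominator is bounded below by $c_0 e^{-2\check\q_{l_\min} r}$ with $c_0 > 0$, by the definition of $l_\min$.
	\item The remaining numerator terms have $\q_j \geq \check\q_{l_\min} + d$, where $d := \check\q_{l_\min+1} - \check\q_{l_\min}$.
\end{itemize}
This gives $0 \leq A_{l_\min}(\gamma(r)) \leq C e^{-2dr}$ for $r \geq 0$. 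Symmetrically, $0 \geq A_{l_\max}(\gamma(r)) \geq -C e^{2d' r}$ for $r \leq 0$, where $d' = \check\q_{l_\max} - \check\q_{l_\max-1}$. If $l_\min = l_\max$, every $j$ with $w_j \neq 0$ has $\q_j = \check\q_{l_\min}$, so both numerators vanish identically and $A_{l_\min} = A_{l_\max} = 0$.

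\textbf{The main obstacle} is matching the exponent exactly as stated. The closed form shows that the honest rate is twice the gap between neighbouring $\q$-groups. The logistic example with two occupied groups, where $A' = -2A(d-A)$, shows this rate is sharp. So $|A_{l_\min}| \leq Ce^{-2r}$ follows literally only when $d \geq 1$; otherwise the argument yields $e^{-2dr}$. I would therefore first check whether the author's normalization of the flow parameter (the factor $\frac1{2\abs\xi}$ in \eqref{eq: Ham vf}) is meant to absorb this. If not, I would record the statement with the gap in the exponent, which is what Theorem~\ref{thm: bicharacteristic flow} actually uses: any exponential decay of $A_{l_\min}$ suffices to integrate \eqref{eq: bich diagonal xi y}.

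\textbf{Step 3: monotonicity.} Differentiate $B$ using Step 1. This gives
\[
	\mathsf H_\s A_l = \mathsf H_\s B = -2\sum_j (\q_j - B)^2 \hat\xi_j^2 \leq 0,
\]
with equality only if all occupied $\q_j$ coincide, i.e.\ only if $l_\min = l_\max$. Hence, if $l_\min \neq l_\max$, $A$ is strictly decreasing in $r$. Combining with the signs from Step 2, $|A_{l_\min}| = A_{l_\min}$ is strictly decreasing on $(0,\infty)$ and $|A_{l_\max}| = -A_{l_\max}$ is strictly increasing on $(-\infty,0)$, i.e.\ it grows as $r$ runs toward $0$. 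This holds with no need for the hypothesis $A(\mathring\gamma) < 1$, which I would check is then automatically harmless.
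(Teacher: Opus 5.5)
Your proof is correct, and your worry about the exponent is justified. The factor $\frac1{2\abs\xi}$ in \eqref{eq: Ham vf} absorbs nothing: along $\mathsf H_\s$ one has exactly $\xi_k(\gamma(r)) = \xi_k(\mathring\gamma)e^{-\q_k r}$. Your closed form and the logistic example $A' = -2A(d-A)$ therefore show that the sharp rate is $e^{-2dr}$, where $d$ is the gap from $\check\q_{l_\min}$ to the next occupied group. The stated bound $Ce^{-2r}$ fails whenever $d<1$, and this happens for genuine Kasner spacetimes. For distinct exponents in dimension $4$ one has $\q_j \in (0,\frac43]$, so one of the two gaps is at most $\frac23$.

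The flaw in the paper's own proof is the inequality $\mathsf H_\s A_{l_\min} \le -2A_{l_\min} + 2A_{l_\min}^2$, which it attributes to Cauchy--Schwarz. Cauchy--Schwarz only gives $\sum_j (\q_j-\check\q_{l_\min})^2\hat\xi_j^2 \ge A_{l_\min}^2$, i.e.\ $\mathsf H_\s A_{l_\min}\le 0$. The gap gives $\sum_j (\q_j-\check\q_{l_\min})^2\hat\xi_j^2 \ge d\,A_{l_\min}$. So the correct Riccati inequality is $\mathsf H_\s A_{l_\min} \le -2dA_{l_\min} + 2A_{l_\min}^2$. As you note, only some exponential rate is used downstream, namely in Remark~\ref{rmk: ODE cases} and the Case~3 integration in the proof of Theorem~\ref{thm: bicharacteristic flow}. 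Your gap-dependent version is the statement that should be recorded.

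Your route also differs from the paper's. The paper argues qualitatively that $A_{l_\min}\to 0$, using $\mathsf H_\s \log\abs{\hat\xi_j} = A_{l_\min}$ and $\abs{\hat\xi_j}\le 1$, and then integrates the Riccati inequality. You instead use that the fiber flow is linear and write $A_l = B - \check\q_l$ as a ratio of exponential sums. This gives at once the sharp rate, the signs of $A_{l_\min}$ and $A_{l_\max}$, and the invariance of the set of occupied indices. Your variance identity $\mathsf H_\s B = -2\sum_j(\q_j - B)^2\hat\xi_j^2$ gives strict monotonicity for every $r$ whenever $l_\min < l_\max$, with no smallness hypothesis. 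The paper's hypothesis $A<1$ is an artifact of the factor $1-A$ in its Riccati inequality; with the correct inequality it would have to read $A<d$. Your unconditional monotonicity is what Corollary~\ref{cor: backward propagation smaller larger source} actually needs.
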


\begin{proof}
Let us first note that \eqref{eq: bich diagonal xi} implies that $\hat \xi_j = 0$ for all $j < l_\min$ and $j > l_\max$.
It follows that $A_{l_\max} \leq 0$ and $A_{l_\min} \geq 0$, with equalities if and only if $l_\min = l_\max$.
We may therefore assume that $l_\min < l_\max$.
By \eqref{eq: bich diagonal xi}, for any $l = 1, \hdots, m$ and $j$ such that $\q_j = \check \q_l$, $\mathsf H_\s \log \abs{\hat \xi_j} = A_l$.
Since $\abs{\hat \xi_j} \leq 1$, it follows that $\log \abs{\hat \xi_j} \leq 0$.
Since $A_{l_\min} > 0$, this equation therefore implies that $A_{l_\min}(\gamma(r)) \to 0$ as $r \to \infty$, for otherwise $\log \abs{\hat \xi_j}$ would eventually have to become positive for those $j$ such that $\q_j = \check \q_{l_\min}$. 
One further computes, using the Cauchy-Schwartz inequality, that
\begin{align*}
	\mathsf H_\s A_{l_\min}
		= & \ - 2 \sum_{j : \q_j \neq \check \q_{l_\min}} (\q_j - \check \q_{l_\min})^2 \hat \xi_j^2 + 2 A_{l_\min}^2
		\leq - 2 A_{l_\min} + 2 A_{l_\min}^2.
\end{align*}
Since $A_{l_\min}(\gamma(r)) \to 0^+$, integrating this provides the exponential bound.
The other inequality is proven similarly.
\end{proof}

In the proof of Theorem~\ref{thm: bicharacteristic flow}, we will use the following elementary remark:

\begin{remark} \label{rmk: ODE cases}
Let $f, h : \R \to \R$ be smooth functions satisfying $f'(r) + \a f(r) = h(r)$, where $\a \in \R$ is a constant.
We need the asymptotics of the solution
\[
	f(r)
		= e^{- \a r} \left( \int_0^r e^{\a s} h(s) \md s + f(0) \right),
\]
under the assumption that $\abs{h(r)} \leq \frac 1c e^{- c r}$ for some constant $c > 0$ and all $r \geq 0$.
There is the following trichotomy:
\begin{enumerate}
	\item If $\a > 0$, then $\lim_{r \to \infty}f(r) = 0$. \label{eq: alpha positive}
	\item If $\a = 0$, then $\lim_{r \to \infty}f(r) \in \R$. \label{eq: alpha zero}
	\item If $\a < 0$, then \emph{either} $\lim_{r \to \infty} f(r) = 0$ \emph{either} or $\abs{f(r)} \to \infty$ as $r \to \infty$. \label{eq: alpha negative}
\end{enumerate}
\end{remark}

\begin{proof}[Proof of Theorem~\ref{thm: bicharacteristic flow}]
Since $\mathsf H_\s \rho = - \sum_{j = 1}^n \q_j \hat \xi_j^2 \rho$, it follows that
\[
	e^{-\max_j \q_j r} \rho(\mathring \gamma)
		\leq \rho(\gamma(r))
		\leq e^{-\min_j \q_j r} \rho(\mathring \gamma).
\]
If $\rho(\mathring \gamma) = 0$, then $\rho(\gamma(r)) = 0$ for all $r \in \R$.
If $\rho(\mathring \gamma) > 0$, then $\lim_{r \to \infty}\rho(\gamma(r)) = 0$ and $\lim_{r \to - \infty}\rho(\gamma(r)) = \infty$.

Equations \eqref{eq: bich diagonal xi} and \eqref{eq: bich diagonal xi y} imply that
\begin{equation} \label{eq: integral form}
\begin{split}
	\hat \xi_k \left( \gamma(r) \right)
		= & \ e^{\left( \check \q_l - \q_k \right)r}  \int_0^r e^{-\left( \check \q_l - \q_k \right)s} \left( A_l(\gamma(s)) \hat \xi_k(\gamma(s)) \right) \md s 
		+ e^{\left( \check \q_l - \q_k \right)r}  \hat \xi_k(\mathring \gamma), \\
	\hat \xi_k(\gamma(r)) + \left( 2 \q_k - \check \q_l \right) y_k(\gamma(r))
		= & \ e^{\q_kr} \int_0^r e^{-\q_ks} \left( A_l(\gamma(s)) \hat \xi_k(\gamma(s)) \right) \md s, \\*
		& \ + e^{\q_kr} \left( \hat \xi_k(\mathring \gamma) + \left( 2 \q_k - \check \q_l \right) y_k(\mathring \gamma) \right).
\end{split}
\end{equation}
Since $\abs{\hat \xi_k} \leq 1$, Lemma~\ref{le: A min max} implies that
\[
	\left|A_{l_\max} \left(\gamma(r) \right) \hat \xi_k (\gamma(r)) \right|
		\leq C e^{ 2 r}
\]
for all $r \leq 0$.
Remark~\ref{rmk: ODE cases} applies, with the reversed conditions for $r \to -\infty$, and we conclude that $\lim_{r \to - \infty} \hat \xi_k(\gamma(r))$ exists for each $k$, and
\begin{align*}
	\lim_{r \to - \infty} \hat \xi_k(\gamma(r))
		&= 0, \quad \text{ if } \q_k \neq \check \q_{l_\max}, \\
	\lim_{r \to - \infty} \left( \hat \xi_k(\gamma(r)) + \left( 2 \q_k - \check \q_l \right) y_k(\gamma(r)) \right)
		&= 0, \quad \text{ for all } k. 
\end{align*}
We draw conclusions from this in three separate cases:

\textbf{Case 1:} If $k$ is such that $\q_k = \check \q_{l_\max}$, then $\lim_{r \to - \infty} \hat \xi_k(\gamma(r)) = - \q_k \lim_{r \to - \infty} y_k(\gamma(r))$. \\
\textbf{Case 2:}
If $k$ is such that $\q_k \neq \check \q_{l_\max}$ and $2 \q_k \neq \check \q_{l_\max}$, then we conclude that
\[
	\lim_{r \to - \infty} \hat \xi_k(\gamma(r))
		= \lim_{r \to - \infty} y_k(\gamma(r))
		= 0.
\]
\textbf{Case 3:}
If $k$ is such that $2\q_k = \check \q_{l_\max}$, then Lemma~\ref{le: degeneration linearization} and Lemma~\ref{le: A min max} imply that
\[
	\frac{\md}{\md r} \left( \hat \xi_k^2 + \q_k^2 y_k^2 \right)(\gamma(r))
		\geq \q_k \left( \hat \xi_k^2 + \q_k^2 y_k^2 \right)(\gamma(r)) - C e^{2 r},
\]
for some $C > 0$, and all $r \leq 0$.
Integrating, this implies that $\lim_{r \to - \infty} \left( \hat \xi_k^2 + \q_k^2 y_k^2 \right) (\gamma(r)) = 0$, and hence $\lim_{r \to - \infty} \hat \xi_k(\gamma(r)) = \lim_{r \to - \infty} y_k(\gamma(r)) = 0$. \\
\textbf{Case 1-3} taken together, we have thus shown that $\lim_{r \to - \infty} \gamma(r) \in \cR_{l_\max, +}$, as claimed.

Considering now $r \to \infty$, since $\abs{\hat \xi_k} \leq 1$, Lemma~\ref{le: A min max} implies that
\[
	\left|A_{l_\min} \left(\gamma(r) \right)\right|
		\leq C e^{-2 r},
\]
for all $r \geq 0$.
Remark~\ref{rmk: ODE cases}  implies that $\lim_{r \to \infty}\hat \xi_k(\gamma(r))$ exists for each $k$, and
\[
	\lim_{r \to \infty} \hat \xi_k(\gamma(r))
		= 0, \quad \text{ if } \q_k \neq \check \q_{l_\min}.
\]
The second equation in \eqref{eq: integral form} with $l = l_\min$ is of type \eqref{eq: alpha negative} in Remark~\ref{rmk: ODE cases}, and we conclude that \emph{either}
\begin{equation} \label{eq: diagonalized second limit}
	\lim_{r \to \infty} \left( \hat \xi_k + \left( 2 \q_k - \check \q_{l_\min} \right) y_k \right)(\gamma(r))
		= 0
\end{equation}
\emph{or}
\begin{equation} \label{eq: diagonalized divergent}
	\left| \left( \hat \xi_k + \left( 2 \q_k - \check \q_{l_\min} \right) y_k \right)(\gamma(r)) \right|
		\to \infty,
\end{equation}
as $r \to \infty$.
We again divide this into three separate cases.
Note that since $\q_k \geq \check \q_{l_\min}$ for all relevant $k$, it follows that $2 \q_k - \check \q_{l_\min} \neq 0$. 

\textbf{Case 1:}
If $k$ is such that $\q_k = \check \q_{l_\min}$ and \eqref{eq: diagonalized second limit} holds, then \eqref{eq: diagonalized second limit} and the fact that $\lim_{r \to \infty}\hat \xi_k(\gamma(r))$ exists imply that $\lim_{r \to \infty} \hat \xi_k(\gamma(r)) = - \q_k \lim_{r \to \infty} y_k (\gamma(r))$.

\textbf{Case 2:}
If $k$ is such that $\q_k \neq \check \q_{l_\min}$ and \eqref{eq: diagonalized second limit} holds, then $\lim_{r \to \infty} \hat \xi_k(\gamma(r)) = \lim_{r \to \infty} y_k(\gamma(r)) = 0$.

\textbf{Case 3:} 
Assume that \eqref{eq: diagonalized divergent} holds for some $k$.
Since $\hat \xi_k(\gamma(r))$ converges as $r \to \infty$, we conclude that $\abs{y(\gamma(r))} \to \infty$ as $r \to \infty$.
By the Cauchy-Schwartz inequality, we get
\[
	\mathsf H_\s \left( \sum_{k = 1}^n \q_k y_k^2 \right)
		= 2 \sum_{k = 1}^n \q_k^2 y_k^2 + 2 \sum_{k = 1}^n \q_k y_k \hat \xi_k
		\geq 2 \left( \sqrt{\sum_{j = 1}^n \q_j^2 y_j^2} - 1 \right) \sqrt{\sum_{j = 1}^n \q_j^2 y_j^2},
\]
in $\S_{+, \s}$.
Since, for $y \in \mathcal C_a$, we have
\[
	\sum_{j = 1}^n \q_j^2 y_j^2|_{\mathcal C_a}
		\geq \min_j \q_j \sum_{j = 1}^n \q_j y_j^2|_{\mathcal C_a}
		= 1 + \frac a{\min_j \q_j}
		> 1,
\]
we conclude that indeed $\mathsf H_\s \left( \sum_{k = 1}^n \q_k y_k^2 \right) |_{\mathcal C_a \cap \S_{+, \s}} > 0$.
This finishes the proof. 
\end{proof}

In order to prove the necessary saddle point estimates, we need the following extra information.

\begin{cor} \label{cor: propagation into larger source}
Let $l_\min, l_\max, \gamma$ be as in Theorem~\ref{thm: bicharacteristic flow}.
If $W_l(\mathring \gamma) > 0$, for some $l \in \{1, \hdots, m\}$, then 
\[
	\lim_{r \to - \infty} \gamma(r)
		\in \cR_{\tilde l, +}
\]
for some $\tilde l \in \{l+1, \hdots, m\}$.
\end{cor}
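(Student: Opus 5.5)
The plan is to reduce the statement to part (a) of Theorem~\ref{thm: bicharacteristic flow}, which gives $\lim_{r \to -\infty} \gamma(r) \in \cR_{l_\max, +}$. It then suffices to show that $l_\max \geq l+1$. Recall that $l_\max$ is the largest index such that $\hat \xi_j(\mathring \gamma) \neq 0$ for some $j$ with $\q_j = \check \q_{l_\max}$.

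First, we dispose of the case $\mathring \gamma \in \cR_+$. If $\mathring \gamma \in \cR_{l', +}$ for some $l'$, then by definition of the radial set we have $\hat \xi_j(\mathring \gamma) = 0$ for all $j$ with $\q_j \neq \check \q_{l'}$. The assumption $W_l(\mathring \gamma) > 0$ then forces $\check \q_{l'} \geq \check \q_{l+1}$, that is $l' \geq l+1$. Since $\gamma$ is constant in this case, we may take $\tilde l = l'$.

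Second, assume $\mathring \gamma \notin \cR_+$. By definition \eqref{eq: Vl Wl}, $W_l(\mathring \gamma) = \sum_{j : \q_j \geq \check \q_{l+1}} \hat \xi_j(\mathring \gamma)^2 > 0$. Hence there is some $j$ with $\q_j \geq \check \q_{l+1}$ and $\hat \xi_j(\mathring \gamma) \neq 0$. Writing $\q_j = \check \q_{l'}$, we get $l' \geq l+1$, so by maximality $l_\max \geq l' \geq l+1$. Part (a) of Theorem~\ref{thm: bicharacteristic flow} then gives the claim with $\tilde l = l_\max$.

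There is one point to check. The index $l_\max$ in Theorem~\ref{thm: bicharacteristic flow} is defined through $\hat \xi_j(\mathring \gamma)$, which requires $\hat \xi(\mathring \gamma) \neq 0$. This holds on $\S_{\s,+}$ away from the zero section, since $\abs{\hat \xi} = 1$ wherever $\xi \neq 0$ and at fiber infinity.

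There is no serious obstacle. The only care needed is that $W_l$ is evaluated at $\mathring \gamma$, not along the whole curve. This is harmless: by \eqref{eq: bich diagonal xi}, each $\hat \xi_k$ satisfies a linear ODE along the flow, so its vanishing or non-vanishing is preserved. Consequently $l_\max$ is constant along $\gamma$, and the argument does not depend on the choice of base point on the curve.
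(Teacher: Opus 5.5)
Your proof is correct and follows the paper's argument. The hypothesis $W_l(\mathring \gamma) > 0$ gives some $j$ with $\q_j \geq \check \q_{l+1}$ and $\hat \xi_j(\mathring \gamma) \neq 0$, so $l_\max \geq l+1$, and part (a) of Theorem~\ref{thm: bicharacteristic flow} then gives the limit; your separate treatment of the stationary case $\mathring \gamma \in \cR_+$ is extra care that the paper leaves implicit.
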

\begin{proof}
This follows by Theorem~\ref{thm: bicharacteristic flow} after noting that the assumption implies that $\hat \xi_j(\mathring \gamma) \neq 0$ for some $j$ such that $\q_j > \check \q_l$.
\end{proof}

\begin{cor} \label{cor: backward propagation smaller larger source}
Let $l_\min, l_\max, \gamma$ be as in Theorem~\ref{thm: bicharacteristic flow}.
There is a $\delta_0 > 0$, such that if $\de \in (0, \de_0)$ and $V_l(\mathring \gamma) \in (\de/2, \de)$ and $W_l(\mathring \gamma) \in (0, \de)$ for some $l \in \{l_\min, \hdots, l_\max\}$, then 
\begin{equation} \label{eq: r to infty l tilde}
	\lim_{r \to \infty} \gamma(r)
		\in \cR_{\tilde l, +}
\end{equation}
for some $\tilde l \in \{1, \hdots, l-1\}$, or $\abs{y(\gamma(r))} \to \infty$, as $r \to \infty$.
\end{cor}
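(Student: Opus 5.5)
The plan is to reduce the statement to the forward dichotomy in Theorem~\ref{thm: bicharacteristic flow}~\eqref{item: future dichotomy}. That theorem gives two possibilities: either $\lim_{r\to\infty}\gamma(r)\in\cR_{l_\min,+}$, or $\gamma$ crosses every $\mathcal C_a$ outward, which forces $\sum_j \q_j y_j^2\to\infty$ and hence $\abs{y(\gamma(r))}\to\infty$. Since $l\in\{l_\min,\hdots,l_\max\}$, we have $l_\min\le l$. It therefore suffices to show that, for $\de_0$ small enough, the case $l_\min=l$ cannot lead to convergence to $\cR_{l,+}$. Once that is shown, either $l_\min<l$, so that \eqref{eq: r to infty l tilde} holds with $\tilde l=l_\min$, or $\abs{y}\to\infty$.

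So assume $l_\min=l$. As in the proof of Lemma~\ref{le: A min max}, \eqref{eq: bich diagonal xi} shows that $\hat\xi_j\equiv 0$ along $\gamma$ for all $j$ with $\q_j<\check\q_l$. Hence the first sum in $V_l$ vanishes identically along $\gamma$, and
\[
	A_l=\sum_{j:\q_j>\check\q_l}(\q_j-\check\q_l)\hat\xi_j^2 ,
\]
so that $0\le A_l\le C W_l$. Two consequences follow from the computations in the proof of Theorem~\ref{thm: saddle point flow}.
\begin{enumerate}
	\item Since $\Psi_l=-2A_lW_l\le 0$, we get $\mathsf H_\s W_l\le -cW_l\le 0$. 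Thus $W_l(\gamma(r))\le W_l(\mathring\gamma)<\de$ for all $r\ge 0$.
	\item Since $\abs{\hat\xi}\le 1$ and the bracket in $\Phi_l$ is bounded by $\sqrt{V_l}$ by Cauchy--Schwarz (the $\hat\xi_j^2$ sum over $\q_j\le\check\q_{l-1}$ being zero), we get $\abs{\Phi_l}\le C A_l\sqrt{V_l}\le C W_l\sqrt{V_l}$.
\end{enumerate}
Combining these, along $\gamma$ for $r\ge0$ we obtain
\[
	\mathsf H_\s V_l\ \ge\ cV_l - C\de\sqrt{V_l}.
\]

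Now choose $\de_0$ so that $c\de/2 > C\de\sqrt{\de}$ for all $\de<\de_0$; this is possible because the condition is equivalent to $\sqrt{\de}<c/(2C)$. At any time $r\ge 0$ with $V_l(\gamma(r))=\de/2$ the right-hand side above is positive. Hence $V_l(\gamma(r))$ can never drop below $\de/2$ for $r\ge 0$, since it starts above $\de/2$. If instead $V_l$ grows beyond the small neighbourhood, it has moved away from $\cR_{l,+}$ anyway. Since $V_l$ vanishes on $\cR_{l,+}$ (Remark~\ref{rmk: quadratic def}), convergence to $\cR_{l_\min,+}=\cR_{l,+}$ is impossible. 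The dichotomy of Theorem~\ref{thm: bicharacteristic flow}~\eqref{item: future dichotomy} then gives $\abs{y(\gamma(r))}\to\infty$, which completes the argument.

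The main obstacle is the error estimate $\abs{\Phi_l}\le CW_l\sqrt{V_l}$. Its proof uses the special structure $l_\min=l$: this both kills the lower-frequency contributions to $A_l$ and to $V_l$ and makes $A_l$ nonnegative and controlled by $W_l$. It also requires the monotonicity of $W_l$ to keep $W_l$ small for all forward times.
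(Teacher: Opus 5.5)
Your argument is correct. Its outer structure matches the paper's. The case $l_\min<l$ is settled by the dichotomy in part (b) of Theorem~\ref{thm: bicharacteristic flow}; you keep track of the escape alternative there, which the paper's first line does not. For $l=l_\min$, both arguments rule out convergence to $\cR_{l,+}$ by playing an $O(\de)$ perturbation against an $O(\sqrt{\de})$ unstable component.

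The mechanism for that last step differs. The paper first discards the case $y_j(\mathring\gamma)\neq 0$ for some $\q_j\le\check\q_{l-1}$. It then works componentwise with the Duhamel formula for $\hat\xi_k+(2\q_k-\check\q_l)y_k$. Convergence forces the identity \eqref{eq: integral equality}, whose left side is at most $C\de$ by monotonicity of $A_{l_\min}$ (Lemma~\ref{le: A min max}), while for some $k$ the right side is at least $\sqrt{\de/(2n)}$. You instead run a barrier argument on the quadratic defining function itself, via $\mathsf H_\s V_l\ge cV_l-2A_l\sqrt{V_l}$.
\begin{itemize}
\item Your route absorbs the components with $\q_j<\check\q_l$ automatically, since they only add growing terms to $V_l$. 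It avoids the case analysis over which component diverges, and it gives the quantitative statement $V_l(\gamma(r))>\de/2$ for all $r\ge 0$.
\item The paper's route needs only the linear identities of Lemma~\ref{le: diagonal linearization}, not the saddle-point inequality.
\end{itemize}

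One justification needs repair. In step (1) you use $\Psi_l=-2A_lW_l$ as printed in the proof of Theorem~\ref{thm: saddle point flow}. However, \eqref{eq: bich diagonal xi} gives $\mathsf H_\s W_l=2\sum_{j:\q_j\ge\check\q_{l+1}}(\check\q_l-\q_j)\hat\xi_j^2+2A_lW_l$. So in fact $\Psi_l=+2A_lW_l$, which is nonnegative when $l=l_\min$, and ``$\Psi_l\le 0$'' is not a valid reason.

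Your conclusion still holds. For $l=l_\min$ the same formula reads exactly $\mathsf H_\s W_l=-2A_l(1-W_l)\le 0$. Alternatively, you can bypass $W_l$ altogether. We have $\mathsf H_\s A_l=-2\sum_j(\q_j-\check\q_l)^2\hat\xi_j^2+2A_l^2\le 0$ by Cauchy--Schwarz, so $0\le A_l(\gamma(r))\le A_l(\mathring\gamma)\le C\de$ for $r\ge 0$, which is all your barrier needs.

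Your remark about $V_l$ ``growing beyond the small neighbourhood'' is unnecessary. The differential inequality holds along all of $\gamma$ for $r\ge 0$, so the first-hitting-time argument alone gives $V_l>\de/2$ throughout.
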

\begin{proof}
If $\hat \xi_j(\mathring \gamma) \neq 0$ for some $j$ such that $\q_j \leq \check \q_{l-1}$, then Theorem~\ref{thm: bicharacteristic flow} implies \eqref{eq: r to infty l tilde}, where $\tilde l$ is such that $\check \q_{\tilde l} = \q_j$.
We may therefore assume that $\hat \xi_j(\mathring \gamma) = 0$, and consequently that $\hat \xi_j(\gamma(r)) = 0$ for all $r \in \R$, for all $j$ such that $\q_j \leq \check \q_{l-1}$.
If now $y_j(\mathring \gamma) \neq 0$ for some $j$ such that $\q_j \leq \check \q_{l-1}$, then \eqref{eq: Ham vf} and the fact that $\hat \xi_j(\gamma) = 0$ imply that $\abs{y_k(\gamma(r))} \to \infty$ as $r \to \infty$.
We may therefore assume that also $y_j(\gamma) = 0$ for all $j$ such that $\q_j \leq \check \q_{l-1}$.
The expression for $V_l(\mathring \gamma)$ reduces to
\begin{equation} \label{eq: Vl gamma exp}
	V_l(\mathring \gamma)
		= \sum_{j: \q_j \geq \check \q_l} \left( \hat \xi_j + \left( 2\q_j - \check \q_l \right) y_j \right)(\mathring \gamma)^2,
\end{equation}
and we may assume that $l = l_\min$.
Theorem~\ref{thm: bicharacteristic flow} implies that $\lim_{r \to \infty} \gamma(r) \in \cR_{l, +}$ or $\abs{y(\gamma(r))} \to \infty$ as $r \to \infty$.
Equation~\eqref{eq: bich diagonal xi y} implies that
\begin{align*}
	\hat \xi_k(\gamma(r)) + \left( 2 \q_k - \check \q_{l_\min} \right) y_k(\gamma(r))
		= & \ e^{\q_kr} \int_0^r e^{-\q_ks} \left( A_l(\gamma(s)) \hat \xi_k(\gamma(s)) \right) \md s, \\*
		& \ + e^{\q_kr} \left( \hat \xi_k(\mathring \gamma) + \left( 2 \q_k - \check \q_l \right) y_k(\mathring \gamma) \right),
\end{align*}
for $k = 1, \hdots, n$.
If the left-hand side is diverging for some $k$, then the proof is complete.
Assume therefore that the left-hand side converges for every $k$, which means that
\begin{equation} \label{eq: integral equality}
	- \int_0^\infty e^{-\q_ks} A_{l_\min}(\gamma(s)) \hat \xi_k(\gamma(s)) \md s
		= \hat \xi_k(\mathring \gamma) + \left( 2 \q_k - \check \q_{l_\min} \right) y_k(\mathring \gamma).
\end{equation}
Since $\abs{A_{l_\min}(\mathring \gamma)} < \left( \check \q_n - \check \q_1 \right) W_l(\mathring \gamma) \leq \left( \check \q_n - \check \q_1 \right) \de < 1$ if $\de > 0$ is small enough, Lemma~\ref{le: A min max} implies that $\abs{A_{l_\min}(\gamma(r))}$ is strictly decreasing in $r$.
We can therefore estimate the left-hand side of \eqref{eq: integral equality} by
\[
	\int_0^\infty e^{-\q_ks} \abs{A_{l_\min}(\gamma(s)) \hat \xi_k(\gamma(s))} \md s
		\leq \abs{A_{l_\min}(\mathring \gamma)} \int_0^\infty e^{-\q_ks} \md s
		\leq \frac{\check \q_n - \check \q_1}{\check \q_1} \de.
\]
On the other hand, since $\sum_{j = 1}^n \left( \hat \xi_k(\mathring \gamma) + \left( 2 \q_k - \check \q_{l_\min} \right) y_k(\mathring \gamma) \right)^2 = V_l(\mathring \gamma) \geq \frac \de 2$, there is at least one $k$ with $\abs{\hat \xi_k(\mathring \gamma) + \left( 2 \q_k - \check \q_{l_\min} \right) y_k(\mathring \gamma)} \geq \sqrt{\frac{\de}{2n}}$.
This is a contradiction to \eqref{eq: integral equality} for at least one $k$, if $\de$ is small enough. 
\end{proof}

\subsection{The spectral family of Fredholm operators}

The purpose of this subsection is to prove that the \emph{mode operator}, or \emph{Fourier-transformed in time} wave wave operator is an analytic family of Fredholm operator between appropriate Hilbert spaces. 
The main input is the saddle point structure for the lightlike geodesics (lifted to phase space) in the previous subsection, and the microlocal saddle point estimates proven in Theorem~\ref{thm: saddle points} and Theorem~\ref{thm: saddle points semiclassical}.
Let us first fix the setting for the rest of the section:
\begin{assumption} \label{ass: setting Fredholm}
Let $(M, g)$ be a Kasner-type spacetime and let $\P$ be as in Theorem~\ref{thm: main QNM} and let $\mP := t^2 \P$.
For a fixed $a > 0$, define
\[
	\mX_a
		:= \left\{ y \in \R^n \mid \sum_{j = 1}^n \q_j y_j^2 \leq \frac1{\min_j \q_j } + a \right\}.
\]
\end{assumption}
We note that $[\T, \mP] = [\T, t^2\P] = 0$.
Following the philosophy of \cite{V2013}, we will study the wave equation in the open subset 
\begin{equation} \label{eq: Y a}
	\mY_a 
		:= \R_\tau \times \mX_a \subset M,
\end{equation}
which contains $J^+(\gamma)$, by studying a spectral problem on $\mX_a$.
We define the spectral family of mode operators
\begin{equation}\label{eq: hat P sigma}
	\hat \mP(\s): \ C^\infty(\mX_a) \to C^\infty(\mX_a),
	\quad
	\hat \mP(\s) v(y)
		:= e^{i\s \tau} \mP \left( e^{- i\s \tau} v(y) \right), \ y \in \mX_a.
\end{equation}
\begin{remark} \label{rmk: QNMs and mode operator}
Recall that $\d_\tau = \T$.
The point is that $u \in \mY_a$ satisfies 
\[
	\mP u 
		= 0,
	\qquad 
	\left(\T + i \s \right) u
		= 0,
\]
i.e.~$u$ is a QNM for $\mP$, if and only if $u(\tau, x) = e^{-i\s\tau}v(y)$ with $\hat \mP (\s) v = 0$.
\end{remark}
In this section, we find appropriate Hilbert spaces on which $\hat \mP(\s)$ is an analytic Family of Fredholm operators, with a meromorphic continuation of the resolvent $\hat \mP(\s)^{-1}$.

\subsubsection{The Fredholm operators}

Note that by \eqref{eq: dual metric normalized},
\[
	\mP
		= \left( \d_\tau + \sum_{j = 1}^n \q_j y_j \d_{y_j} \right)^2 - \sum_{j = 1}^n \d_{y_j}^2 + \A(y) \d_\tau + \sum_{j = 1}^n \B_j(y) \d_{y_j} + \Cm(y),
\]
where $\A, \B_1, \hdots, \B_n, \Cm$ are square matrices independent of $\tau$.
It follows that
\begin{equation}\label{eq: hat P sigma explicit}
	\hat \mP(\s)
		= - \left(\s - \sum_{j = 1}^n \q_j y_j \frac1i \d_{y_j} \right) - \sum_{j = 1}^n \d_{y_j}^2 - i \s \A(y) + \sum_{j = 1}^n \B_j(y) \d_{y_j} + \Cm(y),
\end{equation}
The principal symbol of $\hat \mP(\s)$ is given by 
\begin{equation} \label{eq: principal symbol mode operator}
	p 
		= \sum_{j = 1}^n \xi_j^2 - \left( \sum_{j = 1}^n \q_j y_j  \xi_j\right)^2.
\end{equation}
In order to verify the hypotheses of Section~\ref{sec: saddle point estimates}, we need to rescale the principal symbol to $\tilde p := \frac1{\abs \xi^2}p = 1 - \left( \sum_{j = 1}^n \q_j y_j  \hat \xi_j\right)^2$, which now extends smoothly up to fiber infinity $S^*\R^n$.
Note that the characteristic set of $\hat \mP(\s)$ is therefore given by the two components $\d \S_{\s, \pm} = \d \S_{0, \pm} \subset S^*\R^n$.
The Hamiltonian vector field, rescaled by the factor $\frac1{2\abs \xi}$ in order for it to smoothly extend (and be tangent to) fiber infinity $S^*M$, is given by
\[
	\tilde H_p|_{\d \S_{0,\pm}}
		= \mathsf H_\s|_{\d \S_{0,\pm}}
		= \mathsf H_0|_{\d \S_{0,\pm}},
\]
where $\mathsf H_0$ is given by \eqref{eq: Ham vf}.

\begin{lemma} \label{le: saddle points in our setting}
Let $a > 0$ and let $\mP$ and $\mX_a$ be as in Assumption~\ref{ass: setting Fredholm}.
Let $l \in \{1, \hdots, m\}$ and $\s \in \C$.
Then $\cR_{+, l} \subset \oT \R^n$ is a saddle manifold for $\hat \mP(\s)$, defined in \eqref{eq: hat P sigma}, in the sense of Definition~\ref{def: saddle manifold}.
\end{lemma}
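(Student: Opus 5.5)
We have not yet seen Definition~\ref{def: saddle manifold}. We assume it has the form standard in \cite{HV2015}*{Sec.~2} and \cite{HPV2025}*{Sec.~4}. Under this assumption, $\cR$ is a saddle manifold for an operator with real principal symbol $\tilde p$ if the following hold. First, $\cR$ is a smooth submanifold of $\d\Sigma = \{\tilde p = 0\}\cap S^*\R^n$ that is invariant under the rescaled Hamiltonian flow $\tilde H_p$. Second, there are functions $V, W$ on $\d\Sigma$ such that $V+W$ is a quadratic defining function for $\cR$ in $\d\Sigma$. Third, for some $c>0$ we have $\tilde H_p V \geq cV + \Phi$ and $\tilde H_p W \leq -cW + \Psi$ with $\Phi,\Psi$ cubically vanishing at $\cR$. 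Fourth, $\tilde H_p \rho = \beta\rho$ (possibly up to higher order terms) at $\cR$ with $\beta$ of a definite sign, possibly constant. The plan is to check these conditions one by one against the results already proven for the bicharacteristic flow.

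\textbf{Step 1: the characteristic set and flow.} By \eqref{eq: principal symbol mode operator} the principal symbol of $\hat\mP(\s)$ is independent of $\s \in \C$. The term $-i\s\A$ and the remaining $\s$-dependent terms are of lower order, although they enter threshold or subprincipal conditions if the definition has them. Consequently the characteristic set is $\d\Sigma_{0,+}\sqcup \d\Sigma_{0,-}$, and the rescaled Hamiltonian vector field there is $\mathsf H_0$, as noted after \eqref{eq: principal symbol mode operator}. Smoothness of $\cR_{l,+} = \overline{\L_{l,+}}\cap S^*\R^n$ is checked directly. It is the set $\{\hat\xi_j = -\q_j y_j : \q_j=\check\q_l\}\cap\{y_j=\hat\xi_j=0 : \q_j\neq\check\q_l\}$, where for $j$ with $\q_j=\check\q_l$ the $\hat\xi_j$ lie on the unit sphere; this is a smooth sphere of dimension $\#\{j:\q_j=\check\q_l\}-1$. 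Invariance under the flow follows from Proposition~\ref{prop: Stationary points}, since $\mathsf H_0$ vanishes on $\cR_{l,+}$.

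\textbf{Step 2: defining functions and the dynamics.} Take $V=V_l$ and $W=W_l$ from \eqref{eq: Vl Wl}. Remark~\ref{rmk: quadratic def} gives that $V_l+W_l$ is a quadratic defining function in $\d\Sigma_{\s,+}$. Theorem~\ref{thm: saddle point flow} gives the required inequalities with cubically vanishing errors. For the fiber-direction condition, \eqref{eq: Hp rho} gives $\mathsf H_0\rho = (\check\q_l + A_l)\rho$ for the weight $\rho=|\xi|^{-1}$. Since $A_l$ vanishes on $\cR_{l,+}$, this gives $\beta = \check\q_l > 0$ at $\cR_{l,+}$. Here the sign convention (whether $\rho$ decreases or increases along the flow) must be matched with the definition, and the signs in \eqref{eq: Ham vf} and \eqref{eq: Hp rho} need to be reconciled carefully. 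If the definition additionally asks for a global condition, namely that bicharacteristics near $\cR$ with $W>0$ leave toward other sources/sinks or escape, this is exactly what Corollaries~\ref{cor: propagation into larger source} and \ref{cor: backward propagation smaller larger source} provide, and those would be invoked.

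\textbf{Step 3: the component $\cR_{l,-}$ and the main obstacle.} The component $\cR_{l,-}$, if the definition also involves it, is handled via the symmetry \eqref{eq: negative of Hamiltonian VF}, which reverses the roles of $V$ and $W$ under time reversal. The main obstacle is the bookkeeping: matching the exact hypotheses of Definition~\ref{def: saddle manifold}. Two points need care. The first is that the estimates in Theorem~\ref{thm: saddle point flow} are stated on $\d\Sigma$ only, whereas the definition may require them in a neighbourhood in $S^*\R^n$ or up to $\tilde p$-multiples; in that case I would extend $V_l$ and $W_l$ off $\d\Sigma$ arbitrarily and absorb the difference into terms of the form $\tilde p\cdot(\text{smooth})$. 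The second is the degenerate case $2\q_j = \check\q_l$, which is exactly why the extra term appears in $V_l$, and this is already covered by Lemma~\ref{le: degeneration linearization}.
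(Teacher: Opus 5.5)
Your argument follows the paper's route: (A.1) comes from Proposition~\ref{prop: Stationary points}. (A.3) comes from Theorem~\ref{thm: saddle point flow} and Remark~\ref{rmk: quadratic def} with $\rho_1 = V_l$ and $\rho_2 = W_l$. (A.4) comes from \eqref{eq: Hp rho} with $\b_0 = \check\q_l + A_l$, so that $\b_0|_{\cR_{+,l}} = \check\q_l > 0$.

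\textbf{The gap.} You never verify condition (A.2) of Definition~\ref{def: saddle manifold}, namely $\md\tilde p|_{\cR_{+,l}} \neq 0$. Your guessed list of conditions leaves it out. You nevertheless use it tacitly: a quadratic defining function of $\cR_{+,l}$ inside the characteristic set only makes sense once that set is known to be smooth near $\cR_{+,l}$. The check is short. Write $\tilde p = 1 - \hat S^2$ with $\hat S := \sum_j \q_j y_j \hat\xi_j$. On $\cR_{+,l}$ one has $\hat S = -\sum_{j:\q_j = \check\q_l}\hat\xi_j^2 = -1$, hence
\[
	\md\tilde p|_{\cR_{+,l}} = 2\sum_{k=1}^n \q_k\left(\hat\xi_k\,\md y_k + y_k\,\md\hat\xi_k\right).
\]
The $\md y_k$-coefficients $2\q_k\hat\xi_k$ cannot all vanish, because $\abs{\hat\xi} = 1$ and $\q_k > 0$. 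Adding this check completes the proof.

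\textbf{Unneeded caveats.} Your remaining worries can be dropped. (A.3) is only required on the characteristic set intersected with $\{V_l + W_l < \de\}$. For small $\de$ this set lies in the $+$ component, which is exactly where Theorem~\ref{thm: saddle point flow} is stated, so no extension off the characteristic set is needed. There is also no sign to reconcile between \eqref{eq: Ham vf} and \eqref{eq: Hp rho}: differentiating $\rho = \abs{\xi}^{-1}$ along \eqref{eq: Ham vf} gives $\mathsf H_\s\rho = \left(\sum_k \q_k\hat\xi_k^2\right)\rho = (\check\q_l + A_l)\rho$, in agreement with \eqref{eq: Hp rho}. Finally, the discussion of $\cR_{-,l}$ and of global dynamics is not part of this lemma.
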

\begin{proof}
Note first that $\cR_{+,l} \subset \oT \R^n$ is a smooth closed submanifold. 
Proposition~\ref{prop: Stationary points} implies that condition (A.1) in Definition~\ref{def: saddle manifold} is satisfied.
We compute
\[
	\md \tilde p
		= 2 \sum_{j = 1}^n \q_j y_j  \hat \xi_j \sum_{k = 1}^n \q_k \left(\hat \xi_k \md y_k + y_k \md \hat \xi_k \right),
\]
which is non-zero at $\cR_{+,l}$. 
Hence condition (A.2) in Definition~\ref{def: saddle manifold} is also satisfied.
Condition (A.3) in Definition~\ref{def: saddle manifold} is a consequence of Theorem~\ref{thm: saddle point flow}.
Lemma~\ref{le: diagonal linearization} implies that $\tilde H_p \rho = \left( \check \q_l + A_l \right) \rho$.
The condition (A.4) in Definition~\ref{def: saddle manifold} is therefore satisfied with
\begin{equation} \label{eq: beta 0}
	\b_0 
		:= \check \q_l + A_l,
\end{equation}
since $\b_0|_{\cR_{+, l}} = \check \q_l > 0$.
\end{proof}

It remains to compute the threshold quantity for $\hat \mP(\s)$ as in Definition~\ref{def: saddle manifold threshold}.
Fixing the volume form $\mathrm{Vol} := \md y_1 \wedge \hdots \wedge \md y_n$, \eqref{eq: hat P sigma explicit} implies that
\[
	\frac{\hat \mP(\s) - \hat \mP(\s)^*}{2i}
		= - 2 \Im(\s) \sum_{j = 1}^n \q_j y_j \frac1i \d_{y_j} + \sum_{j = 1}^n \Im(\B_j(y)) \frac1i\d_{y_j} + \text{ 0th order terms},
\]
which has principal symbol $p_1 = - 2 \Im(\s) \sum_{j = 1}^n \q_j y_j \xi_j + \sum_{j = 1}^n \B_j(y) \xi_j$.
Rescaling this appropriately to extend smoothly to fiber infinity $S^*\R^n$ (see Section~\ref{sec: saddle point estimates}), we get 
\[
	\tilde p_1 = \frac1{\abs \xi} p_1 = - 2 \Im(\s) \sum_{j = 1}^n \q_j y_j \hat \xi_j + \sum_{j = 1}^n \hat \xi_j \Im(\B_j(y)).
\]
Restricting this to the saddle manifold $\cR_{+, l}$ gives $\tilde p_1|_{\cR_{+, l}} = \sum_{\q_j = \check \q_l} \hat \xi_j \Im(\B_j(y))|_{\cR_{+, l}} - 2 \Im(\s)$.
We have therefore computed the threshold quantity $\tilde \b$ on $\cR_{+, l}$ in Definition~\ref{def: saddle manifold threshold} to be
\[
	\tilde \b_\s
		= \frac1{\b_0} \tilde p_1|_{\cR_{+, l}}
		= \frac1{\check \q_l} \sum_{\q_j = \check \q_l} \hat \xi_j \Im(\B_j)|_{\cR_{+, l}} - \frac2{\check \q_l} \Im(\s).
\]
Let us define $\tilde \b_{\max}(\s) := \max_{l = 1, \hdots, m}\max_{x \in \cR_{+, l}} \norm{\tilde \b_\s(x)}_{\mathrm{op}}$, and note that 
\[
	\tilde \b_{\max}(\s) 
		\leq \tilde \b_{\max}(0) - \frac2{\check \q_m} \Im(\s),
\]
since $\check \q_m \geq \check \q_1, \hdots, \check \q_{m-1}$.

We are now ready to state the main theorem. 

\begin{thm}[Fredholm property of $\hat \mP(\s)$] \label{thm: Fredholm property}
Let $a > 0$ and let $\mP$ and $\mX_a$ be as in Assumption~\ref{ass: setting Fredholm}.
Let $\a \in \R$, and let $s > \frac12 + \frac2{\check \q_m} \a + \tilde \b_\max(0)$.
Then 
\begin{equation} \label{eq: Fredholm map}
	\hat \mP(\s) : \{u \in \bar H^s(\mX_a) \mid \hat \mP(0) u \in \bar H^{s-1}(\mX_a) \} \to \bar H^{s-1}(\mX_a)
\end{equation}
is an analytic family of Fredholm operators for $\s \in \C$ with $\Im(\s) > \a$.
Moreover, 
\begin{enumerate}
	\item $\ker_{\bar H^s(\mX_a)} \hat \mP(\s) \subset \bar C^\infty(\mX_a)$, 
	\item $\ker_{\dot H^{-s +1}(\mX_a)} \hat \mP(\s)^* \subset \cap_{\epsilon > 0} \dot H^{\frac12 - \tilde \b_\max(\s) - \epsilon}(\mX_a)$, 
	\item $f \in \bar H^{s-1}(\mX_a)$ lies in the range of \eqref{eq: Fredholm map} if and only if $\ldr{f, u^*}_{L^2(\mX_a)} = 0$ for all $u^* \in \dot H^{-s + 1}(\mX_a)$ with $\hat \mP(\s)^* u^* = 0$.
\end{enumerate}
\end{thm}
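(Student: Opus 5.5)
The plan is to follow Vasy's recipe from \cite{V2013} (as presented in \cite{H2025}*{Sec.~11}). We prove the two a priori estimates
\begin{align*}
	\norm{u}_{\bar H^s(\mX_a)}
		&\leq C \left( \norm{\hat \mP(\s) u}_{\bar H^{s-1}(\mX_a)} + \norm{u}_{\bar H^{-N}(\mX_a)} \right), \\
	\norm{u^*}_{\dot H^{-s+1}(\mX_a)}
		&\leq C \left( \norm{\hat \mP(\s)^* u^*}_{\dot H^{-s}(\mX_a)} + \norm{u^*}_{\dot H^{-N}(\mX_a)} \right),
\end{align*}
locally uniformly in $\s$ with $\Im(\s) > \a$. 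Together with the compact inclusions $\bar H^s \hookrightarrow \bar H^{-N}$ on the compact domain $\mX_a$, the standard functional analytic argument gives the Fredholm property on the domain space in \eqref{eq: Fredholm map}. Since the principal part of $\hat \mP(\s)$ does not depend on $\s$, this domain is $\s$-independent. Moreover $\s \mapsto \hat \mP(\s)$ is polynomial, hence analytic, in $\s$ as a map from that domain to $\bar H^{s-1}$, so we obtain an analytic family.

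For the first estimate we microlocalize $u$ over $\oT \R^n$.
\begin{itemize}
	\item Away from the characteristic set $\d \S_{0,+} \cup \d \S_{0,-}$, elliptic regularity controls $u$.
	\item At the boundary $\d \mX_a = \mathcal C_a$, Theorem~\ref{thm: bicharacteristic flow} shows that $\mathcal C_a$ is spacelike. This gives the hyperbolic estimate: bicharacteristics leaving through $\mathcal C_a$ transversally carry no incoming information for the extendible space $\bar H^s$, in the sense of \cite{H2025}*{Sec.~11}.
	\item On each $\cR_{l,\pm}$, Lemma~\ref{le: saddle points in our setting} lets us apply the saddle point estimate of Theorem~\ref{thm: saddle points}. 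It applies in the above-threshold regime $s > \frac12 + \tilde \b_\s$. This follows from the hypothesis, since $\tilde \b_\max(\s) \leq \tilde \b_\max(0) - \frac2{\check \q_m}\Im(\s)$, and we shall also need the sign bookkeeping to yield $s - \frac12 > \tilde \b_\max(\s)$ when $\Im(\s) > \a$. The estimate bounds $u$ near $\cR_{l,\pm}$ by $u$ on a punctured neighbourhood in the unstable direction, where $W_l > 0$.
	\item Corollary~\ref{cor: propagation into larger source} shows that every point with $W_l > 0$ flows backwards into some $\cR_{\tilde l, +}$ with $\tilde l > l$.
\end{itemize}
We then run an induction downward in $l$. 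Start at $l = m$, which is a pure source within $\d\S_{0,+}$ since $W_m \equiv 0$, so the radial (high regularity) estimate applies. Propagate forward along $\mathsf H_0$ using real principal type propagation, and then apply the saddle estimate at $l = m-1, \hdots, 1$. The remaining points of $\d\S_{0,+}$ tend as $r \to -\infty$ to some $\cR_{l_\max,+}$ by Theorem~\ref{thm: bicharacteristic flow}(a), so they are covered. The component $\d\S_{0,-}$ is handled by the symmetry \eqref{eq: negative of Hamiltonian VF}.

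For the adjoint estimate all directions are reversed. The space $\dot H^{-s+1}$ is below threshold at every saddle, and the supported space makes $\mathcal C_a$ an initial surface. We propagate from $\mathcal C_a$, where the dichotomy in Theorem~\ref{thm: bicharacteristic flow}(\ref{item: future dichotomy}) applies, and from nearby points, through the saddles in increasing order of $l$. The backward propagation needed near each saddle is supplied by Corollary~\ref{cor: backward propagation smaller larger source}, and at each saddle we use the low regularity version of Theorem~\ref{thm: saddle points}. This low regularity estimate also gives item (2), namely that $u^* \in \dot H^{\frac12 - \tilde\b_\max(\s) - \epsilon}$.

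Item (1) follows from the first estimate applied at all higher regularities $s' \geq s$, since the threshold condition persists as $s'$ increases. Item (3) is the standard duality statement: the range is closed, and it is the annihilator of $\ker \hat \mP(\s)^*$ in the dual space $\dot H^{-s+1} = (\bar H^{s-1})^*$.

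The main obstacle I expect is the global ordering argument. We must check that the saddle estimates chain together without circularity, which rests on the monotone structure in $l$ from Corollaries~\ref{cor: propagation into larger source} and \ref{cor: backward propagation smaller larger source}. We must also verify that the threshold condition is uniform over each $\cR_{l,+}$; this holds because $\cR_{l,+}$ is compact.
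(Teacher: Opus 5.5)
Your proposal is correct and follows the paper's proof for the direct estimate: downward induction in $l$ from the normal source $\cR_{+,m}$, chaining the saddle estimates of Theorem~\ref{thm: saddle points} through Corollary~\ref{cor: propagation into larger source}, the symmetry \eqref{eq: negative of Hamiltonian VF} for $\cR_{-,l}$, elliptic regularity off the characteristic set, and an energy estimate in $\mX_a \backslash \mX_b$ near $\mathcal C_a$; your below-threshold adjoint argument in increasing $l$ via Corollary~\ref{cor: backward propagation smaller larger source} goes further, supplying the finite-dimensional cokernel and item~(2), which the paper's proof leaves implicit because it stops at the semi-Fredholm estimate for $\hat \mP(\s)$. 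The sign bookkeeping you defer is a genuine issue that the paper shares: the hypothesis gives $s > \frac12 + \tilde \b_\max(\s)$ only for $\Im(\s) > -\a$ (the reading used in Corollary~\ref{cor: meromorphic extension}), not for $\Im(\s) > \a$ with $\a < 0$, and moreover for $\Im(\s) < 0$ the shift $-\frac2{\check \q_l}\Im(\s)$ in $\tilde \b_\s$ is largest at $l = 1$, so the bound $\tilde \b_\max(\s) \leq \tilde \b_\max(0) - \frac2{\check \q_m}\Im(\s)$ that you borrow from the paper needs $\check \q_1$ in place of $\check \q_m$ in that regime.
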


\begin{remark}
In the isotropic case, when $\q_1 = \hdots = \q_n$, there are no saddle point estimates needed, as the bicharacteristic flow has only a normal source/sink structure (just set $\q_1 = \hdots = \q_n$ in Theorem~\ref{thm: bicharacteristic flow} and Theorem~\ref{thm: saddle point flow}).
In this case, Theorem~\ref{thm: Fredholm property} follows immediately by the analysis of Vasy in \cite{V2013} (though Big Bang spacetimes were not considered in \cite{V2013}), see also \cite{HV2015}.
The technical novelty of Theorem~\ref{thm: Fredholm property} is that we allow for \emph{anisotropy}, where we do not assume that all $\q_j$ coincide.
\end{remark}

Lemma~\ref{le: saddle points in our setting} in combination with Theorem~\ref{thm: saddle points}, together with the global description of the bicharacteristic flow in Theorem~\ref{thm: bicharacteristic flow}, are the essential ingredients in the proof.

\begin{proof}[Proof of Theorem~\ref{thm: Fredholm property}]
Let $a > 0$ and define $b := a/6$ in order to simplify the notation.
Fix $\psi, \chi \in C_c^\infty(\mX_a)$, such that $\psi(y) = 1$ for $y \in \mX_{2b}$ and $\supp(\psi) \subset \mX_{3b}$, and such that $\chi(y) = 1$ for $y \in \mX_{4b}$ and $\supp(\chi) \subset \mX_{5b}$.
We set $s_0 := \frac12 + \frac2{\check \q_m} \a + \tilde \b_\max(0) > \frac12 + \tilde \b_\max(\sigma)$.

The idea is to extend the source/sink arguments of Vasy in \cite{V2013} to this saddle point flow by successively establish the desired estimate on larger and larger subsets of $\supp(\psi)$, by iteratively applying the saddle point estimate, Theorem~\ref{thm: saddle points}.
Lemma~\ref{le: saddle points in our setting} implies that all assumptions of Theorem~\ref{thm: saddle points} holds for the saddle manifold $\cR_{+, l}$, for $l = 1, \hdots, m$, with the threshold regularity given by $\tilde \b_\max(\sigma)$.
By \eqref{eq: negative of Hamiltonian VF}, the corresponding properties holds at $\cR_{-, l}$, for $l = 1, \hdots, m$, where we instead propagate in the direction of $- \mathsf H_\s$.
We want to use this to establish microlocal control of the regularity at every saddle manifold $\cR_{\pm, l}$, for $l = 1, \hdots, m$.
Once we have done that, then Theorem~\ref{thm: bicharacteristic flow}, together with standard propagation of singularities and elliptic estimates away from the characteristic set, will prove our conclusion.

Recall the definition of $V_l$ and $W_l$ in \eqref{eq: Vl Wl}.
Since $W_m = 0$, the saddle manifold $\cR_{+, m}$ is actually a normal source.
We can therefore apply Theorem~\ref{thm: saddle points} with $E = 0$ and conclude that there is a pseudodifferential operator $B_m$, which is elliptic in $\cR_{+, m}$, such that for all $s, N \in \R$, with $s > s_0$, there exists a $C > 0$, such that
\begin{equation} \label{eq: elliptic max}
	\norm{B_{+,m} u}_{H^s}
		\leq C \left( \norm{\chi \hat \mP(\s) u}_{H^{s-1}} + \norm{\chi u}_{H^{s_0}} + \norm{\chi u}_{H^{-N}} \right),
\end{equation}
for any $u \in \D'(\R^n)$ in the strong sense that if the right-hand side is finite, then the left-hand side is finite as well.
Note here that we have replaced the $G$ in the estimate of Theorem~\ref{thm: saddle points} by $\chi$, which is possible by elliptic regularity, since we can choose $G$ such that $\WF'(G) \subset \supp(\chi)^\circ$.

We have thus established microlocal control of the regularity in a neighbourhood of $\cR_{+, m}$ and would like to propagate this to the other saddle manifolds. 
For this, we note that Theorem~\ref{thm: saddle points} implies that there is a pseudodifferential operator $B_l$, which is elliptic in $\cR_{+, l}$, such that for all $s, N \in \R$, with $s > s_0$, there exists a $C > 0$, such that
\[
	\norm{B_{+,l} u}_{H^s}
		\leq C \left( \norm{\chi \hat \mP(\s) u}_{H^{s-1}} + \norm{\chi u}_{H^{s_0}} + \norm{E u}_{H^s} + \norm{\chi u}_{H^{-N}} \right),
\]
where $E$ is any pseudodifferential operator where
\[
	\{\tilde p \leq \de^2 \} \cap \{ \rho \leq \de^2 \} \cap \{ V_l \leq \de \} \cap \{ \de/2 \leq W_l \leq \de \} 
		\subset \Ell(E),
\]
for a suitably small $\de > 0$ and for any $u \in \D'(\R^n)$ in the strong sense that if the right hand side is finite, then the left-hand side is finite as well.
In order to conclude regularity at $\cR_{+, l}$, we therefore need to have an estimate in $\Ell(E)$.
We therefore choose $E$ such that 
\[
	\Ell(E)
		\subset \{\tilde p \leq 2 \de^2 \} \cap \{ \rho \leq 2 \de^2 \} \cap \{ V_l \leq 2 \de \} \cap \{ 2 \de/2 \leq W_l \leq \de \}.
\]
By elliptic regularity theory, it suffices to prove that we have regularity in
\[
	\Char(P) \cap \Ell(E)
		\subset K_l := \{\tilde p = 0 \} \cap \{ \rho = 0\} \cap \{ V_l \leq 2 \de \} \cap \{ 2 \de/2 \leq W_l \leq \de \}
		\subset S^*\R^n.
\]
The point is that, by Corollary~\ref{cor: propagation into larger source}, any bicharacteristic in $\d \S_{0, +} \subset S^*\R^n$ through this set will approach to a saddle manifold $\cR_{+, \tilde l}$ to the past, for some $\tilde l > l$.

In order to perform an induction argument over $l$, let us therefore assume that we have an estimate of the form \eqref{eq: elliptic max} in $\cup_{\tilde l > l} \cR_{+, \tilde l}$.
Since $\mathrm K_l$ is compact, standard propagation of singularities, see e.g.~\cite{H2025}*{Thm.~8.10}, implies that
\[
	\norm{Eu}_{H^s} 
		\leq C \left( \norm{\chi \hat \mP(\s) u}_{H^{s-1}} + \sum_{\tilde l > l} \norm{B_{+,\tilde l} u}_{H^s} + \norm{\chi u}_{H^{-N}} \right)
\]
for some $C > 0$, and therefore, for all $s, N \in \R$, with $s > s_0$, there exists a $C > 0$, such that
\begin{equation} \label{eq: saddle point + at l}
	\norm{B_{+,l} u}_{H^s}
		\leq C \left( \norm{\chi \hat \mP(\s) u}_{H^{s-1}} + \norm{\chi u}_{H^{s_0}} + \norm{\chi u}_{H^{-N}} \right),
\end{equation}
in the usual strong sense, in case the corresponding estimate holds for all $\tilde l > l$.
Since we have shown the base case for the induction over $l = m, m-1, \hdots, 1$ in \eqref{eq: elliptic max}, we conclude by induction the estimate \eqref{eq: saddle point + at l} in the strong sense, for each $l = 1, \hdots, m$.

By propagating backwards along the Hamiltonian vector field, we similarly obtain an analogous estimate for all $s, N \in \R$, with $s > s_0$, there exists a $C > 0$, such that
\begin{equation} \label{eq: saddle point - at l}
	\norm{B_{-,l} u}_{H^s}
		\leq C \left( \norm{\chi \hat \mP(\s) u}_{H^{s-1}} + \norm{\chi u}_{H^{s_0}} + \norm{\chi u}_{H^{-N}} \right),
\end{equation}
where $B_{-,l}$ is elliptic at $\cR_{-, l}$, for $l = 1, \hdots, m$.

We have thus established control of the desired estimate at every saddle point. 
Fix operators $B_\pm \in \Psi^0(\mX_a^\circ)$ with Schwartz kernels supported in $\mX_{3b} \times \mX_{3b}$, such that $B_\pm$ is elliptic on $\d \S_{\s, \pm} \cap S^*_{\supp(\psi)} \mX_a$ and $\WF'(B_\pm) \cap \d \S_{\s, \mp} = \emptyset$.
Standard propagation of singularities, e.g.~\cite{H2025}*{Prop.~8.10}, together with our obtained estimates in \eqref{eq: saddle point + at l} and \eqref{eq: saddle point - at l}, and the structure of the bicharacteristic flow in Theorem~\ref{thm: bicharacteristic flow} implies that for all $s, N \in \R$, with $s > s_0$, there exists a $C > 0$, such that
\begin{equation} \label{eq: global char set estimate}
	\norm{B_\pm u}_{H^s}
		\leq C \left( \norm{\chi \hat \mP(\s) u}_{H^{s-1}} + \norm{\chi u}_{H^{s_0}} + \norm{\chi u}_{H^{-N}} \right),
\end{equation}
in the usual strong sense.
Moreover, fix an operator $B \in \Psi^0(\mX_a^\circ)$ with Schwartz kernels supported in $\mX_{3b} \times \mX_{3b}$, such that $\WF'(B) \cap \d \S_{\s, \pm} = \emptyset$.
Standard elliptic regularity, e.g.~\cite{H2025}*{Exercise~6.12}, implies the estimate for all $s, N \in \R$, there exists a $C > 0$, such that
\begin{equation} \label{eq: first elliptic estimate}
	\norm{B u}_{H^s}
		\leq C \left( \norm{\chi \hat \mP(\s) u}_{H^{s-2}} + \norm{\chi u}_{H^{-N}} \right),
\end{equation}
in the usual strong sense. 
Finally note that we can choose this $B$ such that $\Ell(B_+) \cup \Ell(B_-) \cup \Ell(B) \supset \supp(\psi)$.
Another application of standard elliptic regularity, \cite{H2025}*{Exercise~6.12}, implies that the esimates \eqref{eq: global char set estimate} and \eqref{eq: first elliptic estimate} combine to imply that for all $s, s_0, N \in \R$, with $s > s_0$ on $\cR$, there exists a $C > 0$, such that
\begin{equation} \label{eq: X 3b estimate}
	\norm{u}_{\bar H^s(\mX_{2b})}
		\leq \norm{\psi u}_{H^s}
		\leq C \left( \norm{\chi \hat \mP(\s) u}_{H^{s-1}} + \norm{\chi u}_{H^{s_0}} + \norm{\chi u}_{H^{-N}} \right),
\end{equation}
in the usual strong sense, that if the right-hand side is finite, so is the left-hand side.

We finally want to estimate $u$ in the region $\mX_a \backslash \mX_b$, which together with \eqref{eq: X 3b estimate} would give the desired global semi-Fredholm estimate. 
For this, we simply generalize the corresponding step in the proof of \cite{H2025}*{Thm.~11.16} using a simple application of a standard energy estimate for wave equations, see \cite{H2025}*{Cor.~9.38}.
We claim that with $f := \sum_{j = 1}^n \q_j y_j^2$, then $\md f$ is \emph{timelike} in $\mX_a \backslash \mX_b$.
Indeed, with $\g^{-1}$ given in \eqref{eq: dual metric normalized}, we compute
\[
	\g^{-1}(\md f, \md f)
		= 4 \g^{-1} \left( \sum_{j = 1}^n \q_j y_j \md y_j, \sum_{j = 1}^n \q_j y_j \md y_j \right)
		= 4 \left( 1 - \sum_{j = 1}^n \q_j^2 y_j^2 \right) \sum_{k = 1}^n \q_k^2 y_k^2
		< 0,
\]
since $\sum_{j = 1}^n \q_j^2 y_j^2 > 1$ in $\mX_a \backslash \mX_b$.
It follows that $\mX_a \backslash \mX_b^\circ$ is an admissible domain in the sense of \cite{H2025}*{Def.~9.14} with $f_1 = f - b$ and $f_2 = a - f$.
Therefore, \cite{H2025}*{Cor.~9.38} applies with $\Omega_{- \de, 0} = \mX_a \backslash \mX_{2b}^\circ$ and $\Omega = \mX_a \backslash \mX_b^\circ$, and implies that the energy estimate
\begin{align*}
	\norm{u}_{\bar H^s(\mX_a \backslash \mX_b)}
		&\leq C \left( \norm{\hat \mP(\s) u}_{\bar H^{s-1}(\mX_a \backslash \mX_b)} + \norm{u}_{\bar H^s(\mX_{2b} \backslash \mX_b)} \right) \\
		&\leq C \left( \norm{\hat \mP(\s) u}_{\bar H^{s-1}(\mX_a)} + \norm{u}_{\bar H^s(\mX_{2b})} \right),
\end{align*}
in the usual strong sense.
This combines with \eqref{eq: X 3b estimate} to conclude that
\[
	\norm{u}_{\bar H^s(\mX_a)}
		\leq C \left( \norm{\hat \mP(\s) u}_{\bar H^{s-1}(\mX_a)} + \norm{u}_{\bar H^{s_0}(\mX_a)} \right),
\]
in the usual strong sense.
This is the desired semi-Fredholm estimate for $\hat \mP(\s)$.
\end{proof}

\subsubsection{The meromorphic continuation of the resolvent}
The spectral family $\hat \mP(\s)$ is invertible if $\Im(\s)$ is large enough, which relies on the following rough energy estimate:
\begin{prop}[A rough energy estimate] \label{prop: rough energy estimate}
Let $a > 0$ and let $\mP$ and $\mX_a$ be as in Assumption~\ref{ass: setting Fredholm}.
Let $s \in \N_0$. 
There is a constant $C_0 > 0$, such that if $w \in C^\infty\left(\mY_a\right)$ with $\supp(w) \subset \{\tau > 0\}$, the unique $v \in C^\infty\left(\mY_a\right)$ to $\mP v = w$ with $\supp(v) \subset \{\tau > 0\}$, satisfies
\[
	\norm{e^{- C_0 \tau}v}_{H^s\left(\mY_a \right) }
		 \leq C_0 \norm{e^{- C_0 \tau}w}_{H^{s-1}\left(\mY_a \right) }.
\]
\end{prop}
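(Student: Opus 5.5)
The plan is a standard energy estimate for $\mP$ on the domain $\mY_a = \R_\tau \times \mX_a$, using the timelike function $\tau$ together with the lateral function $f = \sum_j \q_j y_j^2$. The weight $e^{-C_0\tau}$ comes from Grönwall in $\tau$, and higher regularity comes from commuting with $\d_\tau$ and spatial vector fields.

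\textbf{Step 1: the domain is admissible.} Since $\md\tau$ is timelike for $\g$ by \eqref{eq: dual metric normalized}, the level sets $\{\tau = T\}$ are spacelike. The lateral boundary $\R_\tau \times \d\mX_a$ is a level set of $f$. In the proof of Theorem~\ref{thm: Fredholm property} we computed $\g^{-1}(\md f, \md f) < 0$ wherever $\sum_j \q_j^2 y_j^2 > 1$, which holds near $\d\mX_a$. Because $\d\mX_a$ lies outside the region where $\d_\tau$ is timelike, $f$ increases along future-directed causal curves there; this should be checked by computing $\g^{-1}(\md f,\md\tau)$. Hence the lateral boundary is spacelike, and waves exit $\mY_a$ through it and never enter. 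So $\{0 \le \tau \le T\}\times\mX_a$ is an admissible domain in the sense of \cite{H2025}*{Def.~9.14}, with $f_1 = \tau$ and $f_2 = \frac1{\min_j\q_j} + a - f$. By finite speed of propagation, the forward solution $v$ with $\supp(v)\subset\{\tau>0\}$ exists, is unique and smooth, and its restriction to each slab is determined by $w$ on that slab.

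\textbf{Step 2: the $H^1$ energy and Grönwall.} Contract the energy–momentum tensor of $\mP$ (principal part $\Box_\g$ acting on $\C^m$) with the timelike multiplier $\d_\tau + N\sum_j\q_j y_j\d_{y_j}$ chosen as in \cite{H2025}*{Sec.~9}, or simply use the energy estimate \cite{H2025}*{Cor.~9.38} on slabs. The lateral boundary term has a good sign, since the boundary is spacelike and outgoing. All coefficients of $\mP$ are independent of $\tau$ and bounded on the compact set $\mX_a$. Therefore the slice energy $E(T) = \norm{v(T)}_{H^1(\mX_a)}^2 + \norm{\d_\tau v(T)}_{L^2(\mX_a)}^2$ satisfies
\[
E'(T) \le C E(T) + C\norm{w(T)}_{L^2(\mX_a)}^2,
\]
with $C$ uniform in $T$ because of $\tau$-translation invariance. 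Grönwall gives $E(T) \le C\int_0^T e^{C(T-T')}\norm{w(T')}_{L^2}^2\,\md T'$. Multiply by $e^{-2C_0T}$ with $C_0 > C$ and integrate in $T$. Young's inequality for the convolution in $T$ then yields
\[
\norm{e^{-C_0\tau}v}_{H^1(\mY_a)} \le C_0\norm{e^{-C_0\tau}w}_{L^2(\mY_a)}.
\]
This is the case $s = 1$. The case $s = 0$ is weaker and follows from it.

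\textbf{Step 3: higher $s$ by commutation.} The operator $\d_\tau$ commutes with $\mP$, so $\d_\tau^k v$ solves $\mP\d_\tau^k v = \d_\tau^k w$ with the same support property. For spatial derivatives, commute with the vector fields $\d_{y_j}$ and use the equation itself: $\mP$ is a wave operator and $\md\tau$ is timelike, so the coefficient of $\d_\tau^2$ is invertible. Hence $\d_\tau^2 v$ is expressed through $\mP v$ and lower-order $\tau$-derivatives. Commutators produce terms of order $\le k$ in $v$, with $\tau$-independent coefficients. An induction on $s$, absorbing these terms via the Grönwall structure and possibly enlarging $C_0$ at each stage, gives the estimate for all $s\in\N_0$.

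\textbf{Main obstacle.} The main difficulty is the lateral boundary near $\d\mX_a$, in two respects. First, the boundary flux must have a good sign for the chosen multiplier, which requires verifying that the boundary is spacelike and outgoing; the ergoregion-like region where $\d_\tau$ is spacelike is why $\d_\tau$ alone cannot serve as the multiplier. Second, the spatial commutators $\d_{y_j}$ do not preserve the boundary. If that causes trouble, I would instead extend the operator beyond $\mX_a$ to a larger domain $\mX_{2a}$ with cutoffs, and rely on domain of dependence, so that interior estimates suffice.
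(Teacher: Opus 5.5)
Your approach is essentially the paper's. It uses the same slab domains with lateral function $\frac{1}{\min_j \q_j}+a-\sum_j \q_j y_j^2$, shown to be spacelike and final via $\g^{-1}(\md f,\md f)<0$ and the sign of $\g^{-1}(\md f,\md\tau)$. It applies the energy estimate \cite{H2025}*{Cor.~9.38} on slabs, made uniform by $\tau$-translation invariance, and obtains the exponential weight by propagating in $\tau$. Your Gr\"onwall step is equivalent to the slab-by-slab iteration of \cite{H2025}*{Lem.~11.14} that the paper invokes.

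One step is wrong as written: the case $s=0$ does not follow from $s=1$. For $s=0$ the claim is $\norm{e^{-C_0\tau}v}_{L^2(\mY_a)}\le C_0\norm{e^{-C_0\tau}w}_{H^{-1}(\mY_a)}$, which controls $v$ by a \emph{weaker} norm of the source. It is therefore not implied by $\norm{e^{-C_0\tau}v}_{H^{1}}\le C_0\norm{e^{-C_0\tau}w}_{L^{2}}$; a highly oscillatory $w$ shows this. You need a separate argument, for instance duality with the $H^1$ estimate for the backward adjoint problem. Alternatively, use the $H^s$ form of the energy estimate, which is how the paper gets every $s\in\N_0$ at once from \cite{H2025}*{Cor.~9.38}.

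There are also two smaller points in Step 3.
\begin{itemize}
\item The commutator $[\mP,\d^{\alpha}]$ with $|\alpha|=k$ has order $k+1$, not $\le k$, so it cannot be absorbed by induction from level $k-1$. Instead, run Gr\"onwall on the combined energy of all $\d^{\alpha}v$ with $|\alpha|\le k$. This works, because that energy controls all derivatives of order $k+1$.
\item Your boundary worry is unfounded. The lateral boundary is spacelike and final, so with a future-directed timelike multiplier its flux is nonnegative and can simply be dropped; no boundary condition is needed. Hence $\d_y^{\alpha}v$ satisfies the same energy estimate on the same domain whether or not $\d_{y_j}$ is tangent to the boundary, and no extension to $\mX_{2a}$ is required.
\end{itemize}
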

\begin{proof}
The first step is to get a local in time energy estimate.
We would like to apply \cite{H2025}*{Cor.~9.38} in the spacetime domains $\Omega = [\tau_0 - 1, \tau_0 + 1] \times \mX_a$ and $\Omega_{0, -1} = [\tau_0 - 1, \tau_0] \times \mX_a$.
For this, we need to first check that $\Omega$ is admissible in the sense of \cite{H2025}*{Def.~10.14}.
Firstly, we choose
\[
	f_1 
		= \tau - (\tau_0-1),
	\quad
	f_2
		= \tau_0 + 1 - \tau, 
	\quad
	f_3
		=  \frac1{\min_j \q_j} + a - \sum_{j = 1}^n \q_j y_j^2. 
\]
Then $\Omega = \cap_{j = 1}^3 \{f_j \geq 0\}$, the boundary components are connected and the differentials of the $f_1, f_2, f_3$ are linearly independent at the corners of $\Omega$.
Moreover, note that
\begin{align*}
	\g^{-1}(\md f_1, \md f_1)|_{f_1 = 0}
		= & \ \g^{-1}(\md f_1, \md \tau)|_{f_1 = 0}
		= - 1, \\
	\g^{-1}(\md f_2, \md f_2)|_{f_2 = 0}
		= & \ - \g^{-1}(\md f_2, \md \tau)|_{f_2 = 0}
		= - 1, \\
	\g^{-1}(\md f_3, \md f_3)|_{f_3 = 0}
		= & \sum_{j = 1}^n \q_j^2 y_j^2 - \left( \sum_{j = 1}^n \q_j^2 y_j^2 \right)^2|_{f_3 = 0} 
		< 0, \\
	\g^{-1}(\md f_3, \md \tau)|_{f_3 = 0}
		= & \ 2 \sum_{j = 1}^n \q_j^2 y_j^2|_{f_3 = 0}
		> 0.
\end{align*}
Therefore $\Omega \cap f_1^{-1}(0)$ is a spacelike \emph{initial hypersurface}, and the hypersurfaces $\Omega \cap f_2^{-1}(0)$ and $\Omega \cap f_3^{-1}(0)$ are spacelike \emph{final} hypersurfaces to the \emph{admissible region} $\Omega$.
Following the notation on \cite{H2025}*{Sec.~9.4}, we note that $\Omega_{-1, 0} = \{f_1 \geq 1\} \cap_{j = 2}^3 \{ f_j \geq 0 \}$.
By \cite{H2025}*{Cor.~9.38} and since $\mP$ is invariant under translations in $\tau$, we thus conclude the energy estimate
\[
	\norm{u}_{H^s([\tau_0, \tau_0 +1] \times \mX_a)}
		\leq C \left( \norm{\mP u}_{H^{s-1}([\tau_0-1, \tau_0 +1] \times \mX_a)} + \norm{u}_{H^s([\tau_0-1, \tau_0] \times \mX_a)} \right),
\]
for any $\tau_0 \in \R$.
Note that for $\tau_0 = 0$, the support conditions imply that 
\[
	\norm{u}_{H^s([0, 1] \times \mX_a)}
		\leq C \norm{\mP u}_{H^{s-1}([0, 1] \times \mX_a)}.
\]
The rest of the proof is the same as the proof of \cite{H2025}*{Lem.~11.14}.
\end{proof}

Theorem~\ref{thm: Fredholm property} combines with Proposition~\ref{prop: rough energy estimate} to give the meromorphic extension of the resolvent.
We present this through several corollaries.

\begin{cor}[Invertibility for large $\Im(\s)$] \label{cor: resolvent large im sigma}
There exists $C \in \R$ such that for all $\s \in \C$ with $\Im(\s) > C$, the operator $\hat \mP(\s)$ in \eqref{eq: Fredholm map} is invertible.
\end{cor}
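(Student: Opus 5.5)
The plan follows \cite{H2025}*{Lem.~11.15}: we convert the rough energy estimate of Proposition~\ref{prop: rough energy estimate} into injectivity of $\hat \mP(\s)$ and of its adjoint for $\Im(\s)$ large. Since Theorem~\ref{thm: Fredholm property} already gives that $\hat \mP(\s)$ in \eqref{eq: Fredholm map} is Fredholm whenever $\Im(\s) > \a$ (for $s$ chosen large enough relative to $\a$), it then suffices to show that the kernel is trivial and that the range is everything. By item (3) of Theorem~\ref{thm: Fredholm property}, the latter is equivalent to triviality of $\ker_{\dot H^{-s+1}(\mX_a)} \hat \mP(\s)^*$.

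\textbf{Injectivity.} Let $v \in \ker \hat \mP(\s)$ with $\Im(\s) > C_0$, where $C_0$ is the constant of Proposition~\ref{prop: rough energy estimate}. By item (1), $v \in \bar C^\infty(\mX_a)$. Set $u := e^{-i\s\tau} v(y)$, so that $\mP u = 0$ on $\mY_a$ by Remark~\ref{rmk: QNMs and mode operator}. Take a cutoff $\chi(\tau)$ vanishing for $\tau \leq 1$ and equal to $1$ for $\tau \geq 2$, and put $w := \mP(\chi u) = [\mP, \chi] u$, which is supported in $1 \leq \tau \leq 2$. By uniqueness of forward solutions, $\chi u$ is the solution provided by Proposition~\ref{prop: rough energy estimate}, hence
\[
	\norm{e^{-C_0\tau} \chi u}_{H^s(\mY_a)} \leq C_0 \norm{e^{-C_0 \tau} w}_{H^{s-1}(\mY_a)} < \infty.
\]
On the other hand, $\abs{e^{-C_0\tau} u} = e^{(\Im(\s) - C_0)\tau} \abs{v}$, which is not square integrable on $\{\tau \geq 2\}$ unless $v = 0$. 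Hence $v = 0$.

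\textbf{Adjoint.} Let $u^* \in \dot H^{-s+1}(\mX_a)$ satisfy $\hat \mP(\s)^* u^* = 0$. We run a duality argument on $\mY_a$, again following \cite{H2025}. For arbitrary $f \in C_c^\infty(\mX_a^\circ)$ and $\phi \in C_c^\infty((0,\infty)_\tau)$, let $v$ be the forward solution of $\mP v = \phi(\tau) f(y)$. For $\Im(\s) > C_0$, Proposition~\ref{prop: rough energy estimate} shows that $e^{-C_0\tau}v$ lies in $L^2$, so the Fourier--Laplace transform $\hat v(\s) := \int e^{i\s\tau} v \,\md \tau$ converges in $\bar H^s(\mX_a)$. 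It satisfies $\hat \mP(\s) \hat v(\s) = \hat \phi(\s) f$. Pairing with $u^*$ gives $\hat\phi(\s)\ldr{f, u^*} = 0$. Choosing $\phi$ with $\hat \phi(\s) \neq 0$ and letting $f$ vary yields $u^* = 0$ in the interior. Since $u^*$ is a supported distribution, whose support is contained in $\mX_a$, we also have to rule out mass on $\d \mX_a$. For this we take $f$ in $\bar C^\infty$ up to the boundary, extended appropriately, which is allowed because the boundary $\{f_3 = 0\}$ is spacelike and final, so data there does not affect the forward solution inside.

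The main obstacle is making this transform/duality step rigorous with the correct function spaces, namely the supported versus extendible Sobolev spaces at the spacelike boundary $\d\mX_a$ together with the needed $\tau$-regularity of $v$. The Sobolev orders must also be uniform, since $s$ depends on $\a$. We handle the latter by fixing $\a := C$ larger than $C_0$ and then choosing $s$ according to Theorem~\ref{thm: Fredholm property}.
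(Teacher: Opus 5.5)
Your proposal is correct and takes essentially the same route as the paper, which simply defers to the proof of \cite{H2025}*{Prop.~11.18}, substituting Theorem~\ref{thm: Fredholm property} and Proposition~\ref{prop: rough energy estimate}. That argument rules out a kernel by excluding exponentially growing modes with the rough energy estimate. It obtains surjectivity by Fourier--Laplace transforming forward solutions with forcing $\phi(\tau) f(y)$, $f \in \bar C^\infty(\mX_a)$; this is equivalent to your adjoint-kernel argument, since the range is closed and $\bar C^\infty(\mX_a)$ is dense in $\bar H^{s-1}(\mX_a)$. The one thing to fix is the order of choices: fix $s$ first (a larger $\Im(\s)$ only lowers the threshold), then take $C \geq C_0(s)$, rather than choosing $s$ after $C$.
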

\begin{proof}
The proof is exactly the same as the proof of \cite{H2025}*{Prop.~11.18} with Theorem~\ref{thm: Fredholm property} replacing \cite{H2025}*{Thm.~11.16}, and with Proposition~\ref{prop: rough energy estimate} replacing \cite{H2025}*{Lem.~11.14}.
\end{proof}

\begin{cor}[Meromorphic continuation of the resolvent] \label{cor: meromorphic extension}
Fix $a > 0$ and $\a \in \R$, and let $s > \frac12 + \frac2{\check \q_m} \a + \tilde \b_\max(0)$.
Then $\hat \mP(\s)$ in \eqref{eq: Fredholm map} has index $0$ and the resolvent 
\begin{equation} \label{eq: resolvent Sobolev}
	\hat \mP(\s)^{-1}: \bar H^{s-1}(\mX_a) \to \bar H^s(\mX_a)
\end{equation}
extends from $\Im(\s) \gg 1$ to a finite-meromorphic family of operators on $\left\{\s \in \C \mid \Im(\s) > - \a \right\}$.
The set of poles with $\Im(\s) > - \a$ is independent of the value of $s > \frac12 + \frac2{\check \q_m} \a + \tilde \b_\max(0)$.
\end{cor}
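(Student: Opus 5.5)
The plan is to run the standard analytic Fredholm argument, with Theorem~\ref{thm: Fredholm property} supplying the Fredholm property and Corollary~\ref{cor: resolvent large im sigma} supplying invertibility somewhere.

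Fix $a>0$, $\a\in\R$ and $s > \frac12 + \frac2{\check \q_m}\a + \tilde\b_\max(0)$. The domain $\mathcal X^s := \{u \in \bar H^s(\mX_a) \mid \hat\mP(0)u \in \bar H^{s-1}(\mX_a)\}$, with the graph norm, does not depend on $\s$. This holds because $\hat\mP(\s) - \hat\mP(0)$ is a first order differential operator whose coefficients are polynomial in $\s$, and it maps $\bar H^s$ into $\bar H^{s-1}$. Consequently $\s\mapsto\hat\mP(\s)$ is a holomorphic family of bounded operators $\mathcal X^s\to\bar H^{s-1}(\mX_a)$. This family is Fredholm on the connected half-plane $\Omega_\a:=\{\Im(\s)>-\a\}$.

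\textbf{Step 1: index zero.} The index is locally constant along a continuous Fredholm family, so it is constant on the connected set $\Omega_\a$. By Corollary~\ref{cor: resolvent large im sigma}, $\hat\mP(\s)$ is invertible for $\Im(\s)>C$, and this set meets $\Omega_\a$. Hence the index is $0$ throughout $\Omega_\a$.

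One point needs care here. Corollary~\ref{cor: resolvent large im sigma} was stated for the space in \eqref{eq: Fredholm map} at some fixed $s$. Its proof, via Proposition~\ref{prop: rough energy estimate} and \cite{H2025}*{Prop.~11.18}, gives injectivity and surjectivity for $\Im(\s)$ large depending on $s$. Since we may enlarge $C$, this is enough.

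\textbf{Step 2: meromorphy.} Apply the analytic Fredholm theorem, e.g.\ \cite{H2025}*{Thm.~11.19} or the Gohberg--Sigal theory. It applies to a holomorphic family of index-zero Fredholm operators on a connected domain that is invertible at one point. It gives that $\hat\mP(\s)^{-1}$ is a meromorphic family on $\Omega_\a$, with poles forming a discrete set and finite rank Laurent coefficients, i.e.\ finite-meromorphic. Composing with the inclusion $\mathcal X^s\hookrightarrow\bar H^s$ gives \eqref{eq: resolvent Sobolev}.

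If one wants to avoid quoting the theorem, the local argument is as follows. Near a point $\s_0$, choose a finite rank operator $F$ such that $\hat\mP(\s_0)+F$ is invertible. Then write
\[
\hat\mP(\s) = \bigl(\hat\mP(\s)+F\bigr)\bigl(I - (\hat\mP(\s)+F)^{-1}F\bigr),
\]
and reduce to a finite dimensional determinant, which is not identically zero by connectedness and Step~1.

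\textbf{Step 3: independence of $s$.} This is the main point requiring care. Take $s'>s$, both above the threshold. Poles in $\Omega_\a$ are exactly the $\s$ at which $\hat\mP(\s)$ fails to be invertible, which by index zero means a nontrivial kernel. By Theorem~\ref{thm: Fredholm property}(1), the kernel on $\bar H^s$ lies in $\bar C^\infty(\mX_a)$, hence in $\bar H^{s'}$. The kernel on $\bar H^{s'}$ trivially lies in $\bar H^s$. So the kernels coincide and the pole sets agree.

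For the resolvents themselves, uniqueness of meromorphic continuation shows they agree on $\bar H^{s'-1}$: they coincide for $\Im(\s)\gg1$, since both are inverses there and $\bar H^{s'}\subset\bar H^s$. This also shows that the pole set is independent of $a$ only in the sense stated; no more is claimed.
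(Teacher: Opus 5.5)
Your proposal is correct and follows the paper's proof. Index zero and finite-meromorphy come from Theorem~\ref{thm: Fredholm property}, invertibility for $\Im(\s)\gg1$ (Corollary~\ref{cor: resolvent large im sigma}) and the analytic Fredholm theorem, while $s$-independence of the poles holds because, by index zero, the poles are exactly the points with nontrivial kernel, and that kernel lies in $\bar C^\infty(\mX_a)$ by Theorem~\ref{thm: Fredholm property}(1). One citation slip: the analytic Fredholm theorem in Hintz's notes is Thm.~5.77, whereas Cor.~11.19 there is the analogue of this corollary, whose kernel argument you and the paper both reproduce.
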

\begin{proof}
The first statement follows by Theorem~\ref{thm: Fredholm property} and Corollary~\ref{cor: resolvent large im sigma} and the analytic Fredholm theorem, e.g.~\cite{H2025}*{Thm.~5.77}.
The argument for the second statement is the same as for the proof of \cite{H2025}*{Cor.~11.19}:
Since $\hat \mP(\s)$ has index $0$, it is invertible if and only if it has a trivial nullspace. 
But, by Theorem~\ref{thm: Fredholm property}, the nullspace is a subspace of $\bar C^\infty(\mX_a)$ and is thus independent of $s$.
\end{proof}

\begin{cor}[QNM frequencies as poles; generalized QNMs residues] \label{cor: discrete QNM} \
Let $a > 0$.
\begin{enumerate}
\item The set of all $\s \in \C$, such that there is a $u \in C^\infty(\mY_a)$ and a $k \in \N$, such that
\begin{equation} \label{eq: QNM t squared modified}
	\mP u = 0, \qquad \left( \T + i \s \right)^k u,
\end{equation}
is \emph{discrete}.
A number $\s_0 \in \C$ is a resonance if and only if $\hat \mP(\s)^{-1}$ as in \eqref{eq: resolvent Sobolev} has a pole at $\s = \s_0$ (for suitably large regularity $s$ in \eqref{eq: resolvent Sobolev}). 
\item For all $\s \in \C$, the space of all $u \in C^\infty(\mY_a)$, satisfying \eqref{eq: QNM t squared modified} for some $k \in \N$, is finite-dimensional and equal to
\begin{align*}
	& \Big \{\res_{\s = \s_0} \left( e^{-i \s \tau} \hat \mP(\s)^{-1} f(\s) \right) \mid f \text{ is a polynomial in } \s \text{ with values in } \bar C^\infty(\mX_a) \Big \}.
\end{align*}
\end{enumerate}
Moreover, any smooth generalized QNM in $\mY_a$ extends uniquely to a smooth generalized QNM in $M$.
\end{cor}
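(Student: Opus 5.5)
The plan is to follow the standard Vasy argument: mode decomposition via $\tau$-translation invariance, combined with the Fredholm theory of Theorem~\ref{thm: Fredholm property} and Corollary~\ref{cor: meromorphic extension}.

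For part (1), let $u \in C^\infty(\mY_a)$ satisfy \eqref{eq: QNM t squared modified}. Since $\T = \d_\tau$, we can write $u = e^{-i\s_0\tau}\sum_{j=0}^{k-1}\tau^j v_j(y)$ with $v_j \in \bar C^\infty(\mX_a)$. Applying $\mP$ and using $\tau$-invariance gives a triangular system
\[
	\sum_{j \geq m} \binom{j}{m} \frac{1}{i^{j-m}} \d_\s^{j-m}\hat\mP(\s_0) v_j = 0, \qquad m = 0, \hdots, k-1.
\]
In particular the top coefficient satisfies $\hat\mP(\s_0)v_{k-1}=0$, so $\hat\mP(\s_0)$ has nontrivial kernel in $\bar H^s$ for every $s$. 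Since the operator has index $0$, $\s_0$ is then a pole of the resolvent. Conversely, at a pole there is a nontrivial element $v$ of the kernel, and by Theorem~\ref{thm: Fredholm property}(1) it is smooth, so $e^{-i\s_0\tau}v$ is a QNM. Discreteness follows because the poles are discrete in each half-plane $\Im\s > -\a$ by Corollary~\ref{cor: meromorphic extension}, and $\a$ is arbitrary; choosing $s$ large for each $\a$ costs nothing, since the pole set does not depend on $s$.

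For part (2), we argue exactly as in \cite{H2025}*{Cor.~11.19} (or the Kerr--de Sitter resonant-state argument). First, each residue $\res_{\s=\s_0}(e^{-i\s\tau}\hat\mP(\s)^{-1}f(\s))$ solves $\mP u = 0$. Indeed, $\mP$ applied to it gives $\res(e^{-i\s\tau}f(\s)) = 0$ because $f$ is holomorphic. Expanding $e^{-i\s\tau}$ in Taylor series at $\s_0$ shows that it has the form $e^{-i\s_0\tau}\cdot(\text{polynomial in }\tau)$, so it is a generalized QNM. Smoothness holds because the Laurent coefficients of a finite-meromorphic Fredholm inverse have finite rank with ranges in the kernels/generalized kernels, which are smooth by Theorem~\ref{thm: Fredholm property}(1).

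For the converse, given a generalized QNM $u$, we decompose it as above. We then choose $f(\s)$ as a polynomial with $f(\s) = \hat\mP(\s)\sum_j c_j(\s-\s_0)^{-j-1}v_j$ modulo holomorphic terms, so that the residue recovers $u$. Concretely, we take $\tilde v(\s) := \sum_{j}\frac{j!\,i^{j}}{(\s-\s_0)^{j+1}} v_j$, which is designed so that $\res_{\s_0}(e^{-i\s\tau}\tilde v(\s)) = u$. Then $\hat\mP(\s)\tilde v(\s)$ is holomorphic at $\s_0$ by the triangular system, and we let $f$ be its Taylor polynomial of sufficiently high order. Uniqueness of the meromorphic solution then gives $\hat\mP(\s)^{-1}f(\s) - \tilde v(\s)$ holomorphic, hence the residues agree. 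Finite dimensionality follows from finite rank of the principal part.

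Finally, for the extension to $M$: a generalized QNM is determined by the $v_j$ on $\mX_a$, and $a>0$ is arbitrary. Corollary~\ref{cor: meromorphic extension} gives $s$-independence of the pole set, and uniqueness of solutions, via the hyperbolic energy estimate in the region $\mX_{a'}\setminus\mX_a$ where $\md f$ is timelike, shows that kernels for different $a$ restrict bijectively. Thus QNMs on $\mY_a$ extend uniquely to $\mY_{a'}$ for every $a'$, and hence to $M=\bigcup_{a'}\mY_{a'}$. I expect this last compatibility step (injectivity and surjectivity of restriction between $\mX_{a'}$ and $\mX_a$) to be the main subtlety. The first approach I would try is solving the forward problem in the spacelike region outward from $\mX_a$, together with \cite{H2025}*{Cor.~9.38}.
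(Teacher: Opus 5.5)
Your proposal is correct and follows the same route as the paper. The paper proves (1) and (2) by deferring to the standard argument of \cite{H2025}*{Prop.~11.20}, with Corollary~\ref{cor: meromorphic extension} as input, which is essentially what you reconstruct. It obtains the extension to $M$ by solving a Cauchy problem for the wave equation outward from $\mY_a$, where the level sets of $\sum_j q_j y_j^2$ are spacelike, exactly as you suggest.

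One small point concerns smoothness of the residues. Theorem~\ref{thm: Fredholm property}(1) covers only $\ker \hat \mP(\s_0)$, so it is not enough on its own. You should instead use the strong-sense regularity of the semi-Fredholm estimate, or the $s$-independence of $\hat \mP(\s)^{-1} f$ for smooth $f$; either also handles the Jordan-chain (generalized kernel) elements in the ranges of the Laurent coefficients.
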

\begin{proof}
The proofs of (1) and (2) are exactly the same as the proof of \cite{H2025}*{Prop.~11.20} with Corollary~\ref{cor: meromorphic extension} replacing \cite{H2025}*{Cor.~11.19}.
The last assertion follows by solving a Cauchy problem for a linear wave equation, since $\d_\tau$ is timelike in $M \backslash \mY_a$.
\end{proof}

\begin{remark} \label{rmk: relating QNMs}
Note that if $u \in C^\infty(M)$ is a generalized quasinormal mode for $\P$, with $\P$ as in Theorem~\ref{thm: main QNM}, then also $\mP u = t^2 \P u = 0$.
The assertion in Theorem~\ref{thm: main QNM} that the QNM frequencies form a discrete set therefore follows immediately by Corollary~\ref{cor: discrete QNM}.
However, we still need the high-energy estimates from Section~\ref{thm: high-energy estimate} to prove that there are only finitely many quasinormal mode frequencies with imaginary part over a fixed threshold.
\end{remark}

\subsubsection{Solving wave equations for quasinormal modes}

In Section~\ref{sec: QNM geometric waves} we will show how to compute the QNMs and the QNM frequencies, by introducing what we call \emph{annihilation} and \emph{creation} operators (see Theorem~\ref{thm: creation and annihilation}). 
The construction of the creation operator will rely on the following:

\begin{prop} \label{prop: QNM wave equations}
Let $\mP$ be as in Assumption~\ref{ass: setting Fredholm} and let $a > 0$.
Let $f \in C^\infty(\mY_a)$ be a generalized quasinormal mode with frequency $\s \in \C$.
Then there is a $u \in C^\infty(\mY_a)$ such that $\mP(u) = f$ and 
\[
	\left( \d_\tau + i \s \right)^{k'} u 
		= 0
\]
for some $k' \in \N$.
Moreover, $u$ is unique up to adding QNMs with frequency $\s$.
\end{prop}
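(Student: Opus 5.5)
The plan is to reduce the statement to a finite-dimensional linear algebra problem for the spectral family $\hat \mP(\s)$, using the Laurent expansion of the meromorphic resolvent from Corollary~\ref{cor: meromorphic extension}.

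First, write the generalized QNM as
\[
	f = e^{-i\s\tau}\sum_{j=0}^{k-1}\tau^j f_j(y), \qquad f_j \in \bar C^\infty(\mX_a),
\]
which is possible since $(\d_\tau + i\s)^k f = 0$. Make the ansatz
\[
	u = e^{-i\s\tau}\sum_{j=0}^{k'-1}\tau^j u_j(y).
\]
Conjugating, we have $e^{i\s\tau}\mP(e^{-i\s\tau}\tau^j v) = \sum_{m} \binom{j}{m}\tau^{j-m}\, i^{m}\,\d_\s^m\hat \mP(\s) v$, with the exact constants to be checked. Since $\hat \mP(\s)$ is a quadratic polynomial in $\s$, only $m \le 2$ contribute. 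The equation $\mP u = f$ is therefore equivalent to a triangular system
\[
	\hat\mP(\s)u_j + c_1 (j+1)\,\hat\mP'(\s)u_{j+1} + c_2 (j+1)(j+2)\,\hat\mP''(\s)u_{j+2} = f_j
\]
for $j = 0, \hdots, k'-1$, with $u_j = 0$ for $j \ge k'$. It is cleaner to encode this as follows. A function $e^{-i\s\tau}\sum \tau^j u_j$ equals $\res_{\z=\s}\bigl(e^{-i\z\tau}\, U(\z)\bigr)$ for $U(\z) = \sum_j u_j\, j!\,(-i)^{-j}(\z-\s)^{-j-1}$, up to constants. Under this correspondence, $\mP$ acts by multiplication by $\hat\mP(\z)$ modulo holomorphic terms. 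Likewise $f$ corresponds to a principal part $F(\z)$ with a pole of order $k$ at $\s$.

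The construction is then $u := \res_{\z=\s}\bigl(e^{-i\z\tau}\hat\mP(\z)^{-1}F(\z)\bigr)$. Here we choose $\a$ with $\Im\s > -\a$ and $s$ large as in Corollary~\ref{cor: meromorphic extension}, so that $\hat\mP(\z)^{-1}$ is meromorphic near $\s$ on $\bar H^{s-1}(\mX_a)\to\bar H^s(\mX_a)$. Then $\hat\mP(\z)\hat\mP(\z)^{-1}F(\z) = F(\z)$ identically. Taking the residue against $e^{-i\z\tau}$, and using that $\mP\bigl(e^{-i\z\tau}V(\z)\bigr) = e^{-i\z\tau}\hat\mP(\z)V(\z)$ pointwise in $\z$ (so $\mP$ commutes with the contour integral), gives
\[
	\mP u = \res_{\z=\s}\bigl(e^{-i\z\tau}F(\z)\bigr) = f.
\]
The order $k'$ is bounded by $k$ plus the pole order of the resolvent at $\s$, and $(\d_\tau+i\s)^{k'}u = 0$ because $u$ is a residue of $e^{-i\z\tau}$ against a pole of order $k'$. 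If $\s$ is not a pole, the same formula works with $k' = k$.

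For smoothness, $u_j \in \bar H^s(\mX_a)$ for every large $s$, since the Laurent coefficients are independent of $s$ (the poles and kernels are $s$-independent by Corollary~\ref{cor: meromorphic extension} and Theorem~\ref{thm: Fredholm property}). Hence $u \in C^\infty(\mY_a)$. Alternatively, regularity follows from elliptic and propagation estimates applied to the triangular system, solved from the top index downward.

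For uniqueness, if $u, \tilde u$ are two such solutions, then $w = u - \tilde u$ satisfies $\mP w = 0$ and $(\d_\tau+i\s)^{\max(k',\tilde k')}w = 0$. So $w$ is a generalized QNM with frequency $\s$, which is exactly the stated ambiguity, interpreting QNM as generalized QNM in the sense of Definition~\ref{def: QNM}.

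The main obstacle is justifying the residue calculus identity $\mP\circ\res = \res\circ\hat\mP$ when $\hat\mP(\z)^{-1}$ has a pole. One must check that multiplying the Laurent series of $\hat\mP(\z)^{-1}F(\z)$ by the polynomial $\hat\mP(\z)$ reproduces the principal part $F$ exactly. This is straightforward if we write $F(\z) = \sum_j F_j(\z-\s)^{-j-1}$ and note that the holomorphic remainder contributes no residue. The remaining task is to match the normalizing constants between $\tau^j e^{-i\s\tau}$ and $(\z-\s)^{-j-1}$.
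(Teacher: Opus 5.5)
Your proposal is correct and is essentially the paper's proof. The paper also defines $u$ as the residue at $\sigma$ of $e^{-iz\tau}\hat\mP(z)^{-1}F(z)$, written as a contour integral, with $F(z)=\sum_j i^j f_j (z-\sigma)^{-j-1}$ for $f=e^{-i\sigma\tau}\sum_j f_j\tau^j/j!$; it then uses $\mP(e^{-iz\tau}v)=e^{-iz\tau}\hat\mP(z)v$ to get $\mP u=f$, bounds the order by $k$ plus the pole order of the resolvent, and gets uniqueness because the difference of two solutions is a generalized QNM (so your reading of ``QNM'' as ``generalized QNM'' there is the right one), while your smoothness argument via agreement of the resolvents across $s$ fills in a detail the paper leaves implicit.
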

\begin{proof}
Let $f \in C^\infty(M)$ be a generalized quasinormal mode, i.e.~we assume that $\left( \d_\tau + i \s \right)^k f = 0$ for some $\s \in \C$ and $k \in \N$.
This is equivalent to 
\[
	f(\tau, x)
		= e^{-i\s \tau} \sum_{j = 0}^{k-1} \frac{f_j(x) \tau^j}{j!}.
\]
We define
\[
	u(\tau, x)
		= \sum_{j = 0}^{k-1} \frac{i^j}{2\pi i} \oint_\s \frac{e^{- i z \tau}}{(z - \s)^{j+1}} \hat \mP(z)^{-1} f_j(x) \md z.
\]
This is well-defined since $\hat \mP(z)^{-1}$ is meromorphic on suitably regular function spaces (see Theorem~\ref{thm: Fredholm property}), implying that there is a number $l \in \N_0$, such that $(z-\s)^l \hat \mP(z)^{-1} f_j$ is holomorphic near $\s$ for all $j$.
We now check that
\begin{align*}
	\mP u (\tau, x)
		= & \ \sum_{j = 0}^{k-1} \frac{i^j}{2\pi i} \oint_\s \frac1{(z - \s)^{j+1}} \mP \left( e^{- i z \tau} \hat \mP(z)^{-1} f_j(x) \right) \md z \\
		= & \ \sum_{j = 0}^{k-1} \frac{i^j}{2\pi i} \oint_\s \frac{e^{- i z \tau}}{(z - \s)^{j+1}} \hat \mP(z) \hat \mP(z)^{-1} f_j(x) \md z
		= \ \sum_{j = 0}^{k-1} \frac{i^j}{2\pi i} \oint_\s \frac{e^{- i z \tau}}{(z - \s)^{j+1}} \md z f_j(x) \\
		= & \ e^{-i \s\tau} \sum_{j = 0}^{k-1} \frac{\tau^j}{j!} f_j(x)
		= \ f(\tau, x),
\end{align*}
as claimed, and
\begin{align*}
	\left( \d_\tau + i \s \right)^{j + l + 1} u(\tau, x)
		= \sum_{j = 0}^{k-1} \frac{i^j}{2\pi i} \oint_\s e^{- i z \tau} (z - \s)^l \hat \mP(z)^{-1} f_j(x) \md z 
		= 0,
\end{align*}
since $(z-\s)^l \hat \mP(z)^{-1} f_j$ is holomorphic near $\s$ for all $j$.
This proves the first assertion.
If we now both $u_1$ and $u_2$ are generalized quasinormal mode solutions satisfying $\mP(u_1) = f = \mP(u_2)$, then $u_1 - u_2$ is a generalized quasinormal mode, proving the second assertion.
\end{proof}

\subsection{The high-energy estimates}

Following Vasy's scheme for wave equations on black hole spacetimes in \cite{V2013}, we need to prove \emph{high-energy} estimates for the spectral family $\hat \mP(\s)$ in order to prove Theorem~\ref{thm: main QNM} and Theorem~\ref{thm: main as exp}.
That is, by means of semiclassical analysis, we will prove uniform estimates when $\Im(\s)$ is bounded and $\abs{\Re (\s)} \to \infty$.

We use similar notation as in \cite{H2025}*{Thm.~11.21}, which is the case $\q_1 = \hdots = \q_n = 1$.

\begin{thm}[High-energy estimates] \label{thm: high-energy estimate}
Let $a > 0$ and let $\mP$ and $\mX_a$ be as in Assumption~\ref{ass: setting Fredholm}.
Fix $\a_+ \geq \a_- \in \R$ and let $s > \frac12 + \frac2{\check \q_m} \a_+ + \tilde \b_\max(0)$. 
Then there are constants $C, C' > 0$ such that 
\[
	\norm{u}_{\bar H^s_{{\abs{\Re(\s)}}^{-1}}(\mX_a)}
		\leq C \abs{\Re(\s)}^{-1} \norm{\hat \mP(\s) u}_{\bar H^{s-1}_{\abs{\Re(\s)}^{-1}}}
\]
for $- \a_+ \leq \Im(\s) \leq - \a_-$ and $\abs{\Re(\s)} \geq C'$.
In particular, for all QNM frequencies $\s \in \C$ (as in Corollary~\ref{cor: discrete QNM}) with $-\a_+ \leq \Im(\s) \leq - \a_-$, we have $\abs{\Re(\s)} \leq C'$.
\end{thm}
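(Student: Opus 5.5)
We follow the semiclassical scheme of \cite{H2025}*{Thm.~11.21} (the case $\q_1 = \hdots = \q_n = 1$) and of \cite{V2013}. Set $h := \abs{\Re(\s)}^{-1}$ and $z := h\s$, so that $\Re(z) = \pm 1$ and $\Im(z) = h\Im(\s) \in [-h\a_+, -h\a_-]$ is $O(h)$. We consider the semiclassical operator $\hat \mP_h(z) := h^2 \hat \mP(h^{-1}z) \in \Psi_h^2$. By \eqref{eq: hat P sigma explicit}, its semiclassical principal symbol is
\[
	p_h(y, \xi)
		= \sum_{j = 1}^n \xi_j^2 - \Bigl( \Re(z) - \sum_{j = 1}^n \q_j y_j \xi_j \Bigr)^2,
\]
which is exactly $G$ in \eqref{eq: G definition} with $\s$ replaced by $\Re(z) = \pm 1$. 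The semiclassical characteristic set in $\oT \R^n$ is therefore $\S_{\pm 1}$, and its Hamiltonian flow (rescaled at fiber infinity) is $\mathsf H_{\pm 1}$. The subprincipal part $h^{-1}\Im \hat \mP_h(z)$ involves $\Im(\s)$ and $\Im(\B_j)$ exactly as in the threshold computation preceding Theorem~\ref{thm: Fredholm property}. The threshold is thus bounded by $\tilde \b_{\max}(0) + \frac2{\check \q_m}\a_+$, which is what the hypothesis $s > \frac12 + \frac2{\check \q_m}\a_+ + \tilde \b_\max(0)$ accommodates.

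The proof then repeats the Fredholm argument in semiclassical form, aiming for
\[
	\norm{u}_{\bar H^s_h(\mX_a)} \leq C h^{-1} \norm{\hat \mP_h(z) u}_{\bar H^{s-2}_h(\mX_a)} + C h \norm{u}_{\bar H^{s_0}_h(\mX_a)}
\]
on the relevant sets. The steps are as follows.

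\begin{enumerate}
\item Away from $\S_{\pm 1}$ we use semiclassical elliptic estimates.
\item At fiber infinity, the radial sets $\cR_{\pm, l}$ are again saddle manifolds for $\mathsf H_{\pm 1}$. This holds because $\mathsf H_\s|_{\d \S_{\s}} = \mathsf H_0|_{\d\S_0}$, so Lemma~\ref{le: saddle points in our setting} and Theorem~\ref{thm: saddle point flow} transfer verbatim. We apply the semiclassical saddle point estimate Theorem~\ref{thm: saddle points semiclassical}, inducting downward in $l$ from the normal source $\cR_{+,m}$ exactly as in the proof of Theorem~\ref{thm: Fredholm property}. Corollary~\ref{cor: propagation into larger source} supplies the control on $\Ell(E)$, and Corollary~\ref{cor: backward propagation smaller larger source} handles the unstable directions.
\item In the finite part of phase space, $\S_{\pm1}^\circ$, we use semiclassical real-principal-type propagation. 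Theorem~\ref{thm: bicharacteristic flow} says every bicharacteristic either tends, in the appropriate time direction, to some $\cR_{\pm,l}$ at fiber infinity or exits through $\mathcal C_a$ transversally. Since the flow has no stationary points in the interior, there is \emph{no trapping}. The estimate therefore propagates from the controlled radial sets to all of $\S_{\pm1}$ over $\mX_{2b}$.
\item Near $\d \mX_a$ we use a semiclassical energy estimate, since $\md(\sum_j \q_j y_j^2)$ is timelike there, as in the last step of Theorem~\ref{thm: Fredholm property}.
\end{enumerate}

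Summing these estimates, the error term $Ch\norm{u}_{\bar H^{s_0}_h}$ is absorbed for $h$ small, i.e. $\abs{\Re(\s)} \geq C'$. The bound $\norm{u}_{\bar H^s_h} \leq C h^{-1}\norm{\hat\mP_h(z)u}_{\bar H^{s-2}_h}$ then converts to the stated estimate using $\hat\mP_h(z) = h^2\hat\mP(\s)$ and $\norm{\cdot}_{\bar H^{s-2}_h} \leq \norm{\cdot}_{\bar H^{s-1}_h}$. Injectivity then gives the final claim: a QNM with $\abs{\Re(\s)} \geq C'$ would lie in the kernel, and by Theorem~\ref{thm: Fredholm property}(1) it is smooth, hence in $\bar H^s$, so it must vanish.

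The main obstacle is step 3, the global no-trapping statement in the \emph{finite} semiclassical phase space $\S_{\pm 1}^\circ$ rather than only at fiber infinity. This requires checking that $\rho = 1/\abs\xi$ is monotone under $\mathsf H_{\pm1}$: it satisfies $\mathsf H \rho = -\sum_j \q_j\hat\xi_j^2\rho$, so $\rho \to 0$ forward and $\rho \to \infty$ backward. Backward orbits with $\rho$ growing must then be shown to leave $\S_{\pm1}$ or exit through $\mathcal C_a$. I would try first to verify the latter using the same Cauchy--Schwarz monotonicity of $\sum_j \q_j y_j^2$ as in Case 3 of the proof of Theorem~\ref{thm: bicharacteristic flow}, combined with \eqref{eq: lightlike comp} with $\s = \pm 1$, which bounds $\abs\xi$ away from zero near the zero section.
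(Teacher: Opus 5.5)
Your route is the paper's: the same rescaling $h=\abs{\Re(\s)}^{-1}$, characteristic set $\S_{\pm1}$ with flow $\mathsf H_{\pm1}$, and semiclassical saddle estimates (Theorem~\ref{thm: saddle points semiclassical}) inducted downward from the source $\cR_{+,m}$, and along $-\mathsf H$ at the $\cR_{-,l}$. It then uses propagation via Theorem~\ref{thm: bicharacteristic flow}, a semiclassical energy estimate near $\d\mX_a$, and absorption of the remainder for small $h$. Three points need correcting; none of them changes the structure.

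\textbf{The Sobolev index.} The intermediate bound with $\norm{\hat\mP_h(z)u}_{\bar H^{s-2}_h}$ on the right is false. At the characteristic set over fiber infinity, the saddle and real-principal-type estimates lose a derivative: Theorem~\ref{thm: saddle points semiclassical} gives $H_h^{s-m+1}=H_h^{s-1}$, and only the elliptic region yields $s-2$. Aim directly for $\bar H^{s-1}_h$, which is exactly what the statement requires.

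\textbf{The ``main obstacle''.} This rests on a reversed sign; the same slip occurs in the first line of the proof of Theorem~\ref{thm: bicharacteristic flow}.
\begin{itemize}
\item From \eqref{eq: Ham vf} one gets $\mathsf H_\s\rho=\bigl(\sum_j\q_j\hat\xi_j^2\bigr)\rho>0$. This agrees with Lemma~\ref{le: diagonal linearization}, since $\check\q_l+A_l=\sum_j\q_j\hat\xi_j^2$, and with $\b_0>0$ in Lemma~\ref{le: saddle points in our setting}.
\item Hence along an interior orbit in $\S_{1,+}$, $\rho\to0$ backward, and the orbit converges to $\cR_{l_{\max},+}$ by Theorem~\ref{thm: bicharacteristic flow}(a). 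This is exactly what carries the radial-set control into the interior.
\item Forward, $\rho(\gamma(r))\geq e^{r\min_j\q_j}\rho(\mathring\gamma)\to\infty$ for $r\geq0$. Dividing \eqref{eq: lightlike comp} by $\abs\xi$ gives $\rho-\sum_j\q_jy_j\hat\xi_j=1$, which forces $\abs y\to\infty$, i.e.\ exit through $\mathcal C_a$.
\item $\S_{1,-}$ is analogous along $-\mathsf H$.
\end{itemize}
So non-trapping is immediate, and there are no backward orbits with growing $\rho$ to control. Note also that the absence of interior stationary points would not, by itself, rule out trapping.

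\textbf{The threshold.} The bound $\tilde\b_{\max}(0)+\frac2{\check\q_m}\a_+$, taken over from the paper, is not justified for $\Im(\s)<0$. The term $-\frac2{\check\q_l}\Im(\s)$ is largest at $l=1$, so the hypothesis should involve $\frac2{\check\q_1}\a_+$ when $\a_+>0$. This only affects how large $s$ must be taken.
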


\begin{proof}
We first reformulate the problem in semiclassical terms.
Defining $h := \abs{\Re(\s)}^{-1}$ and writing $\s = \Re(\s) - i \a = h^{-1} \left( \pm 1 - i h \a \right)$ for $\pm \Re(\s) > 0$, we introduce the semiclassical operator
\[
	\mP_{\pm, \a}
		:= h^2 \hat \mP\left(h^{-1} (\pm 1 - i h \a) \right),
\]
for a fixed $\a \in \R$.
Combining this with \eqref{eq: hat P sigma explicit}, we see that the (semiclassical) principal symbol is given by
\[
	\p_\hbar
		:= \s_{\hbar}\left( \mP_{\pm, \a} \right)(\xi)
		= G\left(\mp \md \tau + \xi \right),
\]
for $\pm \Re(\s) > 0$, where $G$ was introduced in \eqref{eq: G definition}.
The semiclassical characteristic set of $\mP_{\pm, \a} $ is $\S_{\pm 1} \subset \oT \mX_a$, and the Hamiltonian vector field $H_{\p_\hbar}$ is given by $\mathsf H_{\pm 1}$ defined in \eqref{eq: Ham vf}.
Since $\mathsf H_1|_{(y, \xi)} = - \mathsf H_{-1}|_{(y, -\xi)}$, it suffices to consider the case with the $+$ sign, i.e.~the case $\Re(\s) > 0$. 
The estimate in the assertion can now be rewritten as the semiclassical estimate
\[
	\norm{u}_{\bar H^s_h (\mX_a)}
		\leq C h^{-1} \norm{\mP_{+, \a} u}_{\bar H^{s-1}_h(\mX_a)}
\]
for $h < h_0$ and $\a \in [\a_-, \a_+]$, where $h_0 = \frac1{C'}$.

The proof of this estimate is analogous to the proof of the Fredholm estimate for Theorem~\ref{thm: Fredholm property}, but with semiclassical estimates.
Let $a > 0$ and $b := a/6$.
Fix $\psi, \chi \in C_c^\infty(\mX_a)$, such that $\psi(y) = 1$ for $y \in \mX_{2b}$ and $\supp(\psi) \subset \mX_{3b}$, and such that $\chi(y) = 1$ for $y \in \mX_{4b}$ and $\supp(\chi) \subset \mX_{5b}$.
We set $s_0 := \frac12 + \frac2{\check \q_m} \a + \tilde \b_\max(0) > \frac12 + \tilde \b_\max(\sigma)$.

Starting at the source $\cR_{+, m}$, where we get a priori semiclassical regularity by Theorem~\ref{thm: saddle points semiclassical}, we propagate regularity inductively to all saddle manifolds $\cR_{+, l}$ for $l = 1, \hdots, m-1$ along the Hamiltonian vector field $H_{\p_\hbar}$, using the semiclassical saddle point estimate Theorem~\ref{thm: saddle points semiclassical} and standard semiclassical propagation of singularities \cite{H2025}*{Thm.~8.27}, by the dynamical information of Theorem~\ref{thm: bicharacteristic flow}.
We similarly obtain and propagate regularity from the sink $\cR_{-, m}$ and to all saddle manifolds $\cR_{-, l}$ for $l = 1, \hdots, m-1$ along $-\H_{\p_\hbar}$.
Finally, standard propagation of singularities \cite{H2025}*{Thm.~8.27} and semiclassical elliptic regularity theory \cite{H2025}*{Prop.~6.61}, together with the dynamical information of Theorem~\ref{thm: bicharacteristic flow}, implies that
\begin{equation} \label{eq: semiclassical interior estimate}
	\norm{u}_{\bar H_h^s(\mX_{2b})}
		\leq C \left( h^{-1} \norm{\chi \mP_{+, \a} u}_{\bar H^{s-1}_h(\mX_a)} + h^N \norm{\chi u}_{\bar H_h^{s_0}(\mX_a)} \right)
\end{equation}
analogous to \eqref{eq: X 3b estimate}.
We would like to get rid of the last term.
Since $\d_\tau$ is \emph{spacelike} in the region $\R \times \left( \mX_a \backslash \mX_b \right)$, \cite{H2025}*{Prop.~9.42} implies that $\mP_{+, \a}$ is a semiclassical wave-type operator in this region. We may therefore apply the semiclassical energy estimate \cite{H2025}*{Cor.~9.46} in the region $\mX_a \backslash \mX_b$ to conclude that
\[
	\norm{u}_{\bar H_h^s(\mX_a \backslash \mX_b)}
		\leq C \left( h^{-1} \norm{\mP_{+, \a} u}_{\bar H^{s-1}_h(\mX_a)} + \norm{u}_{\bar H_h^s(\mX_{2b})} \right).
\]
Combining this with \eqref{eq: semiclassical interior estimate} proves the asserted estimate for a fixed $\a$.
The proof that we can choose the constant $C > 0$ independently of the choice of $\a \in [\a_-, \a_+]$ is the same as in the proof of \cite{H2025}*{Thm.~11.21}.
\end{proof}

\begin{cor}[Quasinormal mode frequencies] \label{cor: QNM freq}
For every $\a \in \R$, the set of $\s \in \C$ with $\Im(\s) > - \a$, such that there is a $u \in C^\infty(M)$ satisfying \eqref{eq: QNM t squared modified} for some $k \in \N_0$ is finite.
\end{cor}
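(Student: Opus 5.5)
The plan is to combine the discreteness of the poles of the meromorphically continued resolvent with the high-energy estimate, as in the Kerr--de Sitter setting. Fix $\a \in \R$ and some $a > 0$. By Corollary~\ref{cor: discrete QNM}, a number $\s_0$ with $\Im(\s_0) > -\a$ admits a solution $u$ of \eqref{eq: QNM t squared modified} in $\mY_a$ if and only if $\hat \mP(\s)^{-1}$ has a pole at $\s_0$, where we take the resolvent on $\bar H^{s-1}(\mX_a) \to \bar H^s(\mX_a)$ with $s > \frac12 + \frac2{\check \q_m} \a + \tilde \b_\max(0)$. Any $u \in C^\infty(M)$ with $\mP u = 0$ and $(\T + i\s)^k u = 0$ restricts to such a solution on $\mY_a \subset M$. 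It therefore suffices to show that the resolvent has only finitely many poles in $\{\Im(\s) > -\a\}$.

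The region $\{\Im(\s) > -\a\}$ is split into three pieces.
\begin{enumerate}
\item \emph{Large imaginary part.} By Corollary~\ref{cor: resolvent large im sigma}, there is $C_1$ such that $\hat \mP(\s)$ is invertible for $\Im(\s) > C_1$, so no poles occur there.
\item \emph{Large real part.} In the strip $-\a \leq \Im(\s) \leq C_1$, apply Theorem~\ref{thm: high-energy estimate} with $\a_+ = \a$, $\a_- = -C_1$ and the same $s$ (the threshold condition on $s$ coincides with the one above). This gives injectivity of $\hat \mP(\s)$ for $\abs{\Re(\s)} \geq C'$. Since $\hat \mP(\s)$ has index $0$ by Corollary~\ref{cor: meromorphic extension}, it is invertible there, so there are no poles.
\item \emph{The remaining compact set.} What is left is contained in $K = \{-\a \leq \Im(\s) \leq C_1,\ \abs{\Re(\s)} \leq C'\}$. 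Strictly, we also need the region up to $-\a$, so I would apply the above with $\a$ replaced by $\a + 1$ to have the meromorphic continuation on an open neighbourhood of $K$. The set $K$ is compact and lies inside the domain $\{\Im(\s) > -\a - 1\}$ where the resolvent is finite-meromorphic. The poles form a discrete closed subset of this domain, so only finitely many lie in $K$.
\end{enumerate}

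The main point requiring care is the bookkeeping of the regularity $s$. Corollary~\ref{cor: meromorphic extension} and Theorem~\ref{thm: high-energy estimate} must be used with one common $s$ large enough for the strip, and the pole set must not depend on this choice. The latter is exactly the last statement of Corollary~\ref{cor: meromorphic extension}, which holds because kernels are smooth. With that, the pole set in $\{\Im(\s) > -\a\}$ is intrinsic and finite, which proves the corollary.
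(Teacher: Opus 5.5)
Your proposal is correct and follows the paper's proof: bound $\Im(\s)$ from above via Corollary~\ref{cor: resolvent large im sigma}, bound $\abs{\Re(\s)}$ in the resulting strip via Theorem~\ref{thm: high-energy estimate} with $\a_+ = \a$ and $\a_-$ equal to minus that upper bound, and conclude finiteness from the discreteness in Corollary~\ref{cor: discrete QNM}. Your extra step of working on $\{\Im(\s) > -\a-1\}$ (with a correspondingly larger $s$, harmless by the $s$-independence of the pole set) makes the remaining region have compact closure inside the domain of meromorphy, a point the paper leaves implicit.
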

\begin{proof}
By Corollary~\ref{cor: resolvent large im sigma}, there is a $C > 0$, such that all quasinormal mode frequencies $\s \in \C$ satisfy $\Im(\s) \leq C$.
We may therefore apply Theorem~\ref{thm: high-energy estimate} with $\a_- := - C$ and $\a_+ := \a$, to conclude that $\abs{\Re(\s)} \leq C'$ for some $C' > 0$ and all quasinormal mode frequencies $\s \in \C$ with $- \a \leq \Im(\s) \leq C$.
By Corollary~\ref{cor: discrete QNM}, we know that the set of such $\s$ is discrete, which proves the assertion.
\end{proof}

We may now prove the first main theorem for linear wave equations. 

\begin{proof}[Proof of Theorem~\ref{thm: main QNM}]
The proof is an immediate consequence of Corollary~\ref{cor: discrete QNM}, Remark~\ref{rmk: relating QNMs} and Corollary~\ref{cor: QNM freq}.
\end{proof}

\begin{cor}[Late-time asymptotics] \label{cor: late-time asymptotics}
Let $a > 0$ and let $\mP$ and $\mX_a$ be as in Assumption~\ref{ass: setting Fredholm}.
Let $\mathrm{QNM}(\mP)$ denote the quasinormal mode frequencies with respect to $\mP$, provided by Corollary~\ref{cor: QNM freq}.
For each $\s \in \mathrm{QNM}(\mP)$, choose a basis $v_{\s, 1}, \hdots, v_{\s, l(\s)}$ for the finite dimensional space of generalized QNMs, provided by Corollary~\ref{cor: discrete QNM}.
Let $\a > 0$ and let $w \in C^\infty_c\left( \mY_a \right)$ with $\supp(w) \subset \{ \tau \geq 0 \}$.
Let $v \in C^\infty(\mY_a)$ denote the unique solution to $\mP u = w$ with $\supp(u) \subset \{ \tau \geq 0 \}$. 
Then there are unique constants $c_{\s, k}$ for $1 \leq k \leq l(\s)$, and all $\s \in \mathrm{QNM}(\mP)$, such that
\[
	u(\tau, y)
		= \sum_{\underset{\s \in \mathrm{QNM}(\mP)}{\Im(\s) > - \a}} c_{\s, k}v_{\s, k}(\tau, y) + \tilde v_\a(\tau, y),
\]
where
\[
	\abs{\d_\tau^j \d_y^\kappa \tilde v_\a(\tau, y)}
		\leq C_{\a, j, \kappa} e^{-\a \tau},
\]
for all $j \in \N_0$, $\kappa \in \N_0^n$, $\tau \geq 0$, $y \in \X_a$.
\end{cor}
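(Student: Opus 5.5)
This is the standard Vasy argument (cf.~\cite{H2025}*{Thm.~11.22}): Fourier–Laplace transform in $\tau$, then shift the inversion contour past the resonances. The ingredients are the meromorphic continuation (Corollary~\ref{cor: meromorphic extension}), the high-energy estimates (Theorem~\ref{thm: high-energy estimate}), the residue description of generalized QNMs (Corollary~\ref{cor: discrete QNM}), and the rough energy estimate (Proposition~\ref{prop: rough energy estimate}) to start the contour.

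\textbf{Transform.} Since $\supp(u), \supp(w) \subset \{\tau \geq 0\}$ and Proposition~\ref{prop: rough energy estimate} gives $e^{-C_0\tau}u \in H^s(\mY_a)$, the transform
\[
	\hat u(\s, y) := \int_\R e^{i\s\tau} u(\tau, y)\, \md\tau
\]
is holomorphic for $\Im(\s) > C_0$ with values in $\bar H^s(\mX_a)$. It satisfies $\hat\mP(\s)\hat u(\s) = \hat w(\s)$. By Corollary~\ref{cor: resolvent large im sigma}, $\hat \mP(\s)$ is invertible there, so $\hat u(\s) = \hat\mP(\s)^{-1}\hat w(\s)$. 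Because $w \in C_c^\infty$, $\hat w$ is entire and Schwartz in $\Re(\s)$ on horizontal strips, in every $\bar H^{s-1}$ norm. Fourier inversion then gives
\[
	u(\tau) = \frac1{2\pi}\int_{\Im \s = C_1} e^{-i\s\tau}\, \hat\mP(\s)^{-1}\hat w(\s)\, \md\s .
\]

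\textbf{Contour shift.} Fix $s > \frac12 + \frac2{\check\q_m}\a + \tilde\b_\max(0)$ and move the contour to $\Im\s = -\a$, perturbing $\a$ slightly so that no pole lies on the line; this changes only the constants. By Corollary~\ref{cor: QNM freq} only finitely many poles lie in the strip $-\a \leq \Im\s \leq C_1$. Theorem~\ref{thm: high-energy estimate} bounds $\norm{\hat\mP(\s)^{-1}}$ polynomially in $\abs{\Re\s}$ for $\abs{\Re\s} \geq C'$, uniformly in the strip. Combined with the rapid decay of $\hat w$, this makes the horizontal segments at $\abs{\Re\s} = R$ vanish as $R \to \infty$. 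Hence $u$ equals $-i$ times the sum of residues of $e^{-i\s\tau}\hat\mP(\s)^{-1}\hat w(\s)$ over the poles with $\Im\s > -\a$, plus the integral along $\Im \s = -\a$.

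\textbf{Residues.} Near each pole $\s_0$, replace $\hat w$ by its Taylor polynomial to high order; the remainder is holomorphic and cancels the pole. Corollary~\ref{cor: discrete QNM}(2) then identifies each residue as a generalized QNM, i.e.\ a combination $\sum_k c_{\s_0,k}v_{\s_0,k}$. The constants are unique because distinct frequencies, together with the distinct powers of $\tau$ times $e^{-i\s\tau}$, are linearly independent modulo $O(e^{-\a\tau})$ as $\tau\to\infty$.

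\textbf{Remainder.} The remainder is
\[
	\tilde v_\a(\tau) = e^{-\a\tau}\frac1{2\pi}\int_\R e^{-i\mu\tau}\,\hat\mP(\mu - i\a)^{-1}\hat w(\mu-i\a)\,\md\mu .
\]
The integrand is Schwartz in $\mu$ with values in $\bar H^s(\mX_a)$ for every large $s$: the $s$-dependent threshold is harmless because $s$ can be taken arbitrarily large, and by Corollary~\ref{cor: meromorphic extension} the pole set does not depend on $s$. So $\d_\tau^j$ corresponds to multiplication by powers of $\mu - i\a$, and Sobolev embedding yields the pointwise bounds $\abs{\d_\tau^j\d_y^\kappa \tilde v_\a} \leq C e^{-\a\tau}$ on $\mX_a$. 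The main obstacle is keeping the polynomial high-energy bounds uniform in the regularity $s$, since $s$ must grow with $\a$; this is handled exactly as in \cite{H2025} using Theorem~\ref{thm: high-energy estimate} for each fixed $s$.
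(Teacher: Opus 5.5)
Your proposal is correct and follows the same route as the paper, whose proof simply invokes Vasy's contour-deformation argument as written out in \cite{H2025}, with Theorem~\ref{thm: high-energy estimate} replacing the corresponding high-energy estimate there; you spell out the steps the paper leaves to that reference: the Fourier--Laplace transform started via Proposition~\ref{prop: rough energy estimate}, the contour shift, the identification of residues through Corollary~\ref{cor: discrete QNM}, and the Schwartz bounds on the remainder. One small caveat: perturbing $\a$ does more than change constants if a resonance lies exactly on $\Im(\s) = -\a$ with a generalized QNM of order $k \geq 2$, since that term is only $O(\tau^{k-1}e^{-\a\tau})$ and must either be kept in the sum or cost a factor $\tau^{k-1}$ in the remainder, a technicality the statement itself also glosses over.
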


\begin{proof}
This is proven via the same complex contour deformation argument developed by Vasy in \cite{V2013}, with a detailed presentation in \cite{H2025}*{Sec.~10.5}. 
Specifically, we refer to the proofs of \cite{H2025}*{Thm.~11.24} and \cite{H2025}*{Thm.~11.24}, which is completely the same in our case, where our high-energy estimate Theorem~\ref{thm: high-energy estimate} is used in place of \cite{H2025}*{Thm.~11.21}.
\end{proof}

\begin{remark}
With little effort, Corollary~\ref{cor: late-time asymptotics} can be improved by allowing for suitably decaying source functions $w$ with only Sobolev regularity, c.f.~\cite{H2025}*{Thm.~11.24}.
For the applications in this paper, we however only need this $C^\infty$-version with compactly supported source function $w$.
\end{remark}

Our second main result for linear wave equations is a consequence of Corollary~\ref{cor: late-time asymptotics}. 

\begin{proof}[Proof of Theorem~\ref{thm: main as exp}]
Since $J^+(\gamma) \subseteq \mY_a$, this is an immediate consequence of Corollary~\ref{cor: late-time asymptotics}.
\end{proof}

\section{Geometric wave equations} \label{sec: QNM geometric waves}

Let in this section $(M, g)$ denote a Kasner-type spacetime with standard coordinates $t$ and $x_1, \hdots, x_n$, see Definition~\ref{def: Kasner-type spacetime}.
Let moreover $\P$ denote a linear wave operator, i.e.~a linear system of differential operators with principal symbol given by $g^{-1}(\xi, \xi)$.
Many linear wave operators of interest are \emph{geometric} in the sense that they are derived from the geometry. 
Since $\T$ is a homothetic Killing vector field such that $\L_\T g = - 2 g$, it is natural to assume that
\begin{equation} \label{eq: T commutator}
	[\T, t^2 \P]
		= 0,
\end{equation}
as we did in Section~\ref{sec: asymptotic expansion}.
Since $\d_{x_j}$ are Killing vector fields for $j = 1, \hdots, n$, a linear wave operator which is associated with the geometry should also satisfy
\begin{equation}\label{eq: x commutators}
	[\d_{x_j}, \P]
		= 0,
\end{equation}
for $j = 1, \hdots, n$.
\begin{definition}
A \emph{geometric wave operator} $\P$ is a linear wave operator satisfying \eqref{eq: T commutator} and \eqref{eq: x commutators}.
\end{definition}
An important example of a geometric wave operator is the Lichnerowicz-d'Alembert operator that we study in the next subsection.

Given any $a > 0$, recall the definition of the region $\mY_a \subset M$ in \eqref{eq: Y a}, which contains $J^+(\gamma)$.
The following is the key to compute both QNMs and their frequencies.

\begin{thm}[The annihilation and creation operators] \label{thm: creation and annihilation}
Assume that $(M, g)$ is a Kasner-type spacetime and that $\P$ is a geometric wave operator on $\C^m$-valued functions, for some $m \in \N$.
Let $a > 0$ and let $\A \subset C^\infty(\mY_a; \C^m)$ be the discrete subset of generalized QNMs, as provided by Corollary~\ref{cor: discrete QNM}.
Then the following holds:
\begin{itemize}
	\item For any $j = 1, \hdots, n$, the operator $\d_{x_j}$ defines a linear map $\d_{x_j}: \A \to \A$, such that if $u \in \A$ with frequency $\s \in \C$, then $\d_{x_j} u$ is a generalized QNM of frequency $\s + (1-p_j)i$.
	\item For any $j = 1, \hdots, n$, there is a linear map $\Cr_j: \A \to \A$, such that if $u \in \A$ with frequency $\s \in \C$, then $\Cr_j (u)$ is a generalized QNM of frequency $\s - (1 - p_j) i$.
\end{itemize}
Moreover, $\d_{x_j} \Cr_j = \id$ and $[\d_{x_j}, \Cr_k] = 0$ for all $j \neq k$.
\end{thm}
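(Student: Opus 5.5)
The annihilation part is a direct commutator computation. Let $u \in \A$ with $\mP u = 0$ and $(\T + i\s)^k u = 0$. Since $[\d_{x_j}, \P] = 0$ and $\d_{x_j}$ commutes with $t^2$, we get $\mP \d_{x_j} u = \d_{x_j}\mP u = 0$. From \eqref{eq: T} we compute $[\T, \d_{x_j}] = (1-p_j)\d_{x_j}$, hence $\T \d_{x_j} = \d_{x_j}(\T + (1-p_j))$. Iterating gives
\[
	\left(\T + i(\s + (1-p_j)i)\right)^k \d_{x_j} u = \d_{x_j}(\T + i\s)^k u = 0 .
\]
So $\d_{x_j} u$ is a generalized QNM of frequency $\s + (1-p_j)i$, and it is smooth on $\mY_a$. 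Linearity is clear. Note that $\d_{x_j}$ is tangent to the slices $\{t = \text{const}\}$, so it preserves $\mY_a$.

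For the creation operator I will construct $\Cr_j u$ as a primitive in $x_j$, corrected so that it solves the equation. The first step is to take
\[
	U(t,x) := \int_0^{x_j} u(t, x_1,\dots, s, \dots, x_n)\,\md s .
\]
This integral stays inside $\mY_a$: in the $y$-coordinates, moving $x_j$ toward $0$ at fixed $t$ decreases $\sum \q_k y_k^2$, so the set $\{t\}\times\mX_a$ is star-shaped in $x_j$. By construction $\d_{x_j} U = u$. A short check using $\T x_j = -(1-p_j)x_j$ shows that $U$ is again a generalized QNM for the operator $\T$ with frequency $\s - (1-p_j)i$. Concretely, one writes $u = e^{-i\s\tau}\sum \tau^l v_l(y)$, and $U$ becomes $e^{-i(\s-(1-p_j)i)\tau}$ times a polynomial in $\tau$ with coefficients that are smooth in $y$.

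The second step is to fix the equation. We have $\d_{x_j}\mP U = \mP u = 0$, so $f := \mP U$ is independent of $x_j$. Moreover $f$ is a generalized QNM (for $\T$) with frequency $\s - (1-p_j)i$, because $\mP$ commutes with $\T$. By Proposition~\ref{prop: QNM wave equations} there is a generalized QNM $w$ of that frequency with $\mP w = f$. However, $\d_{x_j} w$ need not vanish, and arranging that is the main obstacle. My plan is to solve inside the class of $x_j$-independent functions. The key identity is that $\d_{x_j} w$ satisfies $\mP \d_{x_j} w = \d_{x_j} f = 0$, so $\d_{x_j} w$ is a genuine generalized QNM of frequency $\s$. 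Therefore $\d_{x_j}$ maps the finite-dimensional space $Q$ of solutions $w$ (determined up to QNMs of frequency $\s-(1-p_j)i$) into the finite-dimensional space $\A_\s$ of generalized QNMs of frequency $\s$.

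To remove this defect I will argue by induction on the nilpotency of $\d_{x_j}$. The energy argument sketched in the introduction, with growth forbidden by Corollary~\ref{cor: resolvent large im sigma}, shows that $\d_{x_j}^N = 0$ on any generalized QNM for large $N$. So write $\d_{x_j} w = h \in \A_\s$ and assume inductively that $\Cr_j$ has already been defined on elements $h'$ with $\d_{x_j}^{N-1} h' = 0$. I then replace $w$ by $w - \Cr_j h$, which leaves $\mP w = f$ unchanged, since $\Cr_j h$ is a QNM, and makes $w$ $x_j$-independent. Setting $\Cr_j u := U - w$ then gives $\mP\Cr_j u = 0$ and $\d_{x_j}\Cr_j u = u$. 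Linearity is obtained by defining $\Cr_j$ on a basis of each finite-dimensional space of frequency $\s$ and extending linearly. The induction also has to be set up so that $\d_{x_j}$ applied to a generalized QNM with $\d_{x_j}^N = 0$ produces something with $\d_{x_j}^{N-1} = 0$, which is immediate.

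For the commutation $[\d_{x_k}, \Cr_j] = 0$ with $k \ne j$, I will make the choices compatibly with $\d_{x_k}$. The primitive $U$ commutes with $\d_{x_k}$ automatically. For the correction, I will choose the bases of generalized QNMs to be adapted to the nilpotent commuting operators $\d_{x_k}$, $k \neq j$, for instance in Jordan form, and define $\Cr_j$ equivariantly. This bookkeeping of compatible choices is the delicate part. If it resists, my fallback is to define $\Cr_j$ on polynomial-in-$x$ representations, using Corollary~\ref{cor: structure of QNMs}-type structure, where $\d_{x_k}$-equivariance is transparent.
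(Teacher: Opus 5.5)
You have the annihilation half right, and the first steps of the creation construction are also correct: the primitive $U=\int_0^{x_j}u$, its shifted frequency, and the observation that $f=\mP U$ is independent of $x_j$. These match the paper's ansatz. The gap is the induction that is meant to make $w$ independent of $x_j$. To apply the induction hypothesis to $h=\d_{x_j}w$ you need $\d_{x_j}^{N-1}h=\d_{x_j}^{N}w=0$. But $w$ is not in $\ker\mP$: it is an arbitrary output of Proposition~\ref{prop: QNM wave equations}, and nothing bounds its degree in $x_j$. The remark you call immediate is about $\d_{x_j}$ applied to $u$, not to $w$. The base case already fails: if $u$ is $x_j$-independent you need $h=0$, which is exactly what has to be proved. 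In fact the step is circular. We have $u-h=\d_{x_j}(U-w)$, and $U-w\in\ker\mP$ is a generalized QNM of frequency $\s-(1-p_j)i$. So $u$ lies in the image of $\d_{x_j}$ if and only if $h$ does, and $h$ lies in the same space $\A_\s$ with no smaller nilpotency order. For example, take $\Box$ on the Minkowski half-space ($p_k=0$) and $u=1$. Then $U=x_1$, $f=0$, and $w=x_1$ is an admissible output of the Proposition, which gives $h=u$.

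The missing idea is to solve $\mP w=f$ inside the class of $x_j$-independent functions from the start; this is what the paper's ansatz $v(t,x_2,\dots,x_n)$ implicitly does. Since $\P$ is geometric, Remark~\ref{rmk: wave const coeff} shows that on $x_j$-independent functions $\mP$ acts by the same formula with the $\d_{x_j}$-terms deleted. That is a geometric wave operator on the Kasner-type spacetime $(0,\infty)_t\times\R^{n-1}$ with exponents $(p_k)_{k\neq j}$. The projection of $\mY_a$ is a region of the same type, with $a$ replaced by $a+1/\min_k \q_k-1/\min_{k\neq j}\q_k\geq a$, and $f$ is a generalized QNM there. So Proposition~\ref{prop: QNM wave equations} applied on the reduced spacetime gives an $x_j$-independent $w$ directly, with no induction (for $n=1$ it is an ODE in $t$). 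An alternative is to use Corollary~\ref{cor: structure of QNMs}, whose proof uses only the annihilation half: it makes $f$ a finite sum of terms $t^{\mu-\abs{\o}_p}x^\o b_\o(\log t)$ with $\o_j=0$. You can then solve degree by degree from the top, because the spatial terms of $\mP$ lower the $x$-degree. The diagonal part acts on coefficients by $L(t\d_t+\lambda)$ with $L(z)=z^2+z\A+\Cm$, which is surjective on $\C^m$-valued polynomials in $\log t$. For $[\d_{x_k},\Cr_j]=0$ you do not need Jordan-form bookkeeping. Instead, produce $w=S(\mP U)$ with one fixed linear solution operator $S$ commuting with $\d_{x_k}$ for $k\neq j$. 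For instance, take the contour formula of Proposition~\ref{prop: QNM wave equations}; this uses $\d_{y_k}\hat\mP(z)=\hat\mP(z+i\q_k)\d_{y_k}$, and you still have to check the resulting resolvent intertwining. Since $\int_0^{x_j}$ already commutes with $\d_{x_k}$, the commutation relation then follows. The paper's own proof does not spell out linearity or this commutation relation either.
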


\begin{definition} \label{def: ann + creat}
We call $\d_{x_j}$ the \emph{annihilation operator}, and $\Cr_j$ the \emph{creation operator}.
\end{definition}

The creation operator $\Cr_j$ is a particular choice of primitive function with respect to the $x_j$-variable, as we will see in the proof.

\begin{proof}[Proof of Theorem~\ref{thm: creation and annihilation}]
We begin by studying the annihilation operator. 
Note first that, since $[\d_{x_j}, \P] = 0$ and $\P u = 0$, it follows that $\P \left( \d_{x_j} u \right) = 0$.
To see that $\d_{x_j} u$ is a QNM of the shifted frequency, we proceed by induction over the order $k$, using that $[\d_{x_j}, \T + i \s] = - (1 - p_j) \d_{x_j}$.
In case $k = 1$, if $\left( \T + i \s \right) u = 0$, then $0 = \d_{x_j} \left( \T + i \s \right) u = \left( \T + i \left( \s + (1 - p_j)i \right) \right) \d_{x_j}u$.
Now assume that for some fixed $k$, $\left( \T + i \s \right)^k u = 0$ implies that $\left( \T + i \left( \s + (1 - p_j)i \right) \right)^k \d_{x_j}u = 0$.
If now $\left( \T + i \s \right)^{k+1} u = 0$, then $\left( \T + i \s \right)^k \left( \T + i \s \right) u = 0$, and the induction assumption implies that
\[
	0
		= \left( \T + i \left( \s + (1 - p_j)i \right) \right)^k \d_{x_j} \left( \T + i \s \right) u 
		= \left( \T + i \left( \s + (1 - p_j)i \right) \right)^{k+1} u,
\]
as claimed.

We now want to construct the creation operator.
For simplicity of notation, let us assume that $j = 1$ in the proof.
We start with the ansatz
\[
	\Cr_1 u(t, x_1, \hdots, x_n)
		= \int_0^{x_1} u(t, y, x_2, \hdots, x_n) \md y + v(t, x_2, \hdots, x_n),
\]
and compute that
\begin{align*}
	\T \int_0^{x_1} u(t, y, x_2, \hdots, x_n) \md y
		= & \ - (1 - p_1) x_1 \d_{x_1} \int_0^{x_1} u(t, y, x_2, \hdots, x_n) \md y \\
		& \ + \int_0^{x_1} \left( \T + (1-p_1)y \d_y \right) u(t, y, x_2, \hdots, x_n) \md y \\
		= & \ \int_0^{x_1} \left( \T - (1 - p_1) \right) u(t, y, x_2, \hdots, x_n) \md y.
\end{align*}
It follows that $\left( \T + i \s \right)^k u = 0$ if and only if $\left( \T + i \left( \s - (1-p_1)i \right) \right)^k \int_0^{x_1} u(t, y, x_2, \hdots, x_n) \md y = 0$.
Consequently, 
\begin{align*}
	\left( \T + i \left( \s - (1-p_1)i \right) \right)^k \Cr_1 u 
		= 0 \quad 
		& \Longleftrightarrow \quad \left( \T + i \left( \s - (1-p_1)i \right) \right)^k v
		= 0, \\
	\P \Cr_1 u = 0 \quad 
		&\Longleftrightarrow \quad
		\P v = - \P \int_0^{x_1} u(t, y, x_2, \hdots, x_n) \md y.
\end{align*}
This is solvable by Proposition~\ref{prop: QNM wave equations}, with some order $k' \geq k$, since $- \P \int_0^{x_1} u(t, y, x_2, \hdots, x_n) \md y$ is a generalized QNM with the shifted frequency.
\end{proof}

Theorem~\ref{thm: creation and annihilation} gives a way of producing QNMs and frequencies for geometric wave operators.
This is the key in the proof of the next corollary.
In order to formulate it, we introduce the notation
\[
	\abs{\o}_p
		:= \sum_{j = 1}^n \o_j (1-p_j)
\]
for any $\o \in \N_0^n$.

\begin{cor}[Structure of quasinormal modes for wave equations] \label{cor: structure of QNMs}
Let $(M, g)$ and $\P$ be as in Theorem~\ref{thm: creation and annihilation}.
If $u \in C^\infty(\mY_a; \C^m)$ is a generalized QNM with frequency $\s \in \C$ and order $k$, then there is a $K \in \N_0$, such that
\begin{equation} \label{eq: QNM form}
	u(t, x)
		= t^{i \s} \sum_{\abs{\o}_p \leq K} t^{-\abs{\o}_p} x^\o a_\o(t), 
\end{equation}
where $a_\o(t) = \sum_{j = 0}^{k-1} a_{\o, j} \log(t)^j$ for some constant $a_{\o,j} \in \C^m$ for all $j = 0, \hdots, k-1$ and all $\o$, and where $x^\o := x_1^{\o_1} \cdots x_n^{\o_n}$.
Consequently, $u$ is real analytic and extends uniquely to a generalized QNM on all of $M$. 
Moreover, if $\a \in \N_0^n$ is such that $a_\o(t) = 0$ for all $\o > \a$, then $\P\left( t^{i\s - \abs{\a}_p} a_\a(t) \right) = 0$.

\end{cor}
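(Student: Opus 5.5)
The plan is to first show that $u$ is a polynomial in $x$, and then read off the $t$-dependence of its coefficients from the self-similarity condition.

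\textbf{Step 1: polynomiality.} By Theorem~\ref{thm: creation and annihilation}, for every multi-index $\o$ the derivative $\d_x^\o u$ is again a generalized QNM in $\mY_a$, with frequency $\s + i\abs{\o}_p$. Its order is still at most $k$. By Corollary~\ref{cor: QNM freq} (equivalently Corollary~\ref{cor: resolvent large im sigma}) there is a $C$ such that every QNM frequency satisfies $\Im \le C$. By Remark~\ref{rmk: gen QNM are QNM}, a nonzero generalized QNM produces a QNM of the same frequency. Hence $\d_x^\o u = 0$ as soon as $\Im(\s) + \abs{\o}_p > C$. Since $\q_j = 1-p_j > 0$ for all $j$, this holds for all but finitely many $\o$. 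So on each slice $\{t\}\times(\text{the } x\text{-section of } \mY_a)$ the function $u$ is a polynomial in $x$, with $t$-dependent coefficients $b_\o(t)$. Here I use that the $x$-sections are convex (they are ellipsoids), so that vanishing of all high derivatives gives a polynomial. Let $K$ be the largest $\abs{\o}_p$ among the nonzero coefficients.

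\textbf{Step 2: time dependence of the coefficients.} I apply $(\T+i\s)^k$ to $u = \sum_\o x^\o b_\o(t)$. The vector field $\T = -t\d_t - \sum_j \q_j x_j\d_{x_j}$ acts on $x^\o b_\o(t)$ as $x^\o\bigl(-t\d_t - \abs{\o}_p\bigr)b_\o$. The monomials $x^\o$ are linearly independent, so each coefficient satisfies
\[
	\left(t\d_t + \abs{\o}_p - i\s\right)^k b_\o = 0.
\]
This ODE in $\log t$ has the solutions $b_\o(t) = t^{i\s-\abs{\o}_p}\sum_{j<k} a_{\o,j}\log(t)^j$, which is exactly \eqref{eq: QNM form}.

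\textbf{Step 3: analyticity, extension and the top-coefficient claim.} The formula \eqref{eq: QNM form} makes sense on all of $M$ and is real analytic there. It still satisfies $\P u = 0$ and $(\T+i\s)^k u = 0$ on $M$, because these expressions are real analytic on the connected set $M$ and vanish on the open set $\mY_a$. Uniqueness of the extension follows from unique continuation of analytic functions, or alternatively from the last part of Corollary~\ref{cor: discrete QNM}.

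For the final claim, take $\a$ maximal in the sense that $a_\o = 0$ for all $\o > \a$. Then
\[
	\d_x^\a u = \a!\, t^{i\s-\abs{\a}_p} a_\a(t),
\]
since every other surviving term either has some exponent $\o_j < \a_j$ and is killed by $\d_x^\a$, or has $\o>\a$ and vanishes by assumption. Because $[\d_{x_j},\P]=0$, we get $\P\,\d_x^\a u = \d_x^\a \P u = 0$, which is the assertion.

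\textbf{Main obstacle.} The delicate point is Step 1, which needs a uniform upper bound on $\Im$ of all QNM frequencies. That bound comes from the rough energy estimate via Corollary~\ref{cor: resolvent large im sigma}. The other subtlety is making sure that $\d_x^\o u$ is genuinely a generalized QNM in $\mY_a$, i.e.\ smooth and of bounded order; this is guaranteed by Theorem~\ref{thm: creation and annihilation}.
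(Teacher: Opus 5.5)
Your proposal is correct and follows essentially the same route as the paper. The annihilation operators $\partial_{x_j}$ shift the frequency by $i(1-p_j)$; the uniform upper bound on $\Im$ of QNM frequencies from Corollary~\ref{cor: resolvent large im sigma} then forces all sufficiently high $x$-derivatives of $u$ to vanish; the condition $(\T+i\s)^k u=0$ fixes the $t$-dependence of the coefficients; and the last claim follows from $[\partial_{x_j},\P]=0$. You also supply details the paper leaves implicit: the passage from generalized QNMs to QNMs via Remark~\ref{rmk: gen QNM are QNM}, the convexity of the $x$-sections of $\mY_a$, the factor $\alpha!$ in $\partial_x^\alpha u$, and the analytic-continuation argument for extending $u$ to $M$.
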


This shows in particular that every quasinormal mode of a geometric wave operator is a \emph{polynomial} in the variables $x_1, \hdots, x_n$.
The key observation in the proof of this corollary is that the annihilation operator indeed \emph{annihilates} quasinormal modes after a finite number of applications, due to Corollary~\ref{cor: resolvent large im sigma}.

\begin{proof}[Proof of Corollary~\ref{cor: structure of QNMs}]
Let $u$ be a quasinormal mode with respect to $\s \in \C$.
For any $\a \in \N_0^n$, Theorem~\ref{thm: creation and annihilation} implies that $\d_{x_1}^{\a_1} \hdots \d_{x_n}^{\a_n} u$ is a generalized QNM of frequency $\s + i \sum_{j = 1}^n \a_j(1-p_j)$.
Moreover, Corollary~\ref{cor: resolvent large im sigma} implies that there is a $K \in \N_0$, only dependent on $\P$, such that $\Im \left( \s + i \sum_{j = 1}^n \a_j(1-p_j) \right) \leq K$, or equivalently that $\abs{\a}_p = \sum_{j = 1}^n \a_j(1-p_j) \leq K - \Im(\s)$.
Therefore, any generalized QNM $u$ with frequency $\s \in \C$ must satisfy $\d_{x_1}^{\a_1} \hdots \d_{x_n}^{\a_n} u = 0$, for any $\a \in \N_0^n$ such that $\abs{\a}_p > K - \Im(\s)$.
Consequently, $u$ has to be a polynomial in $x_1, \hdots, x_n$ with coefficients depending only on $t$.
We may therefore write
\[
	u(t, x)
		= t^{i \s} \sum_{\abs{\o}_p \leq K - \im(\s)} t^{- \abs{\o}_p} x^\o a_\o(t).
\]
Since $\T \left( t^{- \abs{\o}_p} x^\o\right) = 0$, the generalized QNM condition $(\T + i \s)^k u(t, x) = 0$ is equivalent to $\T^k a_\o(t) = 0$, which is equivalent to $\left( t\d_t \right)^k a_\o(t) = 0$, for all $\abs{\o}_p \leq K - \Im(\s)$.
Integrating this, we find the expressions for $a_\o(t)$.
For the second assertion, if $\a \in \N_0^n$ is such that $a_\o(t) = 0$ for all $\o > \a$, then 
\[
	\P\left( t^{i\s - \abs{\a}_p} a_\a(t) \right)
		= \P \left( \d_{x_1}^{\a_1} \cdots \d_{x_n}^{\a_n} u \right) 
		= \d_{x_1}^{\a_1} \cdots \d_{x_n}^{\a_n} \P u
		= 0,
\]
as claimed.
\end{proof}
The next corollary will be an algebraic characterization of all possible QNM frequencies. 
For this, the following observation will be useful:

\begin{remark} \label{rmk: wave const coeff}
Note that any linear wave operator $\P$ on a Kasner-type spacetime takes the form
\begin{equation} \label{eq: geom wave char}
	t^2 \P
		= \id \left( t\d_t \right)^2 - \id \sum_{j = 1}^n t^{2(1-p_j)} \d_{x_j}^2 + \A t\d_t + \sum_{j = 1}^n \B_j t^{1-p_j} \d_{x_j} + \Cm,
\end{equation}
for some matrices $\A, \B_1, \hdots, \B_n, \Cm$, where \eqref{eq: T commutator} implies that the coefficients are independent of $t$.
For a \emph{geometric} operator, the additional assumption \eqref{eq: x commutators} implies that the coefficient matrices $\A, \B_1, \hdots, \B_n, \Cm$ are in fact constant. 
\end{remark}

\begin{cor}[Quasinormal mode frequencies for wave equations] \label{cor: QNM frequencies}
Let $(M, g)$ and $\P$ be as in Theorem~\ref{thm: creation and annihilation}.
The quasinormal mode frequencies are precisely the numbers
\[
	\s
		= - i \left( \lambda + \abs{\a}_p \right),
\]
where $\a \in \N_0^n$ and $\lambda \in \C$ is such that 
\[
	\lambda^2 \id + \lambda \A + \Cm
\]
is non-invertible, where $\A$ and $\Cm$ are as in Remark~\ref{rmk: wave const coeff}.
Consequently, the real part of the frequency of any generalized QNM coincides with the real part of the frequency of a generalized QNM with no $x$-dependence.
\end{cor}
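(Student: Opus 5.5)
The plan is to prove the two inclusions separately, using Corollary~\ref{cor: structure of QNMs} for necessity and the creation operators of Theorem~\ref{thm: creation and annihilation} for sufficiency. Throughout, Remark~\ref{rmk: gen QNM are QNM} lets us work with honest QNMs ($k=1$) whenever we only need to detect a frequency.

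\emph{Necessity.} Let $\s$ be a QNM frequency. By Remark~\ref{rmk: gen QNM are QNM} there is a non-trivial QNM $u$ with $(\T + i\s)u = 0$. Corollary~\ref{cor: structure of QNMs} with $k=1$ gives
\[
	u = t^{i\s}\sum_{\abs{\o}_p\le K} t^{-\abs{\o}_p}x^\o a_\o,
\]
with constant vectors $a_\o \in \C^m$ and only finitely many of them non-zero.
\begin{enumerate}
	\item Choose $\a$ maximal (with respect to the partial order on $\N_0^n$) among the multi-indices with $a_\a \neq 0$. Then $a_\o = 0$ for all $\o > \a$.
	\item The last assertion of Corollary~\ref{cor: structure of QNMs} then gives $\P\bigl(t^{\mu}a_\a\bigr) = 0$, where $\mu := i\s - \abs{\a}_p$.
	\item Since $t^\mu a_\a$ is independent of $x$, Remark~\ref{rmk: wave const coeff} reduces $t^2\P$ on it to $(t\d_t)^2 + \A\, t\d_t + \Cm$. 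Hence
	\[
		t^2\P\bigl(t^\mu a_\a\bigr) = t^\mu\bigl(\mu^2 + \mu\A + \Cm\bigr)a_\a = 0 .
	\]
	\item Because $a_\a \neq 0$, the matrix $\mu^2\id + \mu\A + \Cm$ is non-invertible.
\end{enumerate}
Setting $\lambda := \mu$ gives $i\s = \lambda + \abs{\a}_p$, that is, $\s = -i(\lambda + \abs{\a}_p)$, as claimed.

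\emph{Sufficiency.} Let $\lambda$ make $\lambda^2\id + \lambda\A + \Cm$ singular, and pick a non-zero kernel vector $a$. Set $v := t^\lambda a$.
\begin{enumerate}
	\item The same reduction as in step 3 gives $t^2\P v = 0$.
	\item Since $\T t^\lambda = -\lambda t^\lambda$, the function $v$ is a QNM with frequency $-i\lambda$.
	\item For $\a \in \N_0^n$, apply the creation operators to get $w := \Cr_1^{\a_1}\cdots\Cr_n^{\a_n}v$. By Theorem~\ref{thm: creation and annihilation}, $w$ is a generalized QNM of frequency $-i\lambda - i\abs{\a}_p = -i(\lambda + \abs{\a}_p)$. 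The creation operators are defined on $\mY_a$; the modes extend to $M$ by Corollary~\ref{cor: structure of QNMs}.
	\item To see that $w$ is non-trivial, use $\d_{x_j}\Cr_j = \id$ and $[\d_{x_j},\Cr_k] = 0$ for $j \neq k$. Annihilations can be moved past creations in other indices, so $\d_x^\a w = v \neq 0$.
	\item Remark~\ref{rmk: gen QNM are QNM} then produces an honest QNM with the same frequency.
\end{enumerate}

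\emph{Final statement.} Every frequency has the form $-i(\lambda + \abs{\a}_p)$ with $\abs{\a}_p \in \R$. Its real part therefore equals $\Re(-i\lambda)$, and $-i\lambda$ is the frequency of the $x$-independent QNM $t^\lambda a$.

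The main point requiring care is the bookkeeping in step 1 of the necessity part: the maximal $\a$ must be chosen so that all strictly larger coefficients vanish, which is exactly the hypothesis of the last assertion of Corollary~\ref{cor: structure of QNMs}. The other delicate point is step 4 of the sufficiency part: checking that repeated creation operators in different directions commute appropriately with the annihilations, so that $w$ is non-trivial.
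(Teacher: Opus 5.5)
Your proposal is correct and follows the paper's proof essentially verbatim. For necessity you take a maximal multi-index $\a$ with $a_\a \neq 0$ and apply the last assertion of Corollary~\ref{cor: structure of QNMs}; for sufficiency you apply $\Cr_1^{\a_1}\cdots\Cr_n^{\a_n}$ to $t^\lambda v$, as the paper does. Your check that $\d_x^\a w = v \neq 0$ (using $\d_{x_j}\Cr_j = \id$ and $[\d_{x_j},\Cr_k]=0$) and your explicit derivation of the final real-part statement fill in points that the paper leaves implicit.
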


\begin{proof}[Proof of Corollary~\ref{cor: QNM frequencies}]
Let $u$ be a generalized QNM and let $\s \in \C$ be the QNM frequency.
By Remark~\ref{rmk: gen QNM are QNM}, we may assume that $u$ is a (not generalized) QNM.
By Corollary~\ref{cor: structure of QNMs}, if $u$ is not identically zero, then there is an $\a \in \N^n_0$ such that $a_\o = 0$ for all $\o > \a$ and $a_\a \neq 0$, and
\begin{align*}
	0
		= \P \left( t^{i \s} a_\a t^{-\abs{\a}_p} \right) 
		= t^\lambda \left( \lambda^2 + \A \lambda + \Cm \right)a_\a
\end{align*}
for $\lambda := i \s -\abs{\a}_p$.

Conversely, let $\s := - i (\lambda + \abs{\a}_p)$, where $\left( \lambda^2 + \A \lambda + \Cm \right)v = 0$ for a $v \in \C^m$.
We then define $u := \Cr_1^{\a_1} \cdots \Cr_n^{\a_n} \left( t^\lambda v \right)$, where $\Cr_1, \hdots, \Cr_n$ are the creation operators.
Since $t^\lambda v$ is a QNM of frequency $-i \lambda$, Theorem~\ref{thm: creation and annihilation} implies that $u$ is a generalized QNM of frequency $\s = - i \left( \lambda + \abs{\a}_p \right)$, completing the proof.
\end{proof}

\begin{example} \label{ex: scalar wave}
If $\P = \Box + \frac{\b}{t^2}$, with $\b \in \R$, then $\A = 0$ and $\Cm = \b$.
By Corollary~\ref{cor: QNM frequencies}, the QNM frequencies of $t^2 \P$ are therefore given by all $- i \left(\lambda + \abs{\a}_p \right)$, where $\a \in \N_0^n$ and $\lambda^2 = \b$.
If $\b \geq 0$, then all QNM frequencies are purely imaginary.
If $\b < 0$, then the QNM frequencies have a non-trivial real part, given by $\pm \sqrt{\abs{\b}}$.
In the special case of the scalar wave equation in de Sitter space, that is $p_1 = \hdots = p_n = 0$ and $\b = 0$, then this set is $- i \N_0$ (c.f.~\cite{H2025}*{Ex.~11.8}).
\end{example}

\begin{remark}
Neither of the above presents a way how to compute the order $k$ of a generalized QNM.
Note, however, that this is in principle possible by inserting \eqref{eq: QNM form} into a geometric wave equation and comparing terms of the same polynomial degree.
\end{remark}

\section{The linearized Einstein equation} \label{sec: QNM lin Ein}

Let in this section $(M, g)$ be a non-flat Kasner spacetime, see Definition~\ref{def: Kasner}. 
In other words, $(M,g)$ is a Kasner-type spacetime satisfying the Kasner relations \eqref{eq: Kasner relations} and $p_j < 1$ for all $j$, which ensures that the Einstein vacuum equation is satisfied and excludes the flat Kasner spacetime.
The purpose of this section is to apply the theory for linear wave equations developed in Section~\ref{sec: asymptotic expansion} and Section~\ref{sec: QNM geometric waves}, specifically Theorem~\ref{thm: main QNM} and Theorem~\ref{thm: main as exp}, to study generalized QNMs for the linearized Einstein equation, i.e.~solutions $u \in C^\infty(M)$ to
\begin{equation} \label{eq: QNM lin Ein}
	\De \Ric_g(u)
		= 0, \qquad
	\left( \L_T + i \s \right)^k u
		= 0,
\end{equation}
for a QNM frequency $\s \in \C$ and order $k \in \N_0$.
The goal is to prove Theorem~\ref{thm: main linearized Einstein, general dim}, Theorem~\ref{thm: self-similar solutions} and Theorem~\ref{thm: main linearized Einstein}.
Note that Theorem~\ref{thm: main linearized Einstein, general dim} says in particular that we get an asymptotic expansion only using QNMs where $i\s \in \R$, meaning it suffices to consider purely imaginary $\s$.
This is a non-trivial fact, as linear wave equations in general require complex frequencies, see Example~\ref{ex: scalar wave}.
It will be a consequence of Theorem~\ref{thm: AVTD system} below, in light of Corollary~\ref{cor: QNM frequencies}.

The first step is to relate QNMs of the linearized Einstein equation, i.e.~solutions to \eqref{eq: QNM lin Ein}, to QNMs for a linear wave equation with the gauge condition \eqref{eq: gauge condition}.
This follows a standard gauge choice for the linearized Einstein equation.
We define the Lichnerowicz-d'Alembert operator as
\[
	\Box_L
		:= \n^*\n - 2 \mathring {\mathrm R},
\]
where $( \mathring {\mathrm R} u)_{ij} := \mathrm R_{kijl}g^{km}g^{lr}u_{mr}$, where $\mathrm R$ denotes the Riemann curvature tensor.
A straightforward computation, see e.g.~\cite{B2008}*{Thm.~1.174} and \cite{P2019}, shows that
\begin{equation} \label{eq: lin Ein equation}
	\De \Ric_g(u)
		= \frac12 \left( \Box_L u + \L_{\div_g\left( u - \frac12\tr_g(u) g \right)^\sharp}g \right).
\end{equation}
Since $(M, g)$ is a globally hyperbolic spacetime, there is a vector field $X \in C^\infty(M)$ satisfying
\begin{equation} \label{eq: constr X}
	\Box_g X
		= \div_g \left( u - \frac12 \tr_g(u) g \right).
\end{equation}
Defining $\tilde u := u + \L_Xg$, a standard computation using $\Ric_g = 0$ shows that
\[
	\div_g\left(\tilde u - \frac12 \tr_g \left( \tilde u \right) g \right)
		= - \Box X + \div_g \left( u - \frac12 \tr_g(u) g \right)
		= 0.
\]
This is the gauge condition \eqref{eq: gauge condition} used in our main results.
Inserting the gauge condition in \eqref{eq: lin Ein equation}, the equation $\De \Ric_g(u) = 0$ reduces to $\Box_L u = 0$.
As we will see, applying Proposition~\ref{prop: QNM wave equations} to \eqref{eq: constr X}, we can similarly fix the gauge for the generalized QNMs, so it suffices to consider solutions $u \in C^\infty(M)$ to 
\begin{equation} \label{eq: Box L QNM}
	\Box_L u
		= 0, 
	\qquad
	\left( \L_T + i \s \right)^k u
		= 0,
\end{equation}
for a QNM frequency $\s \in \C$ and order $k \in \N_0$, satisfying the gauge condition \eqref{eq: gauge condition}.

\subsection{Quasinormal modes with no spatial dependence}

Motivated by the last assertion in Corollary~\ref{cor: structure of QNMs}, a natural starting point is to compute all $x$-independent generalized QNMs to $\Box_L$, i.e.~solutions to \eqref{eq: Box L QNM}, which satisfy the gauge condition \eqref{eq: gauge condition}.

\begin{thm} \label{thm: AVTD system}
Assume that $(M, g)$ is a non-flat Kasner spacetime.
Let $u \in C^\infty(M)$ be a solution to
\[
	\Box_L u
		= 0, 
	\qquad
	\div_g \left(u - \frac12 \tr_g(u) g \right)
		= 0, 
	\qquad
	\L_{\d_{x_j}} u 
		= 0,
\]
for $j = 1, \hdots, n$.
Then $u$ is a linear combination of the following $n(n+2)$ generalized QNMs:
\begin{itemize}
	\item $g'(d) := 2 \log(t) \sum_{j = 1}^n d_j t^{2p_j} \md x_j^2$, where $\sum_{j = 1}^n p_j d_j = \sum_{j = 1}^n d_j = 0$, with frequency $\s = - 2 i$ and order $2$, 
	\item $\L_{\frac1t \d_t}g$, with frequency $\s = 0$ and order $1$,
	\item $\L_{t^{-2p_j}\d_{x_j}}g$ if $p_j \neq 0$, otherwise $\L_{\log(t)\d_{x_j}}g$, with frequency $\s = - (1-p_j)i$ and order $1$, for $j = 1, \hdots, n$,
	\item $\L_{x_j \d_{x_k}}g$, with frequency $\s = (- 2 + p_j - p_k)i$, for $j, k = 1, \hdots, n$, with $j \neq k$,
	\item $\L_{\left( \log(t) - 1 \right)t\d_t + \sum_{j = 1}^n p_j x_j \d_{x_j}}g$, with frequency $\s = -2i$ and order $2$,
	\item $\L_{\sum_{j = 1}^n d_j x_j \d_{x_j}}g$, where $\sum_{j = 1}^n d_j = 0$, with frequency $\s = -2i$ and order $1$,
	\item $\L_{t \d_t} g$, with frequency $\s = -2i$ and order $1$.
\end{itemize} 
\end{thm}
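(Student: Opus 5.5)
The plan is to reduce the system to constant-coefficient ODEs in $\log t$ and solve it block by block. After that we match the solution space against the explicit list by a dimension count.

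\textbf{Reduction and decoupling.} Since $u$ is $x$-independent, I write it in the orthonormal-type frame $\md t$, $t^{p_j}\md x_j$:
\[
	u = u_{00}\,\md t^2 + 2\sum_j u_{0j}\, t^{p_j}\md t\otimes_s \md x_j + \sum_{j,k} u_{jk}\, t^{p_j+p_k}\md x_j\otimes_s \md x_k ,
\]
with coefficients depending only on $t$. By Remark~\ref{rmk: wave const coeff}, $t^2\Box_L$ acting on such $u$ is $(t\d_t)^2 + \A\, t\d_t + \Cm$ with constant matrices. The discrete isometries $x_j\mapsto -x_j$ commute with $\Box_L$ and with the gauge operator. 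Hence the system splits into invariant blocks:
\begin{itemize}
	\item the block $\{u_{00}, u_{jj}\}$, invariant under all reflections;
	\item for each $j$, the block $\{u_{0j}\}$, odd in $x_j$ only;
	\item for each $j<k$, the block $\{u_{jk}\}$, odd in $x_j$ and $x_k$.
\end{itemize}
The gauge condition $\div_g(u-\frac12\tr_g(u)g)=0$ splits in the same way: its $\md t$-component only sees the first block, and its $\md x_j$-component only sees $u_{0j}$. In each block the equations are Euler systems. So every solution is a finite sum of $t^{\lambda}\log(t)^l c$, where the $\lambda$ are roots of $\det(\lambda^2+\lambda\A+\Cm)=0$ restricted to the block, as in Corollary~\ref{cor: QNM frequencies}.

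\textbf{Solving the blocks.} I will compute the indicial polynomials explicitly, using the Kasner curvature $\mathrm R_{0j0j}\propto p_j(p_j-1)t^{-2}$ and $\mathrm R_{jkjk}\propto p_jp_kt^{-2}$ together with the Christoffel symbols $\Gamma^j_{0j}=p_j/t$. The expectation is that all roots are real, which is why only purely imaginary frequencies appear.
\begin{itemize}
	\item In the off-diagonal block $u_{jk}$, the roots should be $\lambda=0$ and $\lambda=-2+p_j-p_k$, or its symmetric counterpart. These give $\L_{x_j\d_{x_k}}g$ and $\L_{x_k\d_{x_j}}g$.
	\item In the $u_{0j}$-block, combined with the first-order gauge equation, one expects exactly the mode $\L_{t^{-2p_j}\d_{x_j}}g$, or its logarithmic analogue when $p_j=0$.
	\item The diagonal block is where $\log$-terms and the order-$2$ modes arise, through a double root at $\lambda=-2$. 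There the gauge equation, being first order, cuts the $2(n+1)$-dimensional ODE solution space down to the span of the following: $g'(d)$, $\L_{\frac1t\d_t}g$, $\L_{t\d_t}g$, $\L_{(\log t-1)t\d_t+\sum p_jx_j\d_{x_j}}g$, and $\L_{\sum d_jx_j\d_{x_j}}g$.
\end{itemize}

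\textbf{Verification and counting.} Conversely, I will check directly that each listed tensor solves both equations. For the gauge solutions $\L_Xg$ this reduces to checking $\Box_g X=0$ and that $\L_Xg$ is $x$-independent. Indeed, by the formula for $\De\Ric_g$ and $\Ric_g=0$ one has $\Box_L\L_Xg=0$ and $\div_g(\L_Xg-\frac12\tr(\L_Xg)g)=-\Box X$ up to sign. For $g'(d)$, this is the derivative of the Kasner family. I will read off the frequency and order by applying $\T+i\s$. Linear independence follows from distinct $(\lambda,\log$-power$)$ pairs within each block. I will then compare the number of listed elements with the dimension of the gauge-constrained ODE solution space computed blockwise; this dimension is the rank count of the first-order gauge constraints on the Euler data. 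That comparison concludes the proof.

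\textbf{Main obstacle.} I expect the main obstacle to be the diagonal block. There $\tr_g u$ couples $u_{00}$ with all $u_{jj}$, the indicial matrix is not diagonalizable at $\lambda=-2$, and one must carefully determine which Jordan chains survive the gauge condition. My first attempt is to change variables to $\tr_g u$, $u_{00}$, and the traceless diagonal part, and to use the Kasner relations $\sum p_j=\sum p_j^2=1$ to decouple them.
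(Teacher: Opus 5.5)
Your route is the paper's. You take components in the frame $\md t$, $t^{p_j}\md x_j$, observe that for $x$-independent $u$ the system splits into Euler equations, solve them, impose the gauge condition and match with the list. The paper reads the block structure off Lemma~\ref{le: L-d'Alembert}. Your reflection-parity argument is a clean substitute for that: each $u_{0j}$ and each $u_{jk}$ sits alone in its isotypic component, and no gauge component has parity $\epsilon_j\epsilon_k$, so $u_{jk}$ is unconstrained. You still need the explicit coefficients, however, and your verify-and-count ending amounts to solving every block anyway.

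When you compute, two of your predictions need correcting:
\begin{itemize}
\item The $u_{jk}$ equation is $(t\d_t)^2u_{jk}=(p_j-p_k)^2u_{jk}$. So $u_{jk}=a\,t^{p_j-p_k}+b\,t^{p_k-p_j}$, with $\L_\T$-eigenvalues $-2\pm(p_j-p_k)$; there is no root $0$.
\item In the diagonal block, the variable that decouples is $S:=u_{00}+\sum_k p_k u_{kk}$, not the trace. It satisfies $(t\d_t)^2S=4S$ because $\sum_k p_k^2=1$, and then $(t\d_t)^2u_{00}=2S$ and $(t\d_t)^2u_{jj}=2p_jS$. Hence the $\L_\T$-eigenvalue $-2$ is not a double root. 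It has algebraic multiplicity $2n$, with $n$ Jordan chains $a+b\log t$ subject to $a_0+\sum_j p_ja_j=b_0+\sum_j p_jb_j=0$. The gauge condition then removes the $t^{2}$ mode and imposes $b_0+\sum_j b_j=0$, leaving dimension $2n$.
\end{itemize}

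The step that genuinely fails is your linear-independence claim via distinct pairs (exponent, power of $\log t$). It breaks whenever $p_j=p_k$ for some $j\neq k$, e.g.\ $(p_1,p_2,p_3)=(\frac23,\frac23,-\frac13)$.
\begin{itemize}
\item In that case $\L_{x_j\d_{x_k}}g=\L_{x_k\d_{x_j}}g$.
\item The block equation $(t\d_t)^2u_{jk}=0$ has the second solution $\log(t)\,t^{2p_j}\md x_j\otimes_s\md x_k$.
\item This tensor is a multiple of the pullback of $g'(d)$, with $d_j=-d_k=1$ and $d_l=0$ otherwise, under the rotation by $\pi/4$ in the $(x_j,x_k)$-plane. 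That rotation is an isometry when $p_j=p_k$, so the tensor satisfies all three equations.
\item It is nevertheless not in the span of the list.
\end{itemize}
So the listed tensors span a space one dimension short per coinciding pair. The statement is true only for pairwise distinct exponents, or after these tensors are added. The paper's proof has the same blind spot: it writes $u_{jk}=a_{jk}t^{p_j-p_k}+b_{jk}t^{p_k-p_j}$ without treating the double root.

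Your verification step will also show that $\L_{\sum_j d_jx_j\d_{x_j}}g$ solves the system if and only if $\sum_j p_jd_j=0$, not $\sum_j d_j=0$. The paper's proof in fact derives $\sum_j p_jd_j=0$, so this is a typo in the statement. Finally, $\Box_L\L_Xg$ does not vanish unconditionally. By \eqref{eq: lin Ein equation} and $\De\Ric_g(\L_Xg)=0$, it vanishes once $\Box X=0$, which is what your verification actually needs.
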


The solutions $g'(d)$ in Theorem~\ref{thm: AVTD system} are simply the linearized Kasner metrics in higher dimensions.
Indeed, if $\sum_{j = 1}^n p_j(r)^2 = \sum_{j = 1}^n p_j(r) = 1$, with $p_j(0) = p_j$, and $p_j'(0) = d_j$, then 
\[
	g'(d)
		= \frac{\md }{\md r}|_{r = 0} \left( -\md t^2 + \sum_{j = 1}^n t^{2p_j(r)}\md x_j^2 \right)
		= 2 \log(t) \sum_{j = 1}^n d_j t^{2p_j} \md x_j^2,
\]
where $\sum_{j = 1}^n p_j d_j = \sum_{j = 1}^n d_j = 0$.

\begin{lemma} \label{le: L-d'Alembert}
Assume that $(M, g)$ is a non-flat Kasner spacetime of dimension $n+1$.
Define the orthonormal frame for $g$ given by $e_0 := \d_t$ and $e_j := \ t^{-p_j}\d_{x_j}$, for $j = 1, \hdots, n$, and the dual frame $e^0 := \md t$ and $e^j := \ t^{p_j}\md x_j$ for $j = 1, \hdots, n$.
Writing $u = \sum_{\a, \b = 0}^n u_{\a \b}e^\a \otimes e^\b$, then
\begin{align*}
	t^2 \Box_L u
		= & \ \left( t^2 \Box u_{00} + 4 \sum_{k = 1}^n p_k t e_k u_{k0} - 2 \left( u_{00} + \sum_{k = 1}^n p_k u_{kk} \right) \right) e^0 \otimes e^0 \\
		& \ + \sum_{j = 1}^n \left( t^2 \Box u_{jj} + 4 p_j t e_j u_{0j} - 2 p_j \left( u_{00} + \sum_{k = 1}^n p_k u_{kk} \right) \right) e^j \otimes e^j \\
		& \ + 2 \sum_{j = 1}^n \left( t^2 \Box u_{0j} + 2 \left( \sum_{l = 1}^n p_l te_l u_{jl} + p_j t e_j u_{00} \right) - \left( p_j + 1 \right)^2 u_{j0} \right) e^0 \otimes_s e^j  \\
		& \ + \sum_{j \neq k} \left( t^2 \Box u_{jk} + 2 \left( p_j t e_j u_{0k} + p_k t e_k u_{0j} \right) - \left( p_j - p_k \right)^2 u_{jk} \right) e^j \otimes e^k,
\end{align*}
where $\Box$ denotes the d'Alembert operator on scalar-valued functions. 
\end{lemma}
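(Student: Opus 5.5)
We compute directly in the orthonormal frame. The idea is to use the Weitzenböck-type splitting $\Box_L = \n^*\n - 2\mathring{\mathrm R}$ and treat the two pieces separately: first the rough Laplacian $\n^*\n$ acting on a symmetric $2$-tensor, then the curvature term.

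\textbf{Connection coefficients.} Since $e^j = t^{p_j}\md x_j$, we have $\md e^j = \frac{p_j}{t} e^0\wedge e^j$ and $\md e^0 = 0$. Cartan's structure equations then give
\begin{equation*}
	\n_{e_0} e_\a = 0, \qquad
	\n_{e_j} e_0 = \frac{p_j}{t} e_j, \qquad
	\n_{e_j} e_k = \frac{p_j}{t}\de_{jk} e_0 .
\end{equation*}
Equivalently, $\n_{e_j} e^0 = \frac{p_j}{t} e^j$, $\n_{e_j} e^k = \frac{p_j}{t}\de_{jk} e^0$, and all $e_0$-derivatives of the coframe vanish. As a consistency check, the trace gives $\div(e_0) = \sum_j p_j/t = 1/t$, so the scalar wave operator is $\Box = \d_t^2 + t^{-1}\d_t - \sum_j e_j^2$. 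This matches $t^2\Box = (t\d_t)^2 - \sum_j t^{2(1-p_j)}\d_{x_j}^2$ in Remark~\ref{rmk: wave const coeff}, with the sign convention $\n^*\n = -\n^\a\n_\a$ for the Lorentzian metric.

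\textbf{Rough Laplacian.} Write $u = \sum u_{\a\b} e^\a\otimes e^\b$ and expand $\n^*\n u = -\sum_\a \epsilon_\a \left( \n_{e_\a}\n_{e_\a} - \n_{\n_{e_\a}e_\a}\right) u$, where $\epsilon_0 = -1$ and $\epsilon_j = 1$. Applying Leibniz to each factor $e^\a$ produces three kinds of terms:
\begin{itemize}
	\item scalar $\Box u_{\a\b}$, where the term $\n_{e_j}e_j = \frac{p_j}{t}e_0$ supplies the $\frac1t\d_t$ part;
	\item first-order cross terms $2\, e_j(u_{\cdot\cdot})\,\n_{e_j}(e^\cdot)$, which give the $\frac{2p_j}{t}e_j u$ couplings (the $0\leftrightarrow j$ index swaps);
	\item zeroth-order terms from second derivatives of the coframe, $\n_{e_j}\n_{e_j} e^0 = \frac{p_j^2}{t^2} e^0$ and $\n_{e_j}\n_{e_j} e^k = \frac{p_j^2}{t^2}\de_{jk} e^k$, together with the products $(\n_{e_j} e^\a)\otimes(\n_{e_j} e^\b)$.
\end{itemize}
After multiplying by $t^2$, the zeroth-order terms give, for example, $-(p_j^2+p_k^2)u_{jk}$ plus mixing terms.

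\textbf{Curvature term.} The nonzero Kasner curvature components are
\begin{equation*}
	\mathrm R_{0j0j} = \frac{p_j(1-p_j)}{t^2}\cdot(\pm), \qquad
	\mathrm R_{jkjk} = \frac{p_jp_k}{t^2} \quad (j\neq k).
\end{equation*}
We compute $-2\mathring{\mathrm R}u$ componentwise and add it to the rough Laplacian. The Kasner relations $\sum p_j = \sum p_j^2 = 1$ are used to simplify sums like $\sum_l p_l(1-p_l)$. In the $jk$ component, for instance, $p_j^2 + p_k^2 - 2p_jp_k = (p_j-p_k)^2$ should appear. In the $0j$ component we expect $(p_j+1)^2$, coming from $p_j^2 + \sum_l p_l^2 + \ldots$.

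\textbf{Main obstacle.} The hard part is bookkeeping the zeroth-order terms and the signs, both from the Lorentzian convention in $\mathring{\mathrm R}$ and from the index placement $g^{00} = -1$. The diagonal components are the most delicate, since they require the combination $-2p_j\left(u_{00} + \sum_k p_k u_{kk}\right)$. My first step would be to verify the $jk$ and $00$ components, then fix signs by checking that $g$ itself satisfies $\Box_L g = 0$ (i.e.\ $u_{00} = -1$, $u_{jj} = 1$) and that the linearized Kasner metric $g'$ does too.
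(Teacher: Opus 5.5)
Your plan (an orthonormal-frame computation of $\nabla^*\nabla u$, followed by the curvature term) is exactly the ``straightforward computation'' the paper has in mind. Your connection coefficients on the \emph{frame} $e_\alpha$ and your check of $\Box$ are correct. The error is in the ``equivalently'' step for the coframe. Because $g(e_0,\cdot)=-e^0$, you cannot simply transcribe the frame formulas. The rule $(\nabla_X\omega)(Y)=X(\omega(Y))-\omega(\nabla_XY)$ gives $(\nabla_{e_j}e^0)(e_j)=-e^0(\tfrac{p_j}{t}e_0)=-\tfrac{p_j}{t}$, i.e.
\[
	\nabla_{e_j}e^0=-\frac{p_j}{t}\,e^j, \qquad \nabla_{e_j}e^k=-\frac{p_j}{t}\,\delta_{jk}\,e^0 .
\]
This sign matters. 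The terms $\nabla_{e_j}\nabla_{e_j}e^\alpha$ and $(\nabla_{e_j}e^\alpha)\otimes(\nabla_{e_j}e^\beta)$ are quadratic in the connection, so they come out right either way. The first-order cross terms $-2\sum_j e_j(u_{\alpha\beta})\,\nabla_{e_j}(e^\alpha\otimes e^\beta)$, however, are linear in it. Your formulas would therefore produce $-4\sum_k p_k te_ku_{k0}$, $-4p_jte_ju_{0j}$, $-2\big(\sum_l p_lte_lu_{jl}+p_jte_ju_{00}\big)$ and $-2(p_jte_ju_{0k}+p_kte_ku_{0j})$, so every first-order coupling has the wrong sign. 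Your bullet asserts the right sign, but it does not follow from what you wrote.

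Your proposed safeguards cannot detect this, because $g$ and $g'$ are $x$-independent and every $e_j$-derivative term vanishes on them. Likewise, $\Box_Lg=0$ cannot fix the sign convention of $\mathring{\mathrm R}$, since $\mathring{\mathrm R}g=\pm\mathrm{Ric}=0$ in either convention; $g'$ does fix it, since $\sum_j p_j^2d_j\neq 0$ in general. You need an $x$-dependent test. One option is the Ricci-flat Weitzenb\"ock identity $\nabla^*\nabla(\mathrm d\phi)=\mathrm d(\Box\phi)$ with $\phi=x_1^2$: its $e^0$-component is $\frac{2p_1}{t}e_1(2x_1t^{-p_1})=4p_1t^{-2p_1-1}=\partial_t\Box\phi$, which holds only with the corrected sign.

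Once this is fixed, the bookkeeping closes:
\begin{itemize}
	\item The zeroth-order parts of $t^2\nabla^*\nabla u$ are $-2u_{00}-2\sum_jp_j^2u_{jj}$, $-2p_k^2(u_{kk}+u_{00})$, $-(1+3p_k^2)u_{0k}$ and $-(p_j^2+p_k^2)u_{jk}$. There are no mixing terms in the $jk$ slot.
	\item With the convention $\mathring{\mathrm R}g=\mathrm{Ric}$, one finds $t^2(\mathring{\mathrm R}u)_{00}=\sum_jp_j(1-p_j)u_{jj}$, $t^2(\mathring{\mathrm R}u)_{kk}=p_k(1-p_k)u_{00}+p_k\sum_{j\neq k}p_ju_{jj}$, $t^2(\mathring{\mathrm R}u)_{0k}=p_k(1-p_k)u_{0k}$ and $t^2(\mathring{\mathrm R}u)_{jk}=-p_jp_ku_{jk}$.
\end{itemize}
Subtracting twice the curvature terms and using $\sum_jp_j=\sum_jp_j^2=1$ gives exactly the stated formula.
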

\begin{proof}[Proof of Lemma~\ref{le: L-d'Alembert}]
This is a straightforward computation.
\end{proof}

\begin{proof}[Proof of Theorem~\ref{thm: AVTD system}]
From Lemma~\ref{le: L-d'Alembert}, we see that $\Box_L u = 0$ and $\L_{\d_{x_j}} u = 0$, for all $j = 1, \hdots, n$, if and only if the components $u_{\a\b}$ of $u$ are independent of $x_1, \hdots, x_n$ and satisfy
\begin{align*}
	\left( t\d_t \right)^2 u_{00} - 2 \left( u_{00} + \sum_{k = 1}^n p_k u_{kk} \right) 
		&= 0, &&
	\left( t\d_t \right)^2 u_{jj} - 2 p_j \left( u_{00} + \sum_{k = 1}^n p_k u_{kk} \right)
		= 0, \\
		\left( t\d_t \right)^2 u_{0j} - \left( p_j + 1 \right)^2 u_{0j}
		&= 0, &&
	\left( t\d_t \right)^2 u_{jk} - \left( p_j - p_k \right)^2 u_{jk}
		= 0,
\end{align*}
for all $j, k = 1, \hdots, n$ with $j \neq k$.
The general solution to this system of ODE is
\begin{align*}
	u_{00}
		&= a_0 + b_0\log(t) + \frac{c_1}2 t^2 + \frac{c_2}2 t^{-2}, &&
	u_{jj}
		= a_j + b_j\log(t) + \frac{c_1}2 p_j t^2 + \frac{c_2}2p_j t^{-2}, \\
	u_{0j}
		&= a_{0j} t^{1 + p_j} + b_{0j} t^{-(1+p_j)}, && 
	u_{jk}
		= a_{jk}t^{p_j - p_k} + b_{jk} t^{p_k - p_j},
\end{align*}
for some constants $a_0, a_j, b_0, b_j, a_{0, j}, b_{0,j}, a_{jk}, b_{jk}, c_1, c_2 \in \R$, where $j \neq k$, under the algebraic constraints $a_0 + \sum_{k = 1}^n p_j a_j = b_0 + \sum_{k = 1}^n p_j b_j = 0$.

We have so far only used that $\Box_L u = 0$.
One can compute that $\div_g \left(u - \frac12 \tr_g(u) g \right) = 0$ is equivalent to $t \d_t \left( u_{00} + \sum_{j = 1}^n u_{jj} \right) + \left( u_{00} + \sum_{j = 1}^n p_j u_{jj} \right) = 0$ and $\left( t \d_t + (1 + p_j) \right) u_{0j} = 0$, for $j = 1, \hdots, n$.
This implies that in fact $c_1 = 0$ and $a_{0j} = 0$ for all $j = 1, \hdots, n$.
Moreover, we also get the extra constraint that $b_0 + \sum_{j = 1}^n b_j = 0$.
To summarize, we conclude that
\begin{align}
	u_{00}
		&= a_0 + b_0 \log(t) + \frac{c_2}2 t^{-2}, \label{eq: 00 solution} \\
	u_{jj}
		&= a_j + b_j \log(t) + \frac{c_2}2p_j t^{-2}, \label{eq: jj solution} \\
	u_{0j}
		&= b_{0j} t^{-(1+p_j)}, \label{eq: 0j solution} \\
	u_{jk}
		&= a_{jk}t^{p_j - p_k} + b_{jk} t^{p_k - p_j}, \label{eq: jk solution}
\end{align}
for all $j, k = 1, \hdots, n$ with $j \neq k$, together with the algebraic constraints
\begin{equation} \label{eq: alg constraints}
	a_0 + \sum_{k = 1}^n p_k a_j
		= b_0 + \sum_{k = 1}^n p_j b_j
		= b_0 + \sum_{j = 1}^n b_j
		= 0.
\end{equation}
The solutions corresponding to \eqref{eq: 0j solution} are given by $t^{-(1+p_j)} \md t \otimes_s t^{p_j}\md x_j = - \frac1{4p_j}\L_{t^{-2p_j}\d_{x_j}}g$, for $j = 1, \hdots, n$, unless $p_j = 0$, in which case $t^{-1} \md t \otimes_s \md x_j = \frac12 \L_{\log(t)\d_{x_j}}g$
The solutions corresponding to \eqref{eq: jk solution} are $\md x_j \otimes_s t^{2p_k} \md x_k = \frac12 \L_{x_j \d_{x_k}}g$, for $j, k = 1, \hdots, n$, with $j \neq k$, respectively. 
The solution corresponding to the $c_2$-factor in \eqref{eq: 00 solution} and \eqref{eq: jj solution} is $\frac1{t^2} \left( \md t \otimes \md t + \sum_{j = 1}^n p_j t^{2p_j} \md x_j \otimes \md x_j \right) = \frac12 \L_{\frac1t \d_t}g$.
It therefore remains to study solutions with $u_{00} = a_0 + b_0 \log(t)$ and $u_{jj} = a_j + b_j \log(t)$, under the algebraic constraints \eqref{eq: alg constraints}.
Let us first assume that $b_0 = 0$.
Then the constraints for $(b_1, \hdots, b_n) \in \R^n$ are solved precisely by any $(d_1, \hdots, d_n) \in \R^n$ such that $\sum_{j = 1}^n p_j d_j = \sum_{j = 1}^n d_j = 0$.
The corresponding solution is indeed given by a linearization of the Kasner metric $\log(t) \sum_{j = 1}^n d_j t^{2p_j} \md x_j^2 = \frac12 g'(d)$.
There is only one more linearly independent solution to the constraints in $b$, for example $b_0 = - 1$ and $b_j = p_j$ for $j = 1, \hdots, n$.
The corresponding solution is given by $\log(t) \left( - \md t^2 + \sum_{j = 1}^n p_j t^{2p_j} \md x_j^2 \right) = \L_{\left( \log(t) - 1 \right)t\d_t + \sum_{j = 1}^n p_j x_j \d_{x_j}}g$.
We proceed by solving the constraint equations for $a$.
First assuming that $a_0 = 0$ implies that $(a_1, \hdots, a_n) = (d_1, \hdots, d_n)$, for any solution to $\sum_{j = 1}^n d_j p_j = 0$.
The corresponding solution is $\sum_{j = 1}^n c_j t^{2p_j} \md x_j^2 = \L_{\sum_{j = 1}^n c_j x_j \d_{x_j}}g$.
The final solution is when $a_0 = -1$ and $a_j = p_j$ for $j = 1, \hdots, n$.
It is given by $-\md t^2 + \sum_{j = 1}^n p_j t^{2p_j} \md x_j^2 = \L_{t\d_t} g$.
This finishes the computation of the $n(n+2)$ solutions.
That these are generalized QNM, and the value of the frequencies, is readily verified with the formula $\L_T \L_V g = \L_{[T, V]} g + \L_V \L_T g$, for any vector field $V$.
\end{proof}

Using this, we may prove Theorem~\ref{thm: main linearized Einstein, general dim}.

\begin{proof}[Proof of Theorem~\ref{thm: main linearized Einstein, general dim}]
Let $(M, g)$ be a non-flat Kasner spacetime of any spacetime dimension (the Kasner relations force the dimension to be $\geq 4$), and let $l \in \R$.
By assumption, there is a $u \in C^\infty(J_\de^+(\gamma))$ satisfying $\De \Ric_g(u) = 0$.
Since $J_\de^+(\gamma) \subset M$ is closed, there is a smooth extension $u_{ext} \in C^\infty(M)$ such that $u_{ext}|_{J_\de^+(\gamma)} = u$.
Consequently, $\De \Ric_g(u_{ext})|_{J_\de^+(\gamma)} = \De \Ric_g(u) = 0$.
Constructing $X$ as in \eqref{eq: constr X} with $u$ replaced by $u_{ext}$, we see that $\tilde u_{ext} := u_{ext} + \L_Xg$ satisfies $\Box_L \tilde u_{ext} = 0$ and $\div_g \left( \tilde u_{ext} - \frac12\tr_g(u_{ext}) g \right) = 0$.
Defining $\tilde u := \tilde u|_{ext}|_{J_\de^+(\gamma)}$, we note that $\Box_L \tilde u = 0$ and $\tilde u = u + \L_Xg|_{J^+_\de(\gamma)}$.
If we now let $\chi \in C^\infty(\R)$ be such that $\chi(t) = 1$ for $t \leq \frac{\de}3$ and $\chi(t) = 0$ for $t \geq \frac{2\de}3$, then $\Box_L \left( \chi \tilde u \right) = [\Box_L, \chi] \tilde u \in C_c^\infty(J_\de^+(\gamma))$ is compactly supported.
We can therefore find an $f \in C_c^\infty(M)$ such that $f|_{J_\de^+(\gamma)} = [\Box_L, \chi] \tilde u$.
Let us now apply Theorem~\ref{thm: main as exp} to conclude that the unique backward solution $v \in C^\infty(M)$ to $\Box_L v = f$ satisfies
\begin{equation} \label{eq: asymptotic expansion in the proof}
	\left| \L_{t\d_t}^k \L_{t^{1-p}\d_x}^\a \left( v - \sum_{j = 1}^N c_j \u_j \right) (t, x) \right| \leq C t^l
\end{equation}
for fixed $\u_1, \u_2, \hdots \in C^\infty(M)$ associated with $\Box_L$, for some $C > 0$ and unique constants $c_1, \hdots, c_N$ and all $(t, x) \in J^+(\gamma)$.
Since $f|_{J^+(\gamma)} = [\Box_L, \chi] \tilde u$, finite speed of propagation implies that $\chi \tilde u = v|_{J^+_\de(\gamma)}$.
It follows that
\[
	\chi u 
		= \chi \tilde u + \L_Xg |_{J_\de^+(\gamma)} 
		= v|_{J_\de^+(\gamma)} + \L_Xg |_{J_\de^+(\gamma)}.
\]
Since $\chi u$ and $u$ coincide for $t < \frac\de3$, we thus obtain the estimate \eqref{eq: asymptotic expansion higher}.
By construction of the $\u_j$, note that $\Box_L \u_j = 0$ for all $j = 1, \hdots, n$.
Note that $v|_{J^+_{\de/3}(\gamma)} = \tilde u$, so $\div_g\left(v - \frac12 \tr_g(v) g \right)|_{J^+_{\de/3}(\gamma)} = 0$.
Since Theorem~\ref{thm: main as exp} implies that the asymptotic expansion \eqref{eq: asymptotic expansion in the proof} is unique, we conclude that $\div_g\left(\u_j - \frac12 \tr_g(\u_j) g \right)|_{J^+_{\de/3}(\gamma)} = 0$ for $j = 1, \hdots, n$.
By Corollary~\ref{cor: structure of QNMs}, all generalized QNMs of geometric wave equations, like $\u_j$ for $\Box_L$, are real analytically depending in $(t, x)$.
Therefore $\div_g\left(\u_j - \frac12 \tr_g(\u_j) g \right)$ is a real analytic that vanishes on an open subset, and thus vanishes globally.
Finally, combining Theorem~\ref{thm: AVTD system} with Corollary~\ref{cor: QNM frequencies} implies that the quasinormal frequencies $\s_j$ of $\u_j$ are purely imaginary.
This means that all $\lambda_j := - i \s$ in the assertion are real.
\end{proof}

\subsection{The explicit unstable quasinormal modes}

In Theorem~\ref{thm: AVTD system}, we computed all the $x$-independent solutions to the linearized Einstein equation satisfying the gauge condition \eqref{eq: gauge condition}.
The next step is to construct our families of which the unstable solutions in Theorem~\ref{thm: self-similar solutions} will be a subset. 

\subsubsection{Construction of the explicit solutions}

The following give in particular the large class of unstable solutions in spacetime dimension $4$, introduced in Section~\ref{subsubsec: USS}.

\begin{thm} \label{thm: explicit solutions}
Assume that $(M, g)$ is a non-flat Kasner spacetime.
Let $j, m, r \in \{1, \hdots, n\}$ be distinct, assume that $p_m > p_r$, and let $k \in \N_0$ be such that $k < 2 \frac{p_m-p_r}{1-p_j}$.
Define
\[
	v_{jk}(t, x)
		: = \sum_{l = 0}^k a_{lk} t^{l(1-p_j)} \left( x_j \right)^{k-l} t^{2p_r} \md x_m \otimes_s \md x_r,
\]
where $a_{0k} = \frac1{k!}$ and $a_{lk} = \frac1{(k-l)!} \prod_{q = 1}^{\frac l2} \frac1{\left( 2q (1 - p_j) + p_r - p_m \right)^2 - \left( p_r - p_m \right)^2}$, if $l$ is even and positive, and $a_{lk} = 0$ if $l$ is odd.
Then 
\[
	\Box_L v_{jk} 
		= 0,
	\qquad
	\left( \L_\T + i \s \right)v_{jk}
		= 0,
	\qquad
	\div_g \left(v_{jk} - \frac12 \tr_g(v_{jk}) g \right)
		= 0, 
\]
with $\s = \left( - 2 + p_m - p_r - k (1-p_j) \right)i$.
In particular, $\De \Ric_g(v_{jk}) = 0$ for $j,k$ as above.
\end{thm}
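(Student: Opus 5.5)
The plan is to verify all three identities by hand in the orthonormal frame of Lemma~\ref{le: L-d'Alembert}, reducing $\Box_L v_{jk} = 0$ to a scalar ODE recursion for the coefficients $a_{lk}$. First rewrite the tensor. Since $t^{2p_r}\md x_m \otimes_s \md x_r = t^{p_r - p_m}\, e^m \otimes_s e^r$, the only non-vanishing frame components of $v_{jk}$ are
\[
	u_{mr} = u_{rm} = w, \qquad w(t, x_j) := t^{p_r - p_m} \sum_{l = 0}^k a_{lk} t^{l(1-p_j)} x_j^{k-l},
\]
and $w$ depends only on $t$ and $x_j$, with $j \notin \{m, r\}$.

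I then insert this into Lemma~\ref{le: L-d'Alembert} and check the components one at a time.
\begin{itemize}
	\item The $e^0\otimes e^0$ and $e^i \otimes e^i$ components involve only $u_{00}$, $u_{kk}$ and $u_{k0}$, which all vanish.
	\item The $e^0 \otimes_s e^i$ components contain $\sum_l p_l t e_l u_{il}$. The only possibly nonzero terms are $p_r t e_r u_{mr}$ and $p_m t e_m u_{rm}$. These vanish because $w$ is independent of $x_m$ and $x_r$.
	\item The off-diagonal components other than $(m,r)$ vanish, since $u_{0\cdot} = 0$.
\end{itemize}
Hence $\Box_L v_{jk} = 0$ is equivalent to the single scalar equation
\[
	\big( t^2\Box - (p_m - p_r)^2 \big) w = 0, \qquad t^2 \Box = (t\d_t)^2 - \sum_i t^{2(1-p_i)} \d_{x_i}^2 .
\]
Here I use the form of $t^2\Box$ stated in the introduction, which relies on $\sum p_i = 1$; the overall sign convention should be checked against Lemma~\ref{le: L-d'Alembert}.

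Next I plug in the monomials $t^{p_r - p_m + l(1-p_j)} x_j^{k-l}$ and compare coefficients of equal powers. This should give the recursion
\[
	\Big( \big( l(1-p_j) + p_r - p_m \big)^2 - (p_r - p_m)^2 \Big) a_{lk} = (k-l+2)(k-l+1)\, a_{l-2,k}.
\]
The recursion starts with $a_{1k} = 0$ and terminates automatically, because $\d_{x_j}^2 x_j^{k-l} = 0$ for $l \geq k-1$. With the normalisation $a_{lk} = \frac{1}{(k-l)!}\, b_l$, the factorials cancel and $b_l$ becomes exactly the stated product over $q \leq l/2$.

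The one genuine issue, and the step I expect to be the main obstacle, is that the denominators must not vanish. The bracket vanishes iff $l(1-p_j) = 0$ or $l(1-p_j) = 2(p_m - p_r)$. The first is excluded by $l \geq 1$ and $p_j < 1$. The second is excluded by $l \leq k < 2\frac{p_m - p_r}{1-p_j}$. This is precisely why the hypothesis on $k$ appears, and also why $p_m > p_r$ is assumed. The index bookkeeping here ($l = 2q$ in the product) and the sign conventions for $\Box$ and $\mathring{\mathrm R}$ in Lemma~\ref{le: L-d'Alembert} are where errors are most likely, so I would double-check them by redoing the case $k = 1$, which should reproduce $v_1 = x_j t^{2p_r}\md x_m \otimes_s \md x_r$, consistent with \eqref{eq: v1 w1}.

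For self-similarity I use $\T = -t\d_t - \sum_i (1-p_i) x_i \d_{x_i}$, which gives
\begin{itemize}
	\item $\L_\T \md x_i = -(1-p_i)\md x_i$;
	\item $\T\big(t^{2p_r + l(1-p_j)} x_j^{k-l}\big) = -\big(2p_r + k(1-p_j)\big)\, t^{2p_r + l(1-p_j)} x_j^{k-l}$, since the contribution $l(1-p_j) + (k-l)(1-p_j)$ is independent of $l$.
\end{itemize}
Adding these, each term of $v_{jk}$ is an eigenvector of $\L_\T$ with the same eigenvalue,
\[
	-\big(2p_r + k(1-p_j) + (1-p_m) + (1-p_r)\big) = -\big(2 + p_r - p_m + k(1-p_j)\big).
\]
This yields $\s = (-2 + p_m - p_r - k(1-p_j))i$.

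For the gauge condition, $\tr_g v_{jk} = 0$ because the tensor is off-diagonal, so it suffices to show $\div_g v_{jk} = 0$. The divergence involves derivatives $\d_{x_m}$ or $\d_{x_r}$ of the coefficient, which vanish, plus Christoffel contractions. For Kasner the nonzero Christoffel symbols are $\Gamma^0_{ii} = p_i t^{2p_i - 1}$ and $\Gamma^i_{0i} = p_i/t$. Contracting these with the single $(m,r)$-component produces terms that cancel pairwise: the $\Gamma^0_{mr}$-type terms vanish because $m \neq r$, and the $\Gamma^i_{0i}$-terms need a $0$-index on $v$, which is absent. Finally, by \eqref{eq: lin Ein equation}, $\Box_L v_{jk} = 0$ together with the gauge condition gives $\De \Ric_g(v_{jk}) = 0$.
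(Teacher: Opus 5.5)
Your proposal is correct and follows the paper's own proof essentially step for step. You reduce $\Box_L v_{jk}=0$ via Lemma~\ref{le: L-d'Alembert} to the scalar equation $\big((t\d_t)^2 - t^{2(1-p_j)}\d_{x_j}^2 - (p_r-p_m)^2\big)f=0$, solve the resulting two-step recursion (whose denominators are nonzero exactly by the bound on $k$), and note that $f(t,x_j)\,e^m\otimes_s e^r$ is trace-free and divergence-free. One wording correction: the Christoffel contributions to $\div_g v_{jk}$ do not ``cancel pairwise'' but each vanish on its own. The traced symbol $g^{\mu\alpha}\Gamma^\beta_{\mu\alpha}$ is nonzero only for $\beta=0$, where it meets $v_{0\nu}=0$, and $\Gamma^r_{m\nu}=\Gamma^m_{r\nu}=0$ for $m\neq r$.
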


Note that for $k = 0$, then $v_{j0} = t^{2p_r} \md x^m \otimes_s \md x^r = \L_{\frac12 x_m \d_{x_r}}g$, so it is a gauge solutions.
However, for $k \geq 1$ we prove in Proposition~\ref{prop: linear independence} that $v_{jk}$ are not gauge solution.

\begin{remark} \label{rmk: relation explicit solutions}
Since $(k-l) a_{lk} = a_{l(k-1)}$, we note that $\L_{\d_{x_j}} v_{jk} = v_{j(k-1)}$, so $v_{jk}$ and $v_{j(k-1)}$ are related via the annihilation operator in Definition~\ref{def: ann + creat}.
\end{remark}

\begin{remark}
Assume that $n = 3$.
For $k \geq 1$, we may write the QNM frequencies in Theorem~\ref{thm: explicit solutions} as
\[
	\s 
		= \left( - 2 + p_m - p_r - k (1-p_j) \right)i
		= \left( - 2 - 2p_r - (k-1) (1-p_j) \right)i
\]
using that $p_j + p_m + p_r = 1$.
Hence, if $\Im(\s) \geq -2$ then $p_r \leq 0$, which for non-flat Kasner spacetimes of dimension $4$ is equivalent to $p_r < 0$ and $p_j, p_m > 0$.
Consequently, if we assume that $p_3 < 0$, the only possibilities for the indices $j$ and $m$ are $j = 1$ and $m = 2$ or $j = 2$ and $m = 1$, leading to the unstable solutions $v_k$ and $w_k$ introduced in Section~\ref{subsubsec: USS}. 
\end{remark}

\begin{proof}[Proof of Theorem~\ref{thm: explicit solutions}]
Firstly, for any smooth function $f$ depending only on $t$ and $x_j$,
\[
	\div_g \left( f(t, x_j) e^m \otimes_s e^r \right)
		= 0, \qquad
	\tr_g \left( f(t, x_j) e^m \otimes_s e^r \right)
		= 0.
\]
Secondly, Lemma~\ref{le: L-d'Alembert} implies that $\Box_L \left( f(t, x_j) e^m \otimes_s e^r \right) = 0$ if and only if
\[
	\left( \left( t\d_t \right)^2 - t^{2(1-p_j)}\d_{x_j}^2 - \left( p_r - p_m \right)^2 \right) f
		= 0.
\] 
In the assertion, $f(t, x_j) = \sum_{l = 0}^k a_{lk} t^{l(1-p_j)} \left( x_j \right)^{k-l} t^{p_r-p_m}$, which is a solution if and only if
\[
	a_{lk} \left( \left( l (1-p_j) + p_r - p_m \right)^2 - \left( p_r - p_m \right)^2 \right) - a_{(l-2)k} (k-l+2) (k-l+1)
		= 0
\]
for every $l = 2, \hdots, k$.
Setting $a_{0k} = \frac1{k!}$, the recursion relation is satisfied by the $a_{lk}$ in the assertion.
The assumption on $k$ ensures that all $a_{lk}$ are well-defined. 
It is straightforward to compute the QNM frequency.
\end{proof}

\subsubsection{An extra solution when two Kasner exponents coincide.}

\begin{thm} \label{thm: LRS case}
Assume that $(M, g)$ is a non-flat Kasner spacetime and let $j,m,r \in \{1, \hdots, n\}$ be distinct such that $p_j = p_m = 1 + p_r$.
(If the spacetime dimension is $4$, then this is precisely when two Kasner exponents coincide, i.e.~up to permutation when $p_j = p_m = \frac23$ and $p_r = -\frac13$.) 
Then
\[
	z(t, x)
		:= x_m \left( \frac{1 + 2p_r}2 x_j^2 t^{2p_r} - 1 \right) \md x_m \otimes_s \md x_r + x_j \md x_j \otimes_s \md x_r
\]
satisfies 
\[
	\De \Ric(z)
		= 0,
	\qquad
	\left( \L_\T + i \s \right) z
		= 0,
\]
with $\s = -(1-3p_r)i$.
In particular, if the spacetime dimension is $4$, and $p_1 = p_2 = \frac23$ and $p_3 = -\frac13$, then
\[
	z(t, x)
		= x_1 \left( \frac16x_2^2t^{- \frac23} - 1 \right) \md x_1 \otimes_s \md x_3 + x_2\md x_2 \otimes_s \md x_3
\]
satisfies $\De \Ric(z) = 0$ and $z$ is a QNM of frequency $\s = -2i$ and of order $1$.
\end{thm}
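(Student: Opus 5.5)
The plan is to verify both identities by direct computation. We do \emph{not} place $z$ in the gauge \eqref{eq: gauge condition}, and we work with the full formula \eqref{eq: lin Ein equation}
\[
	\De \Ric_g(z) = \tfrac12 \left( \Box_L z + \L_{W^\sharp} g \right), \qquad W := \div_g \left( z - \tfrac12 \tr_g(z) g \right).
\]
This choice is forced: a quick divergence computation shows that $z$ is \emph{not} divergence free.

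\textbf{Step 1: the frequency.} This part is pure bookkeeping with the scaling weights of $\T$. From \eqref{eq: T} we have $\L_\T x_i = -(1-p_i) x_i$, $\L_\T \md x_i = -(1-p_i)\md x_i$ and $\T t^{a} = -a t^a$. The term $x_m \md x_m \otimes_s \md x_r$ therefore has weight $-(3 - 2p_m - p_r) = -(1-3p_r)$, using $p_m = 1 + p_r$. The same weight holds for $x_j \md x_j \otimes_s \md x_r$, since $p_j = p_m$. The factor $x_j^2 t^{2p_r}$ has weight $-2(1-p_j) - 2p_r = 0$. Hence $(\L_\T + i\s) z = 0$ with $\s = -(1-3p_r)i$, which equals $-2i$ when $p_r = -\tfrac13$.

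\textbf{Step 2: trace and divergence.} The tensor $z$ only has $(m,r)$ and $(j,r)$ components, so $\tr_g z = 0$. The only possibly nonzero component of $W$ is in the $\md x_r$ direction, plus Christoffel contributions that I will compute in the frame $e^\a$ of Lemma~\ref{le: L-d'Alembert}. Consider the leading coordinate part $g^{mm}\d_{x_m} z_{mr} + g^{jj}\d_{x_j} z_{jr}$ with $c := \tfrac{1+2p_r}{2}$. It equals
\[
	t^{-2p_m}\left( c x_j^2 t^{2p_r} - 1 \right) + t^{-2p_j} = c\, x_j^2 t^{-2},
\]
because $p_j = p_m$ makes the constants cancel and $2p_r - 2p_m = -2$. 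I expect the $\d_t$ Christoffel terms to vanish, since $z$ has no time components and only mixed spatial components. This should give $W = c\, x_j^2 t^{-2} \md x_r$, possibly up to a multiple that the computation fixes. I then compute $\L_{W^\sharp} g$, where $W^\sharp = c\, x_j^2 t^{-2-2p_r} \d_{x_r}$. Its nonzero components are of type $\md t \otimes_s \md x_r$ and $\md x_j \otimes_s \md x_r$.

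\textbf{Step 3: the Lichnerowicz term.} Write $z$ in the orthonormal frame, using $\md x_m \otimes_s \md x_r = t^{-p_m - p_r} e^m \otimes_s e^r$ and similarly for $j$. Lemma~\ref{le: L-d'Alembert} then applies with $u_{0\a} = u_{00} = u_{kk} = 0$. The spatial $(m,r)$ and $(j,r)$ components reduce to scalar equations of the form
\[
	\left( (t\d_t)^2 - \textstyle\sum_i t^{2(1-p_i)}\d_{x_i}^2 - (p_\cdot - p_r)^2 \right) u_{\cdot r}.
\]
These are checked exactly as in the proof of Theorem~\ref{thm: explicit solutions}. The $\d_{x_j}^2$ derivative of $x_m x_j^2 t^{2p_r}$ is the only coupling, and the specific constant $c = \tfrac{1+2p_r}{2}$ should be what balances it against the source coming from $\L_{W^\sharp} g$ in the $(j,r)$ and $(m,r)$ slots.

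The genuinely new piece is the $e^0 \otimes_s e^r$ component. There Lemma~\ref{le: L-d'Alembert} produces
\[
	2\left( p_m t e_m u_{rm} + p_j t e_j u_{rj} \right),
\]
which is nonzero because of the $x_m$ and $x_j$ dependence, and it must cancel against the $\md t \otimes_s \md x_r$ component of $\L_{W^\sharp} g$. The main obstacle is the careful bookkeeping of all frame factors $t^{\pm p}$ and Christoffel terms in these two mixed components. Sign or factor errors are likely there, so I would check them first with the concrete choice $p_j = p_m = \tfrac23$, $p_r = -\tfrac13$, and then redo the computation for general $p_r$ using only $p_j = p_m = 1 + p_r$ and the Kasner relations.
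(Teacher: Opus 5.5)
There is a genuine gap. It sits exactly where your plan says the constant "should" balance, and if you carry that computation out honestly it refutes the printed statement.

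\textbf{What is right.} Steps 1 and 2 are correct: the frequency bookkeeping holds and $\tr_g z=0$. Write $c$ for the coefficient of $x_m x_j^2t^{2p_r}$ in $z$ and $S:=\sum_i t^{-2p_i}\partial_{x_i}z_{ir}$. Then $W=S\,\md x_r$, and $S$ is a constant multiple of $c\,x_j^2t^{-2}$. The $\md t\otimes_s\md x_r$ component, which you single out as the hard one, in fact cancels for every $c$. Using $p_j=p_m$ one gets $(\Box_L z)_{tr}=\frac{2p_m}{t}S$. On the other side, $(\L_{W^\sharp}g)_{tr}=t^{2p_r}\partial_t(t^{-2p_r}S)=-\frac{2(1+p_r)}{t}S$. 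These cancel because $p_m=1+p_r$.

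\textbf{Where it breaks.} The spatial components do not balance, and your description of the mechanism is wrong.
\begin{itemize}
\item $\L_{W^\sharp}g$ has no $\md x_m\otimes_s\md x_r$ component, since the coefficient of $W^\sharp$ does not depend on $x_m$. So in the $(m,r)$ slot there is no gauge source to balance against.
\item For spatial off-diagonal $h$ with no $\md t$ components, Lemma~\ref{le: L-d'Alembert} gives, in coordinates,
\[
t^2(\Box_L h)_{ab}=(t\partial_t-2p_a)(t\partial_t-2p_b)h_{ab}-\sum_i t^{2-2p_i}\partial_{x_i}^2h_{ab}.
\]
\item Using this one finds
\[
\De\Ric_g(z)=\frac{c+2p_mp_r}{t^2}\left(x_j\,\md x_j-x_m\,\md x_m\right)\otimes_s\md x_r .
\]
\item In the $(m,r)$ slot the coefficient comes entirely from $\Box_L$: $\partial_{x_j}^2$ acting on the cubic term, against $(t\partial_t-2p_m)(t\partial_t-2p_r)$ acting on $-x_m$.
\item In the $(j,r)$ slot, $\L_{W^\sharp}g$ supplies the $c$ and $\Box_L$ supplies the $2p_jp_r$.
\end{itemize}
A quick cross-check: $\L_{\partial_{x_m}}\De\Ric_g(z)=\De\Ric_g(\L_{\partial_{x_m}}z)$, and $\L_{\partial_{x_m}}z=(c\,x_j^2t^{2p_r}-1)\md x_m\otimes_s\md x_r$ is divergence- and trace-free. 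It solves $\Box_L=0$ only if $c=-2p_mp_r$.

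\textbf{Consequence.} For the printed value $c=\frac{1+2p_r}2$ the coefficient is $\frac{1+6p_r+4p_r^2}{2}$. At $p_r=-\frac13$ this equals $-\frac5{18}$. So $\De\Ric_g(z)\neq0$, and the statement is false as written, although the frequency claim is fine. It becomes true with $c=-2p_r(1+p_r)$, i.e.\ $\frac49x_2^2t^{-2/3}$ in place of $\frac16x_2^2t^{-2/3}$ in dimension $4$. With that value your direct computation closes.

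\textbf{The paper's proof.} It fails at the same point. It uses $v_{j2}=\left(\frac12x_j^2t^{2p_r}-\frac1{1+2p_r}\right)\md x_m\otimes_s\md x_r$. But its own formula for $a_{22}$, with $p_j=p_m=1+p_r$, gives $a_{22}=\frac{1}{4p_r(1+p_r)}$. Hence $\L_{\partial_{x_m}}z=(1+2p_r)v_{j2}$ is false for the printed $z$. With the corrected $c$ one has $\L_{\partial_{x_m}}z=2c\,v_{j2}$, and the annihilation-operator reduction plus the check at $x=0$ goes through. Your brute-force route would then be a legitimate alternative to that reduction. As proposed, though, it leaves the decisive cancellation unchecked and attributes it to a gauge source that is absent in the $(m,r)$ slot.
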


\begin{proof}
Note that $v_{j2} = \left( \frac12 x_j^2 t^{2p_r} - \frac1{1 + 2p_r} \right) \md x_m \otimes_s \md x_r$, where $v_{j2}$ was defined in Theorem~\ref{thm: explicit solutions}.
It follows that
\begin{align*}
	\L_{\d_{x_m}} z
		= & \ (1 + 2p_r) v_{j2}, \\
	\L_{\d_{x_j}} z
		= & \ (1 + 2p_r) x_m x_j t^{2p_r} \md x_m \otimes_s \md x_r + \md x_j \otimes_s \md x_r \\
		= & \ \frac{1 + 2p_r}4 \L_{x_m^2x_j \d_{x_r}}g - \left( \frac{1 + 2p_r}2 x_m^2 t^{2p_r} - 1 \right) \md x_j \otimes_s \md x_r, \\
		= & \ \frac{1 + 2p_r}4 \L_{x_m^2x_j \d_{x_r}}g - (1 + 2p_r) v_{m2},
\end{align*}
and $\L_{\d_{x_b}} z = 0$ for all $b \neq m, j$.
By Theorem~\ref{thm: explicit solutions}, it follows that $\L_{\d_{x_a}} \De \Ric_g(z) = \De \Ric_g \left( \L_{\d_{x_a}} z \right) = 0$, for all $a = 1, \hdots, n$.
It therefore suffices to check that $\De \Ric_g(z)|_{x = 0}$.
Since $\De \Ric_g$ is a second order differential operator, the first term, that contains third powers of $x$, will give no contribution at $x = 0$.
It therefore suffices to check that $\De \Ric_g(z_0)|_{x = 0}$, where
\begin{align*}
	 z_0
	 	:= & \ - x_m \md x_m \otimes_s \md x_r + x_j \md x_j \otimes_s \md x_r \\
	 	= & \ - x_m t^{-p_m-p_r} e^m \otimes_s e^r + x_j t^{-p_j-p_r} e^j \otimes_s e^r,
\end{align*}
where $e^\a$ are as in Lemma~\ref{le: L-d'Alembert}, for $\a = 0, \hdots, n$.
We first note that 
\[
	\div_g \left( z_0 - \frac12 \tr_g(z_0) g \right)|_{x = 0} 
		= - \md x_m \otimes_s \md r \left( \grad(x_m), \cdot \right) + \md x_j \otimes_s \md r \left( \grad(x_j), \cdot \right)
		= 0,
\]
since $\grad (x_m) = t^{2p_m}\d_{x_m}$ and $\grad (x_j) = t^{2p_j}\d_{x_j}$, and $p_j = p_m$.
Moreover, by Lemma~\ref{le: L-d'Alembert},
\[
	\Box_L z_0 |_{x = 0}
		= 4 \left( - p_m t^{-p_m-p_r} + p_j t^{-p_j-p_r} \right) e^0 \otimes_s e^r
		= 0,
\]
since $p_j = p_m$.
Hence $\De \Ric_g(z)|_{x = 0} = 0$, and therefore $\De \Ric_g(z) = 0$.
A straightforward computation shows that $(\L_\T + 1-3p_r) z = 0$.
\end{proof}

We can modify $z$ in the following way to satisfy the gauge condition \eqref{eq: gauge condition}:

\begin{cor} \label{cor: LRS}
Assume that $(M, g)$ is a non-flat Kasner spacetime and let $j,m,r \in \{1, \hdots, n\}$ be distinct such that $p_j = p_m = 1 + p_r$.
Then $\tilde z(t, x) := z(t, x) + \L_Yg$, where $z$ is as in Theorem~\ref{thm: LRS case} and $Y = \left( f_1(t) x_j^2 + f_2(t) \right) \d_{x_r}$, with
\[
	f_1(t)
		= - \frac{1 + 2p_r}{4p_r} \left( \log(t) + \frac1{2p_r} \right)t^{-2p_r}, 
	\qquad
	f_2(t) 
		= - \frac{1 + 2p_r}{16p_r^3} \left( \log(t) + \frac5{4p_r} \right) t^{-4p_r},
\]
satisfies
\[
	\Box_L \tilde z
		= 0,
	\qquad
	\left( \L_\T + i \s \right)^2 \tilde z
		= 0,
	\qquad
	\div_g \left(\tilde z - \frac12 \tr_g(\tilde z) g \right)
		= 0, 
\]
with $\s = -(1-3p_r)i$.
\end{cor}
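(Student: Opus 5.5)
Since $\L_Y g$ is a pure gauge solution, $\De \Ric_g(\L_Y g) = 0$; together with Theorem~\ref{thm: LRS case} this gives $\De\Ric_g(\tilde z) = 0$. By \eqref{eq: lin Ein equation}, $\Box_L \tilde z = 0$ then follows once the gauge condition $\div_g(\tilde z - \frac12 \tr_g(\tilde z) g) = 0$ holds. So the proof reduces to two facts. First, $\tilde z$ satisfies the gauge condition. Second, $\tilde z$ is a generalized QNM of order $2$ and frequency $\s = -(1-3p_r)i$.

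For the gauge condition, I would use the identity $\div_g(\L_Y g - \frac12\tr_g(\L_Yg)g) = -\Box_g Y$ (up to the sign convention implicit in \eqref{eq: constr X}), which is valid since $\Ric_g=0$. It then suffices to verify
\[
	\Box_g Y = \div_g\Big(z - \tfrac12\tr_g(z) g\Big).
\]
Here $\tr_g z = 0$, since $z$ only has off-diagonal $\md x_a \otimes_s \md x_r$ components. So I would compute $\div_g z$ directly. The cubic term has the form $f(t, x_j)\,x_m\, e^m\otimes_s e^r$ with its own prefactor. The same argument as in Theorem~\ref{thm: explicit solutions} shows that its divergence comes only from the $\d_{x_m}$-derivative of the $x_m$ factor, which produces a multiple of $x_j^2 t^{\cdot}\md x_r$. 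The linear terms give $-\md x_r(\grad x_m)$-type contributions, which cancel because $p_j = p_m$, exactly as in the proof of Theorem~\ref{thm: LRS case}; they may leave a constant-times-$t^{\cdot}\md x_r$ remainder. The outcome should be $\div_g z = (A(t)x_j^2 + B(t))\md x_r$ with explicit powers of $t$.

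Next, the vector field $Y = (f_1 x_j^2 + f_2)\d_{x_r}$ has $\Box_g Y$ proportional to $\d_{x_r}$. This is because $\d_{x_r}$ is parallel-ish up to the $t^{p_r}$ scaling and the Christoffel symbols couple only $t$ and $x_r$. Concretely, $\Box_g Y$ is an ODE operator acting on $f_1, f_2$, plus the term $-t^{-2p_j}\d_{x_j}^2$, which maps $f_1x_j^2$ into $2f_1 t^{-2p_j}$ and so feeds into the $f_2$ equation. Matching coefficients of $x_j^2$ and $x_j^0$ gives two Euler-type ODEs in $t$, one forced by a power $t^{-2p_r-\cdots}$ and one forced by the output of $f_1$. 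The stated $f_1, f_2$, including their $\log t$ resonant terms, should solve these. I would check this by substituting $t^{\mu}\log t$ and $t^\mu$ and using $p_j = p_m = 1+p_r$ together with the Kasner relations to simplify exponents; for instance, $-2p_j$ relates to $-2-2p_r$.

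For the QNM property, I would note that $z$ satisfies $(\L_\T + i\s) z = 0$. I would compute $\L_\T Y$ via $[\T, \cdot]$: $\T$ scales $x_j^2\d_{x_r}$ and $t^\mu\log t$ homogeneously, and the $\log t$ factors produce exactly one Jordan block. Hence $(\L_\T + i\s)^2\L_Y g = \L_{(\mathrm{ad}_\T + i\s)^2 Y}g = 0$. The main obstacle is the bookkeeping of the divergence and $\Box_g Y$ computations, in particular getting the correct resonant exponents and constants (the $\frac1{2p_r}$ and $\frac5{4p_r}$ shifts). I would organize this by writing everything in the frame $e^\a$ of Lemma~\ref{le: L-d'Alembert}, so that each component reduces to a scalar Euler equation.
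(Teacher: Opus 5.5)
Your reduction is the paper's. You use $\De\Ric_g(\tilde z)=\De\Ric_g(z)=0$ from Theorem~\ref{thm: LRS case}, reduce $\Box_L\tilde z=0$ via \eqref{eq: lin Ein equation} to the gauge condition, and reduce that to $\Box Y=\div_g z$. This gives the two Euler equations $(t\d_t)^2f_1+2p_r t\d_t f_1=\kappa\, t^{-2p_r}$ and $(t\d_t)^2f_2+2p_r t\d_t f_2=2t^{-2p_r}f_1$, and you get the Jordan block by commuting $\L_\T$ past $\L_Y$.

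One step is wrong as written. Since $\L_\T g=-2g$, one has $\L_\T\L_Y g=\L_{[\T,Y]}g-2\L_Y g$, hence $(\L_\T+i\s)^2\L_Y g=\L_{(\L_\T+i\s-2)^2Y}g$. With $i\s=1-3p_r$, what you must check is $(\L_\T-1-3p_r)^2Y=0$. This holds because $t^{-2p_r}x_j^2\d_{x_r}$ and $t^{-4p_r}\d_{x_r}$ are both $\L_\T$-eigenvectors with eigenvalue $1+3p_r$, and each $f_i$ carries a single $\log t$. The operator $(\L_\T+i\s)^2$ that you wrote does not annihilate $Y$.

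More importantly, the constants you deferred do not close. The problem lies in the statement, and in the paper's proof, not in your strategy.

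First, normalize $\otimes_s$ as in \eqref{eq: v1 w1}. Then your divergence computation gives $t^2(\div_g z)^\sharp=\frac{1+2p_r}{4}x_j^2t^{-2p_r}\d_{x_r}$, so $\kappa=\frac{1+2p_r}{4}$. The displayed $f_1$ solves the equation with $\kappa=\frac{1+2p_r}{2}$, so $f_1,f_2$ are twice too large.

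Second, and more seriously, $\De\Ric_g(z)=0$ fails for $z$ as written. Since $\L_{\d_{x_m}}z$ is trace- and divergence-free, Lemma~\ref{le: L-d'Alembert} forces its $e^m\otimes_s e^r$-coefficient $u=\frac{1+2p_r}{2}x_j^2t^{-1}-t^{-1-2p_r}$ to solve $\big((t\d_t)^2-t^{-2p_r}\d_{x_j}^2-1\big)u=0$. But the left-hand side equals $\big(1-(1+2p_r)(2+2p_r)\big)t^{-1-2p_r}$, which is $\frac59t^{-1/3}$ when $p_r=-\frac13$.

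The source is the identity $v_{j2}=(\frac12x_j^2t^{2p_r}-\frac1{1+2p_r})\md x_m\otimes_s\md x_r$ used in the proof of Theorem~\ref{thm: LRS case}. Theorem~\ref{thm: explicit solutions} actually gives the constant $a_{22}=\frac1{4p_r(1+p_r)}$.

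Set $c:=\frac{1+2p_r}{4p_r(1+p_r)}$. Replace the $-1$ in $z$ by $c$, and the coefficient $1$ of $x_j\,\md x_j\otimes_s\md x_r$ by $-c$. Then $\L_{\d_{x_m}}z=(1+2p_r)v_{j2}$ and $\L_{\d_{x_j}}z\equiv-(1+2p_r)v_{m2}$ modulo a gauge term. The $x=0$ check still cancels because $p_j=p_m$, and $\div_g z$ is unchanged. With this corrected $z$ and with $f_1,f_2$ halved, your argument proves the statement.
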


\begin{proof}
Note first that $\De \Ric_g(\tilde z) = 0$ by Theorem~\ref{thm: LRS case}.
Since $\left( \L_\T + 1 - 3p_r \right) z = 0$ by Theorem~\ref{thm: LRS case}, it follows that $\left( \L_\T + 1 - 3p_r \right)^2 z = 0$.
Since $\L_\T g = - 2 g$, note that
\[
	\left( \L_\T + 1 - 3p_r \right)^2 \L_Y g 
		= \left( \L_\T + 1 - 3p_r \right) \L_{\left( \L_\T - 1 - 3p_r \right) Y} g 
		= \L_{\left( \L_\T - 1 - 3p_r \right)^2 Y} g
		= 0,
\]
by the form of $f_1$ and $f_2$.

By a standard identity, which uses $\Ric_g = 0$, and a straightforward computation, 
\[
	\div_g \left( \tilde z - \frac12 \tr_g \left( \tilde z \right) g \right)
	= - \Box Y + \div_g\left( z - \frac12 \tr_g(z) g \right)^\sharp 
	= - \Box Y + \frac{1 + 2p_r}2 x_j^2 t^{-2p_r} \d_{x_r},
\]
so the gauge condition is equivalent to $\Box Y = \frac{1 + 2p_r}2 x_j^2 t^{-2p_r} \d_{x_r}$.
One computes that this in turn is equivalent to the equations
\[
	\left( t\d_t \right)^2 f_1 (t) + 2 p_r t \d_t f_1(t) 
		= \frac{1 + 2p_r}2 t^{-2p_r},
	\qquad 
	\left( t\d_t \right)^2 f_2(t) + 2 p_r t \d_t f_2(t) 
		= 2 t^{-2p_r} f_1 (t), 
\]
with are satisfied by the functions $f_1$ and $f_2$ in the assertion. 
\end{proof}

We will not need the precise form of $\tilde z$ in Corollary~\ref{cor: LRS}, but it will be important for the proof of Theorem~\ref{thm: self-similar solutions} below to note that $\tilde z$ coincides to \emph{leading} polynomial order in $x$ with $z$.

\subsubsection{Linear independence of the explicit solutions}

The next step is to prove that the solutions showing up in Theorem~\ref{thm: self-similar solutions} and Theorem~\ref{thm: main linearized Einstein} linearly independent, even modulo the addition of a gauge solution.
This will ensure that the coefficients in the expansions in Theorem~\ref{thm: self-similar solutions} and Theorem~\ref{thm: main linearized Einstein} are unique.

\begin{prop} \label{prop: linear independence}
Assume that $(M, g)$ is a non-flat Kasner spacetime of dimension $4$ with $p_3 < 0$.
Let $K_1, K_2$ be the largest integers such that $K_1 \leq \frac{p_2 - p_3}{1-p_1}$ and $K_2 \leq \frac{p_1 - p_3}{1-p_2}$, respectively.
Assume moreover that there is a vector field $X \in C^\infty(M)$ constants $a, b_1, \hdots, b_{K_1}, c_1, \hdots, c_{K_2}, d \in \R$, such that
\begin{equation}\label{eq: linear gauge combination}
	\left| \L_{t\d_t}^k \L_{t^{1-p}\d_x}^\a \left( \L_X g + a g' + \sum_{k = 1}^{K_1} b_k v_k +  \sum_{k = 2}^{K_2} c_k w_k + d z \right)(t, x) \right|
		\leq C_{k, \a} t^\epsilon,
\end{equation}
for some $C_{k, \a}, \epsilon > 0$ and all $k \in \N_0$ and $\a \in \N_0^n$ and $(t, x) \in J^+(\gamma)$ with $t \leq 1$, and where $d = 0$ if the $p_1, p_2, p_3$ are distinct.
Then $a, b_1, \hdots, b_{K_1}, c_1, \hdots, c_{K_2}, d = 0$.
\end{prop}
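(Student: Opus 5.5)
Write $h := a g' + \sum_{k=1}^{K_1} b_k v_k + \sum_{k=2}^{K_2} c_k w_k + d z$, so that \eqref{eq: linear gauge combination} says $\L_X g + h = O(t^\epsilon)$ with all $\L_{t\d_t}$, $\L_{t^{1-p}\d_x}$ derivatives. I will eliminate the gauge term by passing to gauge-invariant quantities. The natural candidate is the linearized Riemann tensor $\De \mathrm{Riem}_g(h)$, together with its $\T$-weighted versions. Since $\L_X g$ contributes $\L_X \mathrm{Riem}_g$, the linearized curvature is not gauge invariant on Kasner. The cleaner route is therefore to project with the self-similar structure.

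The first step is to decompose the estimate by homogeneity. Each term of $h$ is a generalized QNM whose frequency is purely imaginary:
\begin{itemize}
\item $g'$ has $\lambda=-2$ and order $2$;
\item $v_k$ has $\lambda = -2+p_2-p_3-k(1-p_1)$;
\item $w_k$ has $\lambda=-2+p_1-p_3-k(1-p_2)$;
\item $z$ has $\lambda=-2$.
\end{itemize}
The distinct non-decaying exponents are separated. I would therefore show that \eqref{eq: linear gauge combination} forces, for each fixed $\lambda\ge -2$, the component $h_\lambda$ (the sum of terms with that $\lambda$) to be a gauge solution $\L_{X_\lambda} g$, where $X_\lambda$ is a generalized QNM vector field. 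To do this I would apply $\prod(\L_\T-\lambda')^{k'}$ over all other frequencies $\lambda'$, which commutes with $\L_{t\d_t}$ and $\L_{t^{1-p}\d_x}$ up to rescaling. This kills the other terms exactly and turns $\L_X g$ into $\L_{\tilde X} g$, while the error remains $O(t^\epsilon)$ because $\T$ is a combination of $t\d_t$ and $x_j\d_{x_j}$, and $|x_j| \lesssim t^{1-p_j}$ on $J^+(\gamma)$.

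Having isolated $h_\lambda$, I note that $h_\lambda - (\text{error})$ is exactly self-similar. The remainder $\L_{\tilde X}g + h_\lambda = O(t^\epsilon)$ can be projected onto its $(\L_\T+i\s)^k$-generalized eigenspace via the resolvent contour of Corollary~\ref{cor: discrete QNM}. Using Remark~\ref{rmk: self-similar intro}, a self-similar tensor decaying like $t^\epsilon$ must vanish, so $h_\lambda = -\L_{\tilde X_\lambda} g$ exactly on $J^+(\gamma)$, and by analyticity (Corollary~\ref{cor: structure of QNMs}) everywhere.

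Next I must show that no nonzero combination within each $h_\lambda$ is pure gauge. Here I would use the annihilation operators. If $\sum b_k v_k + \dots = \L_Y g$ with a single $\lambda$ (generically at most one $v_k$ and at most one $w_k$, plus possibly $g'$ and $z$ at $\lambda=-2$), then I apply $\L_{\d_{x_1}}^{k-1}$. By Remark~\ref{rmk: relation explicit solutions} this yields $v_1$ plus lower terms, modulo the gauge term $\L_{\d_{x_1}^{k-1}Y}g$. Applying one more $\d_{x_1}$ gives $v_0 = \frac12\L_{x_2\d_{x_3}}g$, which is consistent, so the test must be at level $v_1$. The key computation is that $v_1 = x_1 t^{2p_3}\md x_2\otimes_s\md x_3$ is not of the form $\L_Y g$ (modulo $w_1$ when both appear). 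The linearized Riemann tensor detects this: $\L_Y \mathrm{Riem}$ for a polynomial $Y$ of that self-similar weight can be listed explicitly. Similarly, $g'$ is non-gauge by Lemma~\ref{le: linearized Kasner non gauge}, and $z$ reduces under $\d_{x_m}$ to $(1+2p_3) v_{j2}$. This inductive reduction via annihilation operators gives each coefficient in turn.

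\textbf{Main obstacle.} The hard step is the base non-gauge claim at level one. One must show that $v_1$ (respectively the independent combination of $v_1$, $w_1$) and $g'$ are not Lie derivatives of $g$, taking care with the relation \eqref{eq: v1 w1}. I would do this by solving $\L_Y g = c\,v_1$ directly, with $Y$ polynomial in $x$ as in Corollary~\ref{cor: structure of QNMs}, componentwise. The goal is to derive an incompatibility in the $\md x_2\otimes\md x_3$ versus $\md x_1\otimes\md x_3$ components, which is precisely why $w_1$ is excluded from the list.
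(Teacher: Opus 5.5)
You are right to isolate each frequency by applying products of $(\L_\T - \lambda')$. This preserves the $\O_*(t^\epsilon)$ bound, since on $J^+(\gamma)$ the field $\T$ is a bounded combination of $t\d_t$ and $t^{1-p_j}\d_{x_j}$, and the paper does the same with $(\L_\T+2)^2$.

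\textbf{The gap is your second step.} The vector field $X$ in \eqref{eq: linear gauge combination} is an arbitrary smooth vector field, so $\L_{\tilde X} g + h_\lambda$ is not self-similar. Nor is it a backward-propagated solution of $\mP u = f$ with $f \in C_c^\infty$. The resolvent in Corollary~\ref{cor: discrete QNM} inverts the spectral family of the wave operator, not of the Killing operator $X \mapsto \L_X g$. Nothing you have established gives $\tilde X$ (or its transform along $\T$) a decomposition into generalized QNMs, so the ``projection onto the generalized eigenspace'' is undefined. Remark~\ref{rmk: self-similar intro} therefore cannot be applied, and you do not get $h_\lambda = -\L_{\tilde X_\lambda} g$ exactly. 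Your final step rules out $\L_Y g = c\, v_1$ only as an exact identity with $Y$ polynomial in $x$, which is strictly weaker than the proposition. Closing this would need a genuinely new ingredient, for instance a rescaling limit along the flow of $\T$ combined with closedness of the range of the Killing operator (using that the Killing equation is of finite type). The proposal contains nothing of this kind.

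The paper never passes to exact identities. Lemma~\ref{le: Bianchi II non-gauge} and Lemma~\ref{le: linearized Kasner non gauge} are proved for arbitrary smooth $X$ in approximate form: there is no $X$ with $\L_X g - v_1 \in \O_*(t^\beta)$ for $\beta > 2p_3$, and none with $\L_X g - g' \in \O_*(t^\beta)$ for $\beta > 0$. They are obtained by reading off the components of $\L_X g$ in the frame $\d_t, t^{-p_j}\d_{x_j}$ and playing them against each other until a growth contradiction appears. The annihilation operators are then applied to the approximate identity itself:
\begin{itemize}
\item $\L_{\d_{x_1}}\L_{\d_{x_2}}$ isolates $d$, via $\L_{\d_{x_1}}\L_{\d_{x_2}} z = \frac13 w_1$ and \eqref{eq: v1 w1};
\item $\L_{\d_{x_1}}^{K_1-1}$ isolates $b_{K_1}$.
\end{itemize}
Each $\d_{x_j}$ costs a factor $t^{-(1-p_j)}$ in the error. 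The crux is that the resulting rates stay strictly better than the threshold $t^{2p_3}$ of Lemma~\ref{le: Bianchi II non-gauge}. These rates are $t^{\epsilon + p_1 + p_2 - 2} = t^{\epsilon + 2p_3}$ (in the case $p_1 = p_2 = \frac23$, where $z$ is present) and $t^{\epsilon - (K_1-1)(1-p_1)}$. The second comparison is exactly where $K_1 \le \frac{p_2-p_3}{1-p_1}$ is used. The paper then iterates, uses $(\L_\T+2)^2$ to remove $g'$ and isolate $b_1$, and applies Lemma~\ref{le: linearized Kasner non gauge} to get $a=0$.
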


\begin{notation}
In order to simplify notation in the coming proofs, let us write $u \in \O_*(t^\b)$ if 
\[
	\left| \L_{t\d_t}^k \L_{t^{1-p}\d_x}^\a u(t, x) \right|
		\leq C_{k, \a} t^\b,
\]
for some $C_{k, \a}, \epsilon > 0$ and all $k \in \N_0$ and $\a \in \N_0^n$ and $(t, x) \in J^+(\gamma)$ with $t \leq 1$, for any tensor or function $u \in C^\infty(M)$.
\end{notation}

\begin{lemma} \label{le: linearized Kasner non gauge}
Assume that $(M, g)$ is a non-flat Kasner spacetime of dimension $4$ with $p_3 < 0$.
There is no vector field $X \in C^\infty(M)$ such that
\begin{equation} \label{eq: g' gauge}
	\L_X g - g' \in \O_* \left( t^{\b} \right) ,
\end{equation}
if $\b > 0$, where $g'$ is as in \eqref{eq: g' def}.
\end{lemma}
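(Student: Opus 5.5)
The plan is to argue directly with the components of $h := \L_X g - g'$ along the observer's world-line $x = 0$. The idea is to show that the logarithmic growth of $g'$ cannot be matched by any Lie derivative of $g$ up to a decaying error. Throughout, $g'$ is understood to be non-trivial, i.e.~$(p_1',p_2',p_3') = c(p_3-p_2, p_2-p_1, p_1-p_3)$ with $c \neq 0$. The goal is to derive $p_j' = 0$ for at least two indices $j$. Since $\sum_j p_j' = 0$, this forces $g' = 0$, a contradiction.

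\textbf{Step 1: the $\md t^2$ and $\md t \otimes_s \md x_j$ components.} Write $X = X^t \d_t + \sum_j X^j \d_{x_j}$. I would use the orthonormal frame $e^0, e^j$ of Lemma~\ref{le: L-d'Alembert}, in which the hypothesis says that every frame component of $h$ is $\O_*(t^\b)$.
\begin{itemize}
\item The $e^0\otimes e^0$ component gives $\d_t X^t = \O(t^{\b-1})$. Integrating in $t$ on $J^+(\gamma)$ then gives $X^t = a(x) + \O(t^\b)$.
\item The mixed components give $t^{2p_j}\d_t X^j - \d_{x_j} X^t = t^{p_j}\O(t^\b)$. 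Since $g'$ has no mixed terms, this yields $\d_t X^j = t^{-2p_j}\d_{x_j} a + t^{-p_j}\O(t^\b)$.
\end{itemize}

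\textbf{Step 2: the diagonal spatial components.} The $e^j\otimes e^j$ component gives
\[
	\d_{x_j}X^j + p_j t^{-1} X^t - p_j' \log(t) = \O(t^\b).
\]
I now differentiate this identity in $t$, and differentiate the identity for $\d_t X^j$ from Step 1 in $x_j$, and equate the two resulting expressions for $\d_t\d_{x_j}X^j$. The remainders are controlled by the $\O_*$ notation:
\begin{itemize}
\item $\d_t$ of an $\O_*(t^\b)$ term is $\O(t^{\b-1})$;
\item $\d_{x_j} = t^{-(1-p_j)}\, t^{1-p_j}\d_{x_j}$ of $t^{-p_j}\O_*(t^\b)$ is $\O(t^{\b-1})$;
\item the $t$-derivative of the $X^t$ remainder is handled the same way.
\end{itemize}
Evaluating at $x = 0$, which lies in $J^+(\gamma)$ for every $t \leq 1$, and multiplying by $t^2$, I expect to obtain
\[
	p_j' t + p_j a(0) - t^{2-2p_j}\,\d_{x_j}^2 a(0) = \O(t^{1+\b}), \qquad t \to 0.
\]

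\textbf{Step 3: comparing powers of $t$.} Since $p_j < 1$, the exponent $2 - 2p_j$ is positive. If moreover $p_j \neq \frac12$, the three functions $1$, $t$, $t^{2-2p_j}$ are distinct powers. Letting $t \to 0$ successively then forces $p_j a(0) = 0$ and $p_j' = 0$.

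In dimension $4$, at most one Kasner exponent can equal $\frac12$. Indeed, if two did, the third would vanish, and then $\sum_j p_j^2 = \frac12 \neq 1$. Hence $p_j' = 0$ for at least two indices $j$. Together with $\sum_j p_j' = 0$ this gives $p' = 0$, i.e.~$c = 0$, contradicting $g' \neq 0$. The hypothesis $p_3 < 0$ is only needed to fix the labelling and is otherwise harmless.

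\textbf{Main obstacle.} The delicate part is the bookkeeping of the remainders in Step 2. One must check that differentiating the $\O_*(t^\b)$ errors in $x_j$ at $x = 0$ costs exactly $t^{-(1-p_j)}$; this is precisely what the weighted derivatives in the definition of $\O_*$ provide. One must also make sure that integrating in $t$ in Step 1 is legitimate inside $J^+(\gamma)$, since for fixed $x \neq 0$ only large enough $t$ lie in $J^+(\gamma)$. If that integration causes trouble, I would work only along the segment $x = 0$ and its $t$-derivatives, where all estimates hold uniformly for $t \leq 1$.
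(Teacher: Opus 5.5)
Your mixed and diagonal component equations are correct, but Steps 1--2 run through $a(x)=\lim_{t\to0}X^t(t,x)$, and that object is not available. The cross-sections of $J^+(\gamma)$ satisfy $|x_j|\lesssim t^{1-p_j}$. So for $x\neq0$ the segment $\{(s,x):0<s\le t\}$ leaves $J^+(\gamma)$, and nothing controls $X^t(s,x)$ as $s\to0$; $\partial_{x_j}a(0)$ and $\partial_{x_j}^2a(0)$ are therefore not defined by anything you know. Even on the axis, the hypothesis only gives $\partial_t\partial_{x_j}X^t=\mathcal{O}(t^{\beta-1+p_j})$ and $\partial_t\partial_{x_j}^2X^t=\mathcal{O}(t^{\beta-2+2p_j})$. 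For $j=3$ and small $\beta$ these bounds are not integrable at $t=0$. Hence neither your formula $\partial_tX^j=t^{-2p_j}\partial_{x_j}a+t^{-p_j}\mathcal{O}(t^\beta)$ nor treating $t^{2-2p_j}\partial_{x_j}^2X^t(t,0)$ as $t^{2-2p_j}$ times a constant is justified. Your fallback of staying on $x=0$ does not fix this. Cross-differentiating the mixed and diagonal equations directly in terms of $X^t$ gives
\[
	F_j(t):=t^{2-2p_j}\partial_{x_j}^2X^t(t,0)=p_jX^t(t,0)+p_j't+\mathcal{O}(t^{1+\beta}),
\]
but $F_j$ is an unknown function of $t$, so Step 3 has no distinct powers to compare. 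Note also that the $e^0\otimes e^0$ component is $-2\partial_tX^t$, so in fact $\partial_tX^t\in\mathcal{O}_*(t^\beta)$. With your weaker bound $\mathcal{O}(t^{\beta-1})$, the error above would only be $\mathcal{O}(t^\beta)$, which swamps the term $p_j't$.

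The missing idea is to differentiate with $t\partial_t$ rather than integrate. The operators $t\partial_t$ and $(t^{1-p_j}\partial_{x_j})^2=t^{2-2p_j}\partial_{x_j}^2$ preserve $\mathcal{O}_*(t^{1+\beta})$, and $t\partial_tX^t\in\mathcal{O}_*(t^{1+\beta})$. So apply $t\partial_t$ to the identity above, read as an $\mathcal{O}_*$-statement on $J^+(\gamma)$. This gives $(2-2p_j)t^{2-2p_j}\partial_{x_j}^2X^t=p_j't+\mathcal{O}_*(t^{1+\beta})$. Substituting back yields $2(1-p_j)p_jX^t=(2p_j-1)p_j't+\mathcal{O}_*(t^{1+\beta})$, and one more $t\partial_t$ gives $(2p_j-1)p_j'=0$. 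This argument is purely differential, so the shape of $J^+(\gamma)$ never enters. Your Step 3 endgame (at most one $p_j=\frac12$, together with $\sum_jp_j'=0$) then finishes the proof.

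The paper performs the same elimination but goes one step further. From $p_jX^t=\kappa_jt+\mathcal{O}_*(t^{1+\beta})$ it deduces $t^{1-p_j}\partial_{x_j}X^t\in\mathcal{O}_*(t^{1+\beta})$. Feeding this into the mixed equation gives $t^{1+p_j}\partial_tX^j\in\mathcal{O}_*(t^{1+\beta})$. It then applies $t\partial_t$ to the diagonal equation $\partial_{x_j}X^j+p_jt^{-1}X^t-p_j'\log(t)\in\mathcal{O}_*(t^\beta)$, obtaining $p_j'=\mathcal{O}(t^\beta)$ for every $j$. So the paper needs neither the exclusion $p_j\neq\frac12$ nor the relation $\sum_jp_j'=0$.
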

\begin{proof}
We assume, to reach a contradiction, that there is a vector field $X = X_0 \d_t + \sum_{j = 1}^3 X_j t^{-p_j} \d_{x_j}$ satisfying \eqref{eq: g' gauge}.
Evaluating \eqref{eq: g' gauge} on $\d_t \otimes \d_t$, $\d_t \otimes t^{-p_j} \d_{x_j}$ and $t^{-p_j} \d_{x_j} \otimes t^{-p_j} \d_{x_j}$, and multiplying by $t$, implies that
\begin{align}
	0
		= & \ t \d_t X_0 + \O_* \left( t^{\b + 1} \right)
		= t^{1 + p_j} \d_t \left( t^{-p_j} X_j \right) - t^{1 - p_j} \d_{x_j} X_0 + \O_* \left(t^{\b + 1} \right), \label{eq: g' first obs} \\
	2 t \log(t) p_j'
		= & \ t^{1-p_j} \d_{x_j} X_j + p_j X_0 + \O_* \left(t^{\b + 1} \right), \label{eq: g' second obs}
\end{align}
for $j = 1, 2, 3$.
It follows that
\begin{equation} \label{eq: g' comp}
\begin{split}
	t^{2-2p_j} \d_{x_j}^2 X_0
		= & \ t^2 \d_t \left( t^{-p_j} \d_{x_j} X_j \right) + \O_* \left(t^{\b + 1} \right)
		=  \left( t \d_t - 1 \right) \left( t^{1-p_j} \d_{x_j} X_j \right) + \O_* \left(t^{\b + 1} \right) \\
		= & \ - \left( t \d_t - 1 \right) \left( p_j X_0 \right) - 2 p_j' \left( t \d_t - 1 \right) \left( t \log(t) \right) + \O_* \left(t^{\b + 1} \right) \\
		= & \ p_j X_0 - 2 p_j' t + \O_* \left(t^{\b + 1} \right),
\end{split}
\end{equation}
for $j = 1, 2, 3$.
Differentiating both sides with respect to $t\d_t$ gives 
\[
	(2-2p_j) \left( t^{2-2p_j} \d_{x_j}^2 X_0 \right)
		= t\d_t \left( t^{2-2p_j} \d_{x_j}^2 X_0 \right) + \O_* \left(t^{\b + 1} \right)
		= - 2 p_j' t + \O_* \left(t^{\b + 1} \right).
\]
Inserting this in \eqref{eq: g' comp}, we conclude that $p_j X_0 = \left( 2 - \frac1{1-p_j} \right) p_j' t + \O_* \left(t^{\b + 1} \right)$, and hence $t^{1 - p_j} \d_{x_j} X_0 \in \O_* \left(t^{\b + 1} \right)$.
Inserting this into \eqref{eq: g' first obs}, it follows that $t^{1 + p_j} \d_t \left( t^{-p_j} X_j \right) \in \O_* \left(t^{\b + 1} \right)$.
Thus dividing \eqref{eq: g' second obs} by $t$ and applying $t\d_t$, we finally conclude that
\[
	2 p_j'
		= t\d_t \left( t^{-p_j} \d_{x_j} X_j \right) + t\d_t \left( \frac{p_j}t X_0 \right) + \O_* \left(t^\b \right)
		= \O(t^\b).
\]
Since $\b > 0$, this is a contradiction unless $p_j' = 0$ for $j = 1, 2, 3$, which is impossible.
\end{proof}

\begin{lemma} \label{le: Bianchi II non-gauge}
Assume that $(M, g)$ is a non-flat Kasner spacetime of dimension $4$ with $p_3 < 0$.
There is no vector field $X \in C^\infty(M)$ such that
\begin{equation} \label{eq: BII gauge}
	\L_X g - v_1 \in \O_* \left( t^{\b} \right) ,
\end{equation}
if $\b > 2p_3$.
\end{lemma}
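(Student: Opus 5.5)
We argue by contradiction, in the spirit of Lemma~\ref{le: linearized Kasner non gauge}. Suppose $X = X_0 \d_t + \sum_{j=1}^3 X_j t^{-p_j}\d_{x_j}$ satisfies \eqref{eq: BII gauge} with $\b > 2p_3$. Recall $v_1 = x_1 t^{2p_3}\md x_2 \otimes_s \md x_3 = x_1 t^{p_3 - p_2} e^2\otimes_s e^3$ in the orthonormal coframe of Lemma~\ref{le: L-d'Alembert}. The only nonzero frame component is $(v_1)_{23}$, and it is of size $t^{p_3-p_2}\abs{x_1}$, which on $J^+(\gamma)$ is at most $t^{p_3-p_2+1-p_1} = t^{2p_3}$. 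So $v_1 \in \O_*(t^{2p_3})$ exactly, and the hypothesis says that $\L_X g$ agrees with $v_1$ to strictly better order. The goal is to show that this forces an identity between quantities that cannot both hold.

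\textbf{Step 1: write out the frame components.} Evaluate $\L_X g - v_1$ on all frame pairs, as in \eqref{eq: g' first obs}--\eqref{eq: g' second obs}. This gives the $00$, $0j$ and $jj$ equations with zero right-hand side modulo $\O_*(t^{\b})$, and in addition the off-diagonal equations
\[
	t^{p_j - p_k}\d_{x_j}(t^{-p_k}X_k) + t^{p_k-p_j}\d_{x_k}(t^{-p_j}X_j) = \de_{\{j,k\},\{2,3\}}\, x_1 t^{p_3-p_2} + \O_*(t^\b).
\]

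\textbf{Step 2: use a gauge-invariant quantity rather than solving for $X$.} Solving for $X$ directly is messy, so instead we apply an operator that kills pure gauge terms: the linearized curvature, i.e.\ $\De\mathrm{R}_g$ applied to the perturbation. Since $\De\mathrm R_g(\L_X g) = \L_X \mathrm R_g$ and $\mathrm R_g$ is explicit, we compute $\L_X\mathrm R_g$ in terms of $X_0, X_j$ and their derivatives. Equating this with $\De\mathrm R_g(v_1) + \O_*(t^{\b-2})$ should give a contradiction.

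\textbf{Step 2a: the $x$-independent top-order term.} First differentiate in $x_1$. Let $Y := \L_{\d_{x_1}} X$; then $\L_Y g - t^{2p_3}\md x_2\otimes_s\md x_3 \in \O_*(t^{\b - (1-p_1)})$. The tensor $t^{2p_3}\md x_2\otimes_s \md x_3$ is itself pure gauge, equal to $\frac12\L_{x_2\d_{x_3}}g$, so this step alone gives no contradiction. Hence the obstruction must come from the $x_1$-dependence.

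\textbf{Step 2b: the leading curvature component.} Compute $\De\mathrm R_g(v_1)$ explicitly: it is nonzero and carries an $x_1$-independent component of order $t^{2p_3-2}$. On the other hand, $\L_X\mathrm R_g = X(\mathrm R)+\dots$ involves $X_0$ and the derivatives $\d X$. The frame equations from Step 1, as in \eqref{eq: g' comp}, should force $X_0 \in \O_*(t^{\b+1})$ and control the relevant derivatives to order $t^{\b}$. Then $\L_X\mathrm R_g \in \O_*(t^{\b-2})$, and since $\b > 2p_3$ this contradicts the fact that $\De\mathrm R_g(v_1)$ is exactly of order $t^{2p_3-2}$.

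\textbf{Main obstacle.} The hard part is Step 2b, in two places. First, one must verify that the relevant component of $\De\mathrm R_g(v_1)$ is genuinely nonvanishing; this should hold because $v_1$ linearizes the curvature-changing Bianchi~II family of Remark~\ref{rmk: Bianchi II}. Second, one must bound $\L_X\mathrm R_g$ using only the weak information the equations give about $X$.

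If the curvature route becomes unwieldy, the fallback is to imitate Lemma~\ref{le: linearized Kasner non gauge} directly. Take the $23$ equation, apply $\d_{x_1}$, and combine it with the $12$, $13$ and $11$ equations via commuting derivatives. The aim is to derive the compatibility condition
\[
	\d_{x_1}\bigl(\text{$23$ equation}\bigr) - \d_{x_2}\bigl(\text{$13$ equation}\bigr) - \d_{x_3}\bigl(\text{$12$ equation}\bigr) + \d_{x_2}\d_{x_3}\bigl(\text{$11$ equation}\bigr)\text{-type},
\]
which annihilates Lie derivatives up to terms involving $X_0$. The $X_0$ terms are then eliminated using the $0j$ equations as in \eqref{eq: g' comp}. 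What remains is a nonzero multiple of $t^{p_3-p_2}$, and this cannot be $\O_*(t^{\b})$ after the appropriate rescaling when $\b > 2p_3$.
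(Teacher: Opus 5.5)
Your proposal has a genuine gap: neither route produces a contradiction. In Step~2b, the claim that the frame equations give $\L_X\mathrm R_g\in\O_*(t^{\b-2})$ does not follow. $\L_X\mathrm R_g$ depends on the full first derivative of $X$. Its symmetric part is $\frac12\L_Xg=\frac12 v_1+\O_*(t^{\b})$, which is of the same size $t^{2p_3}$ as $v_1$, not $O(t^\b)$. Its antisymmetric part contains the spatial rotations $t^{-p_j}\d_{x_j}X_k-t^{-p_k}\d_{x_k}X_j$, on which the hypothesis gives no information, since it only controls the symmetric combinations. Under the hypothesis you have $\L_X\mathrm R_g=\De\mathrm R_g(\L_Xg)=\De\mathrm R_g(v_1)+\O_*(t^{\b-2})$, so showing that $\L_X\mathrm R_g$ is small is no easier than the lemma itself. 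Moreover, no pointwise curvature comparison can settle it. Taub's family is spatially homogeneous and satisfies $g_{-s}=\phi^*g_s$ for $\phi(t,x_1,x_2,x_3)=(t,x_1,x_2,-x_3)$. Hence every scalar curvature invariant $I$ of $g_s$ is even in $s$, and $\De I(v_1)=0$. This is perfectly consistent with $\De I(\L_Xg)=X(I)=X_0\,\d_t I$ and $X_0$ small.

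Your fallback fails for a related reason. Any combination of spatial derivatives of the spatial equations that kills all $\d_{x_j}\xi_k+\d_{x_k}\xi_j$ also kills $v_1$. An example is the Saint-Venant operator $\d_{x_1}^2(23)-\d_{x_1}\d_{x_3}(12)-\d_{x_1}\d_{x_2}(13)+\d_{x_2}\d_{x_3}(11)$; the combination you wrote is not even homogeneous in order. The reason is that the only component of $v_1$ is linear in $x$, and on each slice $\{t=\mathrm{const}\}$ it is itself such a symmetrized gradient, with $\xi$ a multiple of $t^{2p_3}(x_1x_2\,\md x_3+x_1x_3\,\md x_2-x_2x_3\,\md x_1)$. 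So what remains is $0$ plus small $X_0$-terms, not a nonzero multiple of $t^{p_3-p_2}$.

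The obstruction lies in the time dependence, and this is the idea you are missing. Once $X_0\in\O_*(t^{\b+1})$, which you correctly anticipate as in Lemma~\ref{le: linearized Kasner non gauge}, the $01$ equation gives $t^{1+p_1}\d_t(t^{-p_1}X_1)\in\O_*(t^{\b+1})$. In other words, $t^{-p_1}X_1$ is almost constant in $t$. Now apply $t^{1-p_1}\d_{x_1}$ to the $23$ equation, and use the $12$ and $13$ equations to trade $\d_{x_1}X_2$ and $\d_{x_1}X_3$ for derivatives of $X_1$. This gives $t^{2p_3}=c\,t^{p_1}\d_{x_2}\d_{x_3}X_1+\O_*(t^{\b})$ with $c\neq0$. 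Applying $t^{1+p_1}\d_t\,t^{-2p_1}$ then yields $2(p_3-p_1)t^{2p_3-p_1}\in\O_*(t^{\b-p_1})$. This is impossible for $\b>2p_3$ because $p_1\neq p_3$, and it is the paper's argument.
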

\begin{proof}
We assume, to reach a contradiction, that there is a vector field $X = X_0 \d_t + \sum_{j = 1}^3 X_j t^{-p_j} \d_{x_j}$ satisfying \eqref{eq: BII gauge}.
Evaluating \eqref{eq: BII gauge} on $\d_t \otimes \d_t$, $\d_t \otimes t^{-p_j}\d_{x_j}$ and $t^{-p_j}\d_{x_j} \otimes t^{-p_j}\d_{x_j}$, and multiplying by $t$, implies that
\begin{equation} \label{eq: first Lie ident}
\begin{split}
	0
		= & \ t\d_t X_0 + \O_* \left(t^{\b+1} \right)
		= t^{1 + p_j} \d_t \left( t^{-p_j} X_j \right) - t^{1 - p_j} \d_{x_j} X_0 + \O_* \left(t^{\b + 1} \right) \\
		= & \ t^{1-p_j} \d_{x_j} X_j + p_j X_0 + \O_* \left(t^{\b + 1} \right),
\end{split}
\end{equation}
for $j = 1, \hdots 3$.
From this, it follows that
\begin{equation} \label{eq: X0 identity}
\begin{split}
	t^{2-2p_j}\d_{x_j}^2 X_0
		= & \ t^2 \d_t \left( t^{-p_j} \d_{x_j} X_j \right) + \O_* \left(t^{\b + 1} \right)
		= t \d_t \left( t^{1-p_j} \d_{x_j} X_j \right) - t^{1-p_j} \d_{x_j} X_j + \O_* \left(t^{\b + 1} \right) \\
		= & \ - t \d_t \left( p_j X_0 + \O_* \left(t^{\b + 1} \right) \right) + p_j X_0 + \O_* \left(t^{\b + 1} \right) 
		= p_j X_0 + \O_* \left(t^{\b+1} \right).
\end{split}
\end{equation}
Applying $t\d_t$ to both sides, we see that $(2-2p_j) t^{2-2p_j} \d_{x_j}^2 X_0 = p_j t \d_t X_0 + \O_* \left(t^{\b+1} \right)$, hence \eqref{eq: first Lie ident} in combination with \eqref{eq: X0 identity} implies that $X_0 \in \O_* \left(t^{\b+1} \right)$.
Using this, \eqref{eq: first Lie ident} implies that $t^{1-p_j}\d_{x_j} X_j \in \O_* \left(t^{\b+1} \right)$ and $t^{1 + p_j} \d_t \left( t^{-p_j} X_j \right) \in \O_* \left(t^{\b+1} \right)$.

We now evaluate \eqref{eq: BII gauge} on $t^{-p_j}\d_{x_j} \otimes t^{-p_k} \d_{x_k}$, for $j \neq k$ and obtain
\begin{align*}
	t^{-p_1} \d_{x_1} X_2 + t^{-p_2} \d_{x_2} X_1 + \O_* \left(t^\b \right)
		= t^{-p_1} \d_{x_1} X_3 + t^{-p_3} \d_{x_3} X_1 + \O_* \left(t^\b \right)
		&= 0, \\
	t^{-p_2} \d_{x_2} X_3 + t^{-p_3} \d_{x_3} X_2 + \O_* \left(t^\b \right)
		&= x_1 t^{p_3 -p_2}.
\end{align*}
Applying $t^{1-p_1} \d_{x_1}$ to the last equation and using what we proved above, we get
\begin{align*}
	t^{2p_3}
		= t^{1-p_1} \d_{x_1} \left( x_1 t^{p_3 - p_2} \right)
		= & \ t^{1-p_2-p_1} \d_{x_2} \d_{x_1} X_3 + t^{1-p_3-p_1} \d_{x_3} \d_{x_1} X_2 + \O_* \left(t^\b \right) \\
		= & \ 2 t^{1-p_2-p_3} \d_{x_2} \d_{x_3} X_1 + \O_* \left(t^\b \right)
		= 2 t^{p_1} \d_{x_2} \d_{x_3} X_1 + \O_* \left(t^\b \right).
\end{align*}
Finally, apply $t^{1 + p_1}\d_t t^{-2p_1}$ to both sides to obtain
\begin{align*}
	2(p_3 - p_1) t^{-p_1 + 2p_3}
		= & \ t^{1 + p_1}\d_t \left( t^{-2p_1} t^{2p_3} \right)
		= 2 \d_{x_2} \d_{x_3} t^{1 + p_1}\d_t \left( t^{-p_1} X_1 \right) + \O_* \left(t^{\b-p_1} \right) \\
		= & \ \d_{x_2} \d_{x_3} \O_* \left( t^{\b + 1} \right) + \O_* \left(t^{\b-p_1} \right)
		= \O_* \left(t^{\b-p_1} \right).
\end{align*}
This implies that $2|p_3 - p_1| t^{2p_3 - p_1}\leq C t^{\b - p_1}$, but this is a contradiction since $\b > 2p_3$, unless $p_3 = p_1$ which cannot happen. 
\end{proof}

\begin{proof}[Proof of Proposition~\ref{prop: linear independence}]
Let us begin by showing that $d = 0$ also when $p_1 = p_2 = 1 + p_3 = \frac23$.
Note that $\L_{\d_{x_1}}\L_{\d_{x_2}} \left( \L_X g + a g' + \sum_{k = 1}^{K_1} b_k v_k +  \sum_{k = 2}^{K_2} c_k w_k + d z \right)
	=  \L_{\L_{\d_{x_1}}\L_{\d_{x_2}}X} g + d \L_{\d_{x_1}}\L_{\d_{x_2}} z$.
By \eqref{eq: linear gauge combination}, and since $\L_{\d_{x_1}}\L_{\d_{x_2}} z = \frac13 x_2t^{2p_3} \md x_1 \otimes_s \md x_3 = \frac13 w_1 = - \frac13 v_1 + \frac16 \L_{x_1 x_2 \d_{x_3}}g$, we conclude that there is a smooth vector field $Y$ such that $\L_Y g + d v_1 \in \O_*\left(t^{\epsilon + p_1 + p_2 - 2}\right) = \O_*\left( t^{\epsilon + 2 p_3} \right)$.
This is a contradiction to Lemma~\ref{le: Bianchi II non-gauge} unless $d = 0$.

Next, since $\L_{\d_{x_1}}^{K_1-1} \left( \L_X g + a g' + \sum_{k = 1}^{K_1} b_k v_k +  \sum_{k = 2}^{K_2} c_k w_k \right) =  \L_{\L_{\d_{x_1}}^{K_1-1}X} g + (K_1-1)! \cdot b_{K_1} v_1$ for any $K_1 \geq 2$, there is a smooth vector field $Y$ such that $\L_Yg + b_{K_1} v_1 \in \O_*\left( t^{\epsilon - (K_1-1)(1-p_1)} \right)$.
By the assumption on $K_1$, $- (K_1-1) (1-p_1) \geq - (p_2 - p_3) + 1 - p_1 = 2p_3$, so this is contradicting Lemma~\ref{le: Bianchi II non-gauge} for any $K_1 \geq 2$ unless $b_{K_1} = 0$.
We can now iterate this argument in order to conclude that $b_k = 0$ for all $k \geq 2$.
An analogous argument shows that $c_k = 0$ for all $k \geq 2$.
For the remaining expression, we use that $\left(\L_\T + 2\right)^2 g' = 0$, and compute that
\[
	\left( \L_\T + 2 \right)^2 \left( \L_X g + a g' + b_1 v_1 \right)
		= \L_{\left( \L_\T + 2 \right)^2 X} g + b_1 p_3^2 v_1.
\]
Since $p_3 \neq 0$ and $\T = - t\d_t - \sum_{j = 1}^3 (1-p_j)x_j \d_{x_j}$, we may conclude by \eqref{eq: linear gauge combination} that also $b_1 = 0$, for otherwise we would have a contradiction to Lemma~\ref{le: Bianchi II non-gauge}.
Finally, Lemma~\ref{le: linearized Kasner non gauge} implies that $a = 0$, which finishes the proof.
\end{proof}

\subsection{Characterization of unstable quasinormal modes}

We now turn to the proofs of Theorem~\ref{thm: self-similar solutions} and Theorem~\ref{thm: main linearized Einstein}.
In view of Theorem~\ref{thm: main linearized Einstein, general dim}, the idea is to compute all generalized QNM with $\Im(\s) \geq - 2$.
Viewing linear wave equations as systems of wave equations, Corollary~\ref{cor: structure of QNMs} implies that a $\u$ satisfying $\Box_L \u = 0$, the gauge condition $\div_g \left( \u - \frac12 \tr_g(\u) g \right) = 0$ and $\left( \L_\T + i \s \right)^k \u = 0$, for some $k \in \N$, takes the form
\begin{equation} \label{eq: QNM lin ein general form}
	\u(t, x)
		= t^{i\s} \sum_{\abs{\o}_p \leq K} t^{-\abs{\o}_p} x^\o \u_{\o},
\end{equation}
where $\left( \L_{t\d_t} \right)^k \u_{\o} = 0$ for some $k \in \N$ and $\L_{\d_{x_j}}\u_{\o} = 0$ for $j = 1, \hdots, n$.
The main point in the our proofs of Theorem~\ref{thm: self-similar solutions} and Theorem~\ref{thm: main linearized Einstein} is the second assertion of Corollary~\ref{cor: structure of QNMs}, which here says that
\[
	\Box_L \left( t^{i\s - \abs{\a}_p} \u_\a \right)
		= 0, 
	\qquad
	\L_{\d_{x_j}} \left( t^{i\s - \abs{\a}_p} \u_\a \right)
		= 0,
	\qquad
	\L_{\d_{x_j}} \left( t^{i\s - \abs{\a}_p} \u_\a \right)
		= 0,
\]
for any $\a \in \N_0^n$ such that $\u_\o = 0$ for all $\o \in \N_0^n$ with $\o > \a$, and for $j = 1, \hdots, n$.
This means that all such $t^{i\s - \abs{\a}_p} \u_\a$ are characterized in Theorem~\ref{thm: AVTD system}.
This is our starting point.

\subsubsection{Construction of gauge solutions}

The idea in the proof of Theorem~\ref{thm: self-similar solutions} is to successively reduce the polynomial order in $x$ of $\u$ in \eqref{eq: QNM lin ein general form}, starting with the highest order terms that we have characterized in Theorem~\ref{thm: AVTD system}.
The following lemma deals with most of the possible cases.

\begin{lemma} \label{le: gauge solution reduction}
Let $n \in \N$ and assume that $(M, g)$ is a non-flat Kasner spacetime of dimension $n+1$.
Let $W$ be any of the vector fields
\[
	\frac1t \d_t, \quad \left( \log(t) - 1 \right)t\d_t + \sum_{j = 1}^n p_j x_j \d_{x_j}, \quad \sum_{j = 1}^n d_j x_j \d_{x_j}, \quad t \d_t,
\]
or $t^{-2p_j}\d_{x_j}$ if $p_j \neq 0$ and $\log(t) \d_{x_j}$ if $p_j = 0$, where here $\sum_{j = 1}^n d_j = 0$.
Let $\a \in \N_0^n$ and define $\s \in \C$ by $\L_\T \left( x^\a \L_W g\right) = - i \s x^\a \L_Wg$.
Then either $\Im(\s) < -2$ or there is a vector field $X \in C^\infty(M)$ satisfying $\Box X = 0$, such that
\[
	x^\a \L_Wg - \L_X g
		= t^{i \s} \sum_{\abs{\o}_p < \abs{\a}_p} t^{-\abs{\o}_p} x^\o \mathfrak v_\o,
\]
where $\L_{\d_{x_j}} \mathfrak v_\o = 0$ for all $j$ and all $\o$.
\end{lemma}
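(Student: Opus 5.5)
The natural candidate is $X := x^\a W + Z$, where $Z$ is a lower-order correction. The key identity is
\[
	\L_{x^\a W} g = x^\a \L_W g + \md(x^\a) \otimes_s W^\flat,
\]
up to a factor $2$ in the symmetrization convention. Since $\md(x^\a) = \sum_j \a_j x^{\a - e_j}\md x_j$, the error term $x^\a \L_W g - \L_{x^\a W} g$ only contains monomials $x^\o$ with $\o < \a$. It is therefore of lower polynomial order in $x$, which after factoring out $t^{i\s}$ means $\abs{\o}_p < \abs{\a}_p$.

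One should check that the error term has the asserted self-similar shape. Because $x^\a \L_W g$ is a generalized QNM of frequency $\s$ and $\T$ is a homothety, $\L_\T \L_{x^\a W} g = \L_{[\T, x^\a W]} g - 2\L_{x^\a W} g$. For each listed $W$ one computes $[\T, x^\a W] = \mu\, x^\a W$ (possibly plus a nilpotent part for the $\log$-type fields), with the same eigenvalue as for $x^\a\L_W g$. So the difference is again a generalized QNM of frequency $\s$. Corollary~\ref{cor: structure of QNMs}, or a direct expansion, then writes it as $t^{i\s}\sum t^{-\abs{\o}_p} x^\o \mathfrak v_\o$ with $x$-independent $\mathfrak v_\o$.

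The real work is the requirement $\Box X = 0$. In general $\Box(x^\a W) \neq 0$; it is a lower-order polynomial in $x$ (spatial second derivatives lower the degree by two), and it has the same self-similar scaling. The plan is to correct inductively: find $Z$, a polynomial vector field in $x$ of degree $<\abs{\a}$ and self-similar of the same frequency, with $\Box Z = -\Box(x^\a W)$. Here $\Box$ on vector fields is a geometric wave operator, so Proposition~\ref{prop: QNM wave equations} yields a generalized-QNM solution $Z$. Corollary~\ref{cor: structure of QNMs} applied to $\Box$ (or a direct ansatz matching degrees) shows $Z$ is polynomial of lower degree, hence $\L_Z g$ is of lower order and can be absorbed into the sum.

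There is one obstruction: solvability in the QNM class may fail only at resonances, where the indicial equation $(t\d_t)^2 + \A t\d_t + \Cm$ for $\Box$ on the relevant coefficient degenerates. Proposition~\ref{prop: QNM wave equations} circumvents this by allowing higher order $k'$ (extra $\log t$ factors), which is harmless.

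I expect the main obstacle to be the alternative $\Im(\s) < -2$. The frequencies of the listed $W$ are $0$, $-(1-p_j)i$ and $-2i$, shifted by $-\abs{\a}_p i$. The condition therefore restricts which $(\a, W)$ must be handled: only finitely many, with $\abs{\a}_p \leq 2$ or less. One must check in these cases that the correction $Z$, built degree by degree from the top, genuinely has lower $\abs{\o}_p$ and does not reintroduce top-degree terms through the $\log$ solutions. I would first verify this explicitly for the $t^{-2p_j}\d_{x_j}$ and $\frac1t \d_t$ cases by direct computation with Lemma~\ref{le: L-d'Alembert}'s frame, and then treat the remaining fields, where $\Box W = 0$ is expected to hold outright and only the $\md(x^\a)$ cross terms contribute.
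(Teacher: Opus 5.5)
Your route differs from the paper's, and it works once two points are fixed.

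\textbf{Comparison.} The paper first writes $\s = \tilde\s - \abs{\a}_p i$, where $\tilde\s$ is the frequency of $\L_W g$ from Theorem~\ref{thm: AVTD system}. For the three fields with $\tilde\s=-2i$ this gives $\Im(\s) = -2-\abs{\a}_p$, so $\Im(\s)\geq -2$ forces $\a=0$. Then $X=W$ works, because the gauge condition on $\L_W g$ in Theorem~\ref{thm: AVTD system} is equivalent to $\Box W=0$ when $\Ric_g=0$. For the $x$-independent fields $\frac1t\d_t$, $t^{-2p_j}\d_{x_j}$ and $\log(t)\d_{x_j}$, the paper takes $X$ proportional to $\Cr_1^{\a_1}\cdots\Cr_n^{\a_n}(W)$. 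It reads off the structure from $\L_{\d_x}^\a \L_X g = \L_W g$ and $\L_{\d_x}^\o\L_X g = 0$ for $\o\not\leq\a$. Your $X = x^\a W + Z$ is the explicit version of $\a_1!\cdots\a_n!\,\Cr_1^{\a_1}\cdots\Cr_n^{\a_n}(W)$. It avoids the creation operators and makes the monomial bookkeeping visible. The paper's version is a two-line consequence of Theorem~\ref{thm: creation and annihilation}.

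\textbf{First point: the degree of $Z$.} The degree bound on $Z$ does not follow from Proposition~\ref{prop: QNM wave equations} together with Corollary~\ref{cor: structure of QNMs}. Proposition~\ref{prop: QNM wave equations} determines $Z$ only up to homogeneous generalized QNMs of the same frequency, and these may contain monomials $x^\o$ with $\abs{\o}_p\geq\abs{\a}_p$. For example, with $W=\frac1t\d_t$, the homogeneous solution $\Cr_1^{\a_1}\cdots\Cr_n^{\a_n}(W)$ has the same frequency as $x^\a W$ and top monomial $x^\a$. Corollary~\ref{cor: structure of QNMs} only concerns homogeneous solutions; applied to $x^\a W+Z$ it gives polynomiality, not a degree bound.

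Your parenthetical ``direct ansatz'' is the right tool, and it needs no spectral theory. In the frame $e_0,\dots,e_n$ of Lemma~\ref{le: L-d'Alembert} one has $\L_\T e_\b = e_\b$ and $\L_{\d_{x_j}}e_\b=0$. Hence on components $t^2\Box$ has the form of Remark~\ref{rmk: wave const coeff} with constant matrices. So $t^2\Box(x^\o V(t)) = x^\o\big((t\d_t)^2+\A\, t\d_t+\Cm\big)V$ plus terms in $x^{\o-e_j}$ and $x^{\o-2e_j}$. Since $\Box W=0$ (you need this here as well, and Theorem~\ref{thm: AVTD system} supplies it as above), $\Box(x^\a W)$ only contains monomials $x^\o$ with $\o<\a$. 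You then solve for $Z=\sum_{\o<\a}x^\o Z_\o(t)$ from the top down. Each equation $\big((t\d_t)^2+\A\, t\d_t+\Cm\big)V = t^\mu(\log t)^k c$ has a solution in $t^\mu\,\C^m[\log t]$, with extra powers of $\log t$ exactly at indicial roots. Logarithms change only the $t$-dependence, so no monomial of weighted degree $\geq\abs{\a}_p$ is ever created; your worry about logs reintroducing top-degree terms is unfounded.

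\textbf{Second point: the remaining fields.} Your closing remark about the remaining fields is off target. For $(\log(t)-1)t\d_t+\sum_j p_jx_j\d_{x_j}$ and $\sum_j d_jx_j\d_{x_j}$, the one-form $W^\flat$ depends on $x$. So $\md(x^\a)\otimes_s W^\flat$ contains $x^{\a-e_k+e_j}$, of weighted degree $\abs{\a}_p+p_k-p_j$, which is not below $\abs{\a}_p$ when $p_k\geq p_j$. Your lower-order claim therefore fails for these fields when $\a\neq 0$. They are handled not by a cross-term computation but by the observation above that $\Im(\s)\geq-2$ forces $\a=0$.
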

\begin{proof}
We first note that $\L_\T \left( x^\a \L_W g\right) = x^\a \left( - \abs{\a}_p \L_Wg + \L_\T \L_Wg \right)$.
It therefore follows that $\s = - \abs{\a}_p i + \tilde \s$, where $\tilde \s \in \C$ is the QNM frequency of $\L_Wg$ given by Theorem~\ref{thm: AVTD system}.
Since the QNM frequency of
\[
	\L_{\left( \log(t) - 1 \right)t\d_t + \sum_{j = 1}^n p_j x_j \d_{x_j}}g, \quad \L_{\sum_{j = 1}^n d_j x_j \d_{x_j}}g, \quad \L_{t \d_t} g
\]
is $\tilde \s = -2i$, it follows that either $\Im(\s) < -2$ or $\a = 0$.
In case $\a = 0$, then $x^\a \L_W g = \L_W g$. Since Theorem~\ref{thm: AVTD system} implies that $\Box W = 0$ for all these three vector fields, we can simply choose $X := W$ in these cases.
We are thus left with $W$ being either of the vector fields
\begin{equation} \label{eq: remaining VFs}
	\frac1t \d_t, \quad t^{-2p_j}\d_{x_j} \ (\text{if } p_j \neq 0), \quad \log(t) \d_{x_j} \ (\text{if } p_j = 0).
\end{equation}
Since Theorem~\ref{thm: AVTD system} implies that $\Box W = 0$ and $W$ are generalized QNMs themselves, Theorem~\ref{thm: creation and annihilation} implies that $X := \Cr_1^{\a_1} \hdots \Cr_1^{\a_n} (W)$ solves $\Box X = 0$ and is a generalized QNM.
By the properties of the annihilation and creation operators in Theorem~\ref{thm: creation and annihilation}, and since $\d_{x_1}, \hdots, \d_{x_n}$ are Killing vector fields, we note that
\[
	\left( \L_{\d_{x_1}} \right)^{\a_1} \hdots \left( \L_{\d_{x_n}} \right)^{\a_n} \L_X g
		= \L_{\L_{\d_{x_1}}^{\a_1} \hdots \L_{\d_{x_n}}^{\a_n}X} g
		= \L_W g,
\]
and $\left( \L_{\d_{x_1}} \right)^{\o_1} \hdots \left( \L_{\d_{x_n}} \right)^{\o_n} \L_X g = 0$, for all $\o \in \N_0^n$ such that $\o \nleq \a$.
This proves the assertion for the remaining vector fields in \eqref{eq: remaining VFs}.
\end{proof}

\subsubsection{Constructing non-gauge solutions}

Two classes of $x$-invariant solutions in Theorem~\ref{thm: AVTD system} have not been dealt with yet, namely the linearized Kasner metric $g'$ and $\L_{x_j \d_{x_k}}g$, for $j \neq k$.
The $g'$ is a generalized QNM of frequency $\s = - 2i$, so any multiplication by a polynomial would make it stable.
We therefore do not need to treat this case.
The next lemma deals with the remaining case $\L_{x_j \d_{x_k}}g$, for $j \neq k$.
Instead of getting gauge solutions as in Lemma~\ref{le: gauge solution reduction}, we here get the explicit solutions from Theorem~\ref{thm: explicit solutions} and Corollary~\ref{cor: LRS}.
This is where we need to restrict the spacetime dimension $4$.

\begin{lemma} \label{le: mixed terms reduction}
Assume that $(M, g)$ is a non-flat Kasner spacetime of dimension $4$ with $p_3 < 0$.
Let $j, k, l \in \{1, 2, 3\}$ be distinct, and let $\a \in \N_0^3$.
Define $\s \in \C$ by $\L_\T \left( x^\a \L_{x_j \d_{x_k}} g\right) = - i \s x^\a \L_{x_j \d_{x_k}} g$.
Then either $\Im(\s) < -2$ or there are constants $b_{\a_l}, b_{\a_{l+1}}, c_{\a_l}, c_{\a_{l+1}}, d \in \R$ a vector field $X \in C^\infty(M)$ satisfying $\Box X = 0$, such that
\[
	x^\a \L_{x_j \d_{x_k}} g - \L_X g - b_{\a_l} v_{\a_l} - b_{\a_j+1}v_{\a_j+1} - c_{\a_l} w_{\a_l} - c_{\a_{j+1}}w_{\a_j+1} - d \tilde z
		= t^{i\s} \sum_{\abs{\o}_p < \abs{\a}_p} t^{-\abs{\o}_p} x^\o \mathfrak v_\o,
\]
where $\L_{\d_{x_j}} \mathfrak v_\o = 0$ for all $j$ and $\o$.
Moreover,
\begin{itemize}
	\item $d = 0$ unless $\a_1 = 1$, $\a_2 = 2$ and $\a_3 = 0$,
	\item $b_{\a_l} = c_{\a_l} = 0$, unless $\a_j = \a_3 = 0$ and $1 \leq \a_l \leq \frac{p_j - p_k}{1-p_l}$, 
	\item $b_{\a_j+1} = c_{\a_j+1} = 0$, unless $\a_l = \a_3 = 0$ and $2 \leq \a_j + 1 \leq \frac{p_l - p_k}{1-p_j}$.
\end{itemize}
\end{lemma}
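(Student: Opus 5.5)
Compute the frequency first and use it to restrict $\a$; then, for each surviving $\a$, exhibit an explicit element ($v$, $w$, $\tilde z$ or a gauge term) with the same top-order coefficient, and subtract it.

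\textbf{Step 1: frequency bound.} As in the proof of Lemma~\ref{le: gauge solution reduction}, $\L_\T(x^\a \L_{x_j\d_{x_k}}g) = x^\a(-\abs{\a}_p + \L_\T)\L_{x_j\d_{x_k}}g$. By Theorem~\ref{thm: AVTD system}, $\L_{x_j\d_{x_k}}g$ has frequency $(-2+p_j-p_k)i$, so
\[
	\Im(\s) = -2 + p_j - p_k - \abs{\a}_p .
\]
Hence $\Im(\s)\ge -2$ forces $\abs{\a}_p \le p_j - p_k$. Since $p_3<0$ and $p_1,p_2\in(0,1)$, this first forces $k=3$ (or $\a=0$), and then allows only finitely many small $\a$. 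For $\a=0$, take $X = x_j\d_{x_k}$, which satisfies $\Box X=0$ by Theorem~\ref{thm: AVTD system}.

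\textbf{Step 2: top-order coefficient.} Write $x^\a\L_{x_j\d_{x_k}}g = 2x^\a x_j t^{2p_k}\md x_j\otimes_s\md x_k$ (for $k=3$). Its top-order coefficient is the monomial $x^{\a+e_j}$, where $e_j$ denotes the $j$-th unit multi-index, with tensor $\md x_j\otimes_s\md x_k$. I match this by splitting on the support of $\a$:
\begin{itemize}
	\item If $\a$ is supported in the $l$-slot ($l\neq j,k$), the monomial is $x_l^{\a_l}x_j\,\md x_j\otimes_s\md x_k$. Apply the creation operator $\Cr_l^{\a_l}$ to the gauge QNM $\L_{x_j\d_{x_k}}g$, using Theorem~\ref{thm: creation and annihilation}; the result is a $\Box_L$-QNM with top term $x_l^{\a_l}x_j\,\md x_j\otimes_s\md x_k$.
	\item Compare this with $\L_{\Cr_l^{\a_l}(x_j)\d_{x_k}}g$, whose top-order part contains $x_l^{\a_l}x_j$ on $\md x_j\otimes\md x_k$, but also $x_l^{\a_l-1}x_j^2$-type terms on $\md x_l\otimes\md x_k$. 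Those extra terms are exactly of the shape $v_{\a_l}$ or $w_{\a_l}$ (indices permuted), and they are nonzero only when $\a_l\ge1$.
	\item If $\a$ is supported in the $j$-slot, the monomial is $x_j^{\a_j+1}\md x_j\otimes_s\md x_k$. Writing it as $\frac{1}{\a_j+2}\L_{x_j^{\a_j+2}\d_{x_k}}g$ minus a remainder, the remainder is a top-order coefficient of $v_{\a_j+1}$ or $w_{\a_j+1}$, with the roles of the indices as in Theorem~\ref{thm: explicit solutions}.
	\item The existence bounds $k<2\frac{p_m-p_r}{1-p_j}$ in Theorem~\ref{thm: explicit solutions}, combined with Step 1, give precisely the stated ranges $1\le\a_l\le\frac{p_j-p_k}{1-p_l}$ and $2\le\a_j+1\le\frac{p_l-p_k}{1-p_j}$.
	\item Mixed $\a$ (both slots nonzero, or $\a_3\neq0$) are excluded by the frequency bound, except in the LRS case $p_1=p_2=\frac23$ with $\a=(1,2,0)$. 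There $\abs{\a}_p=1=p_j-p_k$, and the monomial is matched by $\tilde z$ from Corollary~\ref{cor: LRS}. This uses the remark that $\tilde z$ agrees with $z$ to leading polynomial order, together with the formula for $\L_{\d_{x_m}}z$ from the proof of Theorem~\ref{thm: LRS case}.
\end{itemize}

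\textbf{Step 3: strict drop in order.} After subtracting $\L_Xg$ and the matched explicit solutions, all terms with $\abs{\o}_p=\abs{\a}_p$ cancel. Every summand is a generalized QNM of frequency $\s$. By Corollary~\ref{cor: structure of QNMs} (applied to $\Box_L$, or by direct inspection of the explicit formulas), the difference therefore has the form $t^{i\s}\sum_{\abs{\o}_p<\abs{\a}_p}t^{-\abs{\o}_p}x^\o\mathfrak v_\o$. Here $X$ is built from creation operators applied to a $\Box$-solution, so $\Box X=0$.

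\textbf{Main obstacle.} The hardest step is the bookkeeping that each residual top-order monomial is exactly the leading term of $v_k$, $w_k$ or $\tilde z$, with the correct index range. In particular, I must verify that no unmatched top term survives when $\a_l$ and $\a_j$ are both nonzero; this needs the frequency inequality to be strict in those cases. I would settle it by enumerating the admissible $\a$ via $\abs{\a}_p\le p_j-p_3\le 1+\frac13$.
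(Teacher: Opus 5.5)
Your overall scheme is the paper's: compute $\Im(\sigma)=-2+p_j-p_k-\abs{\alpha}_p$, enumerate the admissible $\alpha$, and cancel the top-order coefficient by a gauge term or by one of $v$, $w$, $\tilde z$. The execution of Step~2, however, has two real problems.

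First, your starting formula is wrong. $\mathcal{L}_{x_j\partial_{x_k}}g=2t^{2p_k}\,\mathrm{d}x_j\otimes_s\mathrm{d}x_k$ has no $x$-dependence; it is one of the $x$-independent solutions of Theorem~\ref{thm: AVTD system}. So the top monomial is $x^\alpha$, not $x^{\alpha+e_j}$, and once this is corrected your matchings are not the right ones.

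For $\alpha=\alpha_je_j$ one has exactly $x_j^{\alpha_j}\mathcal{L}_{x_j\partial_{x_k}}g=\mathcal{L}_{\frac{1}{\alpha_j+1}x_j^{\alpha_j+1}\partial_{x_k}}g$. This is pure gauge, with $\Box X=0$ arranged by taking $X=\alpha_j!\,\Cr_j^{\alpha_j}(x_j\partial_{x_k})$. There is no $v_{\alpha_j+1}$ or $w_{\alpha_j+1}$ remainder; even with your formula, $\frac{1}{\alpha_j+2}\mathcal{L}_{x_j^{\alpha_j+2}\partial_{x_k}}g$ reproduces the term exactly. This case is also admissible for some $k\neq 3$, namely whenever $\alpha_j(1-p_j)\le p_j-p_k$. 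That contradicts your Step~1 claim that only $\alpha=0$ survives when $k\neq3$.

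For $\alpha=\alpha_le_l$ (which does force $k=3$), the term $2x_l^{\alpha_l}t^{2p_3}\,\mathrm{d}x_j\otimes_s\mathrm{d}x_3$ is directly a constant multiple of the leading part of $v_{\alpha_l}$ (if $l=1$) or $w_{\alpha_l}$ (if $l=2$). Here $\alpha_l\le\frac{p_j-p_3}{1-p_l}$ is just the frequency bound, and no creation-operator comparison is needed. The extra term in $\mathcal{L}_{x_l^{\alpha_l}x_j\partial_{x_3}}g$ is $\alpha_l x_l^{\alpha_l-1}x_j\,\mathcal{L}_{x_l\partial_{x_3}}g$, a mixed term, not the leading part of a $v$ or $w$. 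Likewise, in the LRS case the monomial to match is $x_1x_2^2\,\mathrm{d}x_1\otimes_s\mathrm{d}x_3$, which is indeed the top term of $z$.

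Second, and this is the genuine gap, mixed $\alpha$ are not excluded by the frequency bound. Take $\alpha_3=0$, $\alpha_l=1$, $\alpha_j\ge1$. Then $\abs{\alpha}_p=(1-p_l)+\alpha_j(1-p_j)$, and $\abs{\alpha}_p\le p_j-p_3$ is equivalent to $\alpha_j+1\le\frac{p_l-p_3}{1-p_j}$, which is exactly the range in the third bullet. For example, with $p_1\ge p_2$, the choice $j=1$, $l=2$, $\alpha=(1,1,0)$ gives $\abs{\alpha}_p=1+p_3$. This is admissible because $p_1\ge 1+2p_3$ holds for every Kasner triple with $p_3<0$.

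This case is exactly where $v_{\alpha_j+1}$ and $w_{\alpha_j+1}$ arise. The idea you are missing is the product-rule splitting
\[
x_j^{\alpha_j}x_l\,\mathcal{L}_{x_j\partial_{x_3}}g=\mathcal{L}_{\frac{1}{\alpha_j+1}x_j^{\alpha_j+1}x_l\partial_{x_3}}g-\frac{1}{\alpha_j+1}\,x_j^{\alpha_j+1}\mathcal{L}_{x_l\partial_{x_3}}g .
\]
The first term is gauge, realized with $\Box X=0$ via creation operators. The second term is the pure $l$-slot case with $j$ and $l$ interchanged and $\alpha_j+1$ in place of $\alpha_l$.

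The printed third bullet says $\alpha_l=\alpha_3=0$; the paper's proof shows it should read $\alpha_l=1$, $\alpha_3=0$. That typo may be what led you to attach $v_{\alpha_j+1}$ and $w_{\alpha_j+1}$ to the pure $j$-slot case.

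The remaining exclusions work as you intend. If $\alpha_k>0$, then $\abs{\alpha}_p\ge1-p_k>p_j-p_k$. If $\alpha_l\ge2$ and $\alpha_j\ge1$, the bound forces $1+3p_3\le0$, which is the LRS case with $\alpha=(1,2,0)$. So the enumeration you planned for the ``main obstacle'' must actually be carried out. It does not confirm a strict inequality; it produces this additional admissible family.
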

\begin{proof}
We first note that
\begin{align*}
	\L_\T \left( x^\a \L_{x_j \d_{x_k}} g\right)
		= & \ T(x^\a) \L_{x_j \d_{x_k}} g + \L_{[\T, x_j \d_{x_k}]} g + x^\a \L_{x_j \d_{x_k}} \L_\T g \\
		= & \ \left( - \abs{\a}_p + p_j - p_k - 2 \right) x^\a \L_{x_j \d_{x_k}} g.
\end{align*}
It follows that $\s = \left( - 2 + p_j - p_k - \abs{\a}_p \right) i$, hence $\Im(\s) \geq -2$ is equivalent to
\begin{equation} \label{eq: non-gauge sigma condition}
	- \abs{\a}_p + p_j - p_k
		\geq 0.
\end{equation}
If $\Im(\s) \geq - 2$, it follows that $p_j \geq p_k$.
In case $p_j = p_k$, then $\a = 0$, and we may choose $X = x_j \d_{x_k}$ in the assertion.
We may therefore assume from now on that $p_j > p_k$.

\textbf{Case 1:} $\a_k > 0$.
In this case, $- \abs{\a}_p + p_j - p_k \leq -(1 - p_k) + p_j - p_k \leq p_j - 1 < 0$, which contradicts \eqref{eq: non-gauge sigma condition}.

\textbf{Case 2:} $\a_k = 0$ and $\a_l = 0$.
Then $x^\a \L_{x_j \d_{x_k}} g = (x_j)^{\a_j} \L_{x_j \d_{x_k}} g = \L_{\frac1{\a_j + 1}(x_j)^{\a_j + 1} \d_{x_k}} g$, so we can choose $X = \a_j! \left( \Cr_j \right)^{\a_j} (x_j \d_{x_k})$ in this case, which satisfies $\Box X = 0$ and coincides with $\frac1{\a_j + 1}(x_j)^{\a_j + 1} \d_{x_k}$ to highest polynomial order.

It remains to study the cases when $\a_k = 0$ and $\a_l \geq 1$.
Now \eqref{eq: non-gauge sigma condition} implies that $0 \leq - (1-p_l) + p_j - p_k = - 2 p_k$, and it follows that $k = 3$, i.e.~$p_k = p_3 < 0$.

\textbf{Case 3:} $\a_3 = 0$, $\a_l \geq 2$ and $\a_j \geq 1$.
Using that $p_1 + p_2 + p_3 = 1$,
\[
	- \abs{\a}_p + p_j - p_k
		\leq - 2(1-p_l) - (1-p_j) + p_j - p_k
		\leq - 1 - 3 p_k,
\]
which is only non-positive when $p_k = p_3 = -\frac13$, and $j = 1$ and $l = 2$, in which case $p_1 = p_2 = \frac23$, and we have $\a_1 = 1$ and $\a_2 = 2$.
Corollary~\ref{cor: LRS} implies that $\tilde z$, defined there, satisfies $\Box_L \tilde z = 0$, $\div_g \left( \tilde z - \frac12 \tr_g(\tilde z)g \right) = 0$ and $\left( \L + 2 \right)^2 \tilde z = 0$, and $\frac12 \tilde z$ coincides with 
\[
	x^\a \L_{x_j \d_{x_k}} g
		= x_1 x_2^2 \L_{x_1 \d_{x_3}} g
		= 2 x_1 x_2^2 t^{-\frac23} \md x_1 \otimes_s \md x_3
\]
to highest polynomial order.

\textbf{Case 4:} $\a_3 = 0$, $\a_l \geq 1$ and $\a_j = 0$.
We have $x^\a \L_{x_j \d_{x_k}} g = (x_l)^{\a_l} \L_{x_j \d_{x_3}} g = (x_l)^{\a_l} t^{2p_k} \md x_j \otimes_s \md x_k$.
The condition \eqref{eq: non-gauge sigma condition} is now equivalent to $\a_l \leq \frac{p_j - p_3}{1-p_l}$ when $j,l \in \{1,2\}$.
Thus, in this case, $x^\a \L_{x_j \d_{x_k}} g$ coincides to leading order with $\a_l! \cdot v_{\a_l}$, if $l = 1$, or $\a_l! \cdot w_{\a_l}$, if $l = 2$, which are generalized QNMs satisfying our gauge condition.

\textbf{Case 5:} $\a_3 = 0$, $\a_l = 1$ and $\a_j \geq 1$.
We note that
\[
	x^\a x_l \L_{x_j \d_{x_k}} g
		= (x_j)^{\a_j} x_l \L_{x_j \d_{x_3}} g
		= \L_{\frac1{\a_j + 1} (x_j)^{\a_j + 1}x_l \d_{x_3}} g - (x_j)^{\a_j+1} \L_{x_l \d_{x_3}} g.
\]
The first term can be dealt with by choosing $X = \a_j! \left( \Cr_j \right)^{\a_j + 1} \left( x_l \d_{x_k} \right)$, similar to \textbf{Case 2}.
The second term reduces to \textbf{Case 4}, with $j$ and $l$ interchanged and with $\a_j + 1$ in place of $\a_l$.
\end{proof}

\subsubsection{Proving the characterization of self-similar solutions}

\begin{proof}[Proof of Theorem~\ref{thm: self-similar solutions}]
By assumption, $\u \in C^\infty(J^+(\gamma))$ satisfies $\De \Ric_g(\u) = 0$, and $\left( \L_\T - \lambda \right)^k \u = 0$ for some $k \in \N$ and $\lambda \geq -2$.
In other words, $\u$ is a generalized QNM with frequency $\s = \lambda i$ with $\Im(\s) = \lambda \geq -2$.
By Proposition~\ref{prop: QNM wave equations}, there is a smooth vector field $X_1$ defined in of $\mY_a$, where $\mY_a \subset M$ was defined in \eqref{eq: Y a} for some $a > 0$, satisfying $\Box_g X_1 = \div_g \left( \u + \frac12 \tr_g \left( \u \right) g \right)$ and $\left( \L_\T + i \s \right)^{k'} \L_{X_1}g = 0$ for some $k \leq k' \in \N$.
By Corollary~\ref{cor: structure of QNMs}, $X_1$ extends uniquely by analyticity to a generalized QNM on all of $M$, satisfying the same equations.
This implies that $\tilde \u := \u + \L_{X_1} g$ satisfies $\Box_L \tilde \u = 0$, $\div_g \left( \tilde \u + \frac12 \tr_g \left( \tilde \u \right) g \right) = 0$, and $\left( \L_\T + i \s \right)^{k'} \tilde u = 0$.

By Corollary~\ref{cor: structure of QNMs}, it follows that $\tilde \u(t, x) = t^{i\s} \sum_{\abs \o_p \leq K} t^{ - \abs \o_p} x^\o \tilde \u_\o$, and for all $\a \in \N_0^3$ with $\abs \a_p = K$ that $\Box_L \left( t^{i\s - \abs \a_p} \tilde \u_\a \right) = 0$, and that $\L_{\d_{x_j}} \tilde \u_\a = 0$ for all $j = 1, 2, 3$.
Moreover, we note that
\[
	0
		= \left( \L_{\d_{x_1}}\right)^{\a_1} \hdots \left( \L_{\d_{x_n}}\right)^{\a_n} \div_g \left(\tilde \u - \frac12 \tr_g(\tilde \u) g \right)
		= \div_g \left( t^{i\s - \abs \a_p} \tilde \u_\a - \frac12 \tr_g\left( t^{i\s - \abs \a_p} \tilde \u_\a \right) g \right).
\]
By combining Theorem~\ref{thm: AVTD system} with  Lemma~\ref{le: gauge solution reduction} and Lemma~\ref{le: mixed terms reduction}, we can write $\tilde \u$ as a linear combination of a gauge solution (satisfying the gauge condition \eqref{eq: gauge condition}) and $g'$, $v_1, \hdots, v_{K_1}$, $w_1, \hdots, w_{K_2}$ and $\tilde z$ and a sum of the form $t^{i\s} \sum_{\abs \o_p \leq K-1} t^{ - \abs \o_p} x^\o  \mathfrak v_\o$. 
Iterating this, we prove that $\tilde u$ is the linear combination of a gauge solution and $g'$, $v_1, \hdots, v_{K_1}$, $w_1, \hdots, w_{K_2}$ and $\tilde z$.
By construction of $\tilde z$ in Corollary~\ref{cor: LRS}, $\tilde z$ is related by a gauge solution to $z$, so it can be replaced by $z$.
Moreover, by \eqref{eq: v1 w1}, $w_1$ differs to $v_1$ only by addition of a gauge solution and can thus be removed.
Finally, a straightforward computation using the Kasner relations \eqref{eq: Kasner relations} shows that if $p_1 > p_2$, then $K_2 = 1$ for $p_3 \in \left( - \frac27, 0 \right)$, $K_2 = 2$ for $p_3 \in \left( -\frac13, \frac27 \right]$, and $K_1 = K_2 = 3$ if $p_3 = -\frac13$.
This establishes the form $\u$ in Theorem~\ref{thm: self-similar solutions}.

The uniqueness of the constants for a given $\u$ is an immediate consequence of Proposition~\ref{prop: linear independence}.
\end{proof}

\subsubsection{Proving the characterization of linear instability} \label{subsubsec: proof instability}

\begin{proof}[Proof of Theorem~\ref{thm: main linearized Einstein}]
The existence of the asymptotic expansions is a direct consequence of Theorem~\ref{thm: main linearized Einstein, general dim} and Theorem~\ref{thm: self-similar solutions}.
The uniqueness of the constants for a given $\u$ is an immediate consequence of Proposition~\ref{prop: linear independence}.
\end{proof}

\begin{appendix}
\section{Microlocal saddle point estimates} 
\label{sec: saddle point estimates}

In this section, we state and prove the saddle point estimate, which is by now standard in propagation of singularities since the work of Vasy in \cite{V2013} and subsequent variations. 
However, we are not aware of any explicit reference to precisely this formulation, other than Exercise~10.11 in \cite{H2025}. 
Since this is the main microlocal analysis ingredient (in addition to H\"ormander's propagation of singularities and elliptic regularity theory) that goes into the arguments in Section~\ref{sec: asymptotic expansion}, we formulate the details here.

Let $M$ be a smooth manifold.
Let $\rho \in C^\infty(\oT M \backslash 0)$ be a boundary defining function of $S^*M = \d \oT M$ in $\oT M$, which is positive on $T^* M \backslash 0$.
This is equivalent to defining $0 < \rho \in S^{-1}_{cl}(T^*M)$ as in \cite{H2025}*{p.~306}.
(In our application in Section~\ref{sec: asymptotic expansion}, $M = \R^n$ and we thus have a global coordinate system, so we may choose $\rho = \abs{\xi}^{-1}$.)
Let $P$ be a differential operator of order $m \in \N_0$, acting on sections in a vector bundle $F \to M$, which is equipped with a positive definite metric.
We assume throughout this section that $P$ is of real principal type, meaning that the principal symbol of $P$ is given by $p \cdot \id_F$, for a real valued smooth function $p$ on $T^*M$.
We will henceforth refer to $p$ as the principal symbol of $P$.
Let $H_p$ denote the Hamiltonian vector field with respect to $p$.
For general orders $m$, $p$ and $H_p$ do not extend smoothly to fiber $S^*M$. 
We therefore define the rescaled versions
\begin{equation} \label{eq: rescaling}
	\tilde p
		:= \rho^m p, 
	\qquad
	\tilde H_p
		:= \rho^{m-1}H_p,
\end{equation}
which smoothly extend to $\oT M$.
One can check that $\tilde H_p$ in fact is tangent to $\tilde H_p$.
We define the characteristic set of $P$ to be 
\[
	\Char(P)
		:= \tilde p^{-1}(0) \cap S^*M \subseteq S^*M.
\]
If $P_\hbar$ is a \emph{semiclassical} differential operator of order $m$ with $h$-independent real principal symbol $\p_\hbar$, then the same rescaling as in \eqref{eq: rescaling} gives $\tilde p_\hbar$ and $\tilde H_{p_\hbar}$ that extend smoothly to $\oT M$.
Analogously, $\tilde H_{p_\hbar}$ is tangent to $S^*M$.
We define the (semiclassical) characteristic set of $P_{p_\hbar}$ to be
\[
	\Char(P_\hbar)
		:= \tilde p_\hbar ^{-1}(0) \subseteq \oT M.
\]

\begin{definition} \label{def: saddle manifold}
A smooth compact submanifold $\cR \subset \oT M$ is called a \emph{saddle manifold for $P$} if the following conditions are satisfied:
\begin{itemize}
	\item[(A.1)] $\tilde H_p$ is tangent to $\cR$,
	\item[(A.2)] $\md \tilde p|_{\cR} \neq 0$, 
	\item[(A.3)] There are two functions $\rho_1, \rho_2 \in C^\infty(S^*M)$, such that $(\rho_1 + \rho_2)|_{\Char(P)}$ is a quadratic defining function of $\cR$ in $\Char(P)$ (see \cite{H2025}*{Def.~10.3} for the definition of a quadratic defining function),  satisfying
	\begin{equation} \label{eq: Hp saddle}
		\qquad 
		\tilde H_p \rho_1 
			\geq \b_1 \rho_1 + \Phi_1, 
		\qquad 
		\tilde H_p \rho_2 
			\leq - \b_2 \rho_2 + \Phi_2
	\end{equation}
	on $\Char(P) \cap \{ \rho_1 + \rho_2 < \de \}$, for some $\de > 0$, where $\b_1, \b_2, \Phi_1, \Phi_2 \in C^\infty(S^*M)$, such that $\b_1, \b_2|_{\cR} > 0$ and $\Phi_1, \Phi_2$ vanish cubically at $\cR$.
	\item[(A.4)] $\tilde H_p \rho = \b_0 \rho$ in an open neighbourhood of $\cR$ with $\b_0|_{\cR} > 0$.
\end{itemize}
Similarly, $\cR$ is called a \emph{saddle manifold for the semiclassical operator $P_\hbar$} if the same conditions hold with $p$ replaced by $p_\hbar$ (and correspondingly for the rescaled quantities) and $\Char(P)$ is replaced by $\Char(P_\hbar) \cap S^*M$.
\end{definition}

Assumption (A.2) implies that $\Char(P)$ (resp.~$\Char(P_\hbar)$) is a smooth submanifold in an open neighbourhood of $\cR$.

\begin{remark}
In the special case that $\rho_2 = 0$, then the saddle manifold is a \emph{normal source} in the sense of \cite{H2025}*{p.~306-307} and \cite{H2025}*{p.~319} for $P$ (and \emph{normal sink} for $-P$).
\end{remark}

A peculiar feature of saddle point estimates (in particular, of source/sink estimates) is that the subprincipal part of the operator gives a threshold on the a priori regularity one needs to assume.
Fix a volume density $\Vol$ on $M$.
Given a saddle manifold $\cR \subset S^*M$ with respect to $P$, as in Definition~\ref{def: saddle manifold}, let $p_1$ denote the principal symbol of $\frac1{2i}\left( P - P^* \right)$, where the formal adjoint $P^*$ is defined with respect to $\Vol$ and the positive definite metric on $F$.
We define $\tilde p_1 := \rho^{m-1} p_1$, which extends smoothly to all of $\oT M$.
In the case of a semiclassical operator $P_\hbar$, we instead let $p_{1, \hbar}$ denote the principal symbol of $\frac1{2ih}\left( P_\hbar - P_\hbar^* \right)$ and define the rescaling $\tilde p_{1, \hbar} := \rho^{m-1}p_{1, \hbar}$ analogously.

\begin{definition} \label{def: saddle manifold threshold}
Define the smooth endomorphism $\tilde \b$ of $F$ in an open neighbourhood of $\cR$ in $S^*M$ by $\tilde p_1 = \b_0 \tilde \b$, where $\b_0$ was defined (A.4) in Definition~\ref{def: saddle manifold}.
We define $\tilde \b_\hbar$ similarly.
\end{definition}

This will give the threshold regularity in our saddle point estimates:

\begin{thm}[Non-semiclassical saddle point estimate] \label{thm: saddle points}
Assume that $P$ is a linear differential operator of real principal type and of order $m \in \N_0$.
Assume that $\cR \subset \Char(P) \subset S^*M$ is a saddle manifold for $P$ as in Definition~\ref{def: saddle manifold}.
Then there is a $\de_0 > 0$, such that for all $\de \in (0, \de_0)$ the following holds:
Fix $\chi \in C_c^\infty(M)$, such that $\chi = 1$ in an open neighbourhood of 
\[
	K 
		:= \{\tilde p \leq 2\de^2 \} \cap \left\{ \rho \leq 2 \de^2 \right\} \cap \{ \rho_1 + \rho_2 \leq 2 \de \} .
\]
Let $u \in \D'(M)$.
Let $B, G, E \in \Psi^0(M)$ be such that the Schwartz kernels of $B, G$ and $E$ are contained in $K \times K$, and $\WF'(B) \cup \cR \subset \Ell(G)$.
\begin{enumerate}
	\item (Forward propagation through $\cR$.) Assume that 
	\[
		\left\{ \tilde p \leq \de^2 \right\} \cap \left\{\rho \leq \de^2 \right\} \cap \{\rho_1 \leq \de \} \cap \{\de/2 \leq \rho_2 \leq \de\} 
			\subset \Ell(E).
	\]
	Then for all $s, s_0, N \in \R$, with $s > s_0 > \frac{m-1}2 + \tilde \b$ on $\cR$, there exists a $C > 0$, such that
	\[
		\norm{B u}_{H^s}
			\leq C \left( \norm{G P u}_{H^{s-m+1}} + \norm{G u}_{H^{s_0}} + \norm{E u}_{H^s}+ \norm{\chi u}_{H^{-N}} \right).
	\]
	\item (Backward propagation through $\cR$.) Assume that 
	\[
		\left\{ \tilde p \leq \de^2 \right\} \cap \left\{\rho \leq \de^2 \right\} \cap \{\rho_1 \leq \de \} \cap \{\de/2 \leq \rho_2 \leq \de\} 
			\subset \Ell(E).
	\]
	Then for all $s, N \in \R$, with $s < \frac{m-1}2 + \tilde \b$ on $\cR$, there exists a $C > 0$, such that
	\[
		\norm{B u}_{H^s}
			\leq C \left( \norm{G P u}_{H^{s-m+1}} + \norm{E u}_{H^s} + \norm{\chi u}_{H^{-N}} \right).
	\]
\end{enumerate}
Both these estimate hold in the strong sense: If $u$ is such that the right-hand side is finite, then so is the left-hand side, and the estimate holds. 
\end{thm}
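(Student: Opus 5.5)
We prove both parts with a positive commutator (radial point type) argument, modelled on the source/sink estimates in \cite{H2025}*{Sec.~10} and the saddle estimates of \cite{HV2015}*{Sec.~2}. The error term $Eu$ absorbs the outgoing direction of the saddle, namely the region where $\rho_2$ is of size $\de$. After conjugating by an elliptic operator with symbol $\rho^{-s+\frac{m-1}2}$, we may take $m=1$. We work with the commutant
\[
	a := \rho^{-2s+m-1}\,\phi(\tilde p/\de^2)\,\phi(\rho/\de^2)\,\psi_1(\rho_1)\,\psi_2(\rho_2),
\]
where $\phi$ is a cutoff identically $1$ near $0$, $\psi_1$ is a cutoff equal to $1$ near $0$ and supported in $[0,\de)$, and $\psi_2$ is equal to $1$ on $[0,\de/2]$ and supported in $[0,\de)$. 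Let $A\in\Psi^{2s-m+1}$ be a quantization with Schwartz kernel supported in $K\times K$. We then compute
\[
	\Im\langle Pu, Au\rangle = \tfrac12\langle i[P,A]u,u\rangle + \langle \tfrac{P-P^*}{2i}u, Au\rangle.
\]
The principal symbol of the right-hand side is $\tfrac12\tilde H_p a + \tilde p_1 a$, up to the rescalings by powers of $\rho$.

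\textbf{Sign analysis.} By (A.4), the weight contributes $(-2s+m-1)\b_0\,a$, and the subprincipal term contributes $2\b_0\tilde\b\, a$. Together they give $-2\b_0\big(s-\tfrac{m-1}2-\tilde\b\big)a$. For part (1) this is negative on $\cR$ because $s>\tfrac{m-1}2+\tilde\b$, so it is a definite main term; for part (2) it has the opposite sign. The remaining contributions come from derivatives of the cutoffs.
\begin{itemize}
	\item Derivatives of $\phi(\tilde p/\de^2)$ are supported away from $\Char(P)$ and are controlled by elliptic regularity, i.e.\ by $GPu$.
	\item Derivatives of $\phi(\rho/\de^2)$ are supported at finite frequencies, since $\rho\sim\de^2$ there, and are absorbed into $\norm{\chi u}_{H^{-N}}$.
	\item Since $\tilde H_p\rho_1\ge \b_1\rho_1+\Phi_1$ and $\psi_1'\le 0$, the term $\psi_1'(\rho_1)\,\tilde H_p\rho_1$ has the same sign as the main term in part (1), once $\Phi_1=O((\rho_1+\rho_2)^{3/2})$ is dominated by $\rho_1$. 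On the support this needs $\rho_1\gtrsim \de$ relative to $\rho_2\le\de$, which we arrange by choosing $\de$ small and using cubic vanishing: $|\Phi_1|\le C(\rho_1+\rho_2)^{3/2}\le C\de^{1/2}(\rho_1+\rho_2)$.
	\item The term $\psi_2'(\rho_2)\,\tilde H_p\rho_2$ has the wrong sign, but it is supported where $\de/2\le\rho_2\le\de$ and $\rho_1\le\de$. That set lies in $\Ell(E)$, so it is bounded by $\norm{Eu}_{H^s}$.
\end{itemize}
For part (2) we reverse the roles: we take $\psi_2$ with the opposite monotonicity so that its sign agrees with the now reversed main term. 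The bad term then comes from $\rho_1$, supported where $\rho_1\sim\de$.

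\textbf{Main obstacle.} The difficulty is that on $\Char(P)$ the cutoffs must be arranged so that the $\Phi_j$ errors and cross-terms are genuinely dominated. Because $\rho_1+\rho_2$ is only a quadratic defining function on $\Char(P)$, the cutoff $\phi(\tilde p/\de^2)$ is needed to localize to $\Char(P)$, and the errors off $\Char(P)$ are elliptic. I would follow the proof of \cite{H2025}*{Thm.~10.4}: write $i[P,A]+(P-P^*)A/\ldots = -B_1^*B_1 + E_1^*E_1 + F + R$, where $F$ is supported where $P$ is elliptic and $R$ has lower order. Cauchy--Schwarz then gives the estimate with an extra lower-order term $\norm{Gu}_{H^{s-1/2}}$.

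\textbf{Closing the argument.} To remove the extra term we use a regularization argument: in part (1), iterate in steps of $1/2$ starting from $s_0$; here the threshold condition $s_0>\tfrac{m-1}2+\tilde\b$ is what makes the iteration valid. In part (2) no a priori regularity is needed, because the main term has a good sign at every regularity below the threshold. This is also what establishes the strong sense of the estimate, via approximation by mollified $A_\epsilon$ as in \cite{H2025}. The vector-bundle case requires no change, since $\tilde\b$ enters through $\tilde p_1$ and is handled by treating $s>\tfrac{m-1}2+\tilde\b$ as an operator inequality.
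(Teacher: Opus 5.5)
Your proposal follows essentially the same route as the paper's proof: a positive commutator argument modelled on Hintz's radial estimates. The commutant is $\rho^{-2s+m-1}$ times cutoffs in $\tilde p$ and $\rho$ at scale $\de^2$ and in $\rho_1,\rho_2$. The $\psi_1'$-term has the good sign once $\Phi_1$ is dominated by cubic vanishing; off $\Char(P)$ this uses $\abs{\tilde p},\rho\lesssim\de^2\lesssim\rho_1^2$, not ellipticity. The $\psi_2'$-term is absorbed by $E$, and a regularizer $(1+r\rho^{-1})^{-\Theta}$ gives the strong sense; its wrong-sign contribution above threshold is exactly where $s_0>\frac{m-1}2+\tilde\b$ enters.

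Two small corrections. First, the region $\rho_2\sim\de$ is the incoming (stable) side of the saddle along $\tilde H_p$, not the outgoing one. Second, in part (2) no change of the monotonicity of $\psi_2$ is needed, nor is it possible for a cutoff equal to $1$ near $0$. With the same cutoffs, the $\psi_2'$-term already has the (now positive) sign of the main term, and the $\psi_1'$-term is the bad one. So $E$ must be elliptic where $\rho_1\sim\de$, exactly as the paper's proof uses; the printed hypothesis of (2), which repeats that of (1), is a typo.
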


We will also need a semiclassical version of this estimate.

\begin{thm}[Semiclassical saddle point estimate] \label{thm: saddle points semiclassical}
Assume that $P_\hbar$ is a linear semiclassical differential operator with $h$-independent real principal symbol $\p_\hbar$.
Assume that $\cR \subset \Char_\hbar (\P) \cap S^*M$ and $\P_\hbar$ is a saddle manifold for $P_\hbar$ as in Definition~\ref{def: saddle manifold}. 
Then there is a $\de_0 > 0$, such that for all $\de \in (0, \de_0)$ the following holds: 
Fix $\chi \in C_c^\infty(M)$, such that $\chi = 1$ near 
\[
	K := \{\tilde p_\hbar \leq 2\de^2 \} \cap \left\{ \rho \leq 2 \de^2 \right\} \cap \{ \rho_1 + \rho_2 \leq 2 \de \}.
\] 
Let $u \in \D'(M)$.
Let $B, G, E \in \Psi_\hbar(M)$ be such that the Schwartz kernels of $B, G$ and $E$ are contained in $K \times K$, and $\WF_\hbar(B) \cup \cR \subset \Ell_\hbar(G)$.
\begin{enumerate}
	\item (Forward propagation through $\cR$.) Assume that 
	\[
		\{\tilde p_\hbar \leq \de^2 \} \cap \left\{\rho \leq \de^2 \right\} \cap \{\rho_1 \leq \de \} \cap \{\de/2 \leq \rho_2 \leq \de\} 
			\subset \Ell_\hbar(E).
	\]
	Then for all $s, s_0, N \in \R$, with $s > s_0 > \frac{m-1}2 + \tilde \b$ on $\cR$, there exist $h_1, C > 0$, such that
	\[
		\norm{B u}_{H_h^s}
			\leq C \left( h^{-1}\norm{G P u}_{H_h^{s-m+1}} + h^N \norm{G u}_{H_h^{s_0}} + \norm{E u}_{H_h^s} + h^N \norm{\chi u}_{H_h^{-N}} \right)
	\]
	for all $h \in (0, h_1)$ in the strong sense.
	\item (Backward propagation through $\cR$.) Assume that 
	\[
		\left\{ \tilde p_\hbar \leq \de^2 \right\} \cap \left\{\rho \leq \de^2 \right\} \cap \{\rho_1 \leq \de \} \cap \{\de/2 \leq \rho_2 \leq \de\} 
			\subset \Ell_\hbar (E).
	\]
	Then for all $s, N \in \R$, with $s < \frac{m-1}2 + \tilde \b$ on $\cR$, there exist $h_1, C > 0$, such that
	\[
		\norm{B u}_{H_h^s}
			\leq C \left( h^{-1}\norm{G P u}_{H_h^{s-m+1}} + \norm{E u}_{H_h^s} + \norm{\chi u}_{H_h^{-N}} \right)
	\]
	for all $h \in (0, h_1)$ in the strong sense.
\end{enumerate}
\end{thm}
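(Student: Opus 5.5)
The statement to prove is the semiclassical saddle point estimate, Theorem~\ref{thm: saddle points semiclassical}. I would prove it by a positive commutator argument that mirrors the non-semiclassical estimate, Theorem~\ref{thm: saddle points}, in the semiclassical calculus, following the source/sink estimates in \cite{H2025}*{Sec.~10}. Near $\cR$ I would work in $\oT M$ with defining functions $\rho$ (fiber infinity), $\tilde p_\hbar$ (characteristic set), and $\rho_1,\rho_2$ (the saddle directions).

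The commutant is
\[
	\check a := \rho^{-2s+m-1}\,\phi(\rho)\,\phi(\tilde p_\hbar^2)\,\chi_1(\rho_1)\,\chi_2(\rho_2),
\]
built from cutoffs at scale $\de$. Here $\chi_1$ is decreasing in $\rho_1$, so that by \eqref{eq: Hp saddle} the term $\chi_1'\tilde H_p\rho_1$ has a definite good sign. The cutoff $\chi_2$ is supported in $\{\rho_2\le\de\}$, and its derivative region $\{\de/2\le\rho_2\le\de\}$ is where we have no sign control; that region is absorbed by $\norm{Eu}_{H^s_h}$. The derivative of $\phi(\tilde p_\hbar^2)$ is supported away from $\Char(P_\hbar)$, so semiclassical elliptic regularity (\cite{H2025}*{Prop.~6.61}) controls it by $h^{-1}\norm{GPu}$. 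In the forward case the main term comes from applying $\tilde H_{p_\hbar}$ to the weight $\rho^{-2s+m-1}$, which by (A.4) gives $(-2s+m-1)\b_0$. Adding the skew-adjoint contribution $2\tilde p_{1,\hbar}=2\b_0\tilde\b_\hbar$, the total coefficient is $-2\b_0\big(s-\tfrac{m-1}{2}-\tilde\b_\hbar\big)$, which is negative exactly above threshold. I would quantize $\check a^2$ as $A^*A$ and compute $\frac{i}{h}[P_\hbar,A^*A]+\frac{1}{h}(P_\hbar-P_\hbar^*)A^*A$, whose principal symbol is $\tilde H_{p_\hbar}\check a^2+2\tilde p_{1,\hbar}\check a^2$. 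The term $\chi_1'$, the threshold term, and the remaining terms then combine so that $\norm{Bu}_{H^s_h}^2$ is controlled by the error terms.

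The main obstacle is the non-sharp sign: the saddle inequalities in \eqref{eq: Hp saddle} hold only up to the cubically vanishing errors $\Phi_1,\Phi_2$. These must be dominated by the main term on $\{\rho_1+\rho_2\le 2\de\}$, which is what fixes the choice of $\de_0$. For $\rho_1$ this works directly, since $\Phi_1=O((\rho_1+\rho_2)^{3/2})$ is small relative to the leading contributions. For $\rho_2$ the sign is wrong, so I would use the threshold term to absorb the cutoff error wherever $\chi_2'\neq 0$ is not already inside $\Ell_\hbar(E)$; this requires $\chi_2'$ to be supported where $E$ is elliptic. Two further points need care. First, the regularization: one starts from $\norm{Gu}_{H^s_0}$ and inserts $(1+\epsilon\rho^{-1})^{-(s-s_0)}$ into $\check a$, then lets $\epsilon\to0$. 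The above-threshold condition on $s_0$ is what keeps the regularizer's contribution signed, as in \cite{H2025}*{Thm.~10.2}. Second, the semiclassical gain: the lower-order terms give $h\norm{\cdot}_{H^{s-1/2}_h}$, and I would iterate the argument to turn this into $h^N\norm{Gu}_{H^{s_0}_h}$. The backward propagation estimate (2) is the same argument with the roles reversed: $\chi_1$ is chosen increasing, and the sign condition $s<\tfrac{m-1}{2}+\tilde\b$ flips the sign of the threshold term, so no a priori regularity term is needed and no regularization is required beyond a standard limiting argument.
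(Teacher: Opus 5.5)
Your scheme is the same as the paper's: a positive commutator with weight $\rho^{-2s+m-1}$, regularizer $(1+r\rho^{-1})^{-\Theta}$ with $\Theta=s-s_0$, and cutoffs in $\tilde p_\hbar,\rho,\rho_1,\rho_2$. The threshold coefficient is $-2\beta_0(s-\frac{m-1}2-\tilde\beta_\hbar)$, the $\rho_1$-cutoff term has the good sign, the $\rho_2$-cutoff term is paid for by $E$, and the quantization follows Hintz. However, the one estimate the paper's proof actually carries out is the one your parameter choice breaks, so there is a gap.

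\textbf{The sign of the $\chi_1'$ term.} The inequalities (A.3) are only assumed on $\Char(P_\hbar)\cap S^*M$. The support of $\chi_1'(\rho_1)$ in your commutant is a full neighbourhood in $\oT M$, off the characteristic set and at finite semiclassical frequency. There one only has
\[
\tilde H_{p_\hbar}\rho_1 \geq \beta_1\rho_1+\Phi_{1,1}+\tilde p_\hbar\,\Phi_{1,2}+\rho\,\Phi_{1,3},
\]
where $\Phi_{1,1}$ vanishes cubically at $\cR$ but $\Phi_{1,2},\Phi_{1,3}$ are merely smooth. Since $p_\hbar$ is not homogeneous, the $\rho\,\Phi_{1,3}$ error is genuinely present. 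With cutoffs in $\rho$ and $\tilde p_\hbar$ at scale $\delta$, these errors are of order $\delta$ or worse on $\{\rho_1\sim\delta\}$, the same order as $\beta_1\rho_1$, so the sign is lost. Your remark that $\Phi_1=O((\rho_1+\rho_2)^{3/2})$ only handles the error on $\Char(P_\hbar)\cap S^*M$.

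\textbf{The fix (the paper's choice).} Localize $\tilde p_\hbar$ and $\rho$ at the finer scale $\delta^2$. Then on $\supp\chi_1'$ one has $\rho_2\le 2\rho_1$ and $|\tilde p_\hbar|,\rho\le 4\rho_1^2$, so $\tilde H_{p_\hbar}\rho_1\geq\frac12\beta_1\rho_1$ for small $\delta$. Symmetrically, $\tilde H_{p_\hbar}\rho_2\le-\frac12\beta_2\rho_2$ on $\supp\chi_2'$.

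\textbf{Compatibility with the hypotheses on $E$.} The $\delta^2$ scale is also what makes the argument fit the hypotheses. The bad-sign $\chi_2'$ region must lie inside $\{\tilde p_\hbar\le\delta^2\}\cap\{\rho\le\delta^2\}\cap\{\rho_1\le\delta\}\cap\{\delta/2\le\rho_2\le\delta\}\subset\Ell_\hbar(E)$. With a $\rho$-cutoff at scale $\delta$, the part $\delta^2<\rho\lesssim\delta$ is characteristic and at finite frequency, so it is neither elliptic for $E$ nor $O(h^\infty)$. Nor can the threshold term absorb any of the $\chi_2'$ term: both are $O(1)$ times the weight, and the threshold coefficient may be small. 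That region must lie entirely in $\Ell_\hbar(E)$.

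\textbf{Two smaller points.}
\begin{itemize}
\item Semiclassically, the $\phi'(\rho)$ term is not negligible. In the forward estimate it has the good sign, since $\tilde H_{p_\hbar}\rho=\beta_0\rho$ with $\beta_0>0$ (the paper's $-b_{\infty,r}^2$). In the backward estimate it has the bad sign and must be controlled separately, e.g.\ by $\norm{\chi u}_{H_h^{-N}}$, which in (2) carries no factor $h^N$.
\item In the backward estimate you should not make $\chi_1$ increasing, since it would then no longer localize near $\cR$. The cutoffs stay the same. Below threshold, the threshold term and the $\chi_2'$ term become the good-sign terms, while the $\chi_1'$ term becomes the bad one. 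Hence $E$ must be elliptic on $\{\delta/2\le\rho_1\le\delta\}\cap\{\rho_2\le\delta\}$; the hypothesis printed in (2) is a copy of the one in (1).
\end{itemize}
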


\begin{proof}[Proof of Theorem~\ref{thm: saddle points} and Theorem~\ref{thm: saddle points semiclassical}]
The proof structure is the same as the proofs of \cite{H2025}*{Thm.~10.5} and \cite{H2025}*{Thm.~10.17}.
The only significant difference is that a slightly more general commutatant will be used.
The same commutant will work in both the non-semiclassical and in the semiclassical setting.
Let $\de > 0$, that we eventually will choose small enough.
For $r > 0$ define the commuatant, including regularizing factor, as
\[
	a_r
		:= \rho^{-2s + m - 1} \Psi_r(\rho)^2 \phi \left( \tilde \p \right)^2 \phi(\rho)^2 \psi(\rho_1)^2 \psi(\rho_2)^2 \cdot \id_F,
\]
where $\psi \in C_c^\infty(\R, [0, 1])$, such that $\psi(x) = 0$ for $\abs x \geq 2\de$ and $\psi(x) = 1$ for $\abs x \leq \de$ and $\sqrt{-\psi' \psi} \in C^\infty(\R, [0, 1])$, $\phi(x) := \psi \left( \frac x \de \right)$ for all $x \in \R$, and $\Psi_r(\rho)$ is the regularizer $\Psi_r(\rho) := \left( 1 + r \rho^{-1} \right)^{-\Theta}$ for any $\Theta > 0$ and all $r > 0$.
Note that $\phi$ is localizing more than $\psi$ when $\de > 0$ is small.
In the following computations, we will omit the identity map $\id_F$ for convenience.
Similarly to the proofs of \cite{H2025}*{Thm.~10.5 and Thm.~10.17}, 
\begin{align*}
	H_p a_r + 2 p_1 a_r
		= & \ \rho^{-m+1}\left( \tilde H_p a_r + 2 \tilde p_1 a_r \right) \\
		= & \ \rho^{-2s} \Psi_r(\rho)^2 \Big( \b_0 \left( - 2s + m - 1 + 2 \tilde \b + 2 f_r \right) \phi(\tilde p)^2 \phi(\rho)^2 \psi(\rho_1)^2 \psi(\rho_2)^2 \\*
		& \hspace{20mm} + 2 m \b_0 \tilde p \phi'(\tilde p) \phi(\tilde p) \phi(\rho)^2  \psi(\rho_1)^2 \psi(\rho_2)^2 \\*
		& \hspace{20mm} + 2 \b_0 \rho \phi(\tilde p)^2 \phi'(\rho) \phi(\rho) \psi(\rho_1)^2 \psi(\rho_2)^2 \\*
		& \hspace{20mm} + 2 \left( \tilde H_p \rho_1 \right) \phi(\tilde p)^2 \phi(\rho)^2 \psi'(\rho_1) \psi(\rho_1) \psi(\rho_2)^2 \\*
		& \hspace{20mm} + 2 \left( \tilde H_p \rho_2 \right) \phi(\tilde p)^2 \phi(\rho)^2 \psi(\rho_1)^2 \psi'(\rho_2)  \psi(\rho_2) \Big) ,
\end{align*}
where $f_r(\rho) = \Theta \frac{r \rho^{-1}}{1 + r \rho^{-1}}$.
We thus have
\[
	H_\p a_r + 2 p_1 a_r
		= \ - 2 \de \rho^{2s - 2m + 2} a_r^2 - b_r^2 + h_r p - b_{\infty, r}^2 - b_{1, r}^2 + b_{2, r}^2,
\]
where
\begin{align*}
	b_r
		:= & \ \rho^{-s} \Psi_r(\rho) \phi(\tilde \p) \phi(\rho) \psi(\rho_1) \psi(\rho_2) \\*
		& \ \times \sqrt{ \b_0 \left( 2s - m + 1 - 2 \tilde \b - 2 f_r(\rho) \right) - 2 \de \Psi_r(\rho)^2 \phi(\tilde \p)^2 \phi(\rho)^2 \psi(\rho_1)^2\psi(\rho_2)^2 }, \\
	h_r
		:= & \ 2 m \b_0 \rho^{-2s + m} \Psi_r(\rho)^2  \phi'(\tilde p) \phi(\tilde p) \phi(\rho)^2  \psi(\rho_1)^2 \psi(\rho_2)^2, \\
	b_{\infty, r}
		:= & \ \rho^{-s} \Psi_r(\rho) \phi(\tilde \p) \psi(\rho_1) \psi(\rho_2) \sqrt{ 2 \b_0 \rho \left( - \phi'(\rho)\phi(\rho)  \right) }, \\
	b_{1, r}
		:= & \ \rho^{-s} \Psi_r(\rho) \phi(\tilde \p) \psi(\rho_2) \psi(\rho) \sqrt{2 \left( \tilde H_p \rho_1 \right) \left( - \psi'(\rho_1) \psi(\rho_1) \right) }, \\
	b_{2, r}
		:= & \ \rho^{-s} \Psi_r(\rho) \psi(\rho_1) \phi(\tilde \p) \psi(\rho) \sqrt{ 2 \left( - \tilde H_p \rho_2 \right) \left( - \psi'(\rho_2)\psi(\rho_2)  \right) }.
\end{align*}
In order for this to make sense, we claim that there is a small enough $\de_0 > 0$, such that if $\de \in (0, \de_0)$, then $\tilde H_p \rho_1$ is positive, and $\tilde H_p \rho_2$ is negative, on the support of $\phi(\tilde p) \phi(\phi) \psi'(\rho_1) \psi(\rho_2)$, and on the support of $\phi(\tilde p) \phi(\rho) \psi(\rho_1) \psi'(\rho_2)$, respectively.
The support of $\phi(\tilde p) \phi(\phi) \psi'(\rho_1) \psi(\rho_2)$ is $\{\tilde p \leq \de^2 \} \cap \{ \rho \leq \de^2 \} \cap \{\de/2 \leq \rho_1 \leq \de\} \cap \{\rho_2 \leq \de\}$.
Let us restrict to the non-semiclassical setting, as the semiclassical setting is analogous.
Since \eqref{eq: Hp saddle} implies that $\tilde H_p \rho_1 \geq \b_1 \rho_1 + \Phi_1$ on $\Char(P) \cap \{ \rho_1 + \rho_2 < \de \} \subseteq S^*M$, where $\Phi_1$ is cubically vanishing at $\cR$, we conclude that $\tilde H_p \rho_1 \geq \b_1 \rho_1 + \Phi_{1,1} + \tilde p \Phi_{1, 2} + \rho \Phi_{1, 3}$, for smooth functions $\Phi_{1, 1}, \Phi_{1, 2}, \Phi_{1, 3}$ defined in some open neighbourhood of $\cR$ in $\oT M$, where $\Phi_{1,1}$ vanish cubically at $\cR$.
Now, on the support of $\phi(\tilde p) \phi(\phi) \psi'(\rho_1) \psi(\rho_2)$, we note that $\rho_1 \geq \de/2$, and hence $\rho_2 \leq \de \leq 2 \rho_1$, $\tilde p \leq \de^2 \leq 4 \rho_1^2$ and $\rho \leq \de^2 \leq 4 \rho_1^2$.
Using this, we conclude that
\begin{align*}
	\tilde H_p \rho_1 
		\geq & \ \b_1 \rho_1 + \Phi_{1,1} + \tilde p \Phi_{1, 2} + \rho \Phi_{1, 3} 
		\geq \ \b_1 \rho_1 - C \left( \rho_1 + \rho_2 + \rho + \tilde p \right)^{\frac32} - C \rho_1^2 \\
		\geq & \ \b_1 \rho_1 - C \left( \rho_1 + \rho_1^2 \right)^{\frac32} - C \rho_1^2
		\geq \frac12 \b_1 \rho_1,
\end{align*}
on the support of $\phi(\tilde p) \phi(\phi) \psi'(\rho_1) \psi(\rho_2)$, if $\de > 0$ is small enough.
We similarly conclude that $\tilde H_p \rho_1 \leq - \frac12 \b_2 \rho_2$ on the support of $\phi(\tilde p) \phi(\phi) \psi(\rho_1) \psi'(\rho_2)$, if $\de > 0$ is small enough.
The symbols $b_{1, r}$ and $b_{2, r}$ are therefore well-defined. 
The rest of the proof, in particular the quantization of this commutator identity, proceeds completely analogous to the proofs of \cite{H2025}*{Thm.~10.5} and \cite{H2025}*{Thm.~10.17}, where $F$ by assumption is elliptic on the essential support of $b_{2, r}$ and $b_{1, r}$, respectively.
\end{proof}
\end{appendix}

\end{document}